\documentclass[11pt,a4paper]{amsart}

\usepackage[english]{babel} 
\usepackage[T1]{fontenc} 
\usepackage{amsmath}
\usepackage{amssymb} 
\usepackage{ucs} 
\usepackage[utf8x]{inputenc}
\usepackage[mathcal]{eucal} 
\usepackage[]{graphicx} 
\usepackage{psfrag}
\usepackage{latexsym}
\usepackage{amsthm} 
\usepackage{stmaryrd}
\usepackage{verbatim}
\usepackage{hyperref}

\usepackage{pgf,tikz}
\usetikzlibrary{arrows}
\usetikzlibrary[patterns]

\newtheorem{defi}{Definition}[section] 
\newtheorem{theo}[defi]{Theorem}
\newtheorem{coro}[defi]{Corollary} 
\newtheorem{lemme}[defi]{Lemma}
\newtheorem{lemma}[defi]{Lemma}
\newtheorem{prop}[defi]{Proposition}

\theoremstyle{remark}

\newcommand{\T}{\mathbb{T}} 
\newcommand{\Ss}{\mathbb{S}}

\newcommand{\R}{\mathbb{R}}

\newcommand{\Z}{\mathbb{Z}}
\newcommand{\proj}{\mathbb{P}}
\newcommand{\h}{\mathbb{H}}
\newcommand{\N}{\mathbb{N}} 

\newcommand{\e}{\varepsilon}
\newcommand{\p}{\varphi}
\newcommand{\F}{\mathcal{F}}
\newcommand{\Isom}{\mathrm{Isom}}
\newcommand{\Conf}{\mathrm{Conf}}
\newcommand{\Diff}{\mathrm{Diff}}
\newcommand{\Homeo}{\mathrm{Homeo}}

\newcommand{\suite}{_{n\in \N}}

\newcommand{\Stab}{\mathrm{Stab}}
\newcommand{\PSL}{\mathrm{PSL}}

\newcommand{\Log}{\mathrm{Log}}
\newcommand{\Aff}{\mathrm{Aff}}
\newcommand{\dS}{\mathrm{dS}}

\newcommand{\intoo}[2]{\mathopen{]}#1\,,#2\mathclose{[}}
\newcommand{\intff}[2]{\mathopen{[}#1\,,#2\mathclose{]}}
\newcommand{\intof}[2]{\mathopen{]}#1\,,#2\mathclose{]}}
\newcommand{\intfo}[2]{\mathopen{[}#1\,,#2\mathclose{[}}

\renewcommand{\tilde}{\widetilde}

\title{Isometries of Lorentz surfaces and convergence groups} 
\author{Daniel Monclair}
\date{\today}

\thanks{Partially supported by  ANR project GR-Analysis-Geometry (ANR-2011-BS01-003-02)}

\begin{document}

\maketitle

\begin{abstract} We study the isometry group of a globally hyperbolic  spatially compact Lorentz surface. Such a group acts on the circle, and we show that when the isometry group acts non properly,  the subgroups of $\mathrm{Diff}(\mathbb{S}^1)$ obtained  are semi conjugate to subgroups of finite covers of $\mathrm{PSL}(2,\mathbb{R})$ by  using  convergence groups. Under an assumption on the conformal boundary, we show that we have a conjugacy in $\mathrm{Homeo}(\mathbb{S}^1)$. \end{abstract}

\setcounter{tocdepth}{1}
\tableofcontents

\section{Introduction} One of the (many) differences between Riemannian and Lorentzian geometries lies in the dynamics of the isometry group. More precisely, the isometry group  of a Lorentz manifold can act non properly (stabilizers are subgroups of the non compact group $\mathrm O(n-1,1)$), which is impossible for Riemannian isometries ($\mathrm O(n)$ is compact). However, geometries with many isometries tend to be rare, and Gromov's vague conjecture (as stated in \cite{D'AG}) suggests that they should be classifiable. Several results classifying non proper actions on Lorentz manifolds exist, such as in the compact case (\cite{AS97a}, \cite{AS97b}, \cite{Z99a} and \cite{Z99b}) and for actions of  simple Lie groups on non compact manifolds (\cite{Kowalsky}). However, the study of Lorentz manifolds cannot be reduced to the  compact case. Indeed,  causality is an obstruction to compactness. It would be interesting to obtain a classification result for Lorentz manifolds with a large isometry group under a physically relevant assumption.\\
\indent Lorentz manifolds that occur in physics are globally hyperbolic. Global hyperbolicity is the strongest of all the causality conditions. It is roughly equivalent to the fact that the space of lightlike geodesics is Hausdorff (and therefore a smooth manifold).  We wish to classify globally hyperbolic space-times on which the isometry group acts non properly.\\
\indent We will work in the subcategory of spatially compact Lorentz manifolds, i.e. space-times such that the space of lightlike geodesics is a compact manifold. They are space-times for which space is compact.\\
\indent  While working with classification problems, it is standard to start in a  low dimensional setting. In Lorentzian geometry, the lowest dimension possible is $2$, so we will study spatially compact Lorentz surfaces in order to understand when the isometry group acts non properly.\\
\indent The first example of such a surface is the De Sitter space $\dS_2$, which is the one sheeted hyperboloïd $x^2+y^2-z^2=1$ in $\R^3$, endowed with the restriction of $dx^2+dy^2-dz^2$. The  isometry group   is $\mathrm{SO}^{\circ}(1,2) \approx \PSL(2,\R)$ (we only consider orientation and time orientation preserving isometries). It is diffeomorphic to $\Ss^1 \times \R$ (and so are all spatially compact surfaces), therefore it admits finite covers $\dS_2^k$ for all $k\in \N$,  whose isometry groups  are  the finite covers $\PSL_k(2,\R)$. We will see that spatially compact surfaces do not give more isometry groups:

\begin{theo} \label{algebraic} Let $(M,g)$ be a spatially compact surface such that $\Isom(M,g)$ acts non properly on $M$. The isometry group  $\Isom(M,g)$ is isomorphic to a subgroup of a finite cover of $\PSL(2,\R)$. \end{theo}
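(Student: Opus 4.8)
The plan is to realize $\Isom(M,g)$ as a group of circle diffeomorphisms via its action on the space of lightlike geodesics, to show that non-properness forces this action to be a convergence group, and then to invoke the convergence group theorem to embed it into a finite cover of $\PSL(2,\R)$.

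First I would set up double null coordinates. A Lorentz surface carries two transverse foliations by lightlike geodesics, the two null directions of $g$, and spatial compactness ensures that the leaf space of each foliation is a circle. This gives two circles $\Ss^1_+$ and $\Ss^1_-$, with local coordinates $u$ and $v$ in which the metric reads $g = f(u,v)\,du\,dv$ for a positive function $f$. Every orientation and time-orientation preserving isometry $\p$ preserves each foliation, hence acts as $(u,v)\mapsto(\alpha(u),\beta(v))$ for diffeomorphisms $\alpha\in\Diff(\Ss^1_+)$ and $\beta\in\Diff(\Ss^1_-)$, subject to the conformal constraint $f(\alpha(u),\beta(v))\,\alpha'(u)\,\beta'(v)=f(u,v)$. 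This produces a homomorphism $\rho\colon\Isom(M,g)\to\Diff(\Ss^1_+)\times\Diff(\Ss^1_-)$. I would check that $\rho$ is injective: an isometry inducing $\alpha=\beta=\mathrm{id}$ preserves every leaf of both foliations, and since $v$ restricts to a coordinate along each leaf of the $u$-foliation it fixes that leaf pointwise, hence is the identity.

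The heart of the argument is to convert non-properness on $M$ into the convergence property on one of the circles. Take a sequence $(\p_n)$ realizing non-properness: there are points $p_n\to p$ in $M$ with $\p_n(p_n)\to q$, while $(\p_n)$ leaves every compact subset of $\Isom(M,g)$. Writing $\p_n=(\alpha_n,\beta_n)$, I would argue that escape in the isometry group must be detected on the circles, and that the conformal constraint, together with the compactness of $\Ss^1_\pm$, prevents the induced circle maps from staying in a compact family. I would then extract a subsequence along which $\alpha_n$ (say) converges, uniformly on compact subsets of $\Ss^1_+$ minus a point, to a constant map — the north–south collapse characteristic of a convergence sequence. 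Running this over all escaping sequences shows that $\overline{\rho(\Isom(M,g))}$ acts as a convergence group on the circle.

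Finally I would invoke the convergence group theorem of Tukia, Casson–Jungreis and Gabai. A convergence group on $\Ss^1$ is, up to semiconjugacy, a subgroup of $\PSL(2,\R)$ acting by Möbius transformations; in particular its abstract isomorphism type is that of such a subgroup, or of a subgroup of a finite cover once the circle of lightlike geodesics is a nontrivial finite cover of $\mathbb{RP}^1$ — the models $\dS_2^k$ with group $\PSL_k(2,\R)$ being the prototype. Since $\rho$ is injective, $\Isom(M,g)\cong\rho(\Isom(M,g))$, and the previous step identifies the right-hand side with a subgroup of a finite cover $\PSL_k(2,\R)$, which is the theorem. I expect the main obstacle to be the middle step: non-properness is a two-dimensional dynamical statement on $M$, whereas the convergence property is a sharp one-dimensional degeneration on a leaf space, and the delicate point is to rule out escaping sequences whose induced dynamics on both circles remains tame and to show that the conformal factor $f$ cannot absorb the escape — this is where global hyperbolicity and spatial compactness must be used decisively. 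A secondary subtlety is bookkeeping the finite cover, since one may first obtain a convergence action only after quotienting the circle by a finite cyclic group of symmetries and then lifting through the central extension $\PSL_k(2,\R)\to\PSL(2,\R)$.
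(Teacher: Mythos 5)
Your overall architecture (two circle actions from the lightlike foliations, a dynamical degeneration forced by non‑properness, then Gabai--Casson--Jungreis) is the paper's, but the central step as you state it is false. You claim that from any escaping sequence one can extract a subsequence whose circle action converges to a constant on the complement of a single point, so that the closure of the image is a convergence group. The $k$-fold covers $\dS_2^k$ of de Sitter space are spatially compact, have non‑proper isometry group $\PSL_k(2,\R)$, and for $k\ge 2$ the iterates of a lift of a hyperbolic element have $2k$ fixed points and $2k$ limit points: the action is \emph{not} a convergence group, so no amount of care in the middle step can establish the convergence property. What non‑properness actually gives (via the lemma on affine diffeomorphisms: at a recurrent point the derivative blows up in one null direction and collapses in the other) is the collapse of a single horizontal lightlike arc $\intoo{h_\gets(y_0)}{h_\to(y_0)}$ to a point, and this arc is a proper subinterval of the circle in general. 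To propagate the collapse around the circle one needs the homeomorphism $h_{\to\uparrow}=h_\to\circ h_\uparrow$ built from the conformal boundary, which commutes with the whole group; this is why the paper introduces $(h,k)$-convergence groups with $2k$ limit points. One must then prove that the rotation number of $h_{\to\uparrow}$ is $\frac1k$ (otherwise there is an invariant spacelike circle and the action is proper), and that on the minimal set $h_{\to\uparrow}$ has order $k$, hence after collapsing is conjugate to a rotation --- only then does your ``quotient by a finite cyclic group of symmetries'' make sense and reduce to the classical convergence‑group theorem. Producing and taming this $h$ is the bulk of the proof and is entirely absent from your outline; the finite cover does not come from the leaf space being ``a nontrivial finite cover of $\R\proj^1$'' (it carries no such structure a priori).

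Two secondary gaps. First, global double‑null coordinates $g=f(u,v)\,du\,dv$ presuppose that $x\mapsto(p_1(x),p_2(x))$ is injective, i.e.\ that two lightlike geodesics meet at most once; when there are null conjugate points this fails (the paper exhibits such examples with non‑proper isometry group), and one must first work in the flat‑cylinder model on the universal cover and cut down to an invariant causally convex open set that does embed in the torus. Second, your injectivity argument for $\rho$ is too quick for the same reason: a leaf of the $u$-foliation need not inject into the $v$-circle, so ``fixes that leaf pointwise'' does not follow; the paper instead produces a fixed point at which the differential is the identity and invokes the rigidity of isometries. These are repairable, but the convergence‑property claim is the step that genuinely breaks.
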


\subsection{One-dimensional dynamics}
This isomorphism arises from a one-dimensional structure. Given a Lorentz manifold $(M,g)$, the conformal group acts on the space of lightlike geodesics. For a spatially compact surface, it is diffeomorphic to two copies of the circle, which gives us two representations $\rho_1^M,\rho_2^M:\Conf(M,g)\to \Diff(\Ss^1)$. We will see that these representations tend to characterise the isometry group, and that they encode the dynamical properties of the isometry group. \\ 
\indent In the case of the De Sitter space $\dS_2$, the actions $\rho_1^{\dS_2}$ and $\rho_2^{\dS_2}$ are both equal to the projective action of  $\PSL(2,\R)$ on $\Ss^1\approx \R\proj^1$. For the finite covers $\dS_k^2$, the actions of   $\PSL_k(2,\R)$ are given by the groups whose elements are the lifts of elements of $\PSL(2,\R)$ to the $k$-covering of the circle.\\
\indent We follow \cite{Gh87} and say that two representations $\sigma,\tau : \Gamma \to \Homeo(\Ss^1)$ are \textbf{semi conjugate} if there is a non constant map $h:\Ss^1\to\Ss^1$ which is non decreasing  of degree one (i.e. that has a non decreasing lift $\tilde h:\R\to \R$ such that $\tilde h(x+1)=\tilde h(x) +1$) such that $h\circ \sigma(\gamma) = \tau(\gamma) \circ h$ for all $\gamma \in \Gamma$ (note that we do not ask for $h$ to be continuous).

\begin{theo} \label{main_theorem} Let $(M,g)$ be a spatially compact surface such that $\Isom(M,g)$ acts non properly on $M$. Then $\rho_1^M$ and $\rho_2^M$ are semi conjugate to each other, and the restrictions to $\Isom(M,g)$ are faithful. There are $k\in \N$ and a faithful representation $\rho : \Isom(M,g) \to \PSL_k(2,\R)$  that is semi conjugate to the restrictions of $\rho_1^M$ and $\rho_2^M$ to $\Isom(M,g)$. \end{theo}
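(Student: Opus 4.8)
The plan is to convert the non-properness of the action on $M$ into convergence dynamics on the two circles of lightlike geodesics, to recognise those dynamics as coming from a (cover of a) Fuchsian action, and finally to compare the two circles against the common model.

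\emph{From non-properness to dynamics on the circles.} The two lightlike foliations identify a point of $M$ with the pair $(x_1,x_2)$ of lightlike geodesics through it, so that the natural map $M \to \Ss^1 \times \Ss^1$ is an embedding onto an open subset avoiding the diagonal, and $\Isom(M,g)$ acts by $g \mapsto (\rho_1^M(g),\rho_2^M(g))$. Non-properness then furnishes a compact $K \subset M$ and a sequence $(g_n)$ leaving every compact subset of $\Isom(M,g)$ with $p_n \to p$ and $g_n p_n \to q$ in $K$; writing $p=(a_1,a_2)$ and $q=(b_1,b_2)$, this means $\rho_i^M(g_n)(a_i)\to b_i$ for $i=1,2$ while $g_n \to \infty$. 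First I would show that the escape of $(g_n)$ in $\Isom(M,g)$ forces the maps $\rho_i^M(g_n)$ to degenerate on $\Ss^1$, concentrating almost every triple of points near a finite set, so that the induced action on distinct triples sends $(g_n)$ to infinity. The mechanism is spatial compactness: a lightlike geodesic is essentially recorded by its position on the compact circle, so an isometry is controlled by its boundary action, and if $\rho_1^M(g_n)$ stayed uniformly non-degenerate on some arc the relation $g_n p_n \to q$ with $p_n \to p$ would confine $(g_n)$ to a compact part of $\Isom(M,g)$, a contradiction.

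\emph{The convergence property and recognition of $\PSL_k(2,\R)$.} Carrying out the previous step for \emph{every} escaping sequence is the crux: I would prove that each sequence of $\rho_1^M(\Isom(M,g))$ leaving every compact subset of the closure degenerates as above, i.e. that the image is a convergence group in the sense that its action on the space of distinct triples of $\Ss^1$ is proper. This is the hard part, since it requires promoting the single degenerating sequence produced by non-properness to a uniform statement, and since the non-discrete case (where escape is only transverse to the identity component) must be handled by passing to $\overline{\rho_1^M(\Isom(M,g))}$ and using that its identity component is a connected Lie subgroup of $\Homeo^+(\Ss^1)$, necessarily conjugate to $\{1\}$, $\mathrm{SO}(2)$ or some $\PSL_k(2,\R)$, the first two being excluded by the existence of a degenerating sequence. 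Once the convergence property holds, the Convergence Group Theorem (Tukia, Gabai, Casson--Jungreis) identifies the underlying dynamics, up to topological conjugacy, with that of a subgroup of $\PSL(2,\R)$; the multiplicity of the attracting/repelling points of the degenerating elements records an integer $k$ and realises the whole action as semi-conjugate, through a non-decreasing degree-one map, to a subgroup of the $k$-fold cover $\PSL_k(2,\R)$. This produces the faithful $\rho$ of the statement.

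\emph{Comparison of the two circles and faithfulness.} Applying the construction to $\rho_2^M$ yields a second semi-conjugacy onto the same model, and since that model is minimal I would deduce that $\rho_1^M$ and $\rho_2^M$ carry the same dynamical data and are therefore semi-conjugate to each other, composing the two intertwining maps into a single non-decreasing degree-one map. Finally, faithfulness of the restrictions is geometric: an isometry acting trivially on both circles fixes every lightlike geodesic, hence every point of $M$, while an isometry fixing one whole lightlike foliation leaf by leaf is already forced to be the identity, so each of $\rho_1^M$ and $\rho_2^M$ is injective on $\Isom(M,g)$.
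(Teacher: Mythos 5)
Your overall strategy --- turn non-properness into degenerating dynamics on the circles, recognise convergence-type dynamics, invoke Gabai/Casson--Jungreis, and compare the two circles through a common model --- is indeed the skeleton of the paper's argument, but two of your steps fail as stated. First, the map $M\to \Ss^1\times\Ss^1$ is only a conformal \emph{immersion}; it is an embedding exactly when lightlike geodesics meet at most once, and the paper exhibits spatially compact surfaces with non-proper isometry group (extensions of $\dS_2$ glued along lightlike circles) for which it is not injective. One cannot simply start from ``an open subset of the torus''; the paper needs a separate reduction (Proposition \ref{open_set_embeds}, resting on Lemmas \ref{limit_boundary} and \ref{lemma_open_set_embeds} about the cylinder model) producing a conformally invariant, spatially compact open set $U\subset M$ that does embed and on which the restriction of the isometry group is still faithful with the same circle actions. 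Second, and more seriously, the image groups are in general \emph{not} convergence groups: for $k\ge 2$ a degenerating sequence has $k$ attracting and $k$ repelling points (lifts of hyperbolic elements of $\PSL(2,\R)$ to the $k$-fold cover), so the action on distinct triples is not proper and the Convergence Group Theorem cannot be applied first with the integer $k$ ``recorded'' afterwards --- if the convergence property held you would land in $\PSL(2,\R)$ itself and there would be no $k$ to record. The paper's resolution is to produce a specific homeomorphism $h_{\to\uparrow}$ commuting with $\rho_1^M$ out of the conformal boundary, prove the $(h_{\to\uparrow},k)$-convergence property (via the derivative dichotomy of Lemma \ref{hyperbolic_lemma}, which comes from Kobayashi's theorem on affine diffeomorphisms, not merely from ``boundary control''), collapse the invariant Cantor set so that $h_{\to\uparrow}$ becomes conjugate to a finite-order rotation, and only then pass to the quotient circle by $h$ where the classical convergence theorem applies (Lemma \ref{convergence_rotation}).

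Two further gaps: the elementary case (a finite orbit on the circle) is not covered by any convergence argument, and the paper treats it by a separate algebraic analysis of the stabiliser as a closed subgroup of $\Aff(\R)$ and of the resulting semidirect products (Proposition \ref{semi_conjugacy_elementary}); your Lie-theoretic remark about the identity component of the closure does not substitute for this, since the faithful $\rho$ into $\PSL_k(2,\R)$ must be built by hand there. Finally, your faithfulness argument only controls the kernel of the pair $(\rho_1^M,\rho_2^M)$ and then asserts that fixing one lightlike foliation leaf by leaf forces the identity --- but that assertion is precisely the point to be proved. The paper does it by lifting to the cylinder, using the semi-conjugacy with the boundary maps to produce a fixed point of the second coordinate, and then invoking the rigidity of isometries (a fixed point at which the differential is the identity forces the isometry to be trivial, the unit Jacobian coming from the conformal factor). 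Note also that two transversal lightlike leaves can intersect in more than one point when $M$ does not embed in the torus, so even the weaker statement about the joint kernel is not as immediate as ``fixes every intersection point''.
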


This paper deals with two different theories: groups of circle diffeomorphisms and  (two dimensional) Lorentz manifolds. Some studies have already linked the two, concerning completeness of Lorentz metrics (\cite{Carriere_Rozoy}, with applications to Lorentzian geometry) and links between geodesic curvature and the Schwarzian derivative (see \cite{DO}). 

\subsection{Conformal boundary and semi-conjugacy}  One of the main tools throughout this paper is the use of global conformal models for spatially compact surfaces. Lorentz surfaces are locally conformally flat, but in the globally hyperbolic case, one can find two natural immersions in  flat spaces which are globally defined.\\
\indent The first one consists of a conformal immersion in the flat Lorentzian torus $p : (M,[g])\to (\Ss^1\times \Ss^1,[dxdy])$ (by \textbf{flat Lorentzian torus}, we will always mean $(\T^2,dxdy)$, which is conformal to the two dimensional Einstein Universe $\mathrm{Ein}^{1,1}$). It is defined for all spatially compact surfaces. In this model, an isometry $\p\in \Isom(M,g)$ acts on $p(M)$ by the diffeomorphism $(x,y)\mapsto (\rho_1^M(\p)(x),\rho_2^M(\p)(y))$ where $\rho_1^M,\rho_2^M$ are the representations defined above. In the case where $p$ is an embedding, the boundary of $p(M)$ consists in the graphs of non decreasing maps of degree one which provide a semi-conjugacy between $\rho_1^M$ and $\rho_2^M$. Furthermore, we do not need the non properness of the action to get the results of Theorem \ref{main_theorem}.

\begin{theo} \label{main_theorem_embeds} Let $(M,g)$ be a spatially compact surface that embeds conformally in the flat torus. Then $\rho_1^M$ and $\rho_2^M$ are semi conjugate to each other, and the restrictions to $\Isom(M,g)$ are faithful. There are $k\in \N$ and a faithful representation $\rho:\Isom(M,g) \to \PSL_k(2,\R)$ that is semi conjugate to the restrictions of $\rho_1^M$ and $\rho_2^M$ to $\Isom(M,g)$. \end{theo}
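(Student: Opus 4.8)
The plan is to exploit the explicit geometric description already built into the statement: a conformal embedding $p : (M,[g]) \to (\Ss^1\times\Ss^1,[dxdy])$ in which an isometry $\p$ acts as the product diffeomorphism $(x,y)\mapsto(\rho_1^M(\p)(x),\rho_2^M(\p)(y))$. Since $p$ is an embedding, $p(M)$ is an open domain in the torus whose topological boundary is the object to analyse.

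\medskip

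\noindent\textbf{Step 1: analysing the boundary of $p(M)$.} First I would show that, because $M$ is spatially compact and globally hyperbolic, the domain $p(M)$ is bounded by two curves, each of which is the graph of a non decreasing degree one map $h_1,h_2 : \Ss^1\to\Ss^1$. The key point is that the two coordinate foliations $\{x=\mathrm{const}\}$ and $\{y=\mathrm{const}\}$ of the flat torus pull back to the two lightlike foliations of $(M,[g])$, and the representations $\rho_1^M,\rho_2^M$ record precisely how isometries permute the leaves. Spatial compactness forces each lightlike leaf to meet the boundary in a controlled way, so that the boundary curves are genuine graphs (rather than containing vertical or horizontal segments); global hyperbolicity is what rules out the leaves spiralling and guarantees the graphs have degree one. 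This is the step I expect to be the main obstacle, since it requires carefully translating the causal structure of $M$ into the geometry of $p(M)$ and controlling the regularity and monotonicity of the boundary.

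\medskip

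\noindent\textbf{Step 2: extracting the semi-conjugacy.} Once the boundary is a graph $y=h(x)$ of a non decreasing degree one map $h$, invariance of $p(M)$ under the $\Isom(M,g)$-action forces the boundary to be invariant as a set. Because the action is a product, the graph of $h$ is carried to itself precisely when $h\circ\rho_1^M(\p)=\rho_2^M(\p)\circ h$ for every $\p\in\Isom(M,g)$. This is exactly the semi-conjugacy relation, so $h$ realises a semi-conjugacy from $\rho_1^M$ to $\rho_2^M$. The faithfulness of the restrictions follows because an isometry acting trivially on both factors fixes $p(M)$ pointwise, hence is the identity on $M$ as $p$ is an embedding.

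\medskip

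\noindent\textbf{Step 3: descent to a finite cover of $\PSL(2,\R)$.} Finally, to produce $\rho : \Isom(M,g)\to\PSL_k(2,\R)$, I would feed one of the boundary dynamics into the convergence group machinery developed earlier in the paper. The idea is that the semi-conjugacy identifies the $\Isom(M,g)$-action on the circle with an action preserving the boundary graph, and the convergence property of this action (arising from the non proper dynamics inherited through the embedding) lets one apply a Tukia-type theorem to conclude that the action is conjugate, up to the degree $k$ covering, to a Fuchsian action of a subgroup of $\PSL_k(2,\R)$. Composing this conjugacy with the semi-conjugacy of Step 2 yields the desired $\rho$ semi-conjugate to both $\rho_1^M$ and $\rho_2^M$. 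Unlike Theorem \ref{main_theorem}, here non properness is not assumed, so I would need to verify that the embedding alone supplies enough dynamical input to run the convergence argument, which is where I would invoke the structure of the boundary curves established in Step 1.
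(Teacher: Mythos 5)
Your Steps 1 and 2 match the paper's setup (the boundary graphs $h_\downarrow,h_\uparrow$ do provide the semi-conjugacy), but your faithfulness argument proves less than what is claimed. Showing that an isometry acting trivially on \emph{both} circle factors is the identity only gives injectivity of the product $\rho_1^M\times\rho_2^M$; the theorem asserts that \emph{each} restriction is faithful. If $\rho_1^M(\p)=Id$, the semi-conjugacy only forces $\rho_2^M(\p)$ to fix the image of $h_\downarrow$, which may be a proper subset of $\Ss^1$ when the boundary map has jumps. The paper closes this gap with Lorentzian rigidity: a lift of $\p$ has the form $(x,y)\mapsto(x,\gamma(y))$ with $\gamma$ semi-conjugate to the identity, hence with a fixed point; at a point of $M$ over that fixed point the Jacobian condition forces $\gamma'=1$, and an isometry with a fixed point where its differential is the identity is the identity. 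This rigidity input cannot be replaced by injectivity of $p$ alone.

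The more serious gap is in Step 3. There is no Tukia-type theorem available here: Gabai/Casson--Jungreis applies to genuine convergence groups ($k=1$), and the paper explicitly states that it is \emph{not} known whether $(h,k)$-convergence groups are conjugate to subgroups of $\PSL_k(2,\R)$ in general. The commuting homeomorphism $h_{\to\uparrow}$ produced by the boundary need not be conjugate to a rotation, so Lemma \ref{convergence_rotation} does not apply directly. The paper's actual route is twofold: (i) elementary groups are handled by a separate, purely algebraic classification (Proposition \ref{semi_conjugacy_elementary}), with no convergence argument at all — your sketch does not address finite orbits, where the convergence machinery gives nothing; (ii) for non-elementary groups, one first collapses the minimal invariant Cantor set to obtain a minimal representation $\hat\rho_1^M$, checks that it is still faithful (again via rigidity, Proposition \ref{collapsed_faithful}), and only then observes that minimality forces the periodic set of the induced $\hat h_{\to\uparrow}$ to be all of $\Ss^1$, so $\hat h_{\to\uparrow}^k=Id$ and it \emph{is} conjugate to a rotation, at which point Lemma \ref{convergence_rotation} finishes. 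The collapsing step is not a technicality — it is what makes the convergence argument applicable — and it is absent from your proposal.
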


However, this conformal model will not be enough because $p$ is not always injective (it is injective if and only if there are no conjugate points along lightlike geodesics).\\
\indent The second conformal model consists in applying the first one to the universal cover, replacing the circle $\Ss^1$ by the real line $\R$. We get a conformal immersion $\tilde p : (\tilde M, [\tilde g]) \to (\R^2, [dxdy])$. This time, it is always an embedding, and it can be chosen in a way such that a generator of $\pi_1(M)\approx \Z$ acts on $\tilde p(\tilde M)$ by the map $(x,y)\mapsto (x+1,y+1)$, so $M$ is conformally diffeomorphic to an open set of the flat Lorentzian cylinder (quotient of the Minkowski plane by a spacelike translation). We will make use of this second model in order to see that, when the isometry group acts non properly, it is sufficient to study the case where the first conformal immersion $p:M\to \T^2$  is an embedding.\\
\indent This conformal model gives us a notion of conformal boundary (see \cite{FHS} for a general treatment of the conformal and causal boundaries of space-times), defined to be the boundary in the flat Lorentzian cylinder. We say that the conformal boundary of $(M,g)$ is \textbf{acausal} if the boundary in the flat Lorentzian cylinder does not contain any  segment of lightlike geodesic. Under this assumption, we obtain a topological conjugacy.

\begin{theo} \label{main_theorem_acausal} Let $(M,g)$ be a spatially compact surface such that $\Isom(M,g)$ acts non properly on $M$. Assume that the conformal boundary of $(M,g)$ is acausal. Then $\rho_1^M$ and $\rho_2^M$ are faithful, topologically conjugate to each other, and the restrictions to $\Isom(M,g)$ are topologically conjugate to a representation in a finite cover of $\PSL(2,\R)$. \end{theo}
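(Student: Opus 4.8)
The plan is to take the semi-conjugacies already produced by Theorem~\ref{main_theorem} and show that acausality of the conformal boundary forces them to be homeomorphisms, turning each semi-conjugacy into a topological conjugacy. Since a non-decreasing degree-one map fails to lie in $\Homeo(\Ss^1)$ in exactly two ways --- it can be constant on some interval, or it can have a jump --- the whole argument reduces to translating these two analytic defects into the presence of a lightlike segment in the boundary. Using the second conformal model together with non-properness, I would first reduce to the case in which the first conformal immersion $p:M\to\T^2$ is an embedding, as announced in the discussion of that model, so that Theorem~\ref{main_theorem_embeds} applies and the boundary $\partial p(M)$ is a union of graphs of non-decreasing degree-one maps realizing the semi-conjugacy between $\rho_1^M$ and $\rho_2^M$. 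I would also record that the covering $\R^2/\langle(x,y)\mapsto(x+1,y+1)\rangle\to\T^2$ is a local conformal diffeomorphism carrying lightlike geodesics to lightlike geodesics, so that acausality of the conformal boundary in the cylinder is equivalent to the absence of lightlike segments in $\partial p(M)\subset\T^2$.

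The core step is the dictionary between boundary geometry and the regularity of the semi-conjugacy. Write a boundary component as the graph $\Gamma=\{(x,h(x)):x\in\Ss^1\}$ of a non-decreasing degree-one map $h$, closing up $\Gamma$ with a vertical segment at each jump of $h$. Because the null directions of $[dxdy]$ are precisely the horizontal and vertical ones, a nondegenerate interval on which $h$ is constant contributes a horizontal lightlike segment to $\Gamma$, a jump of $h$ contributes a vertical lightlike segment, and these are the only lightlike segments $\Gamma$ can contain. Acausality therefore forbids both defects, so $h$ is continuous and strictly increasing, hence $h\in\Homeo(\Ss^1)$. Reading the semi-conjugacy relation $h\circ\rho_1^M(\varphi)=\rho_2^M(\varphi)\circ h$ off the invariance of $\Gamma$ under the isometry $(x,y)\mapsto(\rho_1^M(\varphi)x,\rho_2^M(\varphi)y)$ then promotes it to a topological conjugacy between $\rho_1^M$ and $\rho_2^M$; the asserted faithfulness follows from Theorem~\ref{main_theorem_embeds} and is preserved under conjugation.

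For the conjugacy with a finite cover of $\PSL(2,\R)$, I would apply the same mechanism to the semi-conjugacy $h_\rho$ furnished by Theorem~\ref{main_theorem} between $\rho_1^M|_{\Isom(M,g)}$ and the faithful representation $\rho:\Isom(M,g)\to\PSL_k(2,\R)$. Its collapsed intervals are exactly the complementary gaps of the minimal set of the action, which correspond to lightlike segments of the boundary; acausality kills them, so $h_\rho$ has no flat intervals or jumps and is a homeomorphism. Equivalently, once these wandering gaps are excluded, the non-proper action makes $\rho_1^M(\Isom(M,g))$ a genuine convergence group on $\Ss^1$, and the convergence group theorem of Tukia, Gabai and Casson--Jungreis (applied after reducing by the order-$k$ deck symmetry and lifting the resulting conjugacy) produces a topological conjugacy onto a Fuchsian subgroup of $\PSL_k(2,\R)$.

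The hard part will be making the dictionary of the second paragraph fully rigorous. One must verify that $\partial p(M)$ is genuinely a disjoint union of such graphs --- so that ``constant on an interval'' and ``jump'' really exhaust the ways $h$ can fail to be a homeomorphism --- that each boundary component is individually preserved by the (orientation- and time-orientation-preserving) isometry action rather than merely permuted, and that no lightlike segment can sit in a boundary component transverse to its graph structure. The bookkeeping that transfers acausality between the cylinder model, where it is defined, and the torus model, where the semi-conjugating graphs live, is the delicate point underlying the whole reduction.
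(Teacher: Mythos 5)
There is a genuine gap, and it sits at the heart of your third paragraph. You claim that the collapsed intervals of the semi-conjugacy $h_\rho$ from Theorem~\ref{main_theorem} ``are exactly the complementary gaps of the minimal set of the action, which correspond to lightlike segments of the boundary,'' so that acausality kills them. The first half of that sentence is right, but the second half is false: the gaps of the minimal set $L_{\rho_1^M(\Isom(M,g))}$ are a feature of the group dynamics on the circle and have no relation to the causal character of $\partial p(M)$. The paper's examples make this explicit: for a non-elementary $\Gamma\subset\PSL(2,\R)$ with $L_\Gamma\ne\Ss^1$ one can take $h$ a genuine \emph{homeomorphism} commuting with $\Gamma$ (Proposition~\ref{homeo_commutes}) and form $\Omega_h$, whose conformal boundary is acausal while the minimal set is a Cantor set. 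The introduction states this obstruction in so many words: the semi-conjugacy of Theorem~\ref{main_theorem} is a homeomorphism only when the orbits are dense, which acausality does not guarantee. So your strategy of ``promote the existing semi-conjugacy to a conjugacy'' cannot work; the conjugacy asserted in Theorem~\ref{main_theorem_acausal} is with a \emph{different} (though isomorphic) subgroup of $\PSL_k(2,\R)$ and must be built by other means. This is precisely why the paper spends two further sections on the acausal case: for non-elementary groups it keeps the uncollapsed action, which is an $(h_{\to\uparrow},k)$-convergence group by Proposition~\ref{acausal_convergence}, and then modifies $h_{\to\uparrow}$ \emph{outside} the limit set (Lemma~\ref{changing_homeomorphism}) to obtain a commuting homeomorphism of order $k$, after which Lemma~\ref{convergence_rotation} applies. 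Your parenthetical ``reducing by the order-$k$ deck symmetry'' presupposes $h_{\to\uparrow}^k=\mathrm{Id}$, which holds on $L_G$ but not globally when $L_G$ is a Cantor set --- that is exactly the difficulty the lemma resolves.

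A second omission: you say nothing about elementary groups, for which there is no minimal-set/collapsing mechanism at all and the semi-conjugacy of Proposition~\ref{semi_conjugacy_elementary} is again generally not a conjugacy. The paper's Section~7 constructs the conjugacies by hand in the elliptic, hyperbolic and parabolic cases (Lemmas~\ref{elliptic}, \ref{hyperbolic}, \ref{parabolic}), using the finite invariant set of Lemma~\ref{finite_invariant_set}, the affine structure on invariant lightlike geodesics, and in the parabolic case a constant-curvature argument; none of this is recoverable from the boundary-regularity dictionary you propose. By contrast, your first two paragraphs --- acausality forces $h_\uparrow,h_\downarrow$ to be homeomorphisms, hence $\rho_1^M$ and $\rho_2^M$ are topologically conjugate and faithful, after reducing to the embedded case via the cylinder model --- do match the paper (Proposition~\ref{faithful} and Proposition~\ref{open_set_embeds}), but that is the easy part of the statement.
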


The acausal character of the conformal boundary seems to be more than a technical tool, since counter examples exist without it, i.e. spatially compact surfaces $(M,g)$ such that the isometry group acts non properly, for which the restriction of $\rho_1^M$ to $\Isom(M,g)$ is not topologically conjugate to a subgroup of any $\PSL_k(2,\R)$ (see \cite{Monclairb} for a detailed construction).\\
\indent Once again, for open sets of $\T^2$, we do not need the non properness of the action of the isometry group.

\begin{theo} \label{main_theorem_acausal_embeds} Let $(M,g)$ be a spatially compact surface  that embeds conformally in the flat torus. Assume that the conformal boundary of $(M,g)$ is acausal. Then $\rho_1^M$ and $\rho_2^M$ are faithful, topologically conjugate to each other, and the restrictions to $\Isom(M,g)$ are topologically conjugate to a representation in a finite cover of $\PSL(2,\R)$.
\end{theo}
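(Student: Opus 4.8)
The plan is to build directly on Theorem~\ref{main_theorem_embeds}, whose entire conclusion is already available under the present hypotheses: $\rho_1^M$ and $\rho_2^M$ are semi conjugate, their restrictions to $\Isom(M,g)$ are faithful, and there is a faithful $\rho:\Isom(M,g)\to\PSL_k(2,\R)$ semi conjugate to $\rho_1^M|_{\Isom(M,g)}$ and $\rho_2^M|_{\Isom(M,g)}$. Faithfulness is therefore inherited and requires no new argument; the only thing to prove is that the acausality of the conformal boundary upgrades each of the two semi-conjugacies that appear to a genuine homeomorphism of $\Ss^1$.

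First I would treat the conjugacy between $\rho_1^M$ and $\rho_2^M$. Identifying $M$ with the open set $p(M)\subset(\T^2,[dxdy])$, the two null foliations $\{x=\mathrm{cst}\}$ and $\{y=\mathrm{cst}\}$ have as leaf spaces the two circles on which $\rho_1^M$ and $\rho_2^M$ act, and an isometry acts by $(x,y)\mapsto(\rho_1^M(\p)(x),\rho_2^M(\p)(y))$. As in Theorem~\ref{main_theorem_embeds}, the boundary $\partial p(M)$ supplies a non-decreasing degree-one map $h$ with $h\circ\rho_1^M(\p)=\rho_2^M(\p)\circ h$. The new observation is that the failure of $h$ to be a homeomorphism is visible in the boundary: a plateau of $h$ on an interval $[a,b]$ is exactly the horizontal segment $\{(x,h(a)):a\le x\le b\}\subset\partial p(M)$, and a jump of $h$ at a point is exactly a vertical segment of $\partial p(M)$; both segments are lightlike for $[dxdy]$. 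Acausality of the conformal boundary forbids both, so $h$ is continuous and strictly increasing, hence a homeomorphism, and $\rho_1^M\simeq\rho_2^M$ topologically.

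Second I would upgrade the semi-conjugacy $H$ between $\rho_1^M|_{\Isom(M,g)}$ and the representation $\rho$. In Theorem~\ref{main_theorem_embeds} the map $\rho$ is produced through a convergence group, and $H$ is a monotone degree-one map whose non-injectivity is concentrated on a family $\mathcal I$ of disjoint open intervals permuted by $\rho_1^M(\Isom(M,g))$ (those collapsed to points by $H$). I would show that acausality forces $\mathcal I=\emptyset$: passing through the conformal cylinder model used to define the conformal boundary, each interval of $\mathcal I$ corresponds to a lightlike segment of the boundary, which acausality removes. Once $\mathcal I=\emptyset$ the map $H$ is injective, hence a homeomorphism conjugating $\rho_1^M|_{\Isom(M,g)}$ to $\rho$; composing with the homeomorphism $h$ of the first step conjugates $\rho_2^M|_{\Isom(M,g)}$ to $\rho$ as well. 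Equivalently, the vanishing of $\mathcal I$ says that no collapse is needed, so $\rho_1^M(\Isom(M,g))$ is already a convergence group and the conjugacy onto a subgroup of a finite cover of $\PSL(2,\R)$ follows from the convergence group results (in the finite-cover form) set up in the proof of Theorem~\ref{main_theorem_embeds}.

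The main obstacle is this second step: translating ``no lightlike segment in the conformal boundary'' into ``the semi-conjugacy $H$ to $\PSL_k(2,\R)$ collapses no interval.'' The correspondence between plateaus or jumps and lightlike boundary segments is transparent for the intertwiner $h$, because that map \emph{is} the boundary curve; for $H$ it is one step removed, and making it precise requires tracking how the convergence-group construction of $\rho$ reads the family $\mathcal I$ off the dynamics of $\rho_1^M(\Isom(M,g))$ and then matching each interval of $\mathcal I$ with a lightlike segment of $\partial p(M)$ through the cylinder model. I expect the bookkeeping of this identification, rather than any new geometric input, to be the crux.
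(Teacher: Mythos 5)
Your first step is fine and matches the paper: acausality of the conformal boundary is, by definition, the statement that the boundary maps are homeomorphisms, and this immediately upgrades the intertwiner between $\rho_1^M$ and $\rho_2^M$ to a topological conjugacy (this is Proposition \ref{faithful} in the paper). The gap is in your second step. The intervals collapsed by the semi-conjugacy $H$ produced in Theorem \ref{main_theorem_embeds} are \emph{not} read off the conformal boundary: in the non-elementary case they are the connected components of $\Ss^1\setminus L_{\rho_1^M(\Isom(M,g))}$, the gaps of the minimal invariant Cantor set of the circle action, and in the elementary case (e.g. Lemma \ref{stabilizer_semi_conjugacy}) $H$ collapses a whole arc to the point $\infty$. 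These collapses are a dynamical feature of the group, not a causal feature of $\partial p(M)$: a non-elementary group whose limit set is a Cantor set (say a convex-cocompact Fuchsian group acting on a $\Gamma$-invariant open set of $\dS_2$ with a conformally rescaled metric, as in Section 3) has a perfectly acausal conformal boundary, yet the collapsing map $\pi_1$ is genuinely non-injective. The paper states this explicitly in the introduction: the semi-conjugacy of Theorem \ref{main_theorem} is a homeomorphism only when the orbits are dense, which acausality does not guarantee. So the assertion ``acausality forces $\mathcal I=\emptyset$'' is false, and with it the whole second step.

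The correct conclusion (a topological conjugacy \emph{exists}) therefore cannot be obtained by upgrading the particular map $H$; one must build a different conjugacy, and this is where almost all of the paper's work in Sections \ref{sec:elementary_acausal} and \ref{sec:non_elementary_acausal} lies. For non-elementary groups the paper keeps the uncollapsed action and replaces $h_{\to\uparrow}$ by a homeomorphism $h$ with $h^k=Id$ that agrees with $h_{\to\uparrow}$ on the limit set and commutes with the group (Lemma \ref{changing_homeomorphism}); then $\rho_1^M(\Isom(M,g))$ is an $(h,k)$-convergence group with $h$ conjugate to a rotation and Lemma \ref{convergence_rotation} applies. Constructing $h$ on the gaps of the Cantor set in turn requires the explicit hand-built conjugacies of the elementary case (Lemmas \ref{elliptic}, \ref{hyperbolic}, \ref{parabolic}), which your proposal does not address. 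None of this is ``bookkeeping'' on top of Theorem \ref{main_theorem_embeds}; it is new input that your outline is missing.
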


However, this does not mean that the semi conjugacy obtained in Theorem \ref{main_theorem} is a homeomorphism when the conformal boundary is acausal. Indeed, this is only true if the actions on the circle have dense orbits. 

\subsection{Convergence groups and generalisations}
Convergence groups play a major role in the study of circle homeomorphisms as they give a dynamical description of groups that are topologically conjugate to subgroups of $\PSL(2,\R)$.\\
\indent A subgroup $G\subset \Homeo(\Ss^1)$ is said to be a convergence group if all sequences in $G$ have  north\slash south dynamics. The first example is the group generated by a hyperbolic element of $\PSL(2,\R)$, but it is also the case for any subgroup of $\PSL(2,\R)$. Since the definition of a convergence group is stated in terms of topological dynamics, the notion is invariant under topological conjugacy, which means that all groups that are topologically conjugate to a subgroup of $\PSL(2,\R)$ are convergence groups. A Theorem of Gabai and Casson-Jungreis (\cite{Gabai}, \cite{CJ}) states that the converse is true: convergence groups are topologically conjugate to subgroups of $\PSL(2,\R)$.\\
\indent However, convergence groups will not be enough in order to describe Lorentz isometry groups as there are examples for which the groups of circle diffeomorphisms are not conjugate to subgroups of $\PSL(2,\R)$. Indeed, the actions of the groups  $\PSL_k(2,\R)$  do not satisfy the convergence property (lifts of hyperbolic elements have $2k$ fixed points). This will lead us to introduce a generalisation of convergence groups, by using more limit points:

\begin{defi} Let $k\in \N^*$ and let $h\in \Homeo(\Ss^1)$ have rotation number $\frac{1}{k}$. A sequence $(f_n)\suite\in\Homeo(\Ss^1)^\N$ has the $(h,k)$-convergence property if there are $a,b\in \Ss^1$ such that, up to a subsequence: \begin{itemize} \item $h^k(a)=a$ and $h^k(b)=b$ \item $\forall i\in\{0,\dots,k-1\}~ \forall x\in \intoo{h^i(a)}{h^{i+1}(a)} ~ f_n(x)\to h^i(b)$ \end{itemize} A group  $G\subset \Homeo(\Ss^1)$  is a $(h,k)$-convergence group if all elements of $G$ commute with $h$ and all sequence in $G$ either has the $(h,k)$-convergence property or has an equicontinuous subsequence.
\end{defi}

When $k=1$, then this definition is equivalent to the convergence property.  Subgroups of $\PSL_k(2,\R)$ have the $(h,k)$-convergence property, where $h$ is the rotation of angle $\frac{1}{k}$. Conversely, if  $G$ has the $(h,k)$-convergence property and $h$ is topologically conjugate to the rotation of angle $\frac{1}{k}$, then it is quite straightforward that $G$ is topologically conjugate to a subgroup of $\PSL_k(2,\R)$ (see Lemma \ref{convergence_rotation}).\\
\indent It is still unclear to us whether $(h,k)$-convergence groups are conjugate to  subgroups of $\PSL_k(2,\R)$, and it would be interesting to know whether there is a dynamical characterisation of groups that are topologically conjugate to subgroups of $\PSL_k(2,\R)$. Even though we do not have such a theorem, the notion will be central in our study of Lorentzian isometry groups.\\
\indent Note that the convergence property can be defined in any dimension, but the fact that convergence groups are topologically Fuchsian is specific to dimension one: there are convergence groups on $\Ss^2$ that are not topologically conjugate to subgroups of $\mathrm{O}(1,3)$ (see \cite{martin_skora}). However, the convergence property can still be useful in higher dimensions, such as in the proof of Ferrand-Obata's Theorem\footnote{A Riemannian manifold on which the conformal group acts non properly is conformally diffeomorphic to the round sphere or to the Euclidian space} (\cite{ferrand}, \cite{frances_tarquini}, \cite{frances07}).  The Lorentzian equivalent is false (see \cite{frances05b}), and this is related to the fact that Lorentzian conformal groups are not always convergence groups.

\subsection{Comments}
\subsubsection{Regularity} So far, we have not been precise on the regularity. The proofs that we present use geodesics (continuity of the exponential map) and the continuity of the curvature, which means that the Lorentz metrics have to be $C^2$-differentiable. However, everything points to thinking that Theorem \ref{main_theorem} still holds for continuous metrics. A special case (which corresponds to the case where $k=1$) is covered in \cite{Monclairb}. In this case we only need to show that a particular representation of the  isometry group (which is semi conjugate to $\rho_1^M$ and $\rho_2^M$) has the convergence property, so the extension of Theorem \ref{main_theorem} to continuous metrics is somehow linked to the question of knowing whether $(h,k)$-convergence groups are conjugate to subgroups of $\PSL_k(2,\R)$.
\subsubsection{Discrete groups versus connected groups} The  results classifying non compact isometry groups of compact Lorentz manifolds and actions of simple Lie groups mostly concern connected  groups (although some discrete cases are treated in \cite{Z99b} and \cite{PZ}), whereas our use of the convergence property allows us to deal simultaneously with connected groups and discrete groups. As we will see in  \ref{subsec:examples}, there is a much larger variety of discrete groups than connected groups (i.e. the existence of isometries does not imply the existence of a Killing field, not even of a local Killing field). 

\subsubsection{Rigid geometric structures}  We see a Lorentz metric on a surface as the data of two objects: a pair of transversal foliations (the  foliations by lightlike geodesics, they define the conformal structure), and a volume form (it fixes the metric in a conformal class). Foliations and volume forms are not rigid objects, in the sense that they can be preserved by an infinite dimensional group, but preserving both can generate some rigidity (the group preserving a Lorentz metric is always finite dimensional). We will find some results that concern the conformal group and some that concern only the isometry group. It can be translated in terms of the group that preserves one of the two objects (the foliations), and the group that preserves both.

\subsubsection{Regularity of the conjugacy} Even though Theorem \ref{main_theorem} and \ref{main_theorem_acausal} give a classification of the possible isometry groups for spatially compact Lorentz surfaces, they do not give any classification of those surfaces up to isometry. Such a classification would require a characterisation of $\rho_1^M,\rho_2^M$ up to conjugacy in $\Diff(\Ss^1)$. This problem is addressed \cite{Monclaira} in the case where $(M,g)$ is conformal to the De Sitter space $\dS_2$.

\subsubsection{Non spatially compact surfaces} The two representations $\rho_1^M,\rho_2^M$  are only  defined for spatially compact surfaces. If we consider globally hyperbolic surfaces that are not spatially compact, then we find representations in $\Diff(\R)$. For the universal covers of spatially compact surfaces, we find groups that are topologically conjugate to subgroups of the universal cover $\tilde \PSL(2,\R)$. The transverse geometry of Anosov flows points out that larger groups can act on globally hyperbolic surfaces. Given an Anosov flow $\p^t$ on a compact three-manifold $M$ that preserves a volume form $\omega$, the space $Q^{\p}$ of orbits of the lift of $\p^t$ to the universal cover $\tilde M$ is endowed with a Lorentz metric that is preserved by the action of the fundamental group $\pi_1(M)$.  Such flows exist on many hyperbolic manifolds  (see \cite{FH}). These examples belong to a special subcategory of Anosov flows: they are  $\R$-covered (\cite{B01}), which means that the Lorentz metric on $Q^\p$ is globally hyperbolic.\\
\indent Fundamental groups of hyperbolic three-manifolds are not subgroups of $\tilde \PSL(2,\R)$, so Theorem \ref{algebraic} cannot be simply extended to globally hyperbolic surfaces just by replacing $\PSL(2,\R)$ with $\tilde \PSL(2,\R)$.\\
\indent However, note that these examples all have low regularity: the differentiability of the Lorentz metric on $Q^\p$ is that of the stable and unstable foliations, hence only $C^{1+\e}$ (for all $\e <1$).

\subsubsection{Without global hyperbolicity} Finally, another possible generalisation would be to all Lorentz surfaces. In this case, the space of lightlike geodesics is a one dimensional non Hausdorff manifold (we can still define charts locally, but there is no condition on the global topology). This would lead to  group actions on one dimensional non Hausdorff manifolds, which has been studied (see \cite{B98}).

\subsection{Overview}  After giving the necessary Lorentzian background, we will start by defining two conformal models for spatially compact surfaces. The first is an immersion in the flat torus that has the advantage of being directly linked to the representations $\rho_1^M$ and $\rho_2^M$.  The second is an embedding in the flat cylinder, that will allow us to deal with manifolds that do not embed in the flat torus.\\
\indent In the third part, we describe the main examples of spatially compact surfaces with a non proper action of the isometry group. Next, we introduce the necessary tools of circle dynamics, the main one being convergence groups.\\
\indent In section \ref{sec:reducing_problem}, we show how it is sufficient to treat the case where $(M,g)$ embeds conformally in the flat torus. This will allow us to prove  Theorem \ref{main_theorem} in section \ref{sec:proof_main}. Finally, sections \ref{sec:elementary_acausal} and \ref{sec:non_elementary_acausal}  are dedicated to the proof of Theorem \ref{main_theorem_acausal}. Their length is due to the difficulty of manipulating the $(h,k)$-convergence property.


\section{Two conformal models}
\subsection{Lorentzian background} We will only use some basic notions of Lo\-rent\-zian geometry (for more details, see \cite{BEE}, \cite{hawking}, \cite{O'N}, or the introduction chapter of  \cite{o'neill_trous_noirs}). A Lorentz manifold is a manifold $M$ equipped with a symmetric $(2,0)$-tensor $g$ of signature $(-,+,\dots,+)$, called a \textbf{Lorentz metric}. If the regularity is not explicitly stated, we will assume Lorentz metrics to be $C^2$. A  tangent vector $u\in T_xM$ is said to be \textbf{timelike} (resp. \textbf{spacelike}, \textbf{lightlike}, \textbf{causal}) if $g_x(u,u)<0$ (resp. $g_x(u,u)>0$, $g_x(u,u)=0$, $g_x(u,u)\le 0$). A curve or a submanifold of $M$ is said to be timelike, spacelike, lightlike or causal if all its tangent vectors have the corresponding type.\\
\indent A \textbf{time orientation} of $(M,g)$ is  a timelike vector field. We will call a time oriented Lorentz manifold a \textbf{spacetime}. It allows us to define \textbf{future} (resp. \textbf{past}) \textbf{directed} causal vectors as vectors in the same connected component of the cone $g< 0$ (resp. in the other connected component). A future (resp. past) directed curve is a curve whose tangent vectors are future (resp. past) directed. Given a point $x\in M$, we define its future $J^+(x)$ (resp. its past $J^-(x)$) to be the set of endpoints of future (resp. past) curves starting at $x$. \\
\indent A conformal transformation is said to be \textbf{time orientation preserving} if it sends future directed vectors on future directed vectors. We will denote by $\Conf(M,g)$ (resp. $\Isom(M,g)$) the set of orientation and time orientation preserving conformal diffeomorphisms (resp. isometries). The isometry group is always a finite dimensional Lie Group (\cite{Adams}).\\
\indent A spacetime $(M,g)$ is said to be \textbf{globally hyperbolic}  if there is a topological  hypersurface $S\subset M$ (called a Cauchy hypersurface) such that every inextensible causal curve intersects $S$ exactly once. Smooth Cauchy hypersurfaces always exist (\cite{BS_surface_lisse}), moreover they are diffeomorphic to each other and $M$ is diffeomorphic to $\R\times S$. We say that it is \textbf{spatially compact} if it is globally hyperbolic and it has a compact Cauchy hypersurface. If $(M,g)$ is globally hyperbolic and $x,y\in M$, then $J^+(x)\cap J^-(y)$ is compact.\\
\indent Just as in the Riemannian case, a Lorentz metric defines a Levi-Civita connection, and we can define geodesics. They can be timelike, spacelike or lightlike. Unparametrised lightlike geodesics are preserved by conformal transformations. There is also a notion of curvature (and constant sectional curvature implies local isometry with a model space). \\
\indent Because of the connection, isometries are defined by their $1$-jet: if an isometry $f$ has a fixed point $x$ such that $Df_x$ is the identity map, then $f$ is the identity. This property is important as it is the reason for which a Lorentz metric can be considered a rigid geometric structure.\\
\indent In this paper, we will only deal with dimension two. Many questions about Lorentz surfaces have been studied, both in the compact case (e.g. completeness \cite{Carriere_Rozoy}, conjugate points \cite{bavard_mounoud}, closed geodesics \cite{suhr}) and in the non compact case (see \cite{Weinstein}).

\subsection{The lightlike foliations and actions on the circle}

\indent If $(M,g)$ is a time oriented Lorentz surface, then we can define two one dimensional foliations  $\F_1$ and $\F_2$ of $M$ given by the lightlike geodesics
(an orientation of $M\approx \R\times \Ss^1$ differentiates the two foliations).
Let $F_i=M/\F_i$ be the quotient and $p_i:M\to F_i$ the associated projection.  The restriction of  $p_i$ to a Cauchy hypersurface is a diffeomorphism.  Therefore, if $(M,g)$ is spatially compact, then
$F_i$ is diffeomorphic to $\Ss^1$.\\
\indent A conformal diffeomorphism sends (unparametrised) lightlike geodesics to lightlike geodesics, i.e. it preserves $\F_1\cup \F_2$. If it preserves the orientation, then it preserves $\F_1$ and $\F_2$, and it acts on the quotients, which gives two representations $\rho_i^M: \Conf(M,g)\to \Diff(F_i)\approx \Diff(\Ss^1)$, $i=1,2$.\\
\indent Note that $\rho_1^M,\rho_2^M$ are well defined up to conjugacy in $\Diff(\Ss^1)$.

\subsection{Immersion in the flat torus}
The map $x\mapsto (p_1(x),p_2(x))$ sends $M$ into $F_1\times F_2$. By choosing identifications between $F_1,F_2$ and $\Ss^1$, it gives us a map from $M$ to $\T^2$. We will denote by $p$ the map obtained after reversing the orientation on $F_1$ (i.e. $p(x)=(-p_1(x),p_2(x))$).

\begin{prop} \label{immersion_torus} The map $p:(M,[g]) \to (\T^2,[dxdy])$ is a conformal immersion.  If it is injective, then it is a conformal diffeomorphism onto its image.  \end{prop}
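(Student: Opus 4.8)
The plan is to reduce everything to the elementary two-dimensional fact that a Lorentzian conformal class on a surface is exactly the data of its two lightlike line fields, and then to read off immersivity and conformality from the way $p$ transports the foliations $\F_1,\F_2$. First I would record the model structure on the target: for the metric $dxdy$ on $\T^2$ a tangent vector $a\,\partial_x+b\,\partial_y$ is lightlike precisely when $ab=0$, so the two lightlike foliations of $(\T^2,[dxdy])$ are the horizontal lines $\{y=\mathrm{const}\}$ and the vertical lines $\{x=\mathrm{const}\}$, tangent respectively to $\partial_x$ and $\partial_y$.

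Next I would analyse the differential of $p=(-p_1,p_2)$. Since each $p_i:M\to F_i$ is the projection onto the (smooth, circle) quotient by the foliation $\F_i$, it is a submersion whose fibres are the leaves, so $\ker d(p_i)_x=T_x\F_i$. Because $\F_1$ and $\F_2$ are the two distinct lightlike directions of $g_x$, they are transversal, hence $T_x\F_1\cap T_x\F_2=\{0\}$. A vector $v\in\ker dp_x$ lies in both kernels, so $\ker dp_x=T_x\F_1\cap T_x\F_2=\{0\}$ and $p$ is an immersion; as $\dim M=\dim\T^2=2$, the differential $dp_x$ is in fact a linear isomorphism, so $p$ is a local diffeomorphism.

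For conformality I would track the images of the null lines under $dp_x$. A vector $v\in T_x\F_1$ satisfies $d(p_1)_x v=0$, so $dp_x v=(0,d(p_2)_x v)$ is proportional to $\partial_y$; symmetrically $dp_x(T_x\F_2)$ is the line spanned by $\partial_x$. Thus $dp_x$ carries the two null lines of $g_x$ onto the two null lines of $(dxdy)_{p(x)}$. In dimension two any linear isomorphism between $(1,1)$-spaces that sends null lines to null lines is conformal: writing both quadratic forms in a null basis $(e_1,e_2)$ shows each is a nonzero multiple of $(v,w)\mapsto v_1w_2+v_2w_1$, so they agree up to scale. Hence $p^*[dxdy]=[g]$ and $p$ is a conformal immersion.

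Finally, for the last assertion, if $p$ is injective it is an injective local diffeomorphism (by the equality of dimensions just used), so it is open and a homeomorphism onto its open image; being a bijective local diffeomorphism onto that image it is a diffeomorphism, and conformal by the previous step. The only genuinely delicate points are the transversality of the lightlike foliations and the smoothness of the quotients $F_i\cong\Ss^1$, but the latter is already provided by spatial compactness and the former is automatic since the two lightlike lines of a Lorentzian metric never coincide; the rest is linear algebra.
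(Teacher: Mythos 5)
Your proof follows essentially the same route as the paper: kernel of $dp$ as the intersection of the two transversal lightlike lines, equality of dimensions to get a local diffeomorphism, preservation of the null directions to get conformality, and injectivity plus openness for the last claim. The one point you gloss over is the sign of the conformal factor: your linear-algebra step only shows that $p^*(dxdy)$ is a \emph{nonzero} multiple of $g$, and as written the argument applies verbatim to $(p_1,p_2)$, for which the multiple would be negative (i.e.\ a conformal map into $(\T^2,[-dxdy])$, which has the opposite causal structure). The paper closes this by invoking the orientation reversal on $F_1$ built into the definition $p=(-p_1,p_2)$: with that choice the future cone is sent into the cone where the product of the coordinates of the displacement is positive, so the factor is positive and the target class is indeed $[+dxdy]$. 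Adding one sentence to that effect would make your argument complete.
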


\begin{proof} The kernel of  $d_xp_i$ is the tangent space to $\F_i$
at $x$, therefore $\ker d_xp$ is the intersection of two transversal lines and is equal to $\{0\}$, and  $p$ is an immersion.  The image of isotropic vectors in $(M,g)$ being isotropic vectors for $(\T^2,dxdy)$,  we see that the metric $g$ is sent to a (not necessarily positive) multiple of $dxdy$.  Since we changed the orientation of $F_1$ in the definition of $p$,  the future is given by moving negatively along the $x$-axis and positively along the $y$-axis, i.e. the metric is conformal to $+dxdy$. \\ \indent By equality of dimensions, the immersion $p$ is an open map, and it only needs to be injective in order to be a diffeomorphism onto its image.
\end{proof}

We call $(\T^2,dxdy)$ the \textbf{flat Lorentzian torus}.\\
\indent The injectivity of $p$ is equivalent to the following property: two lightlike geodesics have at most one  intersection point (i.e. there are no null conjugate points). \\  \indent Since all globally hyperbolic   open sets of $\T^2$ satisfy this property, we see that $p$ is injective if and only if $(M,g)$ embeds conformally in $(\T^2,dxdy)$. This makes $p$ canonical in some sense: given a spatially compact surface that embeds conformally in $\T^2$, there are many conformal embeddings, but we can choose one that satisfies certain properties.

\subsection{Embedding in the flat cylinder} \label{subsec:flat_cylinder}
\subsubsection{Definition} The problem that we encounter with the previous conformal model is that it is not always an embedding, but only an immersion. However, we will now see that there is another conformal model that always give an embedding.\\
\indent The \textbf{flat Lorentzian cylinder} is the quotient of the Minkowski plane by a spacelike translation (note that these quotients are not all isometric to each other, but they are conformally equivalent).

\begin{theo} \label{flat_cylinder} All spatially compact surfaces embed conformally in the flat Lorentzian cylinder. \end{theo}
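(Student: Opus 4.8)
The plan is to lift the construction of Proposition \ref{immersion_torus} to the universal cover, where the obstruction to injectivity disappears, and then to read off the cylinder as the quotient by the deck group.

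First I would pass to the universal cover $\pi:\tilde M\to M$. Since $(M,g)$ is globally hyperbolic with a compact Cauchy hypersurface $S\cong\Ss^1$ and $M\cong\R\times\Ss^1$, the cover $\tilde M$ is simply connected and globally hyperbolic, with Cauchy hypersurface $\tilde S=\pi^{-1}(S)\cong\R$, and $\pi_1(M)\cong\Z=\langle\gamma\rangle$ acts by isometries of the pulled-back metric $\tilde g$, preserving orientation and time orientation. The lightlike foliations pull back to two transverse foliations $\tilde\F_1,\tilde\F_2$; since $\tilde M$ contains no closed causal curve, each leaf is an inextensible causal curve and hence meets $\tilde S$ exactly once, so the leaf spaces $\tilde F_i=\tilde M/\tilde\F_i$ are diffeomorphic to $\tilde S\cong\R$ through the projections $\tilde p_i$. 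Exactly as in Proposition \ref{immersion_torus}, the map $\tilde p=(-\tilde p_1,\tilde p_2):(\tilde M,[\tilde g])\to(\R^2,[dxdy])$ is then a conformal immersion.

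The heart of the proof, and the step that genuinely uses the passage to the cover, is that $\tilde p$ is injective. The restriction of $\tilde p_2$ to a leaf $\ell$ of $\tilde\F_1$ is a local diffeomorphism from $\ell\cong\R$ onto an open interval of $\tilde F_2\cong\R$ (transversality of the two foliations means $d\tilde p_2$ has no kernel along $\ell$), and a local homeomorphism of $\R$ is strictly monotone, hence injective; thus $\tilde p|_\ell$ is injective. If $\tilde p(\tilde x)=\tilde p(\tilde y)$, then $\tilde p_1(\tilde x)=\tilde p_1(\tilde y)$ places $\tilde x,\tilde y$ on a common leaf of $\tilde\F_1$, and injectivity on that leaf forces $\tilde x=\tilde y$. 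This is precisely the point at which the downstairs model can fail: a leaf of $\F_1$ in $M$ is a line, but $p_2$ maps it to the \emph{circle} $F_2$ and may wind around, whereas on the cover the target is $\R$ and monotonicity is automatic. Hence $\tilde p$ is a conformal embedding of $\tilde M$ onto an open subset of the Minkowski plane.

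It remains to normalise the deck action. Being conformal and orientation/time-orientation preserving, $\gamma$ preserves each $\tilde\F_i$ and induces increasing diffeomorphisms $\bar\gamma_i$ of $\tilde F_i\cong\R$ with $\tilde p\circ\gamma=(\bar\gamma_1,\bar\gamma_2)\circ\tilde p$. Each $\bar\gamma_i$ is fixed-point free: a fixed point would mean $\gamma$ preserves a leaf $\ell$, whence $\pi^{-1}(\pi(\ell))=\ell$ and $\pi(\ell)=\ell/\langle\gamma\rangle\cong\Ss^1$ would be a closed lightlike geodesic in $M$, contradicting global hyperbolicity. A fixed-point-free increasing diffeomorphism of $\R$ is smoothly conjugate to a unit translation; and since $M$ has no closed causal curves, $\gamma(\tilde x)$ is spacelike separated from $\tilde x$ for every $\tilde x$, which forces the increments of $\bar\gamma_1$ and of $\bar\gamma_2$ to have the same constant sign. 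Conjugating by a product map $(x,y)\mapsto(f(x),g(y))$ with $f,g$ increasing diffeomorphisms of $\R$ (such a map multiplies $dxdy$ by $f'g'>0$, hence preserves $[dxdy]$), we may therefore assume $\gamma$ acts by the spacelike translation $(x,y)\mapsto(x+1,y+1)$. Passing to the quotient, $\tilde p$ descends to a conformal embedding of $M=\tilde M/\langle\gamma\rangle$ as an open subset of $\R^2/\langle(x,y)\mapsto(x+1,y+1)\rangle$, which is a flat Lorentzian cylinder. The main obstacle is the injectivity step, together with checking that the induced translation is spacelike (equivalently, that the two factors move in the same direction); both ultimately rest on the absence of closed causal curves in a globally hyperbolic spacetime.
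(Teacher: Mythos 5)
Your proof is correct and follows the same route as the paper's: pass to the universal cover, observe that the leaf spaces $\tilde F_1,\tilde F_2$ are now copies of $\R$, show that $\tilde p$ is a conformal embedding into $(\R^2,[dxdy])$, and normalise the generator of the deck group to the translation $(x,y)\mapsto(x+1,y+1)$ so that $M$ embeds in the quotient cylinder. The only substantive difference is at the injectivity step: the paper disposes of it by citing Weinstein's fact that two distinct lightlike geodesics on a simply connected Lorentz surface meet at most once, whereas you prove it directly from the strict monotonicity of $\tilde p_2$ restricted to a leaf of $\tilde\F_1$ (a fixed-point-free use of the fact that a locally injective continuous map between copies of $\R$ is monotone); your argument is valid and self-contained, though it leans on global hyperbolicity to identify the leaves and leaf spaces with $\R$, where Weinstein's statement needs only simple connectedness. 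Your treatment of the deck transformation (fixed-point-freeness of each $\bar\gamma_i$ from the absence of closed lightlike geodesics, equality of the signs from acausality of $\tilde x$ and $\gamma(\tilde x)$) matches the paper's at the same level of detail.
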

\begin{proof} Let $(M,g)$ be a spatially compact surface, and consider its universal cover $(\widetilde M,\widetilde g)$. It  also has two foliations by lightlike geodesics $\tilde \F_1,\tilde \F_2$, and the quotients $\tilde F_1,\tilde F_2$ are diffeomorphic to the real line $\R$. This gives us a conformal immersion $\tilde p:( \widetilde M, [\widetilde g])\to (\R^2,[dxdy])$. This time, however, it is always an embedding: two distinct lightlike geodesics on a simply connected Lorentz surface intersect  at most once (see p.51 in \cite{Weinstein}). \\
\indent Let $F\in \Isom(\widetilde M,\widetilde g)$ be a generator of $\pi_1(M)$. It is a conformal diffeomorphism for $[dxdy]$, therefore it can be written $F(x,y)=(f_1(x),f_2(y))$  for some $f_1,f_2\in \Diff(\R)$. Since the quotient of $\widetilde M$ by $F$ is causal, we see that $(x,y)\in p(\widetilde M)$ and $F(x,y)$ are not causally related (i.e. they are not in the future or in the past of each other). This shows that  either $f_1(x)>x$ and $f_2(y)>y$, either $f_1(x)<x$ and $f_2(y)<y$ (for one $(x,y)$, hence for all $(x,y)$ because $\widetilde M$ is connected). So up to replacing $F$ by $F^{-1}$, we can assume that $f_1(x)>x$ and $f_2(y)>y$ for all $x,y\in \R$. This implies that they are both differentially conjugate to the translation $x\mapsto x+1$, and we can assume that $F(x,y)=(x+1,y+1)$. This shows that $(M,g)$ embeds in the quotient of $\R^2$ by the map $F$, which is the flat Lorentzian cylinder. \end{proof}
Even though this conformal model gives an embedding for $M$, we will mostly use the map $\tilde p$ defined on the universal cover $\tilde M$.

\subsubsection{Conformal classification} We have a simple characterisation of the image in $\R^2$. We call an open set $U\subset \R^2$ \textbf{canonically embedded} if there is a spatially compact  surface $( M, g)$ such that $U=\tilde p(\widetilde M)$.
 \begin{prop} Let $U\subset \R^2$ be a  canonically embedded open set.  There are non decreasing maps $\tilde h_\gets, \tilde h_{\downarrow} :\R\to\R\cup\{-\infty\}$ and $\tilde h_\to, \tilde h_{\uparrow}:\R\to\R\cup\{+\infty\}$    that commute with the translation $x\mapsto x+1$ such that :
 \begin{eqnarray*} U &= & \{(x,y)\in \R^2 \vert \tilde h_{\downarrow}(x)<y<\tilde h_{\uparrow}(x)\}\\ & =&  \{(x,y)\in \R^2 \vert \tilde h_{\gets}(y)<x<\tilde h_{\to}(y)\}
 \end{eqnarray*} \end{prop}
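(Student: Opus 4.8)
The plan is to use the global hyperbolicity of $(M,g)$, transported to $U$ through the conformal embedding $\tilde p$, together with the fact that in $(\R^2,dxdy)$ the lightlike geodesics are exactly the horizontal and vertical lines. Since causality is a conformal invariant, $(U,dxdy)\cong(\tilde M,\tilde g)$ is globally hyperbolic, and the plan rests on first producing a well-adapted Cauchy hypersurface.

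First I would build the Cauchy graph. The universal cover $\tilde M$ of the globally hyperbolic $M$ is globally hyperbolic, and the preimage $\tilde S$ of a compact Cauchy hypersurface $S\subset M$ is a Cauchy hypersurface of $\tilde M$ on which $F$ acts with quotient $S\cong\Ss^1$; hence $\tilde S$ is a line. Its image $\tilde p(\tilde S)$ is a spacelike Cauchy hypersurface of $(U,dxdy)$. A spacelike tangent direction lies in $\{dx\,dy>0\}$, so it is never horizontal nor vertical; thus the first-coordinate projection is a local diffeomorphism on $\tilde S$, and it is injective because two points sharing the same abscissa would be joined by a vertical (lightlike) segment meeting the Cauchy hypersurface twice. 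Hence $\tilde S$ is the graph of an increasing function $\sigma$ defined on an interval; that interval is invariant under $x\mapsto x+1$ (because $F(x,y)=(x+1,y+1)$ preserves $\tilde S$), hence equal to $\R$, and $\sigma(x+1)=\sigma(x)+1$.

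Next I would analyse the slices. For fixed $x_0$, the vertical line meets $U$ in a disjoint union of open intervals, each a maximal vertical segment and therefore an inextensible causal (lightlike) curve of $U$. By the defining property of a Cauchy hypersurface, each such curve meets $\tilde S$ exactly once; but the graph of $\sigma$ meets the vertical line only at $(x_0,\sigma(x_0))$, so there is a single interval, which I write $\intoo{\tilde h_{\downarrow}(x_0)}{\tilde h_{\uparrow}(x_0)}$ with $\tilde h_{\downarrow}(x_0)\in\R\cup\{-\infty\}$ and $\tilde h_{\uparrow}(x_0)\in\R\cup\{+\infty\}$. This gives the first description of $U$; running the identical argument with horizontal lines (also lightlike) yields the second description and the functions $\tilde h_{\gets},\tilde h_{\to}$. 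The invariance $F(U)=U$ reads $(x+1,y)\in U\iff(x,y-1)\in U$, which translates at once into $\tilde h_{\bullet}(x+1)=\tilde h_{\bullet}(x)+1$ for all four functions.

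The remaining and main point is monotonicity, and this is where the real work lies. I would note that a strict decrease of a boundary function, say $\tilde h_{\downarrow}(x_1)>\tilde h_{\downarrow}(x_2)$ for some $x_1<x_2$, means the corresponding boundary arc runs in a timelike direction; I would derive a contradiction exactly as in the torus case (the ``non-compact diamond'' of Figure \ref{fig:non_compact_diamonds}). Choosing interior points $p,q$ just above the two ends of this arc, one checks that $q$ lies in the causal future of $p$ inside $U$, while the diamond $J^+(p)\cap J^-(q)$ computed in $U$ contains the points $(x_1,\tilde h_{\downarrow}(x_1)+\delta)$ for all small $\delta>0$; as $\delta\to 0$ these converge to $(x_1,\tilde h_{\downarrow}(x_1))\in\partial U$, so the diamond is not closed in $U$, hence not compact, contradicting global hyperbolicity. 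The same reasoning applied to $\tilde h_{\uparrow}$ and, after exchanging the two coordinates, to $\tilde h_{\gets}$ and $\tilde h_{\to}$, shows that all four functions are non-decreasing. I expect the delicate part to be the routing: one must exhibit an honest future causal curve from $p$ to $q$ inside $U$ realising the diamond, and I would secure this by using the equivariance $\tilde h_{\bullet}(x+1)=\tilde h_{\bullet}(x)+1$ (which forces a net increase over each period, so any decrease is a controlled local excursion) to reduce to a boundary arc along which the floor is monotone, so that the staircase path hugging the boundary stays in $U$.
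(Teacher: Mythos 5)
Your proof is correct in its main thrust and follows essentially the same route as the paper's: slice $U$ by vertical and horizontal lines, read the relations $\tilde h_{\bullet}(x+1)=\tilde h_{\bullet}(x)+1$ off the invariance of $U$ under $(x,y)\mapsto(x+1,y+1)$, and obtain monotonicity by exhibiting a diamond $J^+(p)\cap J^-(q)$ that accumulates on a boundary point and hence cannot be compact, contradicting global hyperbolicity --- this last step is the paper's argument verbatim, and your explicit worry about routing an honest causal curve of $U$ through that diamond is a point the paper itself glosses over rather than a defect of your write-up. The one place where you genuinely diverge is in proving that each vertical slice $\{x\}\times\R\cap U$ is a single interval, which the paper asserts without comment; your Cauchy-graph argument reaches the right conclusion, but as written the justification of injectivity of the first-coordinate projection on $\tilde S$ is circular: you invoke a vertical segment joining two points of $\tilde S$ with the same abscissa, yet that segment is a causal curve \emph{of $U$} only if it lies in $U$, i.e.\ only if the two points sit in the same connected component of the vertical slice --- which is precisely what the whole construction is meant to establish. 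The gap is harmless and admits two immediate repairs: either note that $\tilde S\cong\R$ is connected and the projection is a locally increasing local diffeomorphism on it, hence globally monotone and injective; or, more directly, observe that $\{x\}\times\R\cap U=\{x\}\times \tilde p_2\bigl(\tilde p_1^{-1}(x)\bigr)$ is the continuous image of a single connected leaf of $\tilde\F_1$, hence an interval --- which is what the paper implicitly uses and which makes the detour through a Cauchy hypersurface unnecessary.
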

\begin{proof} 
\indent Every $x\in \R$ defines a vertical line in $U$: there are $\tilde h_{\downarrow}(x)\in \intfo{-\infty}{+\infty}$ and $\tilde h_{\uparrow}(x)\in \intof{-\infty}{+\infty}$ such that: $$\{x\}\times \R\cap U= \{x\}\times \intoo{\tilde h_{\downarrow}(x)}{\tilde h_{\uparrow}(x)}$$ \indent  Since $U$ is invariant under the map $(x,y)\mapsto (x+1,y+1)$, we see that $\tilde h_{\uparrow}(x+1)=\tilde h_{\uparrow}(x)+1$ and $\tilde h_\downarrow (x+1) =\tilde h_\downarrow(x) +1$ for all $x\in \R$. \\
\indent Assume that there is $x_0\in \R$ such that $\tilde h_{\downarrow}(x_0)\ne -\infty$. Let $x>x_0$, and assume by contradiction that $(x,\tilde h_{\downarrow}(x_0))\in U$. If $y>\tilde h_{\downarrow}(x_0)$,  the diamond $J^+(x,\tilde h_{\downarrow}(x_0))\cap J^-(x_0,y)$ contains $(x_0,\tilde h_{\downarrow}(x_0))$ and is not included in $U$ (see Figure \ref{fig:proof_cylinder_model}), which implies that $U$ is not globally hyperbolic and is absurd. Hence $(x,\tilde h_{\downarrow}(x_0))\notin U$ and $\tilde h_{\downarrow}(x)\ge \tilde h_{\downarrow}(x_0)>-\infty$. This shows that $\tilde h_{\downarrow}$ is non decreasing. Reversing the time orientation shows that $\tilde h_{\uparrow}$ is also non decreasing.\\
\indent By exchanging the roles of $x$ and $y$, we define $\tilde h_\gets$ and $\tilde h_\to$ in the same way.
\begin{figure}[h]
\definecolor{uuuuuu}{rgb}{0.27,0.27,0.27}
\begin{tikzpicture}[line cap=round,line join=round,>=triangle 45,x=1.0cm,y=1.0cm]
\clip(-6.3,-1) rectangle (7.3,5.3);
\draw[smooth,samples=100,domain=-2.8:-1] plot(\x,{-1.5-(\x)-0.2*sin((5*(\x))*180/pi)});
\draw [->] (-3,0) -- (-3,5);
\draw [->] (-3,0) -- (3,0);
\draw [dash pattern=on 3pt off 3pt] (-2.18,4)-- (2,4);
\draw [dash pattern=on 3pt off 3pt] (-2.18,0.48)-- (-2.18,4);
\draw [dash pattern=on 3pt off 3pt] (-2.18,0.48)-- (2,0.48);
\draw [dash pattern=on 3pt off 3pt] (2,4)-- (2,0.48);
\draw (-2.4,0) node[anchor=north west] {$x_0$};
\draw (1.8,0) node[anchor=north west] {$x$};
\draw (-1.1,-0.2) node[anchor=north west] {$Gr(\tilde h_{\downarrow})$};
\draw (-3,0.75) node[anchor=north east] {$\tilde h_{\downarrow}(x_0)$};
\draw (-3,4.25) node[anchor=north east] {$y$};
\begin{scriptsize}
\draw [color=uuuuuu] (-3,4)-- ++(-1.0pt,0 pt) -- ++(2.0pt,0 pt) ++(-1.0pt,-1.0pt) -- ++(0 pt,2.0pt);
\draw [color=uuuuuu] (-3,0.48)-- ++(-1.0pt,0 pt) -- ++(2.0pt,0 pt) ++(-1.0pt,-1.0pt) -- ++(0 pt,2.0pt);
\draw [color=uuuuuu] (-2.18,0)-- ++(-1.0pt,0 pt) -- ++(2.0pt,0 pt) ++(-1.0pt,-1.0pt) -- ++(0 pt,2.0pt);
\draw [color=uuuuuu] (2,0)-- ++(-1.0pt,0 pt) -- ++(2.0pt,0 pt) ++(-1.0pt,-1.0pt) -- ++(0 pt,2.0pt);
\end{scriptsize}
\end{tikzpicture}
\caption{$\tilde h_{\downarrow}$ is non decreasing} \label{fig:proof_cylinder_model}
\end{figure}
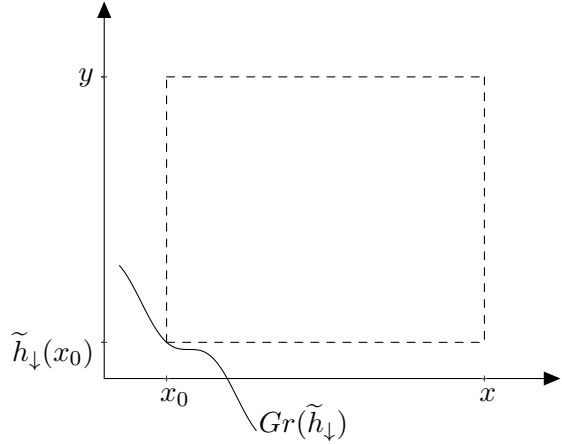
\end{proof}
\indent This implies that a spatially compact surface is conformally diffeomorphic to an open set of the flat cylinder delimited by non timelike curves. Note that if there is $x_0\in \R$ such that $\tilde h_{\downarrow}(x_0)=-\infty$ (resp. $\tilde h_{\uparrow}(x_0)=+\infty$), then $\tilde h_{\downarrow}(x)=-\infty$ (resp. $\tilde h_{\uparrow}(x)=+\infty$) for all $x\in \R$.\\
\indent The map $\tilde h_\uparrow$ (resp. $\tilde h_\downarrow$) is always left continuous (resp. right continuous). All such maps can be obtained: given $\tilde h_-,\tilde h_+ : \R \to \R$ non decreasing  that commute with $x\mapsto x+1$, such that $\tilde h_- < \tilde h_+$ and $\tilde h_+$ (resp. $\tilde h_-$) is left continuous (resp. right continuous), we obtain a spatially compact surface $M$ whose universal cover is the set of points  $(x,y)\in \R^2$ such that $\tilde h_-(x)<y<\tilde h_+(x)$. It is unique up to a conformal diffeomorphism. \\
\indent The boundary $\partial U\subset \R^2$ is the union of the graphs of $\tilde h_{\downarrow}$ and $\tilde h_{\uparrow}$ and of the vertical lines joining discontinuities (we set the graphs of the constants $\pm \infty$ to be the empty set). We can define the conformal boundary $\partial M$ of a spatially compact surface $(M,g)$ to be the projection of the boundary of $U$ in the flat cylinder. It is an achronal set (two points cannot be joined by a timelike curve). However, if $\tilde h_{\downarrow}$ or $\tilde h_{\uparrow}$ is constant on an interval, then the boundary can contain causal curves. Such causal curves in the boundary can only arise when $\tilde h_{\downarrow}$ and $\tilde h_{\uparrow}$ fail to be homeomorphisms.

\begin{defi} Let $(M,g)$ be a spatially compact surface. We say that the conformal boundary is acausal if the maps $\tilde h_{\downarrow}, \tilde h_\uparrow$ are either homeomorphisms of $\R$ either infinite. \end{defi}

Note that this boundary falls in the general concept of conformal boundary for spacetimes (see \cite{FHS}). It is a general fact that the conformal boundary of a globally hyperbolic spacetime is achronal, but not necessarily acausal.

\subsubsection{From the cylinder to the torus}  Given a spatially compact surface $(M,g)$, we have defined a conformal embedding $\tilde p : \tilde M\to \R^2$ and a conformal immersion $p : M\to \T^2$. If we denote by $\pi : \tilde M\to M$ the (Lorentzian) universal cover and by $\pi_0 : \R^2\to \T^2=\R^2/\Z^2$ the universal cover of the flat torus, then it follows from the definitions that $\pi_0\circ \tilde p = p\circ \pi$.\\
\indent The projections of  $(x,y)$ and $(x,y+1)$ in the flat cylinder are different points on the same lightlike geodesics.  This shows that $M$ embeds conformally in the torus if and only if $\tilde h_{\uparrow}(x)-\tilde h_{\downarrow}(x)\le 1$ for all $x\in \R$. In this case, if we denote by $h_{\uparrow}, h_\downarrow$ the associated maps on the circle, then we find that $p(M)=\{ (x,y)\in \Ss^1\times \Ss^1 \vert h_{\downarrow}(x)<y<h_{\uparrow}(x)\le h_{\downarrow}(x)\}$. We also see that if the conformal boundary of $(M,g)$ is acausal, then $h_{\uparrow}$ and $ h_\downarrow$ are circle homeomorphisms.
\begin{prop} Let $(M,g)$ be a spatially compact surface with an acausal conformal boundary. If $(M,g)$ embeds conformally in the torus, then the representations $\rho_1^M$ and $\rho_2^M$ are topologically conjugate.
\end{prop}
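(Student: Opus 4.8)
The plan is to produce the conjugacy explicitly: I claim that the circle homeomorphism $h_\uparrow$ (equivalently $h_\downarrow$) conjugates $\rho_1^M$ to $\rho_2^M$. First I would record the consequences of the hypotheses. Since $(M,g)$ embeds conformally in the torus, $p$ is an embedding, and by the discussion preceding the statement
$$p(M)=\{(x,y)\in \Ss^1\times\Ss^1 \mid h_\downarrow(x)<y<h_\uparrow(x)\},$$
while the acausality of the conformal boundary forces $h_\uparrow,h_\downarrow$ to be genuine homeomorphisms of $\Ss^1$ rather than merely non decreasing degree-one maps. This is exactly the point where acausality enters, and it makes a conjugating candidate available, so the task reduces to verifying the intertwining relation.

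Next I would exploit the dynamical description of the conformal action. For $\p\in \Conf(M,g)$ the map $\Phi_\p:=\rho_1^M(\p)\times \rho_2^M(\p)$ is a diffeomorphism of $\T^2$ (a product of two circle diffeomorphisms), and its restriction to $p(M)$ equals $p\circ \p\circ p^{-1}$; in particular $\Phi_\p$ preserves the open set $p(M)$. Being a homeomorphism of $\T^2$, it therefore preserves the topological boundary $\partial p(M)$, which is the union of the graphs of $h_\uparrow$ and $h_\downarrow$.

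Then I would argue that $\Phi_\p$ preserves each graph separately rather than interchanging them. Here I would invoke orientation: $\p$ preserves the orientation of $M$, hence $\rho_1^M(\p)$ and $\rho_2^M(\p)$ are orientation preserving, so $\Phi_\p$ sends the vertical fibre over $x$, oriented positively in $y$ from $h_\downarrow(x)$ up to $h_\uparrow(x)$, to the vertical fibre over $\rho_1^M(\p)(x)$ with the same orientation. Consequently the top endpoint goes to the top endpoint, i.e.\ $\Phi_\p$ maps the graph of $h_\uparrow$ to itself; equivalently, $h_\uparrow$ is the future part of the conformal boundary and $\p$ preserves time orientation. Reading off the image of the point $(x,h_\uparrow(x))$ gives
$$\rho_2^M(\p)\big(h_\uparrow(x)\big)=h_\uparrow\big(\rho_1^M(\p)(x)\big)\qquad\text{for all }x\in\Ss^1,$$
that is $h_\uparrow\circ \rho_1^M(\p)=\rho_2^M(\p)\circ h_\uparrow$ for every $\p\in\Conf(M,g)$. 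Since $h_\uparrow$ is a homeomorphism, this is precisely a topological conjugacy between $\rho_1^M$ and $\rho_2^M$.

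The main obstacle I anticipate is the bookkeeping at the boundary: one must be sure that $\Phi_\p$ does not swap the two boundary graphs, and that the argument survives the degenerate configurations where $\tilde h_\uparrow(x)-\tilde h_\downarrow(x)=1$, so that the two graphs touch and a vertical fibre becomes the whole circle minus a point. The orientation (time orientation) argument above is what rules out the swap; in the touching case the single boundary point over $x$ is forced to the single boundary point over $\rho_1^M(\p)(x)$, so the intertwining relation persists. Everything else is the routine translation between invariance of a graph and intertwining of the two coordinate actions.
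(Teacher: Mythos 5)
Your proposal is correct and follows essentially the same route as the paper: identify $M$ with $p(M)$, note that $p\circ\p\circ p^{-1}$ acts as $\rho_1^M(\p)\times\rho_2^M(\p)$ and preserves $\partial p(M)$, and read off the intertwining relations $\rho_2^M(\p)\circ h_{\uparrow}=h_{\uparrow}\circ\rho_1^M(\p)$ (and likewise for $h_{\downarrow}$) from the fact that each vertical fibre $\{x\}\times\intoo{h_{\downarrow}(x)}{h_{\uparrow}(x)}$ is sent, preserving orientation, to the fibre over $\rho_1^M(\p)(x)$. Your extra care about the graphs not being swapped and about the touching case is exactly the content of the paper's one-line endpoint comparison, so there is nothing to add.
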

\begin{proof} Given $\p\in \Conf(M,g)$ and $(x,y)\in p(M)$, we can write: $$p\circ \p \circ p^{-1}(x,y)=(\rho_1^M(\p)(x),\rho_2^M(\p)(y))$$ Since for all $x\in \Ss^1$ there is $y\in \Ss^1$ such that $(x,y)\in p(M)$, the fact that $p\circ \p\circ p^{-1}$ preserves $\partial p(M)$ implies that $\rho_2^M\circ h_{\downarrow} = h_{\downarrow} \circ \rho_1^M$ and $\rho_2^M\circ h_{\uparrow} = h_{\uparrow} \circ \rho_1^M $.
\end{proof}

Since the maps $\tilde h_\uparrow, \tilde h_\downarrow, \tilde h_\to, \tilde h_\gets$ commute with $x\mapsto x+1$, they define maps on $\Ss^1$ as soon as they are finite, which is the case when $(M,g)$ embeds in the torus. Let $h_\uparrow, h_\downarrow, h_\to, h_\gets$ be the induced maps on the circle. Using the invariance of $\tilde p(\tilde M)\subset \R^2$ by lifts of conformal diffeomorphisms, we obtain the following relations:

\begin{prop} Let $(M,g)$ be a spatially compact surface that embeds conformally in $\T^2$. If $\p\in \Conf(M,g)$, then the maps $h_\uparrow, h_\downarrow, h_\to, h_\gets$ satisfy:
\begin{eqnarray*} \rho_2^M(\p) \circ h_\downarrow &=& h_\downarrow \circ \rho_1^M(\p) \\
\rho_2^M(\p) \circ h_\uparrow &=& h_\uparrow \circ \rho_1^M(\p) \\
\rho_1^M(\p) \circ h_\gets &=& h_\gets \circ \rho_2^M(\p) \\
\rho_1^M(\p) \circ h_\to &=& h_\to \circ \rho_2^M(\p) 
\end{eqnarray*}
\end{prop}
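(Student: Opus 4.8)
The plan is to lift everything to the universal cover and to exploit the invariance of $U=\tilde p(\tilde M)\subset \R^2$ under the lift of $\p$, reading off the four relations from the two transverse descriptions of $U$ as a region between graphs. The point of working on $\R^2$ rather than on $\T^2$ is that the four maps $\tilde h_\downarrow,\tilde h_\uparrow,\tilde h_\gets,\tilde h_\to$ are available simultaneously and no homeomorphism (acausality) hypothesis is needed.

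First I would lift $\p\in \Conf(M,g)$ to a conformal diffeomorphism $\tilde\p$ of $(\tilde M,[\tilde g])$, and conjugate by the embedding $\tilde p$ to obtain a conformal diffeomorphism $\Psi=\tilde p\circ \tilde\p\circ \tilde p^{-1}$ of $U$ for the flat structure $[dxdy]$. A conformal diffeomorphism of an open set of $(\R^2,[dxdy])$ preserving orientation and time orientation preserves each of the two lightlike foliations (the horizontal and the vertical lines) individually; hence $\Psi$ splits as a product $\Psi(x,y)=(F_1(x),F_2(y))$ with $F_1,F_2\in\Diff(\R)$ increasing. Compatibly with the identity $\pi_0\circ \tilde p=p\circ \pi$, the maps $F_1,F_2$ project to $\rho_1^M(\p),\rho_2^M(\p)$ on the circle factors, so they are lifts of those circle diffeomorphisms; in particular each commutes with $x\mapsto x+1$. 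Since $M$ embeds in $\T^2$, the four boundary maps are finite and $U$ projects onto $\R$ along both axes, so $F_1$ and $F_2$ are defined on all of $\R$ and both $\Psi$ and $\Psi^{-1}=F_1^{-1}\times F_2^{-1}$ carry $U$ bijectively to $U$.

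Next I would use the vertical description $U=\{(x,y)\mid \tilde h_\downarrow(x)<y<\tilde h_\uparrow(x)\}$. Fix $x_0\in\R$. Because $\Psi$ and $\Psi^{-1}$ are product maps carrying $U$ to $U$, the vertical slice $\{x_0\}\times \intoo{\tilde h_\downarrow(x_0)}{\tilde h_\uparrow(x_0)}$ is carried exactly onto the slice of $U$ over $F_1(x_0)$, namely $\{F_1(x_0)\}\times \intoo{\tilde h_\downarrow(F_1(x_0))}{\tilde h_\uparrow(F_1(x_0))}$. On the other hand its image under $\Psi$ is $\{F_1(x_0)\}\times \intoo{F_2(\tilde h_\downarrow(x_0))}{F_2(\tilde h_\uparrow(x_0))}$, since $F_2$ is an increasing homeomorphism of $\R$. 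Matching the endpoints of these two open intervals gives $F_2\circ \tilde h_\downarrow=\tilde h_\downarrow\circ F_1$ and $F_2\circ \tilde h_\uparrow=\tilde h_\uparrow\circ F_1$ on $\R$. Applying the symmetric argument to horizontal slices, using $U=\{(x,y)\mid \tilde h_\gets(y)<x<\tilde h_\to(y)\}$, yields $F_1\circ \tilde h_\gets=\tilde h_\gets\circ F_2$ and $F_1\circ \tilde h_\to=\tilde h_\to\circ F_2$. As all the maps involved commute with integer translations, these four identities descend to $\Ss^1=\R/\Z$, where $F_1,F_2$ become $\rho_1^M(\p),\rho_2^M(\p)$ and the $\tilde h$'s become $h_\downarrow,h_\uparrow,h_\gets,h_\to$, giving the four stated relations.

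The only step that genuinely requires care is the product decomposition $\Psi(x,y)=(F_1(x),F_2(y))$ together with the identification of $F_1,F_2$ as lifts of $\rho_1^M(\p),\rho_2^M(\p)$: one must check that $\tilde\p$ preserves the two lightlike foliations \emph{separately} (not merely their union) and respects their coorientations, so that the induced map on each quotient factor is precisely the lift of the corresponding circle action. This is exactly where orientation and time-orientation preservation of $\p$ enter, as in the proof of Proposition \ref{immersion_torus}; once it is in hand, the remainder is the elementary bookkeeping of matching interval endpoints.
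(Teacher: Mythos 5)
Your proof is correct and follows exactly the route the paper indicates for this proposition (which it states with only the one-line justification ``using the invariance of $\tilde p(\tilde M)\subset \R^2$ by lifts of conformal diffeomorphisms''): lift $\p$ to the universal cover, split the lift as a product $(F_1(x),F_2(y))$, and match the endpoints of vertical and horizontal slices of $U$, just as in the paper's proof of the preceding torus-level proposition. Your added care about the product decomposition and the identification of $F_1,F_2$ as lifts of $\rho_1^M(\p),\rho_2^M(\p)$ is exactly the right point to check, and it is handled correctly.
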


Note that when the conformal boundary is acausal, we automatically have $h_\to = h_\downarrow^{-1}$ and $h_\gets=h_\uparrow^{-1}$.

\subsubsection{Link with $(G,X)$-structures} What is at the heart of the two conformal models that we have defined is the fact that surfaces are conformally flat. It is a simple observation for Lorentz surfaces, and a Theorem of Gauss for Riemannian surfaces. In general, it is only a local condition, i.e. we have local conformal diffeomorphisms with the Minkowski space, but we have shown that we can find a global embedding for globally hyperbolic surfaces. This translates in terms of completeness of a $(G,X)$-structure.\\
\indent If $G$ is a group acting by diffeomorphisms on a simply connected manifold $X$, then a $(G,X)$-structure on a manifold $M$ is an atlas of local diffeomorphisms with $X$, the transitions maps being elements of $G$. If the action is analytic (i.e. elements of $G$ are uniquely defined by their action on a small open set), then we can define a holonomy morphism $h:\pi_1(M)\to G$ and an equivariant map $D:\tilde M \to X$ called the developing map. A $(G,X)$-structure is said to be complete if $D$ is injective (which means that $M$ is a quotient of an open set of $X$).\\
\indent What is interesting here is that we have shown completeness for a structure where even the existence of the developing map is not given by the general theory (the action of the conformal group is not analytic).

\section{Examples}
\subsection{The constant curvature model spaces}
A particularity of two dimensional Lorentzian geometry is that, from the isometry group point of view, there are only two constant curvature geometries. Indeed, if $(M,g)$ is a Lorentz surface, then $-g$ is also a Lorentz metric. If $K_g$ is the curvature of $g$, then the curvature of $-g$ is $K_{-g}=-K_g$. This shows that the positive and negative constant curvature spaces have the same isometry group.\\
\indent The flat simply connected model is the Minkowski space $\R^{1,1}=(\R^2,dxdy)$. The spatially compact model is the flat cylinder: it is the quotient of $\R^{1,1}$ by a spacelike translation. The second conformal model described above shows that any spatially compact surface is conformal to an open set of the flat cylinder bounded by two spacelike curves.\\
\indent As we mentioned earlier, the positive and negative constant curvature spaces share the same isometry group. However,  the causality of a metric $g$ does not imply the causality of $-g$. In the classic model spaces, the positive model is the one sheeted hyperboloïd in $\R^{1,2}$. It is spatially compact. But the negative curvature model is not causal. For this reason, we will only consider the positive curvature model.\\
\indent We consider $\R\proj^1=\R\cup \{\infty\}$ and we endow $\rm{dS}_2 = \R\proj^1\times \R\proj^1\setminus \Delta$ with the Lorentz metric $g_{\rm{dS}_2}=\frac{4dxdy}{(x-y)^2}$. It is preserved by the diagonal action of $\PSL(2,\R)$ acting projectively on $\R\proj^1$ (i.e. the matrix $\begin{pmatrix}  a& b   \\  c& d \end{pmatrix}$ acts by the homography $x\mapsto \frac{ax+b}{cx+d}$), and it has constant curvature $1$. It is isometric to the one sheeted hyperboloïd, and the isometry gives the classic isomorphism $\PSL(2,\R)\approx \mathrm{SO}^\circ(1,2)$ (see \cite{DG}).
\subsection{Open sets of $\rm{dS}_2$ and subgroups of $\PSL(2,\R)$} \label{subsec:examples}
We wish to understand which spatially compact surfaces $(M,g)$ can satisfy $\rho_1^M=\rho_2^M$ and $\rho_1^M(\Isom(M,g))\subset \PSL(2,\R)$. We will first focus on open sets of $\rm{dS}_2$ and conformal changes of the metric on these open sets.\\
\indent Given  $h\in \Homeo(\Ss^1)$,  the connected components of  $\Ss^1\times \Ss^1\setminus (\Delta \cup Gr(h))$ are globally hyperbolic (because $h$ is orientation preserving), so they have two possible topologies: either a plane or a cylinder. If $h$ has no fixed point, then $\Ss^1\times \Ss^1\setminus (\Delta \cup Gr(h))$ has two connected components, both spatially compact. If $h$ has at least one fixed point, then there is one  spatially compact component, which we denote it by $\Omega_h$ (see Figure \ref{fig:omega_h}). The action of  a group $\Gamma\subset \PSL(2,\R)$ preserves $\Omega_h$ if and only $h$ commutes with every element of $\Gamma$. This shows that looking for open sets preserved by a subgroup of $\PSL(2,\R)$ can be achieved by constructing homeomorphisms that commute with every element of the group. Note that $\Gamma$ acts non properly on $\Omega_h$ if and only if it acts non properly on $\rm{dS}_2$, which is equivalent to $\Gamma$ not being relatively compact (this will be shown in Proposition \ref{proper_compact_group}).\\
\begin{figure}[h]
\definecolor{xdxdff}{rgb}{0.49,0.49,1}
\definecolor{qqqqff}{rgb}{0,0,1}
\definecolor{cqcqcq}{rgb}{0.75,0.75,0.75}
\begin{tikzpicture}[line cap=round,line join=round,>=triangle 45,x=1.0cm,y=1.0cm,scale=0.5]
\clip(-9.3,-4) rectangle (16.9,6.3);
\fill[fill=black,pattern=vertical lines] (-2,-4) -- (-1,0) -- (1,2) -- (5,3) -- cycle;
\fill[fill=black,pattern=vertical lines] (5,3) -- (7,4) -- (8,6) -- cycle;
\draw (-2,6)-- (8,6);
\draw (8,6)-- (8,-4);
\draw (8,-4)-- (-2,-4);
\draw (-2,-4)-- (-2,6);
\draw (-2,-4)-- (8,6);
\draw (5.14,3.48) node[anchor=north west] {$Gr(h)$};
\draw (0.76,4.74) node[anchor=north west] {$\Omega_h$};
\draw (-2,-4)-- (-1,0);
\draw (-1,0)-- (1,2);
\draw (1,2)-- (5,3);
\draw (5,3)-- (-2,-4);
\draw (5,3)-- (7,4);
\draw (7,4)-- (8,6);
\draw (8,6)-- (5,3);
\end{tikzpicture}
\caption{$\Omega_h$} \label{fig:omega_h}
\end{figure}
\indent If $g_{\sigma}=e^{\sigma} g_{\rm{dS}_2}$, then the action of $\Gamma\subset \PSL(2,\R)$ is isometric for $g$ if and only if $\sigma$ is invariant under $\Gamma$.\\
\indent Note that all subgroups cannot have a non trivial commuting homeomorphism or invariant function. If $\Gamma=\PSL(2,\Z)$ (or any lattice), then it has dense orbits on $\rm{dS}_2$, which shows that any invariant function is constant, and the only commuting homeomorphism is the identity. The important notion is the \textbf{limit set}. If $\Gamma \subset \PSL(2,\R)$ is a subgroup, then let $L_{\Gamma}\subset \Ss^1$ be a  minimal closed $\Gamma$-invariant subset of $\Ss^1$. There are three possibilities for $L_{\Gamma}$: it can be a finite set, a Cantor set, or $\Ss^1$. We say that $\Gamma$ is \textbf{non elementary} if $L_{\Gamma}$ is infinite. In this case, it is uniquely defined, and it is equal to the intersection of $\Ss^1=\partial_{\infty}\h^2$ with $\overline{\Gamma.x}$ for all $x\in \h^2$.

\begin{prop} \label{homeo_commutes} Let $\Gamma \subset \PSL(2,\R)$ be a subgroup such that $L_{\Gamma}\ne \Ss^1$. Then there is $h\in \Homeo(\Ss^1)\setminus \{Id\}$ that commutes with every element of $\Gamma$. \end{prop}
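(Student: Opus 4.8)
The plan is to build $h$ directly from the gap structure of the complement of the limit set. Since $L_\Gamma\neq\Ss^1$ is a minimal closed $\Gamma$-invariant set, it is nowhere dense (it is finite or a Cantor set), so $\Ss^1\setminus L_\Gamma$ is a nonempty, dense, $\Gamma$-invariant union of open intervals, which I will call the \emph{gaps}. The group $\Gamma$ fixes $L_\Gamma$ setwise and permutes the gaps. I would look for $h$ that fixes $L_\Gamma$ pointwise and acts nontrivially but \emph{equivariantly} inside the gaps, so that commutation with $\Gamma$ is automatic away from $L_\Gamma$ and trivial on $L_\Gamma$.

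The core construction is as follows. For each $\Gamma$-orbit of gaps I choose a representative $I=\intoo{a}{b}$ with $a,b\in L_\Gamma$. Because elements of $\Gamma$ preserve the orientation of $\Ss^1$, the stabilizer $\Gamma_I=\Stab_\Gamma(I)$ must fix both endpoints $a$ and $b$, so its restriction to $I\cong\R$ is a group of fixed-point-free increasing homeomorphisms; when $\Gamma$ is non-elementary (the genuinely new case, with $L_\Gamma$ a Cantor set) $\Gamma$ is discrete, a non-discrete non-elementary subgroup having $L_\Gamma=\Ss^1$, so $\Gamma_I$ is trivial or infinite cyclic, generated by a hyperbolic element acting on $I$ as a translation. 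In every such case the centralizer of $\Gamma_I|_I$ inside the orientation-preserving self-homeomorphisms of $I$ fixing the endpoints is nontrivial (it contains the commuting translation flow), so I may select some $\phi_I\neq\mathrm{id}$ in it. I then spread $\phi_I$ over the whole orbit by setting $h|_{\gamma I}=\gamma\circ\phi_I\circ\gamma^{-1}$; this is well defined precisely because $\phi_I$ commutes with $\Gamma_I$ (if $\gamma I=\gamma'I$ then $\gamma^{-1}\gamma'\in\Gamma_I$), and it satisfies $h\circ\delta=\delta\circ h$ on the orbit for all $\delta\in\Gamma$. Declaring $h=\mathrm{id}$ on $L_\Gamma$ and on all gaps where I take $\phi=\mathrm{id}$ yields a $\Gamma$-equivariant bijection of $\Ss^1$. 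The elementary cases fit the same pattern: if $L_\Gamma$ is two points I may conjugate it to $\{0,\infty\}$, where $\Gamma$ lies in the abelian diagonal group and any hyperbolic transformation with those fixed points commutes with $\Gamma$; if $L_\Gamma$ is a single point, $\Ss^1\setminus L_\Gamma$ is the single gap and the same centralizer argument (or a commuting parabolic flow) applies; the empty case gives a relatively compact $\Gamma$ conjugate into the rotations, where any rotation works.

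To finish, I would verify that $h$ is a homeomorphism. Since $h$ maps each gap into itself, its displacement on a gap is bounded by the length of that gap. On the compact circle only finitely many gaps have length at least $\varepsilon$, so the gaps accumulating at any point of $L_\Gamma$ must shrink; hence both $h$ and its inverse (constructed identically from $\phi_I^{-1}$) are continuous at every point of $L_\Gamma$, and they are clearly continuous inside each gap. Thus $h\in\Homeo(\Ss^1)$, it commutes with every element of $\Gamma$ by the equivariance built into the definition, and it is nontrivial because $\phi_I\neq\mathrm{id}$ on at least one gap.

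The step I expect to be the main obstacle is the equivariant gluing when the gap stabilizers are nontrivial: one must control the action of $\Gamma_I$ on each gap (that it is cyclic and conjugate to a translation group, which is where discreteness of a non-elementary $\Gamma$ enters) in order to guarantee a nontrivial endpoint-fixing $\phi_I$ in its centralizer, and one must simultaneously ensure the shrinking-gaps estimate so that the pieces glue to a genuine homeomorphism across the Cantor set $L_\Gamma$. Identifying the gap stabilizers and their centralizers is the crux; once that structural input is in hand, the continuity and commutation checks are routine.
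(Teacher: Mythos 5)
Your construction is the same as the paper's: decompose $\Ss^1\setminus L_\Gamma$ into its connected components, pick a fundamental domain for the $\Gamma$-action on them, choose on each representative gap a non-trivial endpoint-fixing homeomorphism commuting with the gap stabilizer, extend equivariantly (well defined exactly because of that commutation), and get continuity from the fact that $h$ is the identity on $L_\Gamma$. You in fact supply the one ingredient the paper only asserts, namely why such a $\phi_I$ exists: the gap stabilizer sits inside the hyperbolic one-parameter group fixing the two endpoints, is discrete (a non-discrete non-elementary subgroup has $L_\Gamma=\Ss^1$), hence is trivial or infinite cyclic, and its centralizer on the gap contains the whole translation flow. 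That is the right argument for the main (non-elementary) case.

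Two caveats about your elementary cases. First, if $L_\Gamma=\{a,b\}$, the group $\Gamma$ need \emph{not} lie in the diagonal group: it may contain an elliptic involution swapping $a$ and $b$, and then no non-trivial hyperbolic with fixed points $a,b$ commutes with it. The general gap-orbit machinery still works there (the two gaps form one orbit, the stabilizer of one gap is the index-two diagonal part, and extending by conjugation by the swap produces something like $x\mapsto cx$ on one side and $x\mapsto x/c$ on the other), so you should run your main construction rather than invoke a commuting hyperbolic. Second, and more seriously, if $L_\Gamma$ is a single point the lone gap is stabilized by all of $\Gamma$, and if $\Gamma$ contains both a parabolic and a hyperbolic fixing that point (e.g.\ $\Gamma$ the full stabilizer $\Aff(\R)$ of $\infty$, for which $L_\Gamma=\{\infty\}\ne\Ss^1$), the centralizer of $\Gamma$ in $\Homeo(\Ss^1)$ is trivial: commuting with the parabolic forces $h(\infty)=\infty$ and $h(x)=x+c$ on the translation orbits, and commuting with the hyperbolic forces $c=0$. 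So neither a commuting parabolic flow nor anything else will do, and the statement itself fails for such $\Gamma$. The paper's own proof has exactly the same defect at this point (it, too, needs a non-trivial endpoint-fixing homeomorphism commuting with the full gap stabilizer), so this is a flaw of the proposition as stated rather than of your argument specifically; it is harmless for the paper's intended use, which is the non-elementary case.
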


\begin{proof} We write $\Ss^1\setminus L_{\Gamma}=\bigcup_{i\in \N} I_i$ as the union of its connected components. This induces an action of $\Gamma$ on $\N$, and let $R\subset \N$ be a fundamental domain. For $i\in R$, we set $h_{/I_i}$ to be a homeomorphisms fixing the endpoints of $I_i$ that commutes with the elements of $\Gamma$ stabilizing $I_i$ and that is not the identity. For $\gamma \in \Gamma$, we set $h_{/\gamma (I_i)}= \gamma \circ h_{/I_i}\circ \gamma^{-1}$. It is well defined because if $\gamma_1( I_i)\cap \gamma_2 (I_i)\ne \emptyset$, then $\gamma_2^{-1}\gamma_1$ stabilizes $I_i$, hence commutes with $h_{/I_i}$, and $\gamma_1 \circ h_{/I_i}\circ \gamma_1^{-1}=\gamma_2 \circ h_{/I_i}\circ \gamma_2^{-1}$.\\
\indent It is continuous because it is the identity on $L_{\Gamma}$, and it commutes with all elements of $\Gamma$.
\end{proof}

If $h\ne Id$, then $\Omega_h$ is not conformal to $\dS_2$, so we cannot expect all the Lorentz surfaces under study to be conformal to $\dS_2$, even if the group is non elementary.\\
\indent It is easy to see that the conformal boundary is not necessarily acausal: if $\Gamma\subset \PSL(2,\R)$ is non elementary and $L_\Gamma \ne \Ss^1$, then write $\Ss^1\setminus L_\Gamma =\bigcup\suite \intoo{a_n}{b_n}$, and let $h$ be the identity on $L_\Gamma$ and $h(x)=b_n$ for $x\in \intoo{a_n}{b_n}$. It is a non decreasing map of degree one that commutes with $\Gamma$, and it bounds an open set of $\dS_2$ invariant by $\Gamma$.

\begin{prop} \label{invariant_function} Let $\Gamma\subset \PSL(2,\R)$ be a subgroup such that $L_{\Gamma}\ne \Ss^1$. Then there is a non constant smooth $\Gamma$-invariant function $\sigma: \rm{dS}_2\to \R$. \end{prop}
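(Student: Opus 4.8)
The plan is to exploit the classical identification of $\dS_2$ with the space of oriented geodesics of the hyperbolic plane $\h^2$. A point $(x,y)\in\dS_2$, i.e. a pair of distinct points of $\partial_\infty\h^2=\Ss^1$, is exactly the oriented geodesic of $\h^2$ with endpoints $x$ and $y$, and the diagonal action of $\PSL(2,\R)$ on $\dS_2$ is its action by isometries on geodesics. Hence a $\Gamma$-invariant function on $\dS_2$ is the same as a $\Gamma$-invariant function on the space of geodesics, and the idea is to measure each geodesic against a canonical $\Gamma$-invariant subset of $\h^2$, namely the convex core of $L_\Gamma$.

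I would first treat the case where $L_\Gamma$ has at least two points. Let $N=\mathrm{Hull}(L_\Gamma)\subset\h^2$ be the convex hull of the limit set. Since $\Gamma$ preserves $L_\Gamma$ it preserves $N$, and since $L_\Gamma\ne\Ss^1$ omits an arc, $N$ is a proper closed convex subset of $\h^2$. I then set
\[ D(\ell)=d_{\h^2}(\ell,N)=\inf_{m\in\ell}d_{\h^2}(m,N),\qquad \sigma=\chi\circ D, \]
where $d_{\h^2}$ is the hyperbolic distance and $\chi:\intfo{0}{+\infty}\to\R$ is smooth, non-constant, and infinitely flat at $0$ (for instance $\chi(t)=e^{-1/t}$ for $t>0$ and $\chi(0)=0$). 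Invariance is immediate since $\Gamma$ acts by isometries preserving $N$, giving $D(\gamma\ell)=d_{\h^2}(\gamma\ell,\gamma N)=D(\ell)$. Non-constancy is clear as well: a geodesic through a point of $N$ has $D=0$, while geodesics running deep into a half-plane cut off by a boundary geodesic of $N$ have $D$ arbitrarily large, so $\chi$ can be chosen to separate these values.

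The main obstacle is the smoothness of $\sigma$, and this is where the geometry of $N$ is decisive. The key point is that $\h^2\setminus N$ is the \emph{disjoint} union of the half-planes $H_i$ bounded by the boundary geodesics $\gamma_i$ of $N$, one for each gap $I_i$ of $\Ss^1\setminus L_\Gamma=\bigcup_i I_i$: the $\gamma_i$ are pairwise disjoint and each $H_i$ faces the arc $I_i$, so the $H_i$ are pairwise disjoint and $N=\bigcap_i(\h^2\setminus H_i)$. Consequently $D(\ell)>0$ exactly when both endpoints of $\ell$ lie in a common gap $I_i$, and in that case $\ell\subset H_i$ and $D(\ell)=d_{\h^2}(\ell,\gamma_i)$ is the distance between two ultraparallel geodesics, a quantity that depends smoothly on $\ell$ (with a closed form in the four endpoints). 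Thus $D$ is smooth on the open set $\{D>0\}=\bigsqcup_i\{(x,y):x,y\in I_i\}$ and $\sigma$ vanishes on the complement; the infinite flatness of $\chi$ at $0$ forces $\sigma$ and all its derivatives to extend continuously by $0$ across the frontier $\{D\to 0\}$, so $\sigma\in C^\infty(\dS_2)$. This is the step requiring care, since the bare distance $D$ is only Lipschitz across $\partial N$; composing with a flat $\chi$ is precisely what upgrades it to a smooth function.

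It remains to dispose of the degenerate case $|L_\Gamma|\le 1$. If $L_\Gamma=\{\xi\}$, then $\Gamma$ fixes $\xi$ and preserves the horocycle foliation centred at $\xi$; placing $\xi$ at infinity in an affine chart, a geodesic becomes a Euclidean semicircle with endpoints $u\ne v$, and its width $|u-v|$ (equivalently the maximal height of $\ell$ above the horocycles at $\xi$) is preserved by the parabolic elements of $\Gamma$, yielding a smooth non-constant invariant after composing with a suitable smoothing function. The only obstruction would be the presence of elements scaling the horocycles, but such a group acts transitively on a dense open subset of $\dS_2$ and admits no non-constant invariant; this does not occur for the groups at hand, whose limit set $\Ss^1\setminus\bigcup_iI_i$ carries a genuine domain of discontinuity as in Proposition \ref{homeo_commutes}. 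In every relevant case one thus obtains the desired non-constant smooth $\Gamma$-invariant function $\sigma:\dS_2\to\R$.
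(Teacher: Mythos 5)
Your main construction (the case $\#L_\Gamma\ge 2$) is correct, and it takes a genuinely different, more geometric route than the paper. The paper works directly in torus coordinates: it sets $\sigma=0$ on the squares $I_i\times I_i$ and on the points with a coordinate in $L_\Gamma$, defines $\sigma$ on each rectangle $I_i\times I_j$ ($i\ne j$) as a product of de Sitter volumes of corner rectangles computed by an explicit cross-ratio formula, and gets smoothness by post-composing with a function $F$ that is \emph{constant} near $0$. Your version -- the distance from the geodesic $(x,y)$ to $\mathrm{Hull}(L_\Gamma)$ -- is dual to this (supported on $\bigcup_i I_i\times I_i$ rather than on its complement) and is essentially the paper's second alternative construction, since the volume $\omega(R)$ of the corner rectangle and the distance to the boundary geodesic $\gamma_i$ are functions of the same cross-ratio; what your phrasing buys is a coordinate-free description and an obvious invariance. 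One step needs a line more: flatness of $\chi$ at $0$ only suffices if the derivatives of $D$ blow up at most polynomially in $1/D$ near $\{D=0\}$. This does hold, because $\cosh D$ is a rational expression in the four endpoints, so $D\asymp\sqrt{s-1}$ with $s$ smooth, but it must be said -- or sidestepped entirely by taking $\chi$ constant on a neighbourhood of $0$ (as the paper does with $F$), which makes $\sigma$ locally constant on the open set $\{D<\e\}\supset\{D=0\}$ and renders the smoothness issue trivial.

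Your last paragraph, however, contains a genuine gap. If $L_\Gamma=\{\xi\}$, the width $|u-v|$ is invariant under parabolics fixing $\xi$ but is scaled by any hyperbolic element fixing $\xi$, and nothing in the hypotheses excludes such elements: $\Gamma=\langle z\mapsto 2z\rangle$ has $L_\Gamma=\{\infty\}$ as a perfectly legitimate minimal closed invariant set, and your escape clause (``this does not occur for the groups at hand'') has no basis, since Proposition \ref{homeo_commutes} applies to exactly the same class of groups. For that particular $\Gamma$ the statement is still true -- replace $L_\Gamma$ by the two-point fixed set $\{0,\infty\}$ and run your main construction -- but the statement of Proposition \ref{invariant_function} is actually \emph{false} for $\Gamma=\Aff(\R)$, the full stabilizer of $\infty$: it satisfies $L_\Gamma=\{\infty\}\ne\Ss^1$, yet it has two open orbits on $\dS_2$ whose common closure is all of $\dS_2$, so every continuous invariant function is constant. (The paper's own proof silently degenerates in the same situation: with a single complementary interval there are no rectangles $I_i\times I_j$ with $i\ne j$, and its $\sigma$ is identically zero.) The clean repair is to prove the proposition under the hypothesis that $\Gamma$ preserves a closed set with at least two points which is not all of $\Ss^1$ -- which is all that is ever used -- and your first two paragraphs do exactly that.
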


\begin{proof}  Start by writing $\Ss^1\setminus L_{\Gamma}= \bigcup_{i\in \N}I_i$ as the union of its connected components. We start by setting $\sigma=0$ on $(L_{\rho(\Gamma)}\times \Ss^1\cup \Ss^1\times L_{\rho(\Gamma)})\setminus \Delta$ and on $I_i\times I_i\setminus \Delta$ for $i\in \N$. For $x\in I_i\times I_j$ with $i\ne j$, consider $R_1,R_2,R_3,R_4$ the four rectangles that have $x$ as one corner and a corner of $I_i\times I_j$ as the opposite corner (see Figure \ref{fig:invariant_functions}). Let $\sigma(x)=\omega(R_1)\omega(R_2)\omega(R_3)\omega(R_4)$ where $\omega$ is the volume form associated to the de Sitter metric. By using the explicit formula $\omega(\intff{a}{b}\times\intff{c}{d})=4\Log([a,b,c,d])$ where $[a,b,c,d]=\frac{a-c}{a-d}\frac{b-d}{b-c}$ is the cross-ratio, we see that $\sigma$ is continuous. The function $\sigma$ is smooth in the interior of rectangles $I_i\times I_j$, i.e. where it is non zero. If $F:\R \to \R$ is smooth and constant on a neighbourhood of $0$ sufficiently small so that $F\circ \sigma$ is not constant, then $F\circ \sigma$ is  $\Gamma$-invariant, non constant and smooth.\\
\indent There are many other ways of constructing invariant functions. We could set $\sigma(x)$ on $I_i\times I_i$ to be $F(\omega(R ))$ where $R$ is the rectangle amongst $R_1,R_2,R_3$ and $R_4$ defined above that is included in $\Ss^1\times \Ss^1\setminus \Delta$ (see Figure \ref{fig:invariant_functions}).\\
\indent Finally, we could also choose $\sigma$ arbitrarily on the squares $I_i\times I_i$ where $i$ is in a fundamental domain for the action of $\Gamma$ on the connected components of $\Ss^1\setminus L_\Gamma$, and let $\sigma$ be constant on rectangles $I_i\times I_j$ with $i\ne j$.
\end{proof}

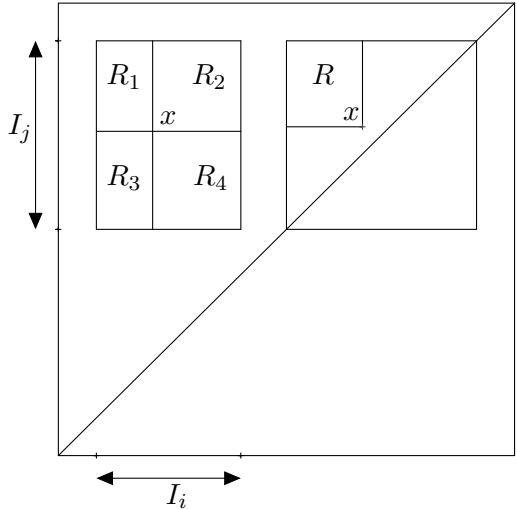
\begin{figure}[h]
\definecolor{uuuuuu}{rgb}{0.27,0.27,0.27}
\begin{tikzpicture}[line cap=round,line join=round,>=triangle 45,x=1.0cm,y=1.0cm]
\clip(-6.5,-1.9) rectangle (11.6,5.2);
\draw (-3,5)-- (3,5);
\draw (3,5)-- (3,-1);
\draw (3,-1)-- (-3,-1);
\draw (-3,-1)-- (-3,5);
\draw (-3,-1)-- (3,5);
\draw (-2.5,4.5)-- (-0.6,4.5);
\draw (-0.6,4.5)-- (-0.6,2);
\draw (-0.6,2)-- (-2.5,2);
\draw (-2.5,4.5)-- (-2.5,2);
\draw (-2.5,3.3)-- (-0.6,3.3);
\draw (-1.76,2)-- (-1.76,4.5);
\draw [<->] (-3.3,2) -- (-3.3,4.5);
\draw [<->] (-0.6,-1.3) -- (-2.5,-1.3);
\draw (-1.72,-1.24) node[anchor=north west] {$I_i$};
\draw (-3.8,3.66) node[anchor=north west] {$I_j$};
\draw (-2.5,4.32) node[anchor=north west] {$R_1$};
\draw (-1.38,4.32) node[anchor=north west] {$R_2$};
\draw (-2.5,2.98) node[anchor=north west] {$R_3$};
\draw (-1.36,2.98) node[anchor=north west] {$R_4$};
\draw (-1.8,3.7) node[anchor=north west] {$x$};
\draw (0,4.5)-- (0,2);
\draw (0,2)-- (2.5,2);
\draw (2.5,2)-- (2.5,4.5);
\draw (2.5,4.5)-- (0,4.5);
\draw (1,4.5)-- (1,3.36);
\draw (0,3.36)-- (1,3.36);
\draw (1.1,3.75) node[anchor=north east] {$x$};
\draw (0.2,4.32) node[anchor=north west] {$R$};
\begin{scriptsize}
\draw [color=black] (-2.5,-1)-- ++(-1.0pt,0 pt) -- ++(2.0pt,0 pt) ++(-1.0pt,-1.0pt) -- ++(0 pt,2.0pt);
\draw [color=black] (-0.6,-1)-- ++(-1.0pt,0 pt) -- ++(2.0pt,0 pt) ++(-1.0pt,-1.0pt) -- ++(0 pt,2.0pt);
\draw [color=black] (-3,2)-- ++(-1.0pt,0 pt) -- ++(2.0pt,0 pt) ++(-1.0pt,-1.0pt) -- ++(0 pt,2.0pt);
\draw [color=black] (-3,4.5)-- ++(-1.0pt,0 pt) -- ++(2.0pt,0 pt) ++(-1.0pt,-1.0pt) -- ++(0 pt,2.0pt);
\draw [color=uuuuuu] (1,3.36)-- ++(-1.0pt,0 pt) -- ++(2.0pt,0 pt) ++(-1.0pt,-1.0pt) -- ++(0 pt,2.0pt);
\end{scriptsize}
\end{tikzpicture}
\caption{Construction of invariant functions} \label{fig:invariant_functions}
\end{figure}

This result implies that the curvature of a Lorentz surface with a non proper action of the isometry group is not necessarily constant.\\
\indent  By choosing the function $\sigma$ arbitrarily on  the rectangles $I_i\times I_j$ where $(i,j)$ lies in a fundamental domain for the action of $\Gamma$ on $\N^2$  and extending by invariance (and always composing with a cut-off function in $\R$ to get smoothness), we could show that there are Lorentz metrics such that all the local isometries are restrictions of elements of $\Gamma$ (by showing that such a metric is generic in the set of $\Gamma$-invariant metrics, following the proof of Sunada \cite{sunada} for Riemannian manifolds).\\
\indent Note that the same method would work on $\Omega_h$ and could produce an invariant metric $g_{\sigma}$ that is not extendible to $\Ss^1\times \Ss^1\setminus \Delta$, so maximal spatially compact surfaces can have a non trivial conformal boundary, eventually  not acausal.
\subsection{Finite covers} If $(M,g)$ is a spatially compact surface, then so are its finite covers. By applying this to the De Sitter space, we find space-times for which the representations $\rho_1^M$, $\rho_2^M$  have values in  $\PSL_k(2,\R)$ (the order $k$ covering of $\PSL(2,\R)$, its elements are the lifts of elements of $\PSL(2,\R)$ to the order $k$ covering of $\Ss^1$, which is still a circle).\\
\indent By taking finite covers of the examples constructed above, we obtain groups that are covers of subgroups of $\PSL(2,\R)$. However, by using similar constructions starting with the $k$ covering of $\rm{dS}_2$, we obtain subgroups of $\PSL_k(2,\R)$ that are not covers of subgroups of $\PSL(2,\R)$.
\subsection{Extensions of $\rm{dS}_2$} A common property to the examples that we have defined so far is that they all embed conformally in the flat torus. However, it is not always the case.

\begin{prop} There are spatially compact surfaces $(M,g)$ with a non proper action of $\Isom(M,g)$ that do not embed conformally in $\T^2$. \end{prop}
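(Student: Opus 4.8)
The plan is to exhibit the obstruction to embedding in $\T^2$ directly in the cylinder model and then realise it by a non-proper isometric action. Recall from the discussion in ``From the cylinder to the torus'' that a spatially compact surface embeds conformally in $\T^2$ if and only if its developing image $U=\tilde p(\tilde M)\subset\R^2$ satisfies $\tilde h_\uparrow(x)-\tilde h_\downarrow(x)\le 1$ for all $x$ (equivalently, by Proposition \ref{immersion_torus}, $p$ is injective, i.e. two lightlike geodesics meet at most once). So it suffices to construct a spatially compact $(M,g)$ whose developing strip has a vertical fibre of length strictly greater than $1$, together with a non-proper isometric action.

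First I would fix a hyperbolic element $\gamma\in\PSL(2,\R)$ with attracting and repelling fixed points $p_+,p_-$ on $\Ss^1=\R\proj^1$, and lift its projective action to $\tilde\gamma:\R\to\R$ on the universal cover of the circle. The elementary fact I would use is that every lift of a circle map commutes with the integer translations $t\mapsto t+n$; hence the strip $U=\{(x,y)\in\R^2 \mid x<y<x+2\}$ is invariant both under the deck transformation $T:(x,y)\mapsto(x+1,y+1)$ and under the diagonal map $\tilde\gamma\times\tilde\gamma$. Its two boundary lines $\{y=x\}$ and $\{y=x+2\}$ are spacelike for $dxdy$, so $M_0=U/\langle T\rangle$ is a spatially compact surface, with compact Cauchy hypersurface $\{y=x+1\}/\langle T\rangle$, that does not embed in $\T^2$ since its vertical fibres have length $2>1$. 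Choosing lifts with $\tilde p_+<\tilde p_-<\tilde p_++2$, the point $z_0=(\tilde p_+,\tilde p_-)$ is a fixed point of $\tilde\gamma\times\tilde\gamma$ inside $U$, and the derivative there is $\mathrm{diag}(\gamma'(p_+),\gamma'(p_-))$, with eigenvalues off the unit circle since $\gamma'(p_+)<1<\gamma'(p_-)$.

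It then remains to put a $\gamma$-invariant metric in the conformal class $[dxdy]$ on $M_0$, so that $\gamma$ becomes an isometry; once this is done non-properness is automatic, because $\gamma^n$ fixes $z_0$ while the norm of $D(\gamma^n)_{z_0}$ tends to infinity, so that $\gamma^n\to\infty$ in $\Isom(M_0,g)$ (isometries being determined by their $1$-jet) while fixing a point. This is the main obstacle: the De Sitter factor, which in these coordinates is a positive function of $y-x$ that blows up precisely along the lines $y-x\in\Z$ and hence is singular along the interior line $\{y=x+1\}\subset U$, cannot be used directly. I would instead produce the conformal factor by an averaging (Poincaré series) argument in the spirit of Proposition \ref{invariant_function}: starting from a nonnegative bump function $\phi$ on $M_0$ that vanishes near the fixed points of the descended map $\bar\gamma$ and whose $\bar\gamma$-translates cover every orbit, the sum $\sigma=\sum_{k\in\Z}\phi\circ\bar\gamma^k$ converges to a smooth positive $\bar\gamma$-invariant function, and $g=\sigma\,dxdy$ is the desired metric.

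Finally I would note that the construction is robust: replacing $2$ by any integer $n\ge 2$, or letting $\gamma$ generate a non-elementary group $\Gamma$ with $L_\Gamma\ne\Ss^1$ and bulging the fibre past length $1$ only over the gaps of the limit set (keeping the De Sitter factor, hence the non-properness, over the directions in $L_\Gamma$ where the fixed points lie), yields further families of such surfaces. The delicate point throughout is the equivariant smooth construction of the conformal factor across the region where the natural invariant metric degenerates; everything else reduces to the monotonicity and commutation properties already established for the cylinder model.
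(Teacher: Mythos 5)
Your conformal and topological setup is fine: the strip $U=\{x<y<x+2\}$ modulo $T$ is spatially compact, has vertical fibres of length $2>1$ and hence does not embed in $\T^2$, and the diagonal lift $\tilde\gamma\times\tilde\gamma$ of a hyperbolic element commutes with $T$ and preserves $U$. The gap is exactly at the point you flag as ``delicate'': on this $M_0$ there is \emph{no} $\gamma$-invariant Lorentz metric in the conformal class $[dxdy]$ at all, so no averaging trick can produce one. The obstruction is not the singular line $y=x+1$ of the De Sitter factor but the fixed points of $\bar\gamma$ sitting \emph{on} that line, namely $(\tilde p_\pm,\tilde p_\pm+1)$ (same fixed point of $\gamma$ in both coordinates, shifted by the deck transformation). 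There $D\bar\gamma=\gamma'(p_\pm)\cdot\mathrm{Id}$ is a homothety of ratio $\lambda\neq\pm1$, and invariance of $g=\sigma\,dxdy$ forces $\sigma(q)\lambda^2=\sigma(q)$, i.e.\ $\sigma(q)=0$. Your Poincar\'e series cannot evade this: if $\phi$ vanishes near the fixed points then every translate $\phi\circ\bar\gamma^k$ vanishes \emph{at} them (they are fixed), so $\sigma$ vanishes there and the metric degenerates; if instead $\phi(q)>0$ the series diverges at $q$. (The ``good'' fixed points such as $(\tilde p_+,\tilde p_-)$, where the differential is $\mathrm{diag}(\lambda,\lambda^{-1})$, pose no local obstruction, but your series still returns $\sigma=0$ there as written.)

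This is why the paper does not take an invariant subset of the cylinder containing the full extra width: any $T$- and $\tilde\gamma\times\tilde\gamma$-invariant region whose fibre over a lift of a fixed point of $\gamma$ has length $>1$ is forced to contain one of these bad homothety points. The paper instead uses a \emph{parabolic} element (whose diagonal action is fixed-point free on $\mathrm{dS}_2$) and glues two regions, $U$ carrying the De Sitter metric and $V$ carrying a metric built by averaging over the properly discontinuous part of the action and matched to the De Sitter metric near the axes; non-properness then comes for free from the De Sitter piece, and non-embeddability from the fact that lightlike geodesics issued from $U\cap V$ meet twice. Your closing suggestion (bulging the fibre past length $1$ only over the gaps of the limit set, keeping the De Sitter factor over the fixed directions) is the right kind of repair, precisely because it keeps the offending fixed points out of the added width, but as stated your main construction does not yield an isometric, let alone non-proper, action of $\gamma$.
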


\begin{proof} Start with $\gamma \in \PSL(2,\R)$ a parabolic element. Let $x_0\in \Ss^1$ be its fixed point. We consider the open sets $U=\{(x,y)\in \Ss^1\times \Ss^1 \vert x_0<y<x<x_0\}$ and $V=\{(x,y)\in \Ss^1\times \Ss^1\vert x_0<\gamma(x)<y<x_0 \}$. Up to replacing $\gamma$ by $\gamma^{-1}$, we can assume that $\Delta\setminus \{(x_0,x_0)\} \subset V$.  Let $g_U$ be the restriction to $U$ of the De Sitter metric. Let $g_V$ be a metric on $V$ preserved by $\gamma$ that is equal to the De Sitter metric in a neighbourhood of the axes of $\gamma$ (such a metric exists because $\gamma$ acts properly on the complement of the axes). Let $M$ be the manifold obtained by gluing $U$ and $V$ along $\{x_0\}\times \Ss^1\cup \Ss^1\times \{x_0\}$. Since the metrics $g_U$ and $g_V$ are equal on a neighbourhood of the glued parts, they endow $M$ with a Lorentz metric $g$  that is preserved by $\gamma$. The lightlike geodesics leaving from a point of $U\cap V$ meet again, and $\gamma$ acts non properly on $M$. The graph of any rotation $R_{\alpha}$ is a Cauchy hypersurface in $M$, therefore it is spatially compact. 
\end{proof}

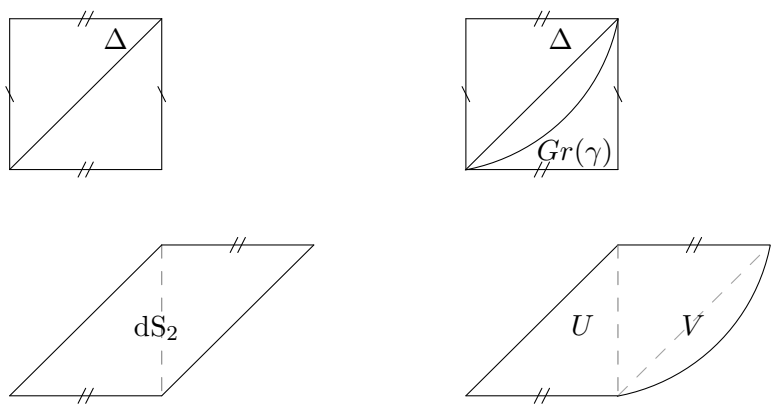
\begin{figure}[h]
\definecolor{aqaqaq}{rgb}{0.63,0.63,0.63}
\definecolor{yqyqyq}{rgb}{0.5,0.5,0.5}
\begin{tikzpicture}[line cap=round,line join=round,>=triangle 45,x=1.0cm,y=1.0cm]
\clip(-4.69,0.5) rectangle (6.57,6.2);
\draw [dash pattern=on 5pt off 5pt,color=yqyqyq] (-2,3)-- (-2,1);
\draw (-4,1)-- (-2,1);  \draw (-3.1,0.9)-- (-3,1.1); \draw (-3,0.9)-- (-2.9,1.1);
\draw (0,3)-- (-2,3);  \draw (-1.1,2.9)-- (-1,3.1); \draw (-1,2.9)-- (-0.9,3.1);
\draw (0,3)-- (-2,1);
\draw (-2,3)-- (-4,1);
\draw (2,1)-- (4,1); \draw (2.9,0.9)-- (3,1.1); \draw (3,0.9)-- (3.1,1.1);
\draw (4,3)-- (2,1);
\draw [dash pattern=on 5pt off 5pt,color=aqaqaq] (4,1)-- (6,3);
\draw (6,3)-- (4,3); \draw (4.9,2.9)-- (5,3.1); \draw (5,2.9)-- (5.1,3.1);
\draw [dash pattern=on 5pt off 5pt,color=yqyqyq] (4,3)-- (4,1);
\draw [shift={(3.52,3.48)}] plot[domain=4.9:6.09,variable=\t]({1*2.53*cos(\t r)+0*2.53*sin(\t r)},{0*2.53*cos(\t r)+1*2.53*sin(\t r)});
\draw (-2.5,2.2) node[anchor=north west] {$\rm{dS}_2$};
\draw (3.25,2.2) node[anchor=north west] {$U$};
\draw (4.7,2.2) node[anchor=north west] {$V$};
\draw (-4,6)-- (-2,6);  \draw (-3.1,5.9)-- (-3,6.1); \draw (-3,5.9)-- (-2.9,6.1);
\draw (-2,4)-- (-4,4);  \draw (-3.1,3.9)-- (-3,4.1); \draw (-3,3.9)-- (-2.9,4.1);
\draw (-4,6)-- (-4,4); \draw(-4.05,5.1)-- (-3.95,4.9);
\draw (-2,6)-- (-2,4); \draw(-2.05,5.1)-- (-1.95,4.9);
\draw (2,6)-- (2,4); \draw(1.95,5.1)-- (2.05,4.9);
\draw (4,4)-- (4,6); \draw(3.95,5.1)-- (4.05,4.9);
\draw (2,6)-- (4,6); \draw (2.9,5.9)-- (3,6.1); \draw (3,5.9)-- (3.1,6.1);
\draw (4,4)-- (2,4); \draw (2.9,3.9)-- (3,4.1); \draw (3,3.9)-- (3.1,4.1);
\draw [shift={(1.52,6.48)}] plot[domain=4.9:6.09,variable=\t]({1*2.53*cos(\t r)+0*2.53*sin(\t r)},{0*2.53*cos(\t r)+1*2.53*sin(\t r)});
\draw (4,6)-- (2,4);
\draw (-2,6)-- (-4,4);
\draw (-2.9,5.99) node[anchor=north west] {$\Delta$};
\draw (2.95,5.99) node[anchor=north west] {$\Delta$};
\draw (2.8,4.55) node[anchor=north west] {$Gr(\gamma)$};
\end{tikzpicture}
\caption{Extension of $\rm{dS}_2$} \label{fig:extension_de_sitter}
\end{figure}

We constructed this example with a parabolic element, but the same method would apply with a hyperbolic element, or more generally with any subgroup $\Gamma\subset \PSL(2,\R)$ such that $L_{\Gamma}\ne \Ss^1$.

\section{Actions on the circle}
\subsection{Minimal closed invariant subsets of $\Ss^1$}  An important object in the study of  groups of circle homeomorphisms is a minimal closed invariant set. It extends the notion of limit set for subgroups of $\PSL(2,\R)$. Given a group $G \subset \Homeo(\Ss^1)$,  exactly one of the following conditions is satisfied (see \cite{Gh01} for a proof and more detail):
\begin{enumerate} \item $G$ has a finite orbit \item All orbits of $G$ are dense \item There is a compact $G$-invariant subset $K\subset \Ss^1$ which is infinite and different from $\Ss^1$, such that the orbits of points of $K$ are dense in $K$. \end{enumerate}

\indent In the first case, all finite orbits have the same cardinality.  In the third case, the set $K$ is unique, and it is homeomorphic to a Cantor set. We can call a group $G\subset \Homeo(\Ss^1)$ \textbf{non elementary} if it does not have any finite orbit, and use $L_{G}$ to denote $\Ss^1$ in the second case and  the $G$-invariant compact set $K$ in the third case. If $G\subset \PSL(2,\R)$, then $L_{G}$ is its limit set, so there is no notation conflict.
\subsection{Semi conjugacy} Let us recall a few results of \cite{Gh87} on semi conjugacy.
\begin{defi}  We say that  $\sigma : \Gamma \to \Homeo(\Ss^1)$ is  semi conjugate to  $\tau : \Gamma\to \Homeo(\Ss^1)$ if there is a non constant  map $h:\Ss^1\to\Ss^1$ which is non decreasing of degree one (i.e. that has a non decreasing lift $\tilde h:\R\to \R$ such that $\tilde h(x+1)=\tilde h(x) +1$) such that $h\circ \sigma(\gamma) = \tau(\gamma) \circ h$ for all $\gamma \in \Gamma$. \end{defi}
\indent The advantage of this definition (contrary to the standard definition where $h$ is asked to be continuous) is that semi conjugacy is an equivalence relation.\\
\indent For elementary groups, we have a simple characterisation of semi conjugacy:
\begin{prop} \label{finite_orbit_semi_conjugacy} Let $\rho :\Gamma \to \Homeo(\Ss^1)$ have a finite orbit $E \subset \Ss^1$ with at least two elements. A representation $\tau:\Gamma \to \Homeo(\Ss^1)$ is semi conjugate to $\rho$ if and only if it has a finite orbit $F\subset \Ss^1$ such that there is a cyclic order preserving bijection from $E$ to $F$ which is equivariant under the actions of $\Gamma$. \end{prop}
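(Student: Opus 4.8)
The plan is to reduce everything to one elementary observation: an orientation-preserving homeomorphism of $\Ss^1$ that permutes a cyclically ordered finite set $E=\{e_0,\dots,e_{n-1}\}$ with $n\ge 2$ must act on it as a cyclic rotation. Thus the $\rho$-action on $E$ is encoded by a homomorphism $\chi_\rho:\Gamma\to\Z/n\Z$ via $\rho(\gamma)e_i=e_{i+\chi_\rho(\gamma)}$, and the hypothesis that $E$ is a single orbit is \emph{exactly} the statement that $\chi_\rho$ is surjective. An equivariant, cyclic-order-preserving bijection $\phi:E\to F$, with $f_i:=\phi(e_i)$, says precisely that $F$ carries the same rotation homomorphism, i.e. $\chi_\tau=\chi_\rho$. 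The proposition then amounts to the assertion that, for a representation with a finite orbit, the semi-conjugacy class is detected by this homomorphism.

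For the implication starting from such an $F$, I would exhibit an explicit semi-conjugacy. Since semi-conjugacy is an equivalence relation, it is enough to semi-conjugate $\rho$ to $\tau$. I would define $h:\Ss^1\to\Ss^1$ by collapsing each half-open arc $\intfo{e_i}{e_{i+1}}$ to the single point $f_i$. Its lift is a step function, non-decreasing and commuting with $x\mapsto x+1$, so $h$ is non-constant and non-decreasing of degree one. Equivariance is then a one-line check: for $x\in\intfo{e_i}{e_{i+1}}$ one has $\rho(\gamma)x\in\intfo{e_{i+\chi_\rho(\gamma)}}{e_{i+1+\chi_\rho(\gamma)}}$, hence $h(\rho(\gamma)x)=f_{i+\chi_\rho(\gamma)}=\tau(\gamma)f_i=\tau(\gamma)h(x)$, using $\chi_\tau=\chi_\rho$.

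For the converse I would again invoke symmetry of semi-conjugacy to obtain a non-decreasing degree-one map $h$ with $h\circ\rho(\gamma)=\tau(\gamma)\circ h$, and set $F:=h(E)$. This is at once a finite $\tau$-invariant set on which $\tau$ acts transitively (because $\rho$ is transitive on $E$ and $h$ intertwines), and $h|_E$ is an equivariant, weakly cyclic-order-preserving surjection. The whole difficulty, and where I expect the main obstacle, is to prove that $h|_E$ is \emph{injective}, i.e. that the monotone map $h$ does not fold two points of $E$ together. I would handle this as follows: the relation $e\sim e'\iff h(e)=h(e')$ is a $\Gamma$-invariant equivalence relation on $E$ whose classes, by monotonicity of $h$, are runs of consecutive points; transitivity of $\Gamma$ on $E$ forces all classes to share a common size $b$ and to form $n/b$ consecutive blocks permuted by $\Gamma$. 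For $\rho(\gamma)$, a rotation by $\chi_\rho(\gamma)$, to preserve this block partition one needs $b\mid\chi_\rho(\gamma)$ for every $\gamma$, so the image of $\chi_\rho$ lies in $b\,\Z/n\Z$; since $\chi_\rho$ is onto (the single-orbit hypothesis), this forces $b=1$. Hence $h|_E:E\to F$ is the desired equivariant cyclic-order-preserving bijection. The essential point is thus that the single-orbit hypothesis is exactly what rules out collapse: without transitivity a monotone equivariant map could fold $E$ onto a proper quotient, and the stated equivalence would break down.
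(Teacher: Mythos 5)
The paper offers no proof of this proposition to compare against: it is quoted from [Gh87] as a recalled fact. Your argument is correct and complete, and it isolates the right difficulty. The forward direction is the standard construction (collapse each arc $\intfo{e_i}{e_{i+1}}$ to $f_i$; the lift is a non-decreasing step function commuting with $x\mapsto x+1$, and equivariance reduces to $\chi_\tau=\chi_\rho$). For the converse, the only real content is indeed the injectivity of $h$ on $E$, and your block argument handles it correctly: the fibres of $h|_E$ form a $\Gamma$-invariant partition of $E$ into cyclic runs of a common length $b$, invariance of this block partition under the index rotations forces $\chi_\rho(\Gamma)\subset b\,\Z/n\Z$, and surjectivity of $\chi_\rho$ (which is exactly the single-orbit hypothesis, and exactly what fails in the counterexamples you allude to) gives $b=1$. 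Two small points are worth writing out. First, you invoke symmetry of semi-conjugacy twice; the paper asserts the relation is an equivalence relation, and for the discontinuous monotone maps used here one can verify symmetry directly by passing to the generalized inverse $\tilde h^{+}(y)=\sup\{x\,:\,\tilde h(x)\le y\}$, which is again non-decreasing of degree one and intertwines the actions in the reverse direction. Second, the claim that each fibre of $h|_E$ is a cyclic run needs the one-line circle argument: lifting, if $\tilde h(\tilde e_i)\equiv \tilde h(\tilde e_j) \pmod 1$ with $\tilde e_i<\tilde e_j<\tilde e_i+1$, then $\tilde h(\tilde e_j)$ equals $\tilde h(\tilde e_i)$ or $\tilde h(\tilde e_i)+1$, and monotonicity collapses one of the two arcs between $e_i$ and $e_j$ accordingly. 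With these two remarks made explicit, your proof stands as a self-contained substitute for the citation.
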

Consequently, if $\rho$ and $\tau$ are semi conjugate, then $\rho$ is non elementary if and only if $\tau$ is non elementary. If $(M,g)$ is a spatially compact surface that embeds conformally in $\T^2$, then $\rho_1^M(\Isom(M,g))$ is non elementary if and only if $\rho_2^M(\Isom(M,g))$ is non elementary, therefore it is a property of the isometry group.
\begin{defi} Let $(M,g)$ be a spatially compact surface that embeds conformally in $\T^2$. We say that a subgroup $G\subset \Isom(M,g)$ is elementary if $\rho_1^M(G)$ is elementary. \end{defi}

We saw that if there is no finite orbit, then  either there is an invariant Cantor set, or all orbits are dense. However, this distinction is not detectable under semi conjugacy:
\begin{prop} Let $\rho : \Gamma \to \Homeo(\Ss^1)$ be a non elementary representation with an invariant Cantor set $L_{\rho(\Gamma)}$. Then $\rho$ is semi conjugate to a  minimal representation $\hat \rho :\Gamma \to \Homeo(\Ss^1)$. \end{prop}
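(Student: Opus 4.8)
The plan is to collapse each connected component of the complement $\Ss^1\setminus L_{\rho(\Gamma)}$ to a point, so that the invariant Cantor set becomes the whole circle and the induced action is minimal. Write $\Ss^1\setminus L_{\rho(\Gamma)}=\bigcup\suite \intoo{a_n}{b_n}$ as the countable union of its connected components, which I call the \emph{gaps}. Since each $\rho(\gamma)$ preserves $L_{\rho(\Gamma)}$ and is an orientation preserving homeomorphism, it permutes the gaps and carries each closed gap $\overline{\intoo{a_n}{b_n}}$ onto another closed gap. To realise the collapse analytically, I would first fix an atomless Borel probability measure $\mu$ on $\Ss^1$ whose support is exactly $L_{\rho(\Gamma)}$; such a measure exists because $L_{\rho(\Gamma)}$ is a Cantor set. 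Note that $\mu$ need not be $\Gamma$-invariant, only supported on $L_{\rho(\Gamma)}$, so no amenability or invariance input is required.

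Lifting $\mu$ to a $\Z$-periodic measure $\tilde\mu$ on $\R$ normalised by $\tilde\mu(\intfo{0}{1})=1$ and setting $\tilde h(x)=\tilde\mu(\intof{0}{x})$, I obtain a non-decreasing map $\tilde h:\R\to\R$ with $\tilde h(x+1)=\tilde h(x)+1$, hence a continuous non-decreasing degree one map $h:\Ss^1\to\Ss^1$. Because $\mu$ has full support on $L_{\rho(\Gamma)}$ and no atoms, for $x\ne y$ one has $h(x)=h(y)$ if and only if the arc between $x$ and $y$ meets $L_{\rho(\Gamma)}$ in a $\mu$-null set, i.e. if and only if this arc is contained in the closure of a single gap. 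I then define $\hat\rho(\gamma)$ by pushing the action through $h$, namely $\hat\rho(\gamma)(h(x))=h(\rho(\gamma)(x))$. This is well defined by the level-set description together with the fact that $\rho(\gamma)$ sends closed gaps to closed gaps; it is plainly a homomorphism, and being a monotone degree one bijection of the circle (its inverse is $\hat\rho(\gamma^{-1})$) each $\hat\rho(\gamma)$ lies in $\Homeo(\Ss^1)$. By construction $h$ is non constant, non-decreasing of degree one, and satisfies $h\circ\rho(\gamma)=\hat\rho(\gamma)\circ h$, so it is the required semi-conjugacy.

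It remains to check that $\hat\rho$ is minimal. The endpoints $a_n,b_n$ of every gap lie in $L_{\rho(\Gamma)}$, so $h(L_{\rho(\Gamma)})=\Ss^1$. Given any point of the target circle, write it as $h(x)$ with $x\in L_{\rho(\Gamma)}$; since the $\rho(\Gamma)$-orbit of $x$ is dense in $L_{\rho(\Gamma)}$ (the defining property of the invariant Cantor set), continuity and equivariance of $h$ yield $\overline{\hat\rho(\Gamma)h(x)}\supseteq h(\overline{\rho(\Gamma)x})=h(L_{\rho(\Gamma)})=\Ss^1$. Thus every $\hat\rho(\Gamma)$-orbit is dense and $\hat\rho$ is minimal. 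The main obstacle is the verification in the middle step: one must be careful that the level sets of $h$ are exactly the closed gaps (this is precisely where full support and atomlessness of $\mu$ enter) and that the factored maps $\hat\rho(\gamma)$ are genuine homeomorphisms of a circle rather than of a degenerate quotient; equivalently, one must confirm that collapsing the countably many closed gaps of a Cantor set produces a space homeomorphic to $\Ss^1$ on which $\Gamma$ acts by homeomorphisms.
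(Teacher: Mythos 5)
Your construction is correct, and it is exactly the approach the paper intends (the paper itself only cites Ghys and describes the proof as "collapsing the circle along the Cantor set $L_{\rho(\Gamma)}$"); your use of an atomless measure supported on $L_{\rho(\Gamma)}$ and its distribution function is the standard way to realise that collapse, and your verifications of well-definedness, of $\hat\rho(\gamma)\in\Homeo(\Ss^1)$, and of minimality via $h(L_{\rho(\Gamma)})=\Ss^1$ are all sound. No gaps.
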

The proof can be found in \cite{Gh01} and consists in collapsing the circle along the Cantor set $L_{\rho(\Gamma)}$. Our goal for spatially compact surfaces whose isometry group acts non properly is to show that the collapsed representations $\hat \rho_1^M$ and $\hat \rho_2^M$ are topologically conjugate to actions of subgroups of $\PSL_k(2,\R)$. 

\subsection{Rotation number of $h_{\to \uparrow}$} \label{subsec:rotation}
In order to introduce a generalised notion of convergence groups, we will need a homeomorphism that commutes with $\rho_1^M$. If the conformal boundary is acausal, then such a homeomorphism is given by $h_{\to \uparrow} = h_\to \circ h_\uparrow$. Its class under differentiable conjugacy is a conformal invariant.\\ 
\indent The principal invariant under topological conjugacy for circle homeomorphisms is the rotation number. We will see that if the isometry group acts non properly, then there is a restriction on the values that it can take for $h_{\to \uparrow}$.\\
\indent For the $k$-cover of $\dS_2$, we find $h_{\to}=Id$ and $h_{\uparrow}=R_{\frac{1}{k}}$, so $h_{\to \uparrow} = R_{\frac{1}{k}}$. We will see that this is part of a more general property.
\begin{prop} \label{rotation_number} Let $(M,g)$ be a spatially compact surface with an acausal conformal boundary that embeds conformally in $\T^2$. Assume that $\Isom(M,g)$ acts non properly on $M$. Then there is $k\in \N$ such that the rotation number of $h_{\to \uparrow}$ is $\frac{1}{k}$. \end{prop}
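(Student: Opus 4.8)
The plan is to work throughout in the acausal, embeds-in-the-torus setting, where $h := h_{\to\uparrow} = h_\downarrow^{-1}\circ h_\uparrow$ is a genuine circle homeomorphism commuting with every element of $G := \rho_1^M(\Isom(M,g))$, and to determine its rotation number. I would phrase everything through the canonical lift $\tilde h = \tilde h_\downarrow^{-1}\circ \tilde h_\uparrow$, whose translation number $\mathrm{rot}(\tilde h)$ is a well-defined real number, and the goal becomes $\mathrm{rot}(\tilde h)=\tfrac1k$.

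First I would establish an a priori bound. By acausality $\tilde h_\uparrow,\tilde h_\downarrow:\R\to\R$ are increasing homeomorphisms commuting with $x\mapsto x+1$, and the torus-embedding condition is exactly $0<\tilde h_\uparrow(x)-\tilde h_\downarrow(x)\le 1$ for all $x$. Since $\tilde h_\downarrow^{-1}$ is increasing, this gives $x<\tilde h(x)\le x+1$ for all $x$, so $\mathrm{rot}(\tilde h)\in\mathopen{]}0,1\mathclose{]}$. Moreover the equality $\tilde h(x_0)=x_0+1$ happens precisely when the fibre over $x_0$ is a full circle minus a point (the $\dS_2$-type degeneration), in which case $h$ has a fixed point and $\mathrm{rot}(\tilde h)=1$, i.e. $k=1$; so I may assume every fibre has width $<1$ and $\mathrm{rot}(\tilde h)<1$.

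Next I would feed in non-properness. Choosing $\p_n\to\infty$ in $\Isom(M,g)$ and a compact $K\subset M$ with $\p_n(K)\cap K\ne\emptyset$, I pass to a subsequence with $x_n\to x_*$ and $\p_n(x_n)\to y_*$, both interior. Because isometries are determined by their $1$-jets, the escape $\p_n\to\infty$ together with the boundedness of $\p_n(x_n)$ forces $\lVert D\p_n\rVert\to\infty$; in the null coordinates given by $p$ the differential $D\p_n$ is diagonal with entries the derivatives of $f_n:=\rho_1^M(\p_n)$ and of $\rho_2^M(\p_n)$, whose product is a conformal-factor ratio bounded away from $0$ and $\infty$. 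Hence one derivative blows up, $(f_n)$ is not equicontinuous, and after a further subsequence $f_n$ has a collapsing limit: a finite \emph{sink set} $B$ and \emph{source set} $A$ with $f_n\to B$ off $A$. Since $h$ commutes with each $f_n$ it preserves the finite set $B$; as a homeomorphism with irrational rotation number has no finite invariant set, $\mathrm{rot}(\tilde h)$ must be rational and $B$ is a union of periodic $h$-orbits.

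The crux, and the part I expect to be genuinely hard, is upgrading ``rational'' to $\tfrac1k$: that $h$ advances the cyclically ordered sinks of $B$ by exactly one step, rather than by some $p\ge 2$ (a possibility the bound $\tilde h(x)\le x+1$ alone does \emph{not} rule out, since on a period-$k$ orbit it only yields $p\le k$). Here I would exploit the two conjugacies $\rho_2^M(\p_n)=h_\uparrow f_n h_\uparrow^{-1}=h_\downarrow f_n h_\downarrow^{-1}$, which identify the sink set of $\rho_2^M(\p_n)$ simultaneously with $h_\uparrow(B)$ and $h_\downarrow(B)$ and thereby tie the $\rho_2$-dynamics to $B$ through $h$, and combine this with the requirement that the image of the recurrent point stay in $p(M)$, i.e. satisfy the fibre constraint $\tilde h_\downarrow\le\,\cdot\,\le\tilde h_\uparrow$. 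Carefully tracking these inequalities around $B$ should constrain how $h$ permutes consecutive sinks and force the cyclic advance to be a single fundamental block, giving $\mathrm{rot}(\tilde h)=\tfrac1k$ with $k=|B|$. I anticipate that it is this compatibility bookkeeping between the collapse structure and the torus fibres, and not the rationality step or the a priori bound, that carries the real difficulty, and it is presumably where a case distinction (elementary versus non-elementary $G$, alongside the degenerate full-width fibre already handled) becomes necessary.
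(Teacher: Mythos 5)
Your proposal does not reach the conclusion: you yourself flag that the passage from ``$\mathrm{rot}(\tilde h)$ is rational'' to ``$\mathrm{rot}(\tilde h)=\frac1k$'' is the crux, and at that point you offer only a plan (``carefully tracking these inequalities \dots should constrain \dots''), not an argument. Since the whole content of the proposition is precisely that the rotation number is $\frac1k$ rather than a general rational $\frac pq$, this is a genuine gap, not a detail. There is a second gap earlier: the assertion that a non-equicontinuous subsequence of $(f_n)$ collapses onto a \emph{finite} sink set $B$ off a finite source set $A$ is essentially the $(h,k)$-convergence property, which in the paper is a theorem (Proposition \ref{acausal_convergence}) requiring the geodesic-shrinking arguments of Lemma \ref{hyperbolic_lemma}; it does not follow from abstract circle dynamics. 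Helly selection only gives a monotone degree-one limit $F$ whose set of limit values may be infinite (a Cantor set, say), so even the rationality step is not secured: an infinite closed $h$-invariant set is compatible with an irrational rotation number (Denjoy examples). Also, blow-up of $f_n'$ at a single point does not by itself rule out uniform convergence of $f_n$ to a homeomorphism; the paper gets non-equicontinuity from the properness of the map $\rho_1^M$ (Lemma \ref{proper_map}).

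The paper's own proof goes in the opposite, contrapositive direction and avoids all convergence machinery. If $\alpha=\mathrm{rot}(h_{\to\uparrow})$ is not of the form $\frac1k$, the cyclic order of $h_{\to\uparrow}$-orbits is that of $R_\alpha$, so there are $x$ and $n$ with $x<h_{\to\uparrow}^n(x)<h_{\to\uparrow}(x)<x$; applying this on the fibre over $y=h_\to(x)$ shows that the graph of $h_{\to\uparrow}^n\circ h_\downarrow$ is a compact spacelike circle contained in $p(M)$ (it meets neither $Gr(h_\downarrow)$ nor $Gr(h_\uparrow)$ because $n$ can be chosen so that $n\alpha$ and $(n-1)\alpha$ are both nonzero). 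This circle is $\Conf(M,g)$-invariant; the induced length distance on it is preserved by $\Isom(M,g)$, so the action on it is equicontinuous, whence $\rho_1^M(\Isom(M,g))$ is relatively compact and the action on $M$ is proper --- contradiction. The key idea you are missing is that the failure of $\alpha=\frac1k$ is \emph{equivalent} to the existence of an invariant compact spacelike circle strictly inside $M$, which kills non-properness outright. If you want to pursue your route instead, the relevant tool is Lemma \ref{simplify_convergence}, which does derive $\mathrm{rot}(h)=\frac1k$ from convergence of $f_n$ to a single point on a full interval $\intoo{a}{h(a)}$ --- but establishing that convergence requires the geometric (geodesic) input, not just the diagonal form of $D\p_n$.
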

\begin{proof} Let $\alpha$ be the rotation number of $h_{\to \uparrow}$, and assume that $\alpha$ is not equal to $\frac{1}{k}$ for some $k\in \N$. Since the cyclic order of the elements of an orbit for $h_{\to \uparrow}$ is the same as for the rotation $R_{\alpha}$,  there are $x\in \Ss^1$ and $n\in \Z$ such that $x<(h_{\to \uparrow})^n(x)<h_{\to \uparrow}(x)<x$. By setting $y=h_{\rightarrow}(x)$, we find that $h_{\downarrow}(y)<(h_{\to \uparrow})^n(h_{\downarrow}(y))<h_{\uparrow}(y)<h_{\downarrow}(y) $, i.e.   $(y,(h_{\to \uparrow})^n(h_{\downarrow}(y)))\in p(M)$. \\
\indent If the graphs of $(h_{\to \uparrow})^n\circ h_{\downarrow}$ and $h_{\downarrow}$  (resp. $h_{\uparrow}$) were to intersect, then $(h_{\to \uparrow})^n$ (resp. $(h_{\to \uparrow})^{n-1}$) would have a fixed point, i.e. its rotation number  $n\alpha$ (resp. $(n-1)\alpha$) is equal to $0$. If $\alpha$ is irrational, this is impossible. If $\alpha$ is rational, then $n$ can be chosen such that $n\alpha =\frac{1}{k}$ for some $k\ge 2$, hence $n\alpha\ne 0$ and $(n-1)\alpha \ne 0$. \\
\indent This implies that $K=Gr((h_{\to \uparrow})^n\circ h_{\downarrow})\subset p(M)$ is a $\Conf(M,g)$-invariant compact set in $M$. Since it is a spacelike circle, we can define a distance on $K$ as the infimum of lengths of spacelike curves joining two points. It is preserved by the isometry group, which shows that the action of $\Isom(M,g)$ on $K$ is equicontinuous. The projection of $K$ onto the first and second coordinates of $\Ss^1\times \Ss^1$ shows that the action of $\Isom(M,g)$ on $K\approx \Ss^1$ is topologically conjugate to $\rho_1^M$ and $\rho_2^M$, hence $\rho_1^M(\Isom(M,g))$ and $\rho_2^M(\Isom(M,g))$ are compact. This implies that $\Isom(M,g)$ acts properly on $p(M)$, hence on $M$.
\end{proof}

\subsection{Properness and compactness}
\begin{lemma} \label{proper_map} Let $(M,g)$ be a spatially compact surface   that embeds conformally in $\T^2$. Assume that the conformal boundary is acausal. Then the maps  $\rho_1^M,\rho_2^M:\Isom(M,g)\to \Homeo(\Ss^1)$ are proper.  \end{lemma}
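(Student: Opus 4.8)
The plan is to argue by contradiction, turning the failure of properness into the collapse of a macroscopic null segment, which will be incompatible with the convergence of $\rho_2^M$ to a circle homeomorphism. Suppose $\rho_1^M$ were not proper: there is a sequence $\varphi_n\in\Isom(M,g)$ leaving every compact subset of $\Isom(M,g)$, while $f_n:=\rho_1^M(\varphi_n)$ stays in a compact subset of $\Homeo(\Ss^1)$. After extracting a subsequence I may assume $f_n\to f$ and $f_n^{-1}\to f^{-1}$ uniformly, with $f\in\Homeo(\Ss^1)$. Since the conformal boundary is acausal, $h_\downarrow$ and $h_\uparrow$ are homeomorphisms, so $\rho_2^M(\varphi_n)=h_\downarrow\circ f_n\circ h_\downarrow^{-1}$ shows that $g_n:=\rho_2^M(\varphi_n)\to\tilde f:=h_\downarrow\circ f\circ h_\downarrow^{-1}$ in $\Homeo(\Ss^1)$, and likewise $g_n^{-1}\to\tilde f^{-1}$. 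In the torus model $\varphi_n$ acts on $p(M)$ by $(x,y)\mapsto(f_n(x),g_n(y))$, so $\varphi_n\to\Phi:=f\times\tilde f$ and $\varphi_n^{-1}\to\Phi^{-1}$ uniformly on $\overline{p(M)}$; using both conjugacy relations and the fact that $h_\downarrow,h_\uparrow$ are increasing homeomorphisms, one checks that $\Phi$ maps $p(M)$ homeomorphically onto itself, and in particular sends interior points to interior points.

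First I would reduce the problem to a bound on the $1$-jets of the $\varphi_n$. Because a Lorentz isometry is determined by its $1$-jet at a point, the assignment $\varphi\mapsto(\varphi(x_0),D\varphi_{x_0})$ embeds $\Isom(M,g)$ into the orthonormal frame bundle, and by continuity of the exponential map a sequence of isometries whose $1$-jets at a fixed $x_0$ remain in a compact set has a subsequence converging in $\Isom(M,g)$. Hence it suffices to fix $x_0=(a,b)\in p(M)$ and bound the $1$-jet of $\varphi_n$ at $x_0$. The base point is controlled for free, since $\varphi_n(x_0)=(f_n(a),g_n(b))\to\Phi(x_0)$ lies in the interior $p(M)$. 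In null coordinates the derivative is $D\varphi_n(x_0)=\diag(f_n'(a),g_n'(b))$, and as $\varphi_n$ is an isometry its determinant is pinned by the conformal factor (which converges, the base points staying interior); thus the only way the $1$-jet can escape is through the ``boost'', i.e. $f_n'(a)\to 0$ or $f_n'(a)\to\infty$.

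The heart of the argument, and the step I expect to be the main obstacle, is to rule this out: $C^0$-convergence of $f_n$ gives \emph{no} control on the pointwise derivative $f_n'(a)$, so the bound cannot be read off the circle dynamics directly. The key point is that for an isometry this pointwise number is really a global quantity. Fix a compact null segment of the leaf $\{x=a\}$ inside a small diamond around $x_0$, say $\ell=\{a\}\times\intff{y_1}{y_2}$. Since $\varphi_n$ maps the $\F_1$-leaf through $x_0$ to the $\F_1$-leaf $\{x=f_n(a)\}$ \emph{preserving affine parametrisation}, it scales the affine length of $\ell$ by a single constant, which the isometry equation identifies with $1/f_n'(a)$. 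Now $\varphi_n(\ell)=\{f_n(a)\}\times\intff{g_n(y_1)}{g_n(y_2)}$, and as $\varphi_n\to\Phi$ uniformly all these segments lie in one compact subset $K\subset p(M)$ of the interior, where the (continuous, positive) affine density of null geodesics is bounded between two positive constants. Therefore the affine length of $\varphi_n(\ell)$ is comparable to the length of the arc $g_n(\intff{y_1}{y_2})$, which tends to the positive number $\tilde f(y_2)-\tilde f(y_1)$ because $g_n\to\tilde f$ is a homeomorphism. Hence $1/f_n'(a)$, being the ratio of this bounded-away-from-$0$-and-$\infty$ quantity to the fixed affine length of $\ell$, is itself bounded away from $0$ and $\infty$.

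This bounds the boost, so the $1$-jets of $\varphi_n$ at $x_0$ stay in a compact set, $\varphi_n$ has a subsequence converging in $\Isom(M,g)$, and the contradiction with $\varphi_n\to\infty$ proves $\rho_1^M$ proper; exchanging the two coordinates proves the same for $\rho_2^M$. I would emphasise that two hypotheses carry the real content. Acausality is what makes $h_\downarrow,h_\uparrow$ homeomorphisms, hence forces $g_n$ to converge to a homeomorphism and the images $\varphi_n(\ell)$ to stay interior; without it the arc $g_n(\intff{y_1}{y_2})$ could shrink to a point and the argument would fail (indeed the conclusion itself can fail). The $C^2$ regularity of $g$, through continuity of the exponential map and of the affine density, is precisely what converts the pointwise derivative $f_n'(a)$ into the macroscopic affine-length ratio that the circle dynamics can see.
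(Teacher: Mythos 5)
Your proof is correct, and its opening moves coincide with the paper's: argue by contradiction, extract $f_n\to f$, use acausality of the boundary (so that $h_\downarrow,h_\uparrow$ are homeomorphisms) to force $g_n=\rho_2^M(\p_n)\to \tilde f$, and observe that the product limit $\Phi=f\times\tilde f$ preserves $p(M)$. Where you diverge is in how the contradiction is extracted. The paper finishes in one line by invoking the fact that $\Isom(M,g)$ is a closed subgroup of $\Homeo(M)$ (citing Adams), so that $\Phi\in\Isom(M,g)$ and $\p_n\to\Phi$ in $\Isom(M,g)$, contradicting $\p_n\to\infty$. You instead re-derive this closedness by hand: you reduce to compactness of the $1$-jets at a base point (which is the contrapositive of the Kobayashi statement the paper records as Lemma \ref{affine_diffeomorphisms}), note that the Jacobian determinant is pinned by the conformal factor at interior points, and then control the remaining boost $f_n'(a)$ by computing how an isometry rescales the affine density $g(x,y)\,dy$ along a null leaf: the affine length of $\p_n(\ell)$ is $\tfrac{1}{f_n'(a)}$ times that of $\ell$, and since $\p_n(\ell)$ stays in a fixed compact subset of the interior and $g_n\to\tilde f$ is injective, that length is bounded away from $0$ and $\infty$. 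This costs you a page where the paper spends a sentence, but it buys a self-contained argument that does not rely on the closedness result, and it is exactly the shrinking-null-geodesic mechanism the paper deploys later (Lemma \ref{cylinder_proper}, Lemma \ref{hyperbolic_lemma}, Proposition \ref{acausal_convergence}), so nothing in your reasoning is foreign to the paper's toolkit. Your closing remarks correctly identify where acausality and $C^2$ regularity enter; the only point worth making explicit is that the subconvergence criterion from bounded $1$-jets is precisely the properness of the isometry group's action on the frame bundle, i.e.\ the same Kobayashi input the paper already uses.
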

\begin{proof} We identify $M$ with $p(M)\subset \T^2$. Let $\p_n\to \infty$ in $\Isom(M,g)$. By contradiction, let us assume that the sequence $\rho_1^M(\p_n)$ is equicontinuous. Then, up to a subsequence, it converges to $f_1\in \Homeo(\Ss^1)$. Since $\rho_1^M$ and $\rho_2^M$ are topologically conjugate, we see that there is  $f_2\in \Homeo(\Ss^1)$ such that $\rho_2^M(\p_n)\to f_2$. Let $\p:\T^2\to \T^2$ be defined by $\p(x,y)=(f_1(x),f_2(y))$. Then $\p_n\to \p$ uniformly on $\T^2$, and $\p(M)=M$. Since $\Isom(M,g)$ is a closed subgroup of $\Homeo(M)$ (see \cite{Adams}), we see that $\p \in \Isom(M,g)$ and $\p_n\to \p$ in $\Homeo(M)$ hence in $\Isom(M,g)$, which is absurd because $\p_n\to \infty$. Hence $\rho_1^M(\p_n)\to \infty$. This shows that $\rho_1^M:\Isom(M,g)\to \Homeo(\Ss^1)$ is a proper map, and the same goes for $\rho_2^M$. 
\end{proof} 
As a consequence of this, we see that the groups  $\rho_i^M(\Isom(M,g))$ are closed subgroups of $\Homeo(\Ss^1)$. With the same proof, we would obtain the same for $\Conf(M,g)$ by replacing $\Homeo(\Ss^1)$ with $\Diff(\Ss^1)$ (because $\Conf(M,g)$ is closed in $\Diff(M)$, but not necessarily in $\Homeo(M)$).\\
\indent Let us quote a technical result that we will use.

\begin{lemma} \label{rotation_number_equal} Let $\alpha\in \Ss^1$  and let $f\in \Homeo(\Ss^1)$ have $\alpha$ as its rotation number. There is $x\in \Ss^1$ such that $f(x)=R_{\alpha}(x)$. \end{lemma}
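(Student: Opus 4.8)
The plan is to reduce the statement to a standard fact about lifts and then invoke the intermediate value theorem on the circle. I would choose a lift $\tilde F:\R\to\R$ of $f$, so that $\tilde F(x+1)=\tilde F(x)+1$ for all $x$, and let $r$ denote its rotation number, $r=\lim_{n\to\infty}\frac{\tilde F^n(x)-x}{n}$; this limit exists, is independent of $x$, and its class modulo $\Z$ is $\alpha$. It then suffices to produce a point $x_0\in\R$ with $\tilde F(x_0)=x_0+r$, for projecting to $\Ss^1=\R/\Z$ gives $f(x_0)=x_0+\alpha=R_{\alpha}(x_0)$ (writing $x_0$ also for its image in $\Ss^1$), since $r\equiv\alpha\pmod 1$.

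First I would introduce the displacement function $\psi(x)=\tilde F(x)-x$. Because $\tilde F$ commutes with the unit translation, $\psi$ is $1$-periodic, hence descends to a continuous function on the compact connected circle; in particular it attains its minimum $m$ and its maximum $\mathcal M$. The goal then becomes finding $x_0$ with $\psi(x_0)=r$, which by the intermediate value theorem (the circle being connected, $\psi$ takes every value in $\intff{m}{\mathcal M}$) will follow as soon as I establish the sandwiching $m\le r\le \mathcal M$.

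The key step is this sandwiching inequality, which I would obtain from the telescoping identity $\tilde F^n(x)-x=\sum_{i=0}^{n-1}\psi(\tilde F^i(x))$. Since each summand lies in $\intff{m}{\mathcal M}$, this gives $nm\le \tilde F^n(x)-x\le n\mathcal M$; dividing by $n$ and letting $n\to\infty$ yields $m\le r\le\mathcal M$ directly from the definition of the rotation number. Applying the intermediate value theorem then furnishes the desired $x_0$ with $\psi(x_0)=r$, and projecting back to $\Ss^1$ completes the argument.

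The closest thing to an obstacle is purely bookkeeping: keeping track of the relation between $f$ and its chosen lift (ensuring the lift's rotation number $r$ reduces to $\alpha$ modulo $\Z$), and verifying that $r$ is squeezed between the extreme values of a single displacement $\psi$, which is exactly what the telescoping sum delivers. Note that the argument never distinguishes whether $\alpha$ is rational or irrational, which is appropriate since the conclusion holds in both cases.
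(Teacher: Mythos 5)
Your argument is correct and complete: the telescoping identity squeezes the rotation number of the lift between the minimum and maximum of the displacement function, and the intermediate value theorem then produces the required point. The paper itself gives no proof of this lemma, simply citing Lemme 4.1.3 of Herman's paper; your argument is the standard one behind that citation (Herman phrases it contrapositively, assuming $\tilde F(x)-x-r$ has constant sign and deriving a contradiction with the definition of the rotation number, which is the same squeeze in disguise), so there is nothing to fix.
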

For the proof, see Lemme 4.1.3 in \cite{Herman}.

\begin{prop} \label{proper_compact_group} Let $(M,g)$ be a spatially compact surface with an acausal conformal boundary that embeds conformally in $\T^2$. Let $G\subset \Isom(M,g)$ be a subgroup. The following statements are equivalent: \begin{enumerate} \item $G$ acts properly on $M$ \item $G\subset \Homeo(M)$ is relatively compact \item $\rho_1^M(G)\subset \Homeo(\Ss^1)$ is relatively compact \item $\rho_2^M(G)\subset \Homeo(\Ss^1)$ is relatively compact \end{enumerate} \end{prop}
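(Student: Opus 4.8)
The plan is to establish the easy chain $(2)\Leftrightarrow(3)\Leftrightarrow(4)$ together with $(2)\Rightarrow(1)$, and then to isolate the only substantial implication, $(1)\Rightarrow(2)$, which I would prove by a dynamical argument. Throughout I identify $M$ with $p(M)\subset\T^2$ via the conformal embedding $p$; since the conformal boundary is acausal, $h_\downarrow,h_\uparrow$ are homeomorphisms of $\Ss^1$, so that
\[
p(M)=\{(x,y)\in\T^2 \mid h_\downarrow(x)<y<h_\uparrow(x)\},
\]
and each $\varphi\in\Isom(M,g)$ acts by the product homeomorphism $\Phi_\varphi(x,y)=(\rho_1^M(\varphi)(x),\rho_2^M(\varphi)(y))$.

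For $(3)\Leftrightarrow(4)$ I would use the conjugacy $\rho_2^M(\varphi)\circ h_\uparrow=h_\uparrow\circ\rho_1^M(\varphi)$, which gives $\rho_2^M(G)=h_\uparrow\,\rho_1^M(G)\,h_\uparrow^{-1}$: conjugation by the fixed homeomorphism $h_\uparrow$ is a homeomorphism of $\Homeo(\Ss^1)$ and hence preserves relative compactness. For $(2)\Leftrightarrow(3)$ the key input is Lemma \ref{proper_map}, which asserts that $\rho_1^M:\Isom(M,g)\to\Homeo(\Ss^1)$ is a (continuous) proper map. Continuity gives $\rho_1^M(\overline G)$ compact whenever $\overline G$ is, proving $(2)\Rightarrow(3)$; properness gives $G\subset(\rho_1^M)^{-1}\!\big(\overline{\rho_1^M(G)}\big)$, a compact subset of $\Isom(M,g)$, and since $\Isom(M,g)$ is closed in $\Homeo(M)$ (\cite{Adams}) this forces $(3)\Rightarrow(2)$. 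Finally $(2)\Rightarrow(1)$ is standard: if $\overline G$ is compact, the action of the compact group $\overline G$ on the locally compact space $M$ is proper, and for any compact $K\subset M$ the set $\{g\in G \mid gK\cap K\neq\emptyset\}$ is contained in $\overline G$, hence relatively compact.

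The remaining implication $(1)\Rightarrow(2)$ is where the hypotheses genuinely enter, and I would prove its contrapositive. Assuming $\neg(2)$, Lemma \ref{proper_map} produces $\varphi_n\in G$ with $\alpha_n:=\rho_1^M(\varphi_n)\to\infty$ and, by the conjugacy, $\beta_n:=\rho_2^M(\varphi_n)\to\infty$. Lifting to $\R$ and applying Helly's selection theorem to the monotone degree-one lifts, I pass to a subsequence along which $\alpha_n$ converges pointwise to a non-decreasing degree-one map $\alpha_\infty$ which, because $\alpha_n\to\infty$, is not a homeomorphism; after possibly replacing $\varphi_n$ by $\varphi_n^{-1}$ (which swaps the roles of plateaus and jumps), $\alpha_\infty$ is constant equal to some $c$ on a nontrivial open arc $I$, and monotone convergence to a continuous limit upgrades this to uniform convergence $\alpha_n\to c$ on compact subarcs of $I$. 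The goal is then to manufacture a recurrent pair: a point $z\in p(M)$, points $z_n\to z$, and a point $z'\in p(M)$ with $\Phi_{\varphi_n}(z_n)\to z'$, where both limits lie in the \emph{open} strip. Since $\varphi_n\to\infty$ in $\Isom(M,g)$, the existence of such a pair contradicts properness of the $G$-action, giving $\neg(1)$.

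To build the pair I would work at an endpoint $x_\ast$ of the plateau $I$, where $\alpha_n$ stops contracting and instead spreads a small one-sided neighbourhood of $x_\ast$ across a definite arc; choosing $x_n\to x_\ast$ at the right rate makes $\alpha_n(x_n)$ converge to a prescribed interior value $c'$. The fibre over $x_\ast$ is the nondegenerate arc $\intoo{h_\downarrow(x_\ast)}{h_\uparrow(x_\ast)}$ precisely because acausality forces $h_\downarrow,h_\uparrow$ to be homeomorphisms with $h_\downarrow(x_\ast)\neq h_\uparrow(x_\ast)$; transporting the contraction/expansion of $\alpha_n$ to $\beta_n$ along fibres through the two relations $\beta_n\circ h_\downarrow=h_\downarrow\circ\alpha_n$ and $\beta_n\circ h_\uparrow=h_\uparrow\circ\alpha_n$, I would then select $y_n$ in the fibre over $x_n$ so that both $(x_n,y_n)$ and $(\alpha_n(x_n),\beta_n(y_n))$ remain in a fixed compact subset of the open strip and converge to interior points $z,z'$. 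The coordinated choice of $(x_n,y_n)$ meeting the two constraints simultaneously is the main obstacle. Both hypotheses are essential here: the boundedness of the strip (the conformal embedding in $\T^2$ forces $\tilde h_\uparrow-\tilde h_\downarrow\le 1$) and the acausality of the boundary, for the spacelike translations of the full flat cylinder show that a non-relatively-compact isometry group can otherwise act properly.
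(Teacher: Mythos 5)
Your treatment of $(2)\Leftrightarrow(3)\Leftrightarrow(4)$ and $(2)\Rightarrow(1)$ is correct and coincides with the paper's (the paper likewise derives $(2)\iff(3)$ from the properness of $\rho_1^M$ established in Lemma \ref{proper_map} and $(3)\iff(4)$ from the conjugacy by $h_\uparrow$). The problem is the implication $(1)\Rightarrow(2)$, where the step you yourself flag as ``the main obstacle'' is a genuine gap, not a routine verification. Your plan requires producing $(x_n,y_n)\in p(M)$ with $(x_n,y_n)\to z$ and $(\alpha_n(x_n),\beta_n(y_n))\to z'$, \emph{both} in the open strip. But $\beta_n=h_\uparrow\circ\alpha_n\circ h_\uparrow^{-1}=h_\downarrow\circ\alpha_n\circ h_\downarrow^{-1}$, so $\beta_n(y_n)=h_\uparrow\bigl(\alpha_n(w_n)\bigr)$ where $w_n=h_\uparrow^{-1}(y_n)$ ranges over the interval $\intoo{h_\uparrow^{-1}(h_\downarrow(x_n))}{x_n}$ abutting $x_n$. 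If $\alpha_n$ contracts that whole interval to the single limit $c'=\lim\alpha_n(x_n)$ --- which is exactly the behaviour a degenerating sequence can exhibit on one side of $x_*$ --- then $\beta_n(y_n)\to h_\uparrow(c')$ for \emph{every} admissible choice of $y_n$, and the limit $(c',h_\uparrow(c'))$ lies on the conformal boundary, not in $p(M)$. One must then switch to the other conjugation (through $h_\downarrow$) and argue that expansion occurs on the other side of $x_n$; organising this case analysis is essentially re-proving the hyperbolic dichotomy of Lemma \ref{hyperbolic_lemma}, which in the paper is obtained from the Lorentzian structure (isometries are affine, the Jacobian $\rho_1^M(\varphi_n)'(x_0)\,\rho_2^M(\varphi_n)'(y_0)$ is pinched by the metric), not from circle dynamics alone. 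Nothing in your outline supplies a substitute for that input, and there is also a secondary imprecision: at an endpoint $x_*$ of a plateau of $\alpha_\infty$ the map need not have a jump, so ``spreading a one-sided neighbourhood of $x_*$ across a definite arc'' and hence the existence of $x_n\to x_*$ with $\alpha_n(x_n)\to c'$ prescribed is not automatic.

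The paper's proof of $(1)\Rightarrow(2)$ avoids all of this with a short global argument you may want to compare against. Take $\alpha>0$ small enough that $x<R_\alpha(x)<h_\to\circ h_\uparrow(x)\le x$ for all $x$, and set $K=Gr(h_\downarrow\circ R_\alpha)$, a compact spacelike circle contained in $p(M)$. For any $\varphi\in\Isom(M,g)$, the homeomorphism $\rho_1^M(\varphi)\circ R_\alpha\circ\rho_1^M(\varphi)^{-1}$ has the same rotation number as $R_\alpha$, so by Lemma \ref{rotation_number_equal} it agrees with $R_\alpha$ at some point $x$; pushing this through $h_\downarrow\circ\rho_1^M(\varphi)=\rho_2^M(\varphi)\circ h_\downarrow$ shows $(x,h_\downarrow\circ R_\alpha(x))\in K\cap\varphi(K)$. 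Thus $K\cap\varphi(K)\ne\emptyset$ for \emph{all} $\varphi$, and properness of the action forces $G$ to be relatively compact. This uses only the rotation number and the boundary conjugacies, with no limiting procedure and no case analysis; if you want to salvage your route you would need to actually carry out the coordinated choice of $(x_n,y_n)$, including the degenerate contraction case described above.
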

\begin{proof} $(2)\Rightarrow (1)$ It follows from the definition of a proper action that relatively compact groups always act properly. \\ $(3)\iff (4)$ comes from the fact that $\rho_1^M$ and $\rho_2^M$ are topologically conjugate. \\ $(2)\iff (3)$ is a straightforward consequence of the fact that the map $\rho_1^M:\Isom(M,g)\to \Homeo(\Ss^1)$ is  proper. \\ $(1)\Rightarrow (2)$ We start by considering:  $$W=\{ f\in \Homeo(\Ss^1) \vert \forall x\in \Ss^1~x<f(x)<h_{\rightarrow}\circ h_{\uparrow}(x)\le x\}$$ It is a non empty open set of $\Homeo(\Ss^1)$. For $\alpha$ small enough and positive, the rotation $R_{\alpha}$ is in $W$. Let $\alpha$ be such that $R_{\alpha}\in W$.\\
\indent We note $K=Gr(h_{\downarrow}\circ R_{\alpha})$. If $x\in \Ss^1$, then applying $h_{\downarrow}$ to the inequalities  $x<R_{\alpha}(x)<h_{\rightarrow}\circ h_{\uparrow}(x)\le x$ shows that $(x,h_{\downarrow}\circ R_{\alpha}(x))\in p(M)$, hence $K$ is a compact subset of $M$.\\
\indent Let $\p\in \Isom(M,g)$. Since the rotation number is an invariant under conjugacy, Lemma \ref{rotation_number_equal} implies that there is $x\in \Ss^1$ such that: $$\rho_1^M(\p)\circ R_{\alpha}\circ \rho_1^M(\p)^{-1}(x)=R_{\alpha}(x)$$ Since $h_{\downarrow}\circ \rho_1^M(\p) =\rho_2^M(\p)\circ h_{\downarrow}$, we see that: $$\rho_2^M(\p)\circ (h_{\downarrow}\circ R_{\alpha}) \circ \rho_1^M(\p)^{-1}(x)=h_{\downarrow}\circ R_{\alpha}(x)$$This means that $(x,h_{\downarrow}\circ R_{\alpha}(x))\in  Gr( \rho_2^M(\p)\circ (h_{\downarrow}\circ R_{\alpha}) \circ \rho_1^M(\p)^{-1})= \p(K)$, i.e.  $(x,h_{\downarrow}\circ R_{\alpha}(x))\in K\cap \p(K)$. We have shown that there is a compact set $K\subset M$ such that $K\cap \p(K)\ne \emptyset$ for all $\p\in \Isom(M,g)$. This shows that if $G\subset \Isom(M,g)$ acts properly on $M$, then it is relatively compact. \end{proof}
This result does not hold when $(M,g)$ does not embed conformally in the torus: the isometry group of the flat cylinder is $\R\times \R/\Z$ which is not compact, but it acts properly on the cylinder.\\
\indent We can now prove Theorem \ref{main_theorem_acausal_embeds} in the case where the isometry group acts properly.
\begin{prop} \label{main_acausal_embeds_proper} Let $(M,g)$ be a spatially compact surface that embeds conformally in $\T^2$, with an acausal conformal boundary. If $\Isom(M,g)$ acts properly on $M$, then $\rho_1^M$ is topologically conjugate to a representation in $\mathrm{SO}(2,\R)\subset \PSL(2,\R)$. \end{prop}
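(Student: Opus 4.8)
The plan is to show that $\rho_1^M(\Isom(M,g))$ is a compact group of orientation-preserving circle homeomorphisms, and then to invoke the classical fact that such a group is topologically conjugate to a group of rotations. First I would apply Proposition \ref{proper_compact_group}: the hypotheses on $(M,g)$ are exactly those of that statement, so since $\Isom(M,g)$ acts properly on $M$, the equivalence $(1)\iff(3)$ gives that $G:=\rho_1^M(\Isom(M,g))$ is relatively compact in $\Homeo(\Ss^1)$. Its closure $K=\overline{G}$ is then a compact subgroup of $\Homeo(\Ss^1)$, and since $\rho_1^M$ preserves orientation (an orientation-preserving conformal diffeomorphism of $M$ induces an orientation-preserving map on the leaf space $F_1\approx\Ss^1$), every element of $K$ preserves orientation. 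It will suffice to conjugate $K$ into the rotation group, since then $G\subset K$ lands in a subgroup of rotations.

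Next I would produce a $K$-invariant Borel probability measure on $\Ss^1$ that is atomless and of full support, by averaging over $K$. Let $\lambda$ be the Lebesgue probability measure on $\Ss^1$ and $dh$ the normalized Haar measure of the compact group $K$, and set $\mu=\int_K f_*\lambda\,dh(f)$. Invariance of Haar measure gives $g_*\mu=\mu$ for every $g\in K$. The measure $\mu$ has full support because $\mu(U)=\int_K\lambda(f^{-1}U)\,dh(f)>0$ for every nonempty open $U$ (each $f^{-1}U$ is open and nonempty), and it is atomless because $\mu(\{x\})=\int_K\lambda(f^{-1}\{x\})\,dh(f)=0$.

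Finally, I would fix a basepoint $x_0$ and define $\phi:\Ss^1\to\Ss^1$ by $\phi(x)=\mu(\intfo{x_0}{x})$, the $\mu$-measure of the positively oriented arc from $x_0$ to $x$, taken mod $1$. Since $\mu$ is atomless with full support, $\phi$ is a monotone degree-one bijection, hence a homeomorphism of $\Ss^1$. For $g\in K$, the $g$-invariance of $\mu$ together with the fact that $g$ preserves orientation gives $\mu(\intfo{x_0}{g(x)})=\mu(g^{-1}\intfo{x_0}{g(x)})=\mu(\intfo{g^{-1}x_0}{x})=\mu(\intfo{g^{-1}x_0}{x_0})+\phi(x)$ mod $1$, so that $\phi\circ g\circ\phi^{-1}$ is the rotation $R_{c_g}$ with $c_g=\mu(\intfo{g^{-1}x_0}{x_0})$. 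Thus $\phi K\phi^{-1}$ is a group of rotations of $\Ss^1$, and hence so is $\phi G\phi^{-1}$; identifying the rotation group of $\Ss^1$ with the maximal compact subgroup $\mathrm{SO}(2,\R)\subset\PSL(2,\R)$ acting projectively on $\Ss^1\approx\R\proj^1$ (a homeomorphic reparametrization, which can be absorbed into $\phi$) shows that $\rho_1^M$ is topologically conjugate to a representation in $\mathrm{SO}(2,\R)$.

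The only point that requires genuine care is the passage from relative compactness to the averaged invariant measure and the verification that $\mu$ is atomless and of full support, so that its distribution function $\phi$ is an actual homeomorphism of the circle rather than merely a semi-conjugacy; once this is secured, both the conjugation of $K$ to rotations and the identification with the projective $\mathrm{SO}(2,\R)$-action are immediate.
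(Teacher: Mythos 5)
Your proof is correct and follows essentially the same route as the paper: both reduce to Proposition \ref{proper_compact_group} to obtain (relative) compactness of $\rho_1^M(\Isom(M,g))$ and then invoke the classical fact that a compact group of orientation-preserving circle homeomorphisms is topologically conjugate to a group of rotations. The only differences are cosmetic: the paper uses Lemma \ref{proper_map} to see that the image is closed (hence compact) where you pass to the closure instead, and you write out the standard Haar-averaging construction of the invariant measure that the paper leaves implicit.
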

\begin{proof} By Lemma \ref{proper_map}, we see that $\rho_1^M(\Isom(M,g))$ is closed in $\Homeo(\Ss^1)$, and Proposition \ref{proper_compact_group} implies that it is compact, therefore topologically conjugate to a subgroup of $\mathrm{SO}(2,\R)$.
\end{proof}

\subsection{The convergence property} Finding a conjugacy between a subgroup of $\Homeo(\Ss^1)$  and a subgroup of $\PSL(2,\R)$, when it exists, is a rather complicated exercise. But there is a characterisation of the existence of such a conjugacy that does not require to find it explicitly.

\begin{defi} A sequence $(f_n)\suite \in \Homeo(\Ss^1)^{\N}$ has the convergence property if there are $a,b\in \Ss^1$ such that, up to a subsequence,  $f_n(x)\to b$ for all $x\ne a$.\\ A group $G \subset \Homeo(\Ss^1)$ is a convergence group if every  sequence in $G$ either satisfies the convergence property or has an equicontinuous subsequence.  \end{defi}
The classical definition of a convergence group also implies the sequence of the inverses $f_n^{-1}$, but it is not necessary in the case of $\Ss^1$. 
\begin{theo}[Gabai \cite{Gabai}, Casson-Jungreis \cite{CJ}] \label{convergence_groups} A convergence group $G \subset \Homeo(\Ss^1)$ is topologically conjugate to a subgroup of $\PSL(2,\R)$. \end{theo}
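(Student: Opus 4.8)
The plan is to reduce the statement to a dynamical criterion on the space of triples, handle the elementary groups by hand, and then concentrate all the difficulty in the non-elementary case. The first step is to replace the convergence property by its symmetric reformulation (due to Gehring-Martin): $G$ is a convergence group precisely when it acts properly discontinuously on the space $\Theta$ of positively oriented triples of distinct points of $\Ss^1$. In one direction, if proper discontinuity failed one could extract $g_n\to\infty$ keeping a fixed compact set of triples inside a compact set, which is incompatible with the north/south collapse forced by the convergence property; conversely, the wandering of the triple action is exactly that collapse dynamics. Since $\PSL(2,\R)$ acts simply transitively on $\Theta$, one identifies $\Theta\cong\PSL(2,\R)\cong T^1\h^2$ (an open solid torus), under which the footpoint map $\Theta\to\h^2$ becomes the unit tangent bundle projection. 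The theorem then asks to realize the abstract proper action of $G$ on $\Theta$ as left translation by a subgroup of $\PSL(2,\R)$, up to conjugacy; passing to the closure $\overline G$ lets me assume $G$ is closed, and the non-discrete closed case reduces quickly to the standard closed subgroups of $\PSL(2,\R)$, so the real content is the discrete case.

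Next I would dispose of the elementary groups, i.e. those admitting a finite orbit. The convergence property pins each infinite-order element into one of the three Fuchsian normal forms: elliptic (an equicontinuous cyclic group, hence conjugate into $\mathrm{SO}(2)$), parabolic (a single fixed point, the case $a=b$), or hyperbolic (an attracting/repelling pair). An elementary group is then an extension of such a cyclic model by the finite symmetry of its fixed-point set, and each model is explicitly topologically conjugate to the corresponding subgroup of $\PSL(2,\R)$ (rotations, a unipotent one-parameter group, a diagonal group, or an infinite-dihedral extension). This case is therefore settled directly.

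The non-elementary discrete case is where I would follow Gabai \cite{Gabai} and Casson-Jungreis \cite{CJ} through three-manifold topology. Assuming $G$ torsion-free for a first pass, the quotient $\Theta/G$ is an aspherical open $3$-manifold with $\pi_1=G$, and the circle direction of $T^1\h^2$ should descend to a Seifert fibration exhibiting $G$ as the fundamental group of a Seifert fibered space over a hyperbolic $2$-orbifold. Recovering that base orbifold yields a discrete faithful representation $G\to\PSL(2,\R)$, after which a straightening argument promotes the abstract conjugacy on $\Theta$ to a topological conjugacy on $\Ss^1$. Torsion and finiteness issues would be absorbed by passing to finite-index subgroups and extending equivariantly.

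The main obstacle is precisely this last step, and more exactly the claim that a non-elementary convergence action, which a priori records only topological dynamics and never a conformal or metric structure, nonetheless forces a genuine Seifert structure. This is the content of the Seifert Fibered Space Conjecture: one cannot simply average or straighten the action into $\PSL(2,\R)$ directly, and must instead characterise fundamental groups of Seifert fibered $3$-manifolds among $3$-manifold groups containing a normal infinite cyclic subgroup, the subgroup destined to become the fiber. Producing that normal $\Z$ from the bare convergence dynamics (the route through which Mess linked the two problems), and then invoking the topological rigidity of Seifert spaces, is the deep part where the work of Gabai and of Casson-Jungreis is unavoidable; by contrast, the triple reformulation and the elementary cases are the only genuinely routine portions of the argument.
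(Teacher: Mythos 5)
The paper offers no proof of this statement: it is imported verbatim from Gabai and Casson--Jungreis as a black box (it is invoked in the proof of Lemma \ref{convergence_rotation} and in Corollary \ref{connected}), so there is no internal argument to compare yours against. Your write-up should therefore be judged as an attempted reconstruction of the literature proof.

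Read that way, it is an accurate map of the terrain but not a proof, and you say so yourself: the reformulation as a properly discontinuous action on the space $\Theta$ of ordered triples, the identification $\Theta\cong\PSL(2,\R)\cong T^1\h^2$, and the elementary case are indeed the routine parts, while the entire content of the theorem sits in the non-elementary discrete case, which you defer to ``the work of Gabai and Casson--Jungreis'' --- i.e.\ to the statement being proved. Moreover, the route you sketch runs logically backwards relative to the actual literature. The Seifert Fibered Space Conjecture is not an available tool here: the implication established by Mess and Tukia goes from the convergence group theorem \emph{to} the Seifert conjecture (a $3$-manifold group with a normal infinite cyclic subgroup induces a convergence action of the quotient group on a circle, so Fuchsianness of convergence groups yields the Seifert structure), and both Gabai and Casson--Jungreis attack the circle action directly --- Gabai via his machinery of group actions on order trees and laminations, Casson--Jungreis via a combinatorial analysis of the induced action on the open solid torus of triples --- rather than by first exhibiting a Seifert fibration on $\Theta/G$. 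Invoking the Seifert conjecture to prove the convergence group theorem is therefore circular as stated. Since you explicitly flag this step as unavoidable, the proposal is best read as a contextualization of the citation, which is exactly how the paper itself treats the theorem, rather than as an independent argument.
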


In the case of finite covers of $\PSL(2,\R)$, we have to generalise the notion of convergence groups to the case where there can be more limit points. The most simple generalisation would be to keep the same definition but to have $k$ possible limit points, as it is the case for subgroups of $\PSL_k(2,\R)$. However, this cannot be enough since the limit points are linked to each other by the rotation of angle $\frac{1}{k}$ in this case. This tells us that we need to add some more data  for a proper generalisation of convergence groups.\\
\indent Let us fix a notation for intervals in $\Ss^1$: set  $\intoo{a}{b}=\{x\in \Ss^1 \vert a<x<b<a\}$ if $a\ne b$ and $\intoo{a}{a}=\Ss^1\setminus \{a\}$.

\begin{defi} Let $k\in \N^*$ and let $h\in \Homeo(\Ss^1)$ have rotation number $\frac{1}{k}$. A sequence $(f_n)\suite\in\Homeo(\Ss^1)^\N$ has the $(h,k)$-convergence property if there are $a,b\in \Ss^1$ such that, up to a subsequence: \begin{itemize} \item $h^k(a)=a$ and $h^k(b)=b$ \item $\forall i\in\{0,\dots,k-1\}~ \forall x\in \intoo{h^i(a)}{h^{i+1}(a)} ~ f_n(x)\to h^i(b)$ \end{itemize} A group  $G\subset \Homeo(\Ss^1)$  is a $(h,k)$-convergence group if all elements of $G$ commute with $h$ and all sequence in $G$ either has the $(h,k)$-convergence property or has an equicontinuous subsequence.
\end{defi}

If $k=1$, then a $(h,k)$-convergence group is a convergence group, hence  topologically Fuchsian.  Of course, the main interest we have in this notion is that it is satisfied by the isometry groups of spatially compact surfaces. If $h$ is topologically conjugate to a rotation, then we immediately obtain an analog of Theorem \ref{convergence_groups}. 
\begin{lemme} \label{convergence_rotation} Let $k\in \N^*$ and let $h$ be topologically conjugate to the rotation of angle $\frac{1}{k}$. If $G\subset \Homeo(\Ss^1)$ is a $(h,k)$-convergence group, then $G$ is topologically conjugate to a subgroup of $\PSL_k(2,\R)$. \end{lemme}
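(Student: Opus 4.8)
The plan is to reduce to the case $h=R_{\frac1k}$ and then pass to the quotient circle, where the $(h,k)$-convergence property becomes the ordinary convergence property, so that Theorem \ref{convergence_groups} applies.

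Since the $(h,k)$-convergence property is defined purely in terms of the topological dynamics (fixed points of $h^k$ and limits on the intervals $\intoo{h^i(a)}{h^{i+1}(a)}$), it is invariant under topological conjugacy. As $h$ is topologically conjugate to $R_{\frac1k}$, I would first choose $\theta\in\Homeo(\Ss^1)$ with $\theta h\theta^{-1}=R_{\frac1k}$ and replace $G$ by $\theta G\theta^{-1}$: it suffices to prove the statement when $h=R_{\frac1k}$, so from now on $h=R_{\frac1k}$ and $h^k=\mathrm{Id}$. Let $q:\Ss^1\to\Ss^1$ be the quotient of $\Ss^1$ by $\langle R_{\frac1k}\rangle\cong\Z/k$; it is a $k$-fold covering onto a circle, which I identify with the projective line on which $\PSL(2,\R)$ acts. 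Because every $f\in G$ commutes with $R_{\frac1k}$, it permutes the fibres of $q$ and descends to a homeomorphism $\bar f$ with $q\circ f=\bar f\circ q$. This yields a homomorphism $G\to\Homeo(\Ss^1)$, $f\mapsto \bar f$, whose image I call $\bar G$ (all maps preserve orientation, as throughout the paper, so $\bar G\subset\Homeo_+(\Ss^1)$).

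The key step is to show that $\bar G$ is a convergence group. Given a sequence $(\bar f_n)$ in $\bar G$, I lift it to a sequence $(f_n)$ in $G$ and apply the dichotomy from the definition of an $(h,k)$-convergence group. If $(f_n)$ has an equicontinuous subsequence, then, $q$ being a covering and a local homeomorphism, the corresponding subsequence of $(\bar f_n)$ is equicontinuous as well. If instead $(f_n)$ has the $(h,k)$-convergence property, then there are $a,b$ such that, along a subsequence, $f_n(x)\to h^i(b)=R_{\frac ik}(b)$ for every $x\in\intoo{h^i(a)}{h^{i+1}(a)}$ and every $i\in\{0,\dots,k-1\}$. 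The $k$ points $R_{\frac ik}(a)$ all project to a single $\bar a=q(a)$, each interval $\intoo{h^i(a)}{h^{i+1}(a)}$ maps homeomorphically onto $\Ss^1\setminus\{\bar a\}$, and the $k$ limits $R_{\frac ik}(b)$ all project to $\bar b=q(b)$. Hence for every $\bar x\ne\bar a$, writing $\bar x=q(x)$ with $x$ in one of these intervals, $\bar f_n(\bar x)=q(f_n(x))\to\bar b$, which is exactly the convergence property for $(\bar f_n)$. Commutation with $R_{\frac1k}$ is what makes $\bar f_n(\bar x)$ independent of the chosen lift $x$ and collapses the $k$ source intervals and $k$ limit points into the single pair $(\bar a,\bar b)$. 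Thus $\bar G$ is a convergence group.

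By Theorem \ref{convergence_groups} there is $\bar\varphi\in\Homeo(\Ss^1)$ with $\bar\varphi\,\bar G\,\bar\varphi^{-1}\subset\PSL(2,\R)$. Since $\bar\varphi$ has degree one, it lifts through $q$ to some $\varphi\in\Homeo(\Ss^1)$ with $q\circ\varphi=\bar\varphi\circ q$. For $f\in G$, set $\psi=\varphi f\varphi^{-1}$; using $q\circ\varphi^{-1}=\bar\varphi^{-1}\circ q$ one computes $q\circ\psi=\bar\varphi\circ\bar f\circ\bar\varphi^{-1}\circ q$, so $\psi$ is a lift through $q$ of the element $\bar\varphi\,\bar f\,\bar\varphi^{-1}\in\PSL(2,\R)$. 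As $\PSL_k(2,\R)$ is by definition the set of all lifts through the $k$-fold cover $q$ of elements of $\PSL(2,\R)$, we get $\psi\in\PSL_k(2,\R)$, hence $\varphi G\varphi^{-1}\subset\PSL_k(2,\R)$; composing with $\theta$ yields the conjugacy for the original $G$. The Gabai–Casson-Jungreis theorem does the analytic heavy lifting, so the only genuine work is the covering-space bookkeeping of the last two paragraphs, and the one point needing care is verifying that the convergence property really descends to $\bar G$ — which is precisely where the commutation hypothesis is indispensable.
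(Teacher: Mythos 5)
Your proof is correct and follows essentially the same route as the paper: pass to the quotient circle $\Ss^1/h$, check that the $(h,k)$-convergence property descends to the ordinary convergence property for the quotient group, invoke Gabai--Casson--Jungreis, and lift the resulting conjugacy back through the $k$-fold cover. The only cosmetic difference is that you first normalise $h$ to the rotation $R_{1/k}$ before quotienting, whereas the paper quotients by $h$ directly; the covering-space bookkeeping is otherwise identical.
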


\begin{proof} Let $\pi$ be the projection of the circle $\Ss^1$ onto its quotient by $h$. Since $h$ is topologically conjugate to a rational rotation, it is a finite covering. Since the quotient is homeomorphic to the circle, the image $\pi(G)\subset \Homeo(\Ss^1/h)$ is a subgroup of $\Homeo(\Ss^1)$.\\
\indent Let $(g_n)\suite$ be a sequence in $\pi(G)$ that leaves every compact set. Choose a sequence $(f_n)\suite$ of lifts in $G$. Since $f_n\to \infty$, there are $a,b\in \Ss^1$ such that $f_n(x)\to h^i(b)$ for all $x\in \intoo{h^i(a)}{h^{i+1}(a)}$. Let $x\in \Ss^1 \setminus \pi(a)$. Let $z\in \pi^{-1}(\{x\})$. Then $z$ is not in the orbit of $a$ for $h$, hence $f_n(z)\to h^i(b)$ for some $i\in \{0,\dots,k-1\}$, and $g_n(x)=\pi(f_n(z))\to \pi(h^i(b))=\pi(b)$.\\
\indent We have shown that $\pi(G)\subset \Homeo(\Ss^1)$ is a convergence group, therefore there is $\p\in \Homeo(\Ss^1)$ such that $\p^{-1} \pi(G)\p \subset \PSL(2,\R)$. If $\psi\in \Homeo(\Ss^1)$ is a lift of $\p$, then $\psi^{-1}G\psi \subset \pi^{-1}(\PSL(2,\R))$. Since $h$ is topologically conjugate to a rotation of angle $\frac{1}{k}$, $\pi^{-1}(\PSL(2,\R))$ is topologically conjugate to $\PSL_k(2,\R)$.
\end{proof}

The definition of the $(h,k)$-convergence property can actually simplified by only looking at convergence on one interval.

\begin{lemma} \label{simplify_convergence} Let $h\in \Homeo(\Ss^1)$. Let $G\subset \Homeo(\Ss^1)$ be a group that commutes with $h$. Let  $(f_n)\suite \in G^\N$ . Assume that  there are $a,b\in \Ss^1$ such that  $f_n(x)\to b$ for all $x\in \intoo{a}{h(a)}$. Then there is $k\in \N$ such that the rotation number of $h$ is $\frac{1}{k}$, and $(f_n)\suite$ has the $(h,k)$-convergence property. \end{lemma}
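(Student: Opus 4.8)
The plan is to propagate the convergence to every $h$-translate of the interval, read the rotation number of $h$ off the combinatorics of the orbit of $a$, and finally upgrade $a$ and $b$ to periodic points. If $h(a)=a$ then $\intoo{a}{h(a)}=\Ss^1\setminus\{a\}$, the hypothesis is the ordinary convergence property, and one checks directly (using $f_n h=hf_n$) that $h(b)=b$, so the case $k=1$ is immediate; I therefore assume $h(a)\neq a$ from now on.

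First I would propagate. Since every $f_n$ commutes with $h$, the homeomorphism $h^i$ carries $\intoo{a}{h(a)}$ onto $A_i:=\intoo{h^i(a)}{h^{i+1}(a)}$, and writing $f_n=h^i\circ f_n\circ h^{-i}$ gives $f_n(x)\to h^i(b)$ for every $x\in A_i$ and every $i\in\Z$. The decisive observation is a rigidity statement: if a \emph{single} point $x$ belongs to two of these open arcs $A_i$ and $A_j$, then $f_n(x)$ converges at once to $h^i(b)$ and to $h^j(b)$, so that $h^{i-j}(b)=b$. I will use this together with the elementary fact that $h^m$ has rotation number $m\alpha$ (where $\alpha$ is the rotation number of $h$), hence is fixed-point free whenever $m\alpha\notin\Z$.

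Next I would pin down $\alpha$. By Poincaré's theory the cyclic order of the family $(h^i(a))_{i\in\Z}$ is that of $(i\alpha)_{i\in\Z}$, so $A_i$ occupies the combinatorial position of the rotation arc $\intoo{i\alpha}{(i+1)\alpha}$. If $\alpha$ were irrational, density of $\{i\alpha\}$ would yield some $j\neq 0$ with $A_0\cap A_j\neq\emptyset$, whence $h^j(b)=b$ while $h^j$ is fixed-point free — impossible. If $\alpha=p/q$ in lowest terms with $p\geq 2$, then the $q$ arcs $A_0,\dots,A_{q-1}$ correspond to rotation arcs of length $p/q$ with total length $p>1$, so they cannot be pairwise disjoint; an overlap $A_0\cap A_j$ with $0<j<q$ forces $h^j(b)=b$, again contradicting that $h^j$ is fixed-point free. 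Hence $\alpha=\tfrac1k$ for some $k\in\N$. To finish, let $P$ be a period-$k$ orbit of $h$ and suppose $a$ lies in a gap $\intoo{p_0}{p_1}$ of $P$ with $h^k(a)\neq a$; then $h^k(a)$ lies in the same gap, on one side of $a$, and in either case a small one-sided neighbourhood of $h^{\pm k}(a)$ meets $A_0$ and an adjacent arc simultaneously, forcing $b$ to be fixed by a fixed-point-free iterate of $h$ — a contradiction. Thus $h^k(a)=a$, so $A_k=A_0$, and comparing the two limits on $A_0$ gives $h^k(b)=b$. With $a,b$ periodic and $A_0,\dots,A_{k-1}$ tiling $\Ss^1$, this is exactly the $(h,k)$-convergence property.

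The hard part will be the passage to rotation combinatorics in the third step: transferring an overlap of the model arcs $\intoo{i\alpha}{(i+1)\alpha}$ back to a genuine common point of $A_i$ and $A_j$ requires care when the semiconjugacy to $R_\alpha$ is not injective (Denjoy-type wandering intervals), so one must select a value of the semiconjugacy whose fibre is a single point in order to produce an honest $x\in A_i\cap A_j$. The subsequent periodicity argument for $a$ is elementary once the rotation number is known, but it is the step most sensitive to the precise one-sided positions of $h^{\pm k}(a)$ relative to $a$ and to the endpoints of the gap, so I expect the bookkeeping there to be the most delicate portion of the write-up.
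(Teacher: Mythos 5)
Your proposal is correct and follows essentially the same strategy as the paper's own proof: propagate the convergence to the arcs $h^i(\intoo{a}{h(a)})$ using the commutation $f_n=h^i f_n h^{-i}$, observe that a point lying in two such arcs forces $h^{i-j}(b)=b$, and use this overlap mechanism both to exclude every rotation number other than $\frac{1}{k}$ and to show that $a$ and $b$ are periodic. The differences are only in bookkeeping — you pass through Poincar\'e's combinatorics of the orbit of $a$ and contradict fixed-point-freeness of $h^{i-j}$, whereas the paper argues directly with points just to the right and just to the left of $a$ (or of $h^k(a)$) to produce two relations that combine to $h(b)=b$ — and your write-up inherits the same untreated degenerate case as the paper's, namely rotation number $0$ with $h(a)\neq a$, where the arcs $h^i(\intoo{a}{h(a)})$ can be pairwise disjoint and need not cover the circle, so that no overlap is available.
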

\begin{proof}  Let  $(f_n)\suite \in G^\N$ be such a sequence. Consider $a,b\in \Ss^1$ such that $f_n(x)\to b$ for all $x\in \intoo{a}{h(a)}$.\\
\indent If $x\in \intoo{h^i(a)}{h^{i+1}(a)}$, then $h^{-i}(x) \in \intoo{a}{h(a)}$, which shows that $f_n(x) = h^i\circ f_n\circ h^{-i}(x) \to h^i(b)$. \\
\indent If the rotation number is not equal to some $\frac{1}{k}$, then there is $i\in \N$ such that $h^i(a)\in \intoo{a}{h(a)}$. Let $x\in \intoo{a}{h(a)}$ be close enough to $a$ so that $h^i(x)\in \intoo{a}{h(a)}$. We find that $f_n(x)\to b$, and that $f_n(h^i(x))\to b$. Since $f_n$ commutes with $h$, we also find that $f_n(h^i(x))\to h^i(b)$, hence $h^i(b)=b$. Now let $y\in \intoo{h^{-1}(a)}{a}$ be close enough to $a$ so that $h^i(a)\in \intoo{a}{h(a)}$. We now have $f_n(y)\to h^{-1}(b)$, so $f_n(h^i(b))\to h^{i-1}(b)$ and $h^{i-1}(b)=b$. This implies that $h(b)=b$, so $h$ has a fixed point, therefore its rotation number is $\frac{1}{1}$, which contradicts our assumption.\\
\indent We now have to show that $a$ and $b$ are periodic points of $h$.\\
\indent If $a$ were not a periodic point of $h$, then $h^k(a)$ would belong to an interval $\intoo{h^i(a)}{h^{i+1}(a)}$ for some $i\in \N$. In this case, let $x\in \intoo{a}{h(a)}$ be close enough to $a$ so that $h^k(x) \in \intoo{h^i(a)}{h^{i+1}(a)}$. We then have $f_n(x)\to b$ and $f_n(h^k(x))\to h^i(b)$, so that $h^k(b)=h^i(b)$. This implies that $i=nk$ for some $n\in \N$, and $h^k(b)=b$. Now let $y\in \intoo{h^{-1}(a)}{a}$ be such that $h^k(y)\in  \intoo{h^i(a)}{h^{i+1}(a)}$. We now have $f_n(y)\to h^{-1}(b)$ and $f_n(h^k(y))\to h^i(b)=b$, i.e. $h^{k-1}(b)=b$ which is absurd because all periodic points have the same period.\\
\indent We have shown that $h^k(a)=a$. For any $x\in \intoo{a}{h(a)}$, we have $h^k(x)\in \intoo{a}{h(a)}$, which shows that $f_n(h^k(x))\to b$. Since $f_n(x)\to b$, we also have $f_n(h^k(x))\to h^k(b)$, so finally $h^k(b)=b$.
\end{proof}

 The $(h,k)$-convergence property can be seen as a one dimensional hyperbolic behaviour (there are attracting and repelling points). The key in showing that the isometry groups under study satisfy this property will consist in exhibiting a hyperbolic behaviour for sequences of isometries. In order to find hyperbolic points (attracting in one direction and repelling in the other), we will make use of a result on sequences of affine diffeomorphisms.

\begin{lemma} \label{affine_diffeomorphisms} Let $M$ be a connected manifold equipped with an an affine connection. Let $(f_n)\suite$ be a sequence of affine diffeomorphisms (i.e. that preserve the connection) such that $f_n\to \infty$. If there is a converging sequence $p_n\to p$ in $M$ such that the sequence $f_n(p_n)$ lies in a compact set, then, up to a subsequence, $Df_n(p_n)\to \infty$ or $Df_n(p_n)\to 0$. \end{lemma}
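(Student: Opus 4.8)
The plan is to reformulate the conclusion as a statement about the frame bundle and to play it against the rigidity of affine maps. Measuring $Df_n(p_n)$ in fixed local frames near $p$ and near a subsequential limit $q$ of $f_n(p_n)$ turns it into a sequence $A_n\in \mathrm{GL}(d,\R)$ (with $d=\dim M$); the two alternatives ``$Df_n(p_n)\to\infty$'' and ``$Df_n(p_n)\to 0$'' together say exactly that $A_n$ leaves every compact subset of $\mathrm{GL}(d,\R)$ (the largest singular value tends to $\infty$, respectively the smallest tends to $0$). So it suffices to prove: if $A_n$ had a subsequence remaining in a compact subset of $\mathrm{GL}(d,\R)$, then $f_n$ would remain in a compact subset of the affine group, contradicting $f_n\to\infty$.

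First I would pass to a subsequence so that $f_n(p_n)\to q$, and fix relatively compact charts $U\ni p$ and $V\ni q$ containing $p_n$ and $f_n(p_n)$ for large $n$. The coordinate frames give frames $u_n\in F_{p_n}M$ with $u_n\to u_0\in F_pM$ in the linear frame bundle $FM$; by construction the image frame $\tilde f_n(u_n)$ (the lift of $f_n$ to $FM$ applied to $u_n$) is the frame over $f_n(p_n)$ whose columns are $Df_n(p_n)u_n$, so in the chart $V$ it is recorded by the pair $(f_n(p_n),A_n)$.

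The engine is the rigidity of affine diffeomorphisms: since $f$ affine means $f(\exp_x v)=\exp_{f(x)}(Df_x v)$, an affine map is determined by its $1$-jet at one point, the induced action of $G=\Aff(M,\nabla)$ on $FM$ is free, and --- this is the point I would invoke (Kobayashi--Nomizu) --- it is \emph{proper}. Now suppose a subsequence of $A_n$ stays in a compact $K\subset \mathrm{GL}(d,\R)$. Then $u_n$ lies in a compact subset $C_1\subset FM$, while $\tilde f_n(u_n)$, being recorded by $(f_n(p_n),A_n)$ with $f_n(p_n)\to q$ and $A_n\in K$, lies in a compact subset $C_2\subset FM$. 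Properness of the $G$-action on $FM$ says that $\{g\in G : g\cdot C_1\cap C_2\neq\emptyset\}$ is compact; since $f_n\cdot u_n=\tilde f_n(u_n)\in C_2$ and $u_n\in C_1$, the $f_n$ lie in this compact set, so a subsequence converges in $G$. This contradicts $f_n\to\infty$, and proving the contrapositive finishes the dichotomy.

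The main obstacle is the properness/rigidity input. If one does not want to quote it, it can be reproved directly from the exponential map: with $A_n\to A\in\mathrm{GL}(d,\R)$, $p_n\to p$ and $f_n(p_n)\to q$, the identity $f_n(\exp_{p_n}v)=\exp_{f_n(p_n)}(Df_n(p_n)v)$ forces $f_n\to f$ in $C^1$ on a normal neighborhood of $p$, where $f(\exp_p v)=\exp_q(Av)$ is affine. The remaining (and technically most delicate) point is the passage from this local $C^1$ convergence to convergence of $f_n$ in $G$: one shows that the set of points of $M$ near which $f_n$ converges in $C^1$ to an affine map is open and closed, hence all of the connected manifold $M$, so that $f_n\to f\in G$. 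Here the $C^2$ regularity of the metric --- giving a $C^1$ connection and a continuously varying exponential map --- is exactly what is needed to run this continuity argument.
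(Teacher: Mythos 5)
Your proposal is correct and follows exactly the route the paper indicates: the paper does not prove this lemma but cites Kobayashi's \emph{Transformation Groups in Differential Geometry}, whose argument is precisely the one you give (properness/freeness of the affine group acting on the frame bundle, proved via linearisation by the exponential map), and your reading of the conclusion as ``$Df_n(p_n)$ leaves every compact subset of $\mathrm{GL}(d,\R)$'' matches how the lemma is actually used later in the paper. Nothing to object to.
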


The proof of this result can be found in \cite{Kobayashi}. We will not detail the proof, however the idea of it is quite simple: since affine diffeomorphisms are linearisable via the exponential map, the behaviour of the derivative at one point dictates the behaviour of the diffeomorphism on the whole manifold. In our context, we get a more detailed result. 
\begin{lemma} \label{hyperbolic_lemma} Let $(M,g)$ be a spatially compact surface  that embeds conformally in $\T^2$. Let $\p_n\in\Isom(M,g)$ be such that $\p_n\to \infty$, and let $(x_0,y_0)\in M$ be such that $\p_n(x_0,y_0)\to (x_1,y_1)\in M$. Then, up to a subsequence,  one of the following is satisfied: \begin{itemize} \item $\rho_1^M(\p_n)'(x_0)\to \infty$ and $\rho_2^M(\p_n)'(y_0)\to 0$ \item $\rho_1^M(\p_n)'(x_0)\to 0$ and $\rho_2^M(\p_n)'(y_0)\to \infty$ \end{itemize} \end{lemma}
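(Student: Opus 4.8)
The plan is to combine the affine rigidity of isometries supplied by Lemma \ref{affine_diffeomorphisms} with the isometry (volume) constraint read off in the flat-torus coordinates; the latter is what couples the two one-dimensional derivatives and pins down which one blows up.

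First I would record the shape of the differential. Identifying $M$ with $p(M)\subset\T^2$ as in Proposition \ref{immersion_torus}, every $\p\in\Isom(M,g)$ acts by the product map $\p(x,y)=(\rho_1^M(\p)(x),\rho_2^M(\p)(y))$, so in these coordinates its differential at $(x_0,y_0)$ is the diagonal matrix $\diag(a_n,d_n)$ with $a_n=\rho_1^M(\p_n)'(x_0)>0$ and $d_n=\rho_2^M(\p_n)'(y_0)>0$ (both positive because the $\rho_i^M(\p_n)$ preserve orientation). Since isometries preserve the Levi-Civita connection, the $\p_n$ are affine diffeomorphisms, so I would apply Lemma \ref{affine_diffeomorphisms} with the constant sequence $p_n=(x_0,y_0)$; as $\p_n(x_0,y_0)\to(x_1,y_1)$ lies in a compact set, up to a subsequence $D\p_n(x_0,y_0)\to\infty$ or $D\p_n(x_0,y_0)\to 0$. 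Measuring with the flat background metric $dx^2+dy^2$ of $\T^2$ (legitimate since $(x_0,y_0)$ is fixed and the images $\p_n(x_0,y_0)$ stay in a compact set), this means precisely that the pair $(a_n,d_n)$ fails to remain in a compact subset of $(0,\infty)^2$: either $\max(a_n,d_n)\to\infty$ or $\min(a_n,d_n)\to 0$.

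The coupling comes from writing $g=\lambda\,dx\,dy$ on $p(M)$, where $\lambda>0$ is continuous (positive because, by Proposition \ref{immersion_torus}, $g$ is conformal to $+dx\,dy$). The isometry condition $\p_n^*g=g$ evaluated at $(x_0,y_0)$ reads $\lambda(\p_n(x_0,y_0))\,a_n\,d_n=\lambda(x_0,y_0)$. Here interiority is essential: since $(x_1,y_1)\in M$, the factor $\lambda(\p_n(x_0,y_0))\to\lambda(x_1,y_1)\in(0,\infty)$, whence
$$a_n d_n \longrightarrow \frac{\lambda(x_0,y_0)}{\lambda(x_1,y_1)}=:c\in(0,\infty).$$
Because $a_nd_n\to c>0$, the two conditions $\max(a_n,d_n)\to\infty$ and $\min(a_n,d_n)\to 0$ are in fact equivalent, so both alternatives of Lemma \ref{affine_diffeomorphisms} force $(a_n,d_n)$ to degenerate. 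Passing to a further subsequence so that $a_n\to a_\infty$ and $d_n\to d_\infty$ in $[0,\infty]$, the relation $a_nd_n\to c\in(0,\infty)$ leaves exactly two possibilities: $a_\infty=\infty,\ d_\infty=0$, or $a_\infty=0,\ d_\infty=\infty$. These are precisely the two cases of the statement (the excluded case $a_\infty,d_\infty\in(0,\infty)$ would make $D\p_n$ converge to an invertible limit, so $\p_n$ would converge, contradicting $\p_n\to\infty$; this is already built into Lemma \ref{affine_diffeomorphisms}).

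I expect the only genuinely delicate point to be the bookkeeping around the conformal factor $\lambda$: one must ensure it stays finite and bounded away from $0$ at the relevant points, which is exactly guaranteed by the hypothesis that both $(x_0,y_0)$ and the limit $(x_1,y_1)$ lie in $M$ rather than on its conformal boundary, where $\lambda$ may blow up or vanish. Everything else is either the cited rigidity of affine maps or an elementary manipulation of the product $a_nd_n$.
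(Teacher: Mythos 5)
Your proposal is correct and follows essentially the same route as the paper: apply Lemma \ref{affine_diffeomorphisms} to get that the diagonal differential degenerates, then use the isometry relation $g(\p_n(x_0,y_0))\,\rho_1^M(\p_n)'(x_0)\,\rho_2^M(\p_n)'(y_0)=g(x_0,y_0)$ together with $(x_1,y_1)\in M$ to keep the product of the two derivatives bounded in $\R_+^*$, forcing one factor to $\infty$ and the other to $0$. Your observation that the product actually converges to a positive constant is a slight sharpening of the paper's "bounded in $\R_+^*$", but the argument is the same.
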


\begin{proof} The derivative $D\p_n(x_0,y_0)$ is given by the diagonal matrix with coefficients $\rho_1^M(\p_n)'(x_0)$ and $\rho_2^M(\p_n)'(y_0)$. Up to a subsequence, Lemma \ref{affine_diffeomorphisms} gives us four cases.\\
\indent If $D\p_n(x_0,y_0)\to 0$, then either $\rho_1^M(\p_n)'(x_0)\to 0$, either $\rho_2^M(\p_n)'(y_0)\to 0$. Let us write the metric $g=g(x,y)dxdy$. The fact that the maps $\p_n$ are isometries gives us: $$g(\p_n(x_0,y_0))  \rho_1^M(\p_n)'(x_0) \rho_2^M(\p_n)'(y_0) = g(x_0,y_0)$$ Since $g(\p_n(x_0,y_0))\to g(x_1,y_1)\in \R_+^*$, we find that the Jacobian product  $\rho_1^M(\p_n)'(x_0) \rho_2^M(\p_n)'(y_0)$ is bounded in $\R_+^*$, hence the fact that one term converges to $0$ implies that the other tends to $\infty$.\\
\indent In the case where $D\p_n(x_0,y_0)\to \infty$,  one has  either $\rho_1^M(\p_n)'(x_0)\to \infty$ or $\rho_2^M(\p_n)'(y_0)\to \infty$, and the fact that the product $\rho_1^M(\p_n)'(x_0) \rho_2^M(\p_n)'(y_0)$ is bounded in $\R_+^*$ implies that when one term tends to $\infty$, the other converges to $0$.
\end{proof}

This result is exactly the hyperbolic behaviour that we were looking for: we find attraction in one direction and repulsion in the other. A useful fact is that the stable and unstable foliations are simply the lightlike foliations. We are now ready to show that when the conformal boundary is acausal, the isometry groups are convergence groups.

\begin{prop} \label{acausal_convergence} Let $(M,g)$ be a spatially compact surface with an acausal conformal boundary  that embeds in $\T^2$. Assume that the homeomorphisms $h_{\downarrow},h_{\uparrow}$ defining the boundary in $\T^2$ are such that the rotation number of $h_{\rightarrow \uparrow}$ is $ \frac{1}{k}$. Then $\rho_1^M(\Isom(M,g))$ is a $(h_{\rightarrow \uparrow},k)$-convergence group. \end{prop}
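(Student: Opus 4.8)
The plan is to verify the two clauses in the definition of a $(h_{\to\uparrow},k)$-convergence group for $G=\rho_1^M(\Isom(M,g))$: that every element commutes with $h_{\to\uparrow}$, and that every sequence either has the $(h_{\to\uparrow},k)$-convergence property or an equicontinuous subsequence. Since the boundary is acausal and $(M,g)$ embeds in $\T^2$, the maps $h_\downarrow,h_\uparrow$ are circle homeomorphisms, so $h_\to=h_\downarrow^{-1}$ and $h_{\to\uparrow}=h_\downarrow^{-1}h_\uparrow$ is a genuine homeomorphism of rotation number $\frac1k$. The commutation is then immediate from the boundary relations: for $\p\in\Isom(M,g)$, using $\rho_1^M(\p)\circ h_\to=h_\to\circ\rho_2^M(\p)$ and then $\rho_2^M(\p)\circ h_\uparrow=h_\uparrow\circ\rho_1^M(\p)$,
\[
\rho_1^M(\p)\circ h_{\to\uparrow}=\rho_1^M(\p)\circ h_\to\circ h_\uparrow=h_\to\circ\rho_2^M(\p)\circ h_\uparrow=h_\to\circ h_\uparrow\circ\rho_1^M(\p)=h_{\to\uparrow}\circ\rho_1^M(\p).
\]

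Now fix a sequence $\p_n\in\Isom(M,g)$. If $\rho_1^M(\p_n)$ has an equicontinuous subsequence we are done; otherwise, since relatively compact subsets of $\Homeo(\Ss^1)$ are equicontinuous, continuity of $\rho_1^M$ forces $\p_n\to\infty$ in $\Isom(M,g)$ (Lemma \ref{proper_map} gives the matching control in the other direction). By Lemma \ref{simplify_convergence} it then suffices to produce $a,b\in\Ss^1$ with $\rho_1^M(\p_n)(x)\to b$ for all $x\in\intoo{a}{h_{\to\uparrow}(a)}$, after passing to a subsequence: that lemma upgrades convergence on this single interval to the full $(h_{\to\uparrow},k)$-convergence property and re-derives that the rotation number is $\frac1k$.

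To manufacture $a$ and $b$, I would first locate hyperbolic behaviour. Choosing $(x_0,y_0)\in p(M)$ whose image $\p_n(x_0,y_0)$ subconverges to an \emph{interior} point $(x_1,y_1)\in p(M)$, Lemma \ref{hyperbolic_lemma} gives, up to a subsequence, either $\rho_1^M(\p_n)'(x_0)\to 0$ (so $x_0$ is the prospective sink $b=x_1$) or $\rho_1^M(\p_n)'(x_0)\to\infty$ (so $x_0$ is the prospective source $a$), with the other factor behaving reciprocally. The idea is that vertical fibers $\{x\}\times\intoo{h_\downarrow(x)}{h_\uparrow(x)}$ are leaves of a lightlike foliation, hence permuted by the $\p_n$: the expansion in one direction forces the image of a tiny sub-arc to spread across a whole target fiber, while the contraction in the other direction collapses the transverse directions. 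The Jacobian identity $g(\p_n(x,y))\,\rho_1^M(\p_n)'(x)\,\rho_2^M(\p_n)'(y)=g(x,y)$ keeps the two factors reciprocal up to a bounded conformal factor, which is what lets one transport the contraction/expansion type consistently along horizontal and vertical segments of the connected set $p(M)$ and thereby isolate a single interval $\intoo{a}{h_{\to\uparrow}(a)}$ on which $\rho_1^M(\p_n)\to b$.

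The main obstacle is precisely this last transfer: Lemma \ref{hyperbolic_lemma} only provides pointwise derivative data at isolated points, whereas the convergence property demands uniform convergence to a constant on an entire interval, and the conjugating maps $h_\downarrow,h_\uparrow$ are mere homeomorphisms, so derivative information cannot be pushed through them. I expect to handle this by passing to the monotone pointwise limits $g_1=\lim\rho_1^M(\p_n)$ and $g_2=\lim\rho_2^M(\p_n)$ (extracted by a Helly-type selection), exploiting the exact relations $g_2=h_\downarrow g_1 h_\downarrow^{-1}=h_\uparrow g_1 h_\uparrow^{-1}$ together with the commutation with $h_{\to\uparrow}$ to show that the plateau of $g_1$ on which it is constantly $b$ must fill a full fundamental domain of $h_{\to\uparrow}$. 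A subsidiary difficulty, to be resolved by the same monotonicity-and-area bookkeeping, is guaranteeing that some orbit $\p_n(x_0,y_0)$ actually accumulates in the interior of $p(M)$ rather than being absorbed into the (acausal, hence well-behaved) conformal boundary, so that Lemma \ref{hyperbolic_lemma} applies at all. Since the argument treats an arbitrary sequence, it yields that $\rho_1^M(\Isom(M,g))$ is a $(h_{\to\uparrow},k)$-convergence group.
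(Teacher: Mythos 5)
Your skeleton matches the paper's: commutation with $h_{\to\uparrow}$ via the boundary relations, reduction to convergence on a single fundamental interval via Lemma \ref{simplify_convergence}, and Lemma \ref{hyperbolic_lemma} as the source of hyperbolicity. But there is a genuine gap at the point you yourself flag as a ``subsidiary difficulty'': you cannot in general guarantee that $\p_n(x_0,y_0)$ accumulates at an \emph{interior} point of $p(M)$, and no amount of monotonicity-and-area bookkeeping will fix this, because it is simply false. Already for a hyperbolic one-parameter subgroup acting on $\dS_2$, the orbit of a generic point escapes to the boundary (the attracting fixed point on the diagonal). The case $(x_1,y_1)\in\partial p(M)$ is therefore not a degenerate situation to be ruled out but a main case requiring its own argument, and in that case Lemma \ref{hyperbolic_lemma} is unavailable (it needs the limit to lie in $M$ so that the conformal factor $g(\p_n(x_0,y_0))$ stays bounded away from $0$ and $\infty$). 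The paper handles it differently: one first reduces to the situation where no point of the horizontal fiber is sent to a point over the interior, so the only possible limit points of $\rho_1^M(\p_n)(x)$ for $x\in\intoo{h_\gets(y_0)}{h_\to(y_0)}$ are the two endpoints $h_\gets(y_1)$ and $h_\to(y_1)$; monotonicity then splits the fiber into at most two subintervals converging to each endpoint, and the commutation with $h_{\to\uparrow}$ (applied to the interval $\intoo{h_\to(y_0)}{h_{\to\uparrow}(z)}$, where $z$ is the transition point) produces a full fundamental interval on which the limit is constant, so Lemma \ref{simplify_convergence} applies.

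A second, smaller issue is your mechanism for the interior case. You correctly observe that Lemma \ref{hyperbolic_lemma} only gives derivative data at one point and that this cannot be pushed through the homeomorphisms $h_\downarrow,h_\uparrow$, but your proposed remedy (Helly selection of monotone limits $g_1,g_2$ plus a plateau analysis) is left unexecuted and is not how the transfer actually works. The paper's device is geometric: parametrise the horizontal lightlike geodesic from $(x_0,y_0)$ to $(x,y_0)$; its image under $\p_n$ is a geodesic whose initial point converges into $M$ and whose initial velocity tends to $0$ (resp.\ $\infty$), so by continuity of the exponential map it converges uniformly to a constant geodesic (resp.\ its endpoints are pushed to the ends $h_\gets(y_1)$, $h_\to(y_1)$ of the limit fiber). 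This is what converts the single-point derivative estimate into convergence of $\rho_1^M(\p_n)(x)$ for every $x$ in the fiber, which is exactly an interval of the form $\intoo{a}{h_{\to\uparrow}(a)}$ with $a=h_\gets(y_0)$. Without either this geodesic argument or a completed version of your plateau argument, the interior case is also not closed.
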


\begin{proof} 
 Let $\p_n\in \Isom(M,g)$ be a sequence such that $\rho_1^M(\p_n)\to \infty$ in $\Homeo(\Ss^1)$ (i.e. $\p_n\to \infty$ because of Proposition \ref{proper_compact_group}). We start with $(x_0,y_0)\in M\subset \T^2$, and consider a subsequence such that $\p_n(x_0,y_0)\to (x_1,y_1)\in \overline M\subset \T^2$. \\

\textbf{First case:} Assume that $(x_1,y_1)\in M$.\\
\indent By Lemma \ref{hyperbolic_lemma}, there are two subcases.\\
\emph{First subcase:} $\rho_1^M(\p_n)'(x_0)\to 0$ and $\rho_2^M(\p_n)'(y_0)\to \infty$\\ \indent Let $x\in\Ss^1$ be such that $(x,y_0)\in M$, i.e. $x\in \intoo{h_{\gets}(y_0)}{h_{\rightarrow}(y_0)}$, and consider the geodesic $\gamma$ such that $\gamma(0)=(x_0,y_0)$ and $\gamma(1)=(x,y_0)$. The geodesic $\gamma_n=\p_n\circ \gamma$ has initial data $\gamma_n(0)\to (x_1,y_1)\in M$ and $\gamma'_n(0)\to 0$, hence $\gamma_n$ converges uniformly to a constant geodesic. This implies that $\gamma_n(1)\to (x_1,y_1)$, i.e. $\rho_1^M(\p_n)(x)\to x_1$.\\
\indent Let $a=h_\gets(y_0)$. We have shown that $\rho_1^M(\p_n)(x)\to x_1$ for all $x\in \intoo{a}{h_{\to \uparrow}(a)}$. Lemma \ref{simplify_convergence} implies that the sequence $(\rho_1^M(\p_n))\suite$ has the $(h_{\to \uparrow},k)$-convergence property.
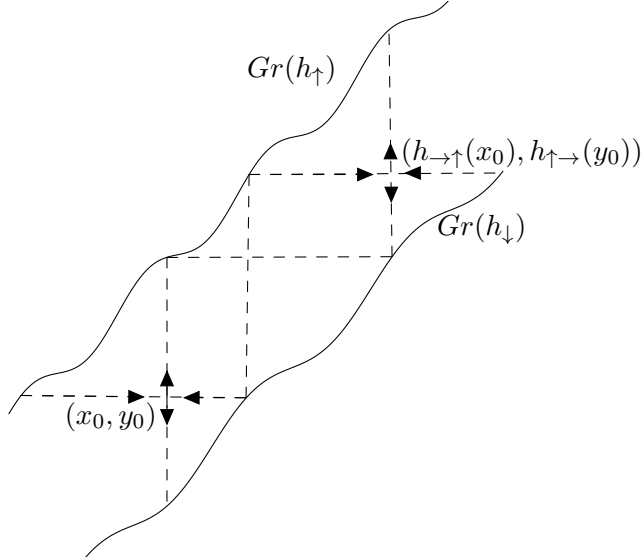
\begin{figure}[h]
\definecolor{uuuuuu}{rgb}{0.27,0.27,0.27}
\definecolor{xdxdff}{rgb}{0.49,0.49,1}
\begin{tikzpicture}[line cap=round,line join=round,>=triangle 45,x=1.0cm,y=1.0cm]
\clip(-3.56,-2.79) rectangle (5.78,4.57);
\draw[smooth,samples=100,domain=-3:3.5] plot(\x,{2+(\x)+0.2*sin((4*(\x))*180/pi)});
\draw[smooth,samples=100,domain=-3:3.5] plot(\x,{0-1+(\x)+0.2*sin((3*(\x)+1)*180/pi)});
\draw [dash pattern=on 4pt off 4pt] (-0.92,1.18)-- (-0.92,-2.12);
\draw [dash pattern=on 4pt off 4pt] (-0.92,1.18)-- (2.04,1.19);
\draw [dash pattern=on 4pt off 4pt] (2.04,1.19)-- (2,4.2);
\draw [dash pattern=on 4pt off 4pt] (-2.84,-0.65)-- (0.12,-0.68);
\draw [dash pattern=on 4pt off 4pt] (0.12,-0.68)-- (0.16,2.28);
\draw [dash pattern=on 4pt off 4pt] (0.16,2.28)-- (3.48,2.3);
\draw [->,dash pattern=on 4pt off 4pt] (0.16,2.28) -- (1.8,2.28);
\draw [->,dash pattern=on 4pt off 4pt] (2.3,2.3) -- (2.2,2.3);
\draw [->,dash pattern=on 4pt off 4pt] (2.02,2.6) -- (2.02,2.7);
\draw [->,dash pattern=on 4pt off 4pt] (2.03,2) -- (2.03,1.9);

\draw (2.5,1.9) node[anchor=north west] {$Gr(h_{\downarrow})$};
\draw (0,4) node[anchor=north west] {$Gr(h_{\uparrow})$};
\draw [->,dash pattern=on 4pt off 4pt] (-2.84,-0.65) -- (-1.21,-0.67);
\draw [->,dash pattern=on 4pt off 4pt] (0.12,-0.68) -- (-0.71,-0.68);
\draw [->,dash pattern=on 4pt off 4pt] (-0.92,-0.67) -- (-0.92,-0.31);
\draw [->,dash pattern=on 4pt off 4pt] (-0.92,-0.67) -- (-0.92,-1.05);
\draw (-2.4,-0.6) node[anchor=north west] {$(x_0,y_0)$};
\draw (2.02,2.9) node[anchor=north west] {$(h_{\rightarrow \uparrow}(x_0),h_{\uparrow \rightarrow}(y_0))$};
\end{tikzpicture}
\caption{The dynamics of the isometry group} \label{fig:convergence_proof}
\end{figure}

\noindent \emph{Second subcase:} $\rho_1^M(\p_n)'(x_0)\to \infty$ and $\rho_2^M(\p_n)'(y_0)\to 0$ \\ \indent In this case, for $x\in \Ss^1$ such that $(x,y_0)\in M$, the geodesic joining $(x_0,y_0)$ to $(x,y_0)$ is now dilated by the sequence $\p_n$, which shows that $\rho_1^M(\p_n)(x)$ converges to $h_{\gets}(y_1)$ for $x\in \intfo{h_{\gets}(y_0)}{x_0}$ and to $h_{\rightarrow}(y_1)$ for $x\in \intof{x_0}{h_{\rightarrow}(y_0)}$.\\
\indent If $x\in \intfo{h_{\rightarrow}(y_0)}{h_{\rightarrow \uparrow}(x_0)}$, then $h_{\rightarrow \uparrow}^{-1}(x)\in \intfo{h_{\gets}(y_0)}{x_0}$ which shows that $\rho_1^M(\p_n)(x)\to h_{\rightarrow \uparrow}(h_{\gets}(y_1))=h_{\rightarrow}(y_1)$.\\
\indent We have shown that $\rho_1^M(\p_n)(x)\to h_{\rightarrow}(y_0)$ for all $x\in \intoo{x_0}{h_{\rightarrow \uparrow}(x_0)}$. Lemma \ref{simplify_convergence} implies that $(\rho_1^M(\p_n))\suite$ has the $(h_{\to \uparrow},k)$-convergence property.\\
\indent We now know that if  $(x_1,y_1)\in M$, then $(\rho_1^M(\p_n))\suite$ has the  $(h_{\rightarrow \uparrow},k)$-convergence property.\\

\textbf{Second case:} Assume that $(x_1,y_1)\notin M$.\\
\indent If there is  $x\in \Ss^1$ such that $(x,y_0)\in M$ and $\rho_1^M(\p_n)(x)\to x'$  with $(x',y_1)\in M$, then the first case shows that $(\rho_1^M(\p_n))\suite$ has the $(h_{\rightarrow \uparrow},k)$-convergence property. Therefore we can assume that there is no such $x$. In this case, the only limit points of $\rho_1^M(\p_n)(x)$ are $h_{\gets}(y_1)$ and $h_{\rightarrow}(y_1)$. We now have three subcases.\\
\emph{First subcase:} $\rho_1^M(\p_n)(x)\to h_{\gets}(y_1)$ for all $x\in\intoo{h_{\gets}(y_0)}{h_{\rightarrow}(y_0)}$.\\ \indent  Since $h_\to(y_0)=h_{\to \uparrow}(h_\gets(y_0))$, Lemma \ref{simplify_convergence} implies that $(\rho_1^M(\p_n))\suite$ has the $(h_{\to \uparrow},k)$-convergence property.\\
\emph{Second subcase:} $\rho_1^M(\p_n)(x)\to h_{\rightarrow}(y_1)$ for all $x\in \intoo{h_{\gets}(y_0)}{h_{\rightarrow}(y_0)}$.\\ \indent  The argument is the same as in the previous case since we only change the limit.\\
\emph{Third subcase:} The two limits are possible.\\ \indent If $\rho_1^M(\p_n)(u)\to h_{\gets}(y_1)$ for some $u\in \intoo{h_{\gets}(y_0)}{h_{\rightarrow}(y_0)}$, then $\rho_1^M(\p_n)(x)\to h_{\gets}(y_1)$ for all $x\in \intof{h_{\gets}(y_0)}{u}$. Similarly, if $\rho_1^M(\p_n)(v)\to h_{\rightarrow}(y_1)$ for some $v\in \intoo{h_{\gets}(y_0)}{h_{\rightarrow}(y_0)}$, then $\rho_1^M(\p_n)(x)\to h_{\rightarrow}(y_1)$ for all $x\in \intfo{v}{h_{\rightarrow}(y_0)}$. This implies that there is a point $z\in  \intoo{h_{\gets}(y_0)}{h_{\rightarrow}(y_0)}$ such that $\rho_1^M(\p_n)(x)\to h_{\gets}(y_1)$ for all $x\in \intoo{h_{\gets}(y_0)}{z}$ and $\rho_1^M(\p_n)(x)\to h_{\rightarrow}(y_1)$ for all $x\in \intoo{z}{h_{\rightarrow}(y_0)}$.\\
\indent If $x\in \intoo{h_{\rightarrow}(y_0)}{h_{\rightarrow \uparrow}(z)}$, then $h_{\rightarrow \uparrow}^{-1}(x)\in \intoo{h_{\gets}(y_0)}{z}$, which shows that $\rho_1^M(\p_n)(x)\to h_{\rightarrow \uparrow}(h_{\gets}(y_1))=h_{\rightarrow}(y_1)$. Once again, Lemma \ref{simplify_convergence} implies that $(\rho_1^M(\p_n))\suite$ has the $(h_{\to \uparrow},k)$-convergence property.\\
\end{proof}

Note that the strategy consisting in separating the cases depending on the possible limit points can also be found in Theorem 2.5 of \cite{Ba96}.\\
\indent We do not know if the $(h,k)$-convergence property implies topological conjugacy with a subgroup of $\PSL_k(2,\R)$, but it will still be a crucial tool in our proof for isometry groups of spatially compact surfaces. We already obtain Theorem \ref{main_theorem_acausal_embeds} in a special case (when $k=1$).
 
\begin{coro} \label{connected} Let $(M,g)$ be a spatially compact surface  with an acausal conformal boundary that embeds conformally in $\T^2$. Assume that the boundary of $p(M)\subset \T^2$ is connected. Then $\rho_1^M(\Isom(M,g))$ is topologically conjugate to a subgroup of $\PSL(2,\R)$. \end{coro}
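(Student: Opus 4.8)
The plan is to deduce from the connectedness hypothesis that the relevant integer $k$ equals $1$, so that the $(h_{\to\uparrow},k)$-convergence property supplied by Proposition \ref{acausal_convergence} degenerates into the ordinary convergence property, and then to conclude directly with the Gabai--Casson-Jungreis theorem.

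First I would translate the topological hypothesis on $\partial p(M)$ into a dynamical statement about $h_{\to\uparrow}$. Because the conformal boundary is acausal and $(M,g)$ embeds in $\T^2$, the maps $h_\uparrow$ and $h_\downarrow$ are circle homeomorphisms; in particular the boundary contains no vertical segments and is exactly $Gr(h_\uparrow)\cup Gr(h_\downarrow)$, a union of two embedded circles, and moreover $h_\to=h_\downarrow^{-1}$, so that $h_{\to\uparrow}=h_\downarrow^{-1}\circ h_\uparrow$. If the two graphs were disjoint, then $\partial p(M)$ would have at least two connected components, contradicting the hypothesis; hence there is $x_0\in\Ss^1$ with $h_\uparrow(x_0)=h_\downarrow(x_0)$, which means $h_{\to\uparrow}(x_0)=x_0$. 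Thus $h_{\to\uparrow}$ has a fixed point and its rotation number is $0$, i.e. $\frac1k$ with $k=1$ (the degenerate case $h_\uparrow\equiv h_\downarrow$, realised by $\dS_2$ itself, is the same phenomenon with every point fixed).

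Then I would apply Proposition \ref{acausal_convergence} with $k=1$: it gives that $\rho_1^M(\Isom(M,g))$ is an $(h_{\to\uparrow},1)$-convergence group, which is precisely an ordinary convergence group. Theorem \ref{convergence_groups} then produces $\p\in\Homeo(\Ss^1)$ with $\p^{-1}\rho_1^M(\Isom(M,g))\p\subset\PSL(2,\R)$, which is the desired conclusion. Note that properness of the action never enters the argument: in the proper case $\rho_1^M(\Isom(M,g))$ is relatively compact, so every sequence has an equicontinuous subsequence and the convergence-group condition holds trivially.

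The only real content --- and the step I would treat most carefully --- is the equivalence between connectedness of $\partial p(M)$ and the existence of a fixed point of $h_{\to\uparrow}$. The forward direction I actually need (connected $\Rightarrow$ intersecting graphs $\Rightarrow$ fixed point) is clean once acausality is invoked to rule out vertical boundary segments, but I would make sure to verify that acausality really forces $\partial p(M)=Gr(h_\uparrow)\cup Gr(h_\downarrow)$ with each graph an embedded circle, since this is exactly what makes the disjoint case split the boundary into two pieces. Everything beyond this is a direct invocation of the two black boxes Proposition \ref{acausal_convergence} and Theorem \ref{convergence_groups}.
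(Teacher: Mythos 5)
Your argument is correct and follows the paper's proof exactly: connectedness of the boundary forces the graphs of $h_\uparrow$ and $h_\downarrow$ to meet, so $h_{\to\uparrow}=h_\downarrow^{-1}\circ h_\uparrow$ has a fixed point, $k=1$, and Proposition \ref{acausal_convergence} plus the Gabai--Casson-Jungreis theorem conclude. The paper states the first step without justification, so your careful verification that acausality makes $\partial p(M)=Gr(h_\uparrow)\cup Gr(h_\downarrow)$ a union of two embedded circles is a welcome elaboration rather than a divergence.
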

\begin{proof} If the boundary of $p(M)$ is connected, then $h_{\rightarrow \uparrow}$ has a fixed point, i.e. $k=1$. We showed that $\rho_1^M(\Isom(M,g))$ is a $(h_{\rightarrow \uparrow},1)$-convergence group, i.e. a convergence group, hence it is topologically conjugate to a subgroup of $\PSL(2,\R)$. \end{proof}

\section{Reducing the problem to open sets of $\T^2$} \label{sec:reducing_problem} Before we go further, we will see that under the assumption that the isometry group acts non properly, isometries are completely described by the representations $\rho_1^M$ and $\rho_2^M$, i.e. they are faithful. In the study of surfaces that do not embed conformally in $\T^2$, we will make use of the second conformal model (embedding in the flat cylinder, defined in \ref{subsec:flat_cylinder}).

\begin{lemme} \label{cylinder_proper} Let $(M,g)$ be a spatially compact surface that is conformal to the flat cylinder. Then $\Isom(M,g)$ acts properly on $M$.
\end{lemme}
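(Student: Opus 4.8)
The plan is to argue by contradiction, turning a hypothetical non-proper sequence into the hyperbolic behaviour of Lemma \ref{hyperbolic_lemma} and then contradicting invariance of the volume form. Recall that since $(M,g)$ is conformal to the \emph{full} flat cylinder, its universal cover is all of $\R^2$ (so $\tilde h_\downarrow\equiv-\infty$, $\tilde h_\uparrow\equiv+\infty$ and there is \emph{no} conformal boundary), the lightlike leaves are complete lines, and writing $g=\lambda\,dx\,dy$ with $\lambda>0$ continuous and invariant under $T:(x,y)\mapsto(x+1,y+1)$, every isometry acts by $\phi(x,y)=(\rho_1^M(\phi)(x),\rho_2^M(\phi)(y))$ with lifts $\tilde f,\tilde g$ of $\rho_1^M(\phi),\rho_2^M(\phi)$ commuting with $t\mapsto t+1$. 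Suppose the action is not proper: there are $\p_n\to\infty$ in $\Isom(M,g)$ and points $m_n=(x_n,y_n)\to m=(x_0,y_0)\in M$ with $\p_n(m_n)\to m'=(x_1,y_1)\in M$. Since isometries are affine and $\Isom(M,g)$ is closed in the affine diffeomorphisms (a limit of isometries preserves $g$), the sequence leaves every compact set of affine maps, so Lemma \ref{affine_diffeomorphisms} applies at $m_n\to m$: up to a subsequence $D\p_n(m_n)\to 0$ or $\to\infty$. As $D\p_n(m_n)=\diag(\rho_1^M(\p_n)'(x_n),\rho_2^M(\p_n)'(y_n))$ and the isometry identity $\lambda(\p_n(m_n))\,\rho_1^M(\p_n)'(x_n)\,\rho_2^M(\p_n)'(y_n)=\lambda(m_n)$ keeps the product bounded in $\R_+^*$, the argument of Lemma \ref{hyperbolic_lemma} (which never used the torus embedding) gives, after possibly exchanging $\F_1$ and $\F_2$, that $\rho_1^M(\p_n)'(x_n)\to 0$ and $\rho_2^M(\p_n)'(y_n)\to\infty$.

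Next I would propagate the contraction using completeness of the lightlike leaves, exactly as in the first subcase of Proposition \ref{acausal_convergence}. For each $x$, let $\gamma_n^x:\intff{0}{1}\to M$ be the horizontal lightlike geodesic from $m_n$ to $(x,y_n)$, defined for \emph{every} $x$ because $\tilde M=\R^2$. Its initial velocity is horizontal and bounded (as $m_n\to m$), so the image geodesic $\p_n\circ\gamma_n^x$ has initial point $\p_n(m_n)\to m'$ and initial velocity scaled by $\rho_1^M(\p_n)'(x_n)\to 0$; by continuity of the exponential map it converges uniformly to the constant geodesic at $m'$, whence $\rho_1^M(\p_n)(x)\to x_1$ for every $x\in\Ss^1$. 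Because $\rho_1^M(\p_n)$ is monotone and every point of a fixed arc $I_x$ of length $<1$ converges to $x_1$, the image arc $\rho_1^M(\p_n)(I_x)$ shrinks: its lifted length tends to $0$. Meanwhile the degree-one relation caps the opposite factor, since for an arc $I_y$ of length $<1$ one always has $\tilde g_n(\max I_y)-\tilde g_n(\min I_y)\le\tilde g_n(\min I_y+1)-\tilde g_n(\min I_y)=1$; thus the $y$-extent of $\p_n$ stays bounded no matter how large $\rho_2^M(\p_n)'(y_n)$ becomes.

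Finally I would obtain the contradiction from the invariant volume. Fix a small rectangle $R=I_x\times I_y$ around $m$ with both sides $<1$, so that $R$ and all its images embed in the cylinder and $V(R)=\int_R\lambda\,dx\,dy>0$ is preserved, $V(\p_n(R))=V(R)$. Choosing lifts $\tilde f_n,\tilde g_n$ normalised by $\tilde f_n(x_0),\tilde g_n(y_0)\in\intfo{0}{1}$, the set $\p_n(R)$ lifts to the rectangle $[\tilde f_n(\min I_x),\tilde f_n(\max I_x)]\times[\tilde g_n(\min I_y),\tilde g_n(\max I_y)]$, which lies in a fixed compact subset of $\R^2$, has $x$-width $\to 0$ and $y$-height $\le 1$; since $\lambda$ is bounded there, $V(\p_n(R))=\int\lambda\,dx\,dy\to 0$, contradicting $V(\p_n(R))=V(R)>0$. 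This forces the action to be proper. The main obstacle is precisely this last step, which must replace the argument via Proposition \ref{proper_compact_group}: that proposition is unavailable because the cylinder does not embed in $\T^2$, and here $\p_n\to\infty$ in $\Isom(M,g)$ need \emph{not} force the circle representations to degenerate. The key point is that, with no conformal boundary to absorb it, the hyperbolic expansion cannot escape but is capped by periodicity, so that collapse in the transverse lightlike direction together with invariance of the volume form is what yields the contradiction.
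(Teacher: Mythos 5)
Your argument is correct, and its core is the same as the paper's: argue by contradiction, apply Lemma \ref{affine_diffeomorphisms} at the accumulating points to get $\rho_1^M(\p_n)'\to 0$ and $\rho_2^M(\p_n)'\to\infty$ (the Jacobian identity for $g=\lambda\,dx\,dy$ never uses the torus embedding, as you note), and then contract a lightlike geodesic through $m_n$ via continuity of the exponential map. Where you diverge is the final contradiction. The paper contracts the geodesic from $(x_n,y_n)$ to its translate by one full period \emph{in the contracted direction} and immediately clashes with equivariance of the lift: $\beta_n(y_n+1)=\beta_n(y_n)+1\to v+1$ while the contraction forces $\beta_n(y_n+1)\to v$. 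You instead propagate the contraction along the whole leaf, cap the expanding factor by the degree-one relation, and contradict invariance of the volume form. Your ending is valid (the image rectangle does sit in a fixed compact set with $x$-width tending to $0$ and $y$-height at most $1$, so its volume tends to $0$), but it is a detour: once you know $\tilde f_n(x)\to \tilde x_1$ for every $x\in\R$ with the \emph{same} lift $\tilde x_1$ — which your geodesic-contraction step already gives — applying this at $x_0$ and at $x_0+1$ contradicts $\tilde f_n(x_0+1)=\tilde f_n(x_0)+1$ directly, with no need for the volume form or the bound on the expanding direction. So the volume step is not, as you suggest, the essential replacement for Proposition \ref{proper_compact_group}; the periodicity of the lifts in the contracted direction already does all the work. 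One small slip: a lift of $\p_n$ has the form $(\alpha_n+k,\beta_n+k)$ for a single $k\in\Z$, so you cannot normalise $\tilde f_n(x_0)$ and $\tilde g_n(y_0)$ into $\intfo{0}{1}$ independently; this is harmless since boundedness of the image rectangle follows from choosing the lift with $\tilde\p_n(\tilde m_n)\to\tilde m'$.
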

\begin{proof} If the action were not proper, we could find a sequence $f_n$ in $\Isom(M,g)$ that leaves every compact set, and converging sequences $a_n\to a$ and $b_n\to b$ in $M$ such that $f_n(a_n)=b_n$. We can lift everything to $\widetilde M=\R^2$: we choose lifts $\widetilde a_n,\widetilde a, \widetilde b_n, \widetilde b$ such that $\widetilde a_n\to \widetilde a$ and $\widetilde b_n\to \widetilde b$, and lifts $\widetilde f_n$ of $f_n$ such that $\widetilde f_n(\widetilde a_n)=\widetilde b_n$. We write the lifts $\widetilde f_n(x,y)=(\alpha_n(x),\beta_n(y))$ and $\widetilde a_n=( x_n, y_n), \widetilde b= (u,v)$. Since $\widetilde f_n\to \infty$, Lemma \ref{affine_diffeomorphisms} implies that we have $\alpha'_n(x_n)\to \infty$ or $\beta'_n(y_n)\to \infty$. In the first case, we have $\beta'_n(y_n)\to 0$ (just as in Lemma \ref{hyperbolic_lemma}).\\ \indent Consider the geodesic going from $(x_n,y_n)$ to $(x_n,y_n+1)$. The image of this geodesic by $\widetilde f_n$ converges towards a constant geodesic (the initial vector shrinks). This shows that $\widetilde f_n(x_n,y_n+1)\to (u,v)$, i.e. $\beta_n(y_n+1)\to v$, which is incompatible with $\beta_n(y_n+1)=\beta_n(y_n)+1\to v+1$.\\ \indent The same reasoning applied to the geodesic joining $(x_n,y_n)$ and $(x_n+1,y_n)$ treats the other case. Hence $\Isom(M,g)$ acts properly on $M$.
\end{proof}

Associating this and the following proposition, we obtain the first parts of Theorem \ref{main_theorem} and Theorem \ref{main_theorem_acausal}.

\begin{prop} \label{faithful} Let $(M,g)$ be a spatially compact surface that is not conformal to the flat cylinder. Then $\rho_1^M$ and $\rho_2^M$ are semi conjugate to each other, and their restrictions to $\Isom(M,g)$ are faithful. Moreover, if the conformal boundary is acausal, then they are topologically conjugate. \end{prop}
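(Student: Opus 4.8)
The plan is to read everything off the second conformal model of Theorem \ref{flat_cylinder}: lift $(M,g)$ to $\tilde p(\tilde M)=U\subset\R^2$, where $U=\{(x,y)\,:\,\tilde h_\downarrow(x)<y<\tilde h_\uparrow(x)\}$, and where every $\p\in\Conf(M,g)$ lifts to a diffeomorphism $\tilde\p$ of $U$ of product form $\tilde\p(x,y)=(\alpha_\p(x),\beta_\p(y))$, with $\alpha_\p,\beta_\p$ increasing lifts of $\rho_1^M(\p),\rho_2^M(\p)$. Since $(M,g)$ is not conformal to the flat cylinder, $U\neq\R^2$, so by the dichotomy for the boundary maps at least one of $\tilde h_\uparrow,\tilde h_\downarrow$ is everywhere finite; say $\tilde h_\uparrow$ is (the other case being symmetric).

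For the semi-conjugacy I would exploit the invariance of $U$. Having increasing components and preserving $U$, the map $\tilde\p$ sends the vertical fibre $\{x\}\times\intoo{\tilde h_\downarrow(x)}{\tilde h_\uparrow(x)}$ onto the fibre over $\alpha_\p(x)$; comparing suprema gives $\beta_\p\circ\tilde h_\uparrow=\tilde h_\uparrow\circ\alpha_\p$. As $\tilde h_\uparrow$ is non-decreasing and commutes with $x\mapsto x+1$, it descends to a non-decreasing degree-one map $h_\uparrow:\Ss^1\to\Ss^1$ with $\rho_2^M(\p)\circ h_\uparrow=h_\uparrow\circ\rho_1^M(\p)$ for every $\p$, which is exactly a semi-conjugacy from $\rho_1^M$ to $\rho_2^M$. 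When the conformal boundary is acausal, the finite map $\tilde h_\uparrow$ is moreover a homeomorphism of $\R$, so $h_\uparrow$ is a circle homeomorphism and the same identity upgrades the semi-conjugacy to a topological conjugacy.

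For faithfulness, suppose $\p\in\Isom(M,g)$ with $\rho_1^M(\p)=\mathrm{Id}$ (the case $\rho_2^M(\p)=\mathrm{Id}$ being symmetric). Then $\alpha_\p$ is a lift of the identity, hence an integer translation by some $m$, and replacing $\tilde\p$ by $F^{-m}\tilde\p$ (still a lift of $\p$) I may assume $\alpha_\p=\mathrm{id}$, so $\tilde\p(x,y)=(x,\beta(y))$ with $\beta$ a lift of $\rho_2^M(\p)$ commuting with $x\mapsto x+1$. Fibre-preservation then gives $\beta\circ\tilde h_\uparrow=\tilde h_\uparrow$, i.e. $\beta$ fixes the non-empty set $S:=\mathrm{Im}(\tilde h_\uparrow)$ pointwise. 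The crucial step is to produce a fixed point of $\tilde\p$ in the interior of $U$: if no open fibre met $\mathrm{Fix}(\beta)$, then each open fibre, being a connected interval disjoint from the closed set $\mathrm{Fix}(\beta)$, would lie in a single component of $\{\beta\neq\mathrm{id}\}$, so the projection of $U$ to the $y$-axis would avoid $\mathrm{Fix}(\beta)\supseteq S$; but $U$ meets every horizontal line $\R\times\{y\}$ (each lightlike leaf being non-empty), so that projection is all of $\R$ and in particular contains the points of $S$, a contradiction. Hence there is $(x_1,y_1)\in U$ with $\beta(y_1)=y_1$, a fixed point of $\tilde\p$. Writing $g=g(x,y)\,dxdy$, the isometry identity $g(\tilde\p(x,y))\,\alpha_\p'(x)\,\beta'(y)=g(x,y)$ from Lemma \ref{hyperbolic_lemma}, evaluated at $(x_1,y_1)$, forces $\beta'(y_1)=1$, so $D\tilde\p_{(x_1,y_1)}=\mathrm{Id}$; by the rigidity of Lorentz isometries (an isometry with trivial $1$-jet at a point is the identity) $\tilde\p=\mathrm{Id}$, whence $\rho_2^M(\p)=\mathrm{Id}$ and $\p=\mathrm{Id}$.

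The only genuinely delicate point is the existence of that interior fixed point: a priori $\tilde h_\uparrow$ may have plateaux and jumps (lightlike segments in $\partial U$) and $\mathrm{Im}(\tilde h_\uparrow)$ need not be dense, so one cannot simply invoke continuity to get $\beta=\mathrm{id}$. What rescues the argument, with no acausality assumption, is the surjectivity of the projection of $U$ onto the leaf space $\tilde F_2\cong\R$. Everything else is bookkeeping with the product structure of the lifts and the two boundary functions.
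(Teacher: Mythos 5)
Your proof is correct and follows essentially the same route as the paper: the cylinder model gives the product-form lifts, the finite boundary map $\tilde h_\uparrow$ (or $\tilde h_\downarrow$) yields the semi-conjugacy (a homeomorphism under acausality), and faithfulness comes from normalizing the lift to $(x,\beta(y))$, locating a fixed point of $\beta$ on a horizontal line meeting $\tilde M$, and invoking the Jacobian identity plus rigidity of isometries. The only difference is that you spell out explicitly why a fixed point of $\beta$ can be realized inside $U$ (surjectivity of the projection onto the leaf space $\tilde F_2\cong\R$), a point the paper's proof leaves implicit.
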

\begin{proof} 
Since $(M,g)$ is not conformally equivalent to the flat cylinder, at least one of $\tilde h_{\downarrow},\tilde h_{\uparrow}$ is not a constant and provides a semi conjugacy between $\rho_1^M$ and $\rho_2^M$.\\
\indent Let $f\in \ker(\rho_1^M) $, and let $\widetilde f$ be a lift to $\widetilde M$, written $\widetilde f(x,y)=(\alpha(x),\beta(y))$. Since $\rho_1^M(f)=Id$, we find that $\alpha(x)-x\in \Z$ for all $x\in \R$. The continuity of $f$ implies that there is $n\in \Z$ such that $\alpha(x)=x+n$ for all $x\in \R$. Consider $A=T^{-n}\circ \widetilde f$ (where $T(x,y)=(x+1,y+1)\in \Isom(\widetilde M, \widetilde g)$). It is also a lift of $f$, that can be written $A(x,y)=(x,\gamma(y))$ where $\gamma$ is semi conjugate to the identity via $\tilde h_{\uparrow}$ or $\tilde h_{\downarrow}$, hence $\gamma$ has fixed points. If $\gamma(y)=y$, then we choose $x\in \R$ such that $(x,y)\in \tilde M$. Since $A(x,y)=(x,y)$ and $A$ is an isometry, the Jacobian at $(x,y)$ is equal to $1$, i.e. $\gamma'(y)=1$. This implies that $A$ is an isometry with a fixed point $(x,y)\in M$ where the differential is the identity, therefore $A=Id$,  and $f=Id$, i.e. $\rho_1^M$ is injective. The same goes for $\rho_2^M$.
\end{proof}

This result gives a similarity with the case where $M$ embeds in the torus. We will now see that from the isometry group point of view, there is no difference.

\begin{prop} \label{open_set_embeds} Let $(M,g)$ be a spatially compact surface  that does not embed conformally in $\T^2$, such that the action of $\Isom(M,g)$ on $M$ is non proper. Then there is an open set $U\subset M$ such that: \begin{itemize} \item $U$ is  invariant under $\Conf(M,g)$\item $(U,g_{/U})$ is spatially compact \item The restriction map $r :\Isom(M,g) \to \Isom(U,g_{/U})$ is injective \item  $\rho_1^U\circ  r=\rho_1^M$ and $\rho_2^U\circ r=\rho_2^M$    \item  $(U,g_{/U})$ embeds conformally in $(\T^2,dxdy)$ 
 \item If the conformal boundary of $(M,g)$ is acausal, then the same goes for $(U,g_{/U})$. In this case, the image of $U$ in $\T^2$ has a connected boundary. 
\end{itemize} \end{prop}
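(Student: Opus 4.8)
The plan is to work in the flat cylinder model of Section~\ref{subsec:flat_cylinder}, in which $\tilde p(\tilde M)\subset\R^2$ is the strip $\{(x,y):\tilde h_{\downarrow}(x)<y<\tilde h_{\uparrow}(x)\}$ bounded by two non-decreasing maps commuting with $x\mapsto x+1$, and to obtain $U$ by cutting this strip down to lightlike width at most one. Recall that $(M,g)$ fails to embed in $\T^2$ exactly when $\tilde h_{\uparrow}(x)-\tilde h_{\downarrow}(x)>1$ for some $x$. I would begin by using the non-properness hypothesis to rule out the degenerate configurations: if one of $\tilde h_{\downarrow},\tilde h_{\uparrow}$ were infinite (in particular if $(M,g)$ were the flat cylinder), then every point would carry a complete lightlike ray and the completeness argument of Lemma~\ref{cylinder_proper} would force $\Isom(M,g)$ to act properly. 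Hence non-properness makes both $\tilde h_{\downarrow}$ and $\tilde h_{\uparrow}$ real-valued. I would also record that for every $\p\in\Conf(M,g)$ its lift has the product form $(x,y)\mapsto(\alpha(x),\beta(y))$ and satisfies $\beta\circ\tilde h_{\downarrow}=\tilde h_{\downarrow}\circ\alpha$, $\beta\circ\tilde h_{\uparrow}=\tilde h_{\uparrow}\circ\alpha$, together with $\alpha(x+1)=\alpha(x)+1$ and $\beta(y+1)=\beta(y)+1$.

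The core of the construction is the single formula $\tilde c_{\uparrow}:=\min(\tilde h_{\uparrow},\tilde h_{\downarrow}+1)$; I take $\tilde U=\{(x,y):\tilde h_{\downarrow}(x)<y<\tilde c_{\uparrow}(x)\}$ and let $U\subset M$ be its projection. Three verifications follow. First, invariance: since $\beta$ is increasing with $\beta(t+1)=\beta(t)+1$, one gets $\beta\circ\tilde c_{\uparrow}=\min(\beta\circ\tilde h_{\uparrow},\beta\circ\tilde h_{\downarrow}+1)=\tilde c_{\uparrow}\circ\alpha$, so every lift of an element of $\Conf(M,g)$ preserves $\tilde U$ and $U$ is $\Conf(M,g)$-invariant. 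Second, $\tilde c_{\uparrow}-\tilde h_{\downarrow}=\min(\tilde h_{\uparrow}-\tilde h_{\downarrow},1)\in\intof{0}{1}$, so $\tilde U$ has lightlike width at most one; the points $(x,y)$ and $(x,y+1)$ of a vertical leaf then cannot both lie in $\tilde U$, whence $p$ is injective on $U$ and $(U,g_{/U})$ embeds conformally in $\T^2$. Third, $\tilde U$ is bounded by graphs of non-decreasing maps, hence by achronal curves, and has positive bounded width, so it is spatially compact (a rotation graph strictly between $\tilde h_{\downarrow}$ and $\tilde c_{\uparrow}$ projects to a compact spacelike Cauchy circle).

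Next I would treat the two statements about the isometry group. The restriction map $r:\Isom(M,g)\to\Isom(U,g_{/U})$ is well defined because $U$ is invariant, and injective because an isometry is determined by its $1$-jet at a single point of the open set $U$. For the representations, each leaf of $\F_1$ (a vertical line) meets $U$ in the one interval $\{x\}\times\intoo{\tilde h_{\downarrow}(x)}{\tilde c_{\uparrow}(x)}$, and each leaf of $\F_2$ (a horizontal line) meets $U$ in a single interval as well, since $\tilde U=\tilde M\cap\{y<\tilde h_{\downarrow}(x)+1\}$ is an intersection of two sets each meeting every horizontal line in an interval. Thus $U$ meets every leaf of each foliation in one connected sub-leaf, giving $\Conf(M,g)$-equivariant identifications $F_i^U\cong F_i^M$ and hence $\rho_i^U\circ r=\rho_i^M$ for $i=1,2$.

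Finally, assume the conformal boundary of $(M,g)$ is acausal, so $\tilde h_{\downarrow},\tilde h_{\uparrow}$ are homeomorphisms of $\R$. Then $\tilde c_{\uparrow}$, a minimum of two increasing homeomorphisms commuting with $x\mapsto x+1$, is again such a homeomorphism, so the boundary of $(U,g_{/U})$ is acausal. For connectedness of $\partial p(U)$ I use that $(M,g)$ does not embed in $\T^2$: there is $x_0$ with $\tilde h_{\uparrow}(x_0)-\tilde h_{\downarrow}(x_0)>1$, hence $\tilde c_{\uparrow}(x_0)=\tilde h_{\downarrow}(x_0)+1$, so modulo $\Z^2$ the point $(x_0,\tilde h_{\downarrow}(x_0))$ lies on both boundary circles $Gr(h_{\downarrow})$ and $Gr(h_{\uparrow}^U)$ of $p(U)$, and two circles sharing a point have connected union; this is exactly the hypothesis of Corollary~\ref{connected}. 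The step I expect to be the main obstacle is the very first one, namely extracting finiteness of $\tilde h_{\downarrow},\tilde h_{\uparrow}$ (equivalently, that $(M,g)$ is not a half-cylinder) from non-properness, since this is the only place the dynamical hypothesis is genuinely used and it requires adapting the completeness argument of Lemma~\ref{cylinder_proper}; the remaining checks are direct once the strip is non-degenerate, the only mild care being the semicontinuity of $\tilde c_{\uparrow}$ at jumps of $\tilde h_{\downarrow},\tilde h_{\uparrow}$ in the non-acausal case.
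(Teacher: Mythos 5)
Your construction is exactly the paper's: the paper sets $\tilde h=\min(\tilde h_{\uparrow},\tilde h_{\downarrow}+1)$ and takes $U$ to be the projection of $\{(x,y)\,\vert\,\tilde h_{\downarrow}(x)<y<\tilde h(x)\}$, then verifies the same list of properties in essentially the same way (equivariance of $\tilde h$, lightlike width at most one hence embedding in $\T^2$, causal convexity for spatial compactness, injectivity of $r$ via the rigidity of isometries, and the coincidence $\tilde h(x_0)=\tilde h_{\downarrow}(x_0)+1$ at a point where $\tilde h_{\uparrow}(x_0)-\tilde h_{\downarrow}(x_0)>1$ for connectedness of the boundary in the acausal case). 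The finiteness of $\tilde h_{\downarrow},\tilde h_{\uparrow}$, which you correctly flag as the one place where non-properness is genuinely used, is obtained in the paper exactly as you propose, by the geodesic-shrinking argument of Lemma~\ref{cylinder_proper}, packaged as Lemma~\ref{limit_boundary} and the first part of Lemma~\ref{lemma_open_set_embeds}.
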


This implies that all the results in the case where $(M,g)$ embeds in the torus   still apply when $(M,g)$ does not embed in the torus, provided that the isometry group acts non properly. Therefore,  Theorem \ref{main_theorem_embeds} implies Theorem \ref{main_theorem} and Theorem \ref{main_theorem_acausal_embeds} implies Theorem \ref{main_theorem_acausal}. The proof will make use of two intermediate results.

\begin{lemma} \label{limit_boundary} Let $(M,g)$ be a spatially compact surface. Consider its universal cover  $\widetilde M=\{(x,y)\in \R^2 \vert \tilde h_{\downarrow}(x)<y<\tilde h_{\uparrow}(x)\}$. Let  $(x_0,y_0)\in M$, let $(f_n)\suite$ be a sequence in $\Isom(M,g)$ such that $f_n\to \infty$ and let $\widetilde f_n$ be a sequence of lifts to $\widetilde M$. Assume that $\widetilde f_n(x_0,y_0)\to (x_1,y_1)\in \R^2$. If $(x_0,y_0+1)\in \widetilde M$, or $(x_0,y_0-1)\in \widetilde M$, then $(x_1,y_1)\notin \widetilde M$. \end{lemma}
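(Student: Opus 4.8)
The plan is to exploit the product structure of the lifts together with the hyperbolic behaviour furnished by Lemma~\ref{affine_diffeomorphisms}. First I would write each lift as $\widetilde f_n(x,y)=(\alpha_n(x),\beta_n(y))$ with $\alpha_n,\beta_n\in\Diff^+(\R)$; since $\widetilde f_n$ is a lift of an isometry it commutes with the deck transformation $T(x,y)=(x+1,y+1)$ (orientation and time orientation forbid $T\mapsto T^{-1}$), so $\alpha_n(x+1)=\alpha_n(x)+1$ and $\beta_n(y+1)=\beta_n(y)+1$. The key elementary consequence is that $\widetilde f_n(x_0,y_0\pm1)=\widetilde f_n(x_0,y_0)\pm(0,1)$, whence $\widetilde f_n(x_0,y_0\pm 1)\to(x_1,y_1\pm1)$. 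By the symmetric argument obtained from reversing the time orientation (which exchanges $\tilde h_\uparrow$ and $\tilde h_\downarrow$) I may assume $(x_0,y_0+1)\in\widetilde M$; then the whole vertical null segment $\{x_0\}\times[y_0,y_0+1]$ lies in $\widetilde M$.

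I would argue by contradiction, assuming $(x_1,y_1)\in\widetilde M$. Then $\widetilde f_n(x_0,y_0)\to(x_1,y_1)$ stays in a compact subset of $\widetilde M$, so Lemma~\ref{affine_diffeomorphisms} applies to the $\widetilde g$-affine maps $\widetilde f_n$ at $(x_0,y_0)$. Combining its conclusion with the isometry relation $g(\widetilde f_n(x_0,y_0))\,\alpha_n'(x_0)\beta_n'(y_0)=g(x_0,y_0)$ and $g(\widetilde f_n(x_0,y_0))\to g(x_1,y_1)\in\R_+^*$ (exactly as in Lemma~\ref{hyperbolic_lemma}, the product $\alpha_n'(x_0)\beta_n'(y_0)$ being bounded in $\R_+^*$ rules out the $D\widetilde f_n\to 0$ case), I obtain, up to a subsequence, the dichotomy: either $\beta_n'(y_0)\to0$ and $\alpha_n'(x_0)\to\infty$, or $\beta_n'(y_0)\to\infty$ and $\alpha_n'(x_0)\to0$.

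In the contracting case $\beta_n'(y_0)\to0$ the argument is the one already used for the flat cylinder: the image under $\widetilde f_n$ of the fixed vertical geodesic $t\mapsto(x_0,y_0+t)$ has base point $\widetilde f_n(x_0,y_0)\to(x_1,y_1)\in\widetilde M$ and initial velocity tending to $0$, so by continuity of the exponential map it converges uniformly on $[0,1]$ to the constant geodesic at $(x_1,y_1)$; evaluating at $t=1$ gives $\widetilde f_n(x_0,y_0+1)\to(x_1,y_1)$, contradicting $\widetilde f_n(x_0,y_0+1)\to(x_1,y_1+1)$. In the expanding case $\beta_n'(y_0)\to\infty$ the forward geodesic is dilated, so instead I would apply the same idea to $\widetilde f_n^{-1}$, whose vertical derivative $1/\beta_n'(y_0)$ now tends to $0$: the image under $\widetilde f_n^{-1}$ of the vertical segment joining $\widetilde f_n(x_0,y_0)$ to $\widetilde f_n(x_0,y_0+1)$ is the fixed null segment over $x_0$, but traversed with vanishing initial speed and based at the \emph{interior} point $(x_0,y_0)$, so continuity of the exponential map would again force its endpoint $(x_0,y_0+1)$ to coincide with $(x_0,y_0)$, which is absurd.

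The hard part is making the expanding case fully rigorous. The inverse-map argument is clean precisely when the translated limit $(x_1,y_1+1)$ is an interior point of $\widetilde M$, for then the segments $[\widetilde f_n(x_0,y_0),\widetilde f_n(x_0,y_0+1)]$ remain in a fixed compact subset of $\widetilde M$ and the conformal factor keeps their affine speeds bounded, so that the initial velocity of the $\widetilde f_n^{-1}$-image really does tend to $0$. The genuine obstacle is the borderline configuration in which $(x_1,y_1+1)$ lies on the conformal boundary, i.e. $\tilde h_\uparrow(x_1)=y_1+1$: there the affine parameter needed to reach the endpoint may degenerate. I expect this to be resolved by observing that in the expanding case $\beta_n$ crushes the fixed-length interval $(y_0+1,\tilde h_\uparrow(x_0))$ of the fibre over $x_0$ onto an interval of length tending to $0$, which produces heights where $\beta_n'\to0$ and lets one relocate the contracting argument away from the boundary.
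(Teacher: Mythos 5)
Your setup and your treatment of the contracting subcase ($\beta_n'(y_0)\to 0$) coincide with the paper's proof. The gap is exactly where you flag it: the expanding subcase $\beta_n'(y_0)\to\infty$. Your inverse-map argument needs the segments joining $\widetilde f_n(x_0,y_0)$ to $\widetilde f_n(x_0,y_0+1)$ to stay in a fixed compact subset of $\widetilde M$, which fails precisely in the borderline configuration $y_1+1=\tilde h_{\uparrow}(x_1)$, and the repair you sketch does not close it: if $\beta_n$ crushes the interval $\intoo{y_0+1}{\tilde h_{\uparrow}(x_0)}$, the mean value theorem only produces heights $y_n'$ \emph{depending on} $n$ with $\beta_n'(y_n')\to 0$, and any limit point of $\widetilde f_n(x_0,y_n')$ then has second coordinate $y_1+1=\tilde h_{\uparrow}(x_1)$, i.e. lies on the conformal boundary rather than in $\widetilde M$ --- so the relocated contracting argument faces the same degeneracy you were trying to avoid. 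As written, the expanding case is not proved.

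The paper's resolution is simpler and never leaves the contracting regime: in the expanding subcase one has $\alpha_n'(x_0)\to 0$, and the hypothesis $(x_0,y_0+1)\in\widetilde M$ is equivalent, by applying the deck transformation $T^{-1}$, to $(x_0-1,y_0)\in\widetilde M$. One then runs your contracting argument on the \emph{horizontal} geodesic joining $(x_0,y_0)$ to $(x_0-1,y_0)$: its image under $\widetilde f_n$ has base point converging to the interior point $(x_1,y_1)$ and initial velocity tending to $0$, hence $\widetilde f_n(x_0-1,y_0)\to(x_1,y_1)$, contradicting $\alpha_n(x_0-1)=\alpha_n(x_0)-1\to x_1-1$. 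No analysis of $\widetilde f_n^{-1}$ and no discussion of the boundary is needed; replacing your last paragraph by this observation completes the proof.
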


\begin{proof} Write $\widetilde f_n(x,y)=(\alpha_n(x),\beta_n(y))$. Let us assume that $(x_1,y_1)\in M$, and that $(x_0,y_0+1)\in \widetilde M$ (the case where $(x_0,y_0-1)\in \widetilde M$ is identical). In that case, $\alpha'_n(x_0)\to \infty$ or $\beta'_n(y_0)\to \infty$. Assume that $\alpha'_n(x_0)\to \infty$. Then $\beta'_n(y_0)\to 0$. Let $\gamma$ be the geodesic with starting point $\gamma(0)=(x_0,y_0)$ and end point $\gamma(1)=(x_0,y_0+1)$. The image $\eta_n=\widetilde f_n\circ \gamma$ has initial value $\eta_n(0)\to (x_1,y_1)$ and $\eta'(0)=\beta'_n(y)\gamma'(0)\to 0$, hence $\eta$ converges uniformly to a constant geodesic, and $\widetilde f_n\circ \gamma(1)\to (x_1,y_1)$, which is absurd because $\beta_n(y_0+1)=\beta_n(y_0)+1\to y_1+1\ne y_1$. In the case where $\beta'_n(y_0)\to \infty$, then $\alpha'_n(x_0)\to 0$ and we use the same reasoning with the geodesic joining $(x_0,y_0)$ to $(x_0-1,y_0)\in \widetilde M$. This shows that $(x_1,y_1)\notin \widetilde M$.

\end{proof}

We have already seen in Lemma \ref{cylinder_proper} that the conformal geometry can give obstructions to the non properness of the action of the isometry group. The consequence of the following result can be explained in terms of intersection of lightlike geodesics: the fact that $\tilde h_{\uparrow}(x)-\tilde h_{\downarrow}(x)\le 2$ for all $x\in \R$ means that there are no lightlike geodesics with three or more intersection points, and the fact that there is some $x_0\in \R$ such that $\tilde h_{\uparrow}(x_0)-\tilde h_{\downarrow}(x_0)\le 1$ means there are some lightlike geodesics with exactly one intersection point.

\begin{lemma} \label{lemma_open_set_embeds} Let $(M,g)$ be a spatially compact surface. Consider its universal cover $\widetilde M =\{ (x,y)\in \R^2 \vert \tilde h_{\downarrow}(x)<y<\tilde h_{\uparrow}(x)\}$. If $\Isom(M,g)$ acts non properly on $M$, then: \begin{itemize} \item $\tilde h_{\downarrow}\ne -\infty$ and $\tilde h_{\uparrow}\ne +\infty$ \item $\forall x\in \R~ \tilde h_{\uparrow}(x)-\tilde h_{\downarrow}(x)\le 2$ \end{itemize}  If the conformal boundary of $(M,g)$ is acausal, then there is  $x_0\in \R$ such that $\tilde h_{\uparrow}(x_0)-\tilde h_{\downarrow}(x_0)\le 1$ 
\end{lemma}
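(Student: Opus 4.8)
The plan is to make the non-properness explicit on the universal cover and then exploit two features of a lift: it commutes with the deck generator $(x,y)\mapsto(x+1,y+1)$, and it carries vertical fibres of $\widetilde M$ to vertical fibres. Non-properness gives $f_n\to\infty$ in $\Isom(M,g)$ and $a_n\to a$, $b_n\to b$ in $M$ with $f_n(a_n)=b_n$. Lifting to $\widetilde M\subset\R^2$ I would pick $\widetilde a_n\to\widetilde a=(p,q)$, $\widetilde b_n\to\widetilde b=(u,v)\in\widetilde M$ and lifts $\widetilde f_n(x,y)=(\alpha_n(x),\beta_n(y))$ with $\widetilde f_n(\widetilde a_n)=\widetilde b_n$, where $\alpha_n,\beta_n$ are increasing and satisfy $\alpha_n(x+1)=\alpha_n(x)+1$, $\beta_n(y+1)=\beta_n(y)+1$. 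Writing $g=g(x,y)\,dxdy$, the isometry condition gives $g(\widetilde f_n)\,\alpha_n'\beta_n'=g$, so whenever an image stays in a compact part of $\widetilde M$ the Jacobian product is pinched in $\R_+^*$ and Lemma~\ref{affine_diffeomorphisms} (as used in Lemma~\ref{hyperbolic_lemma}) yields, after a subsequence, that one of $\alpha_n',\beta_n'$ tends to $0$ and the other to $\infty$. Since $\widetilde b$ is interior, this dichotomy applies at $\widetilde a_n$.

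For the first item I would argue by contradiction. If $\tilde h_\uparrow\equiv+\infty$ then, using $\tilde h_\downarrow(p_n-1)=\tilde h_\downarrow(p_n)-1<q_n$ (trivially if $\tilde h_\downarrow\equiv-\infty$), both $(p_n,q_n+1)$ and $(p_n-1,q_n)$ lie in $\widetilde M$. In the subcase $\beta_n'(q_n)\to0$ the lightlike geodesic from $\widetilde a_n$ to $(p_n,q_n+1)$ has image with base point $\to\widetilde b$ and initial velocity $\to0$, hence by continuity of $\exp$ at the interior point $\widetilde b$ it converges to the constant geodesic, so $\widetilde f_n(p_n,q_n+1)\to\widetilde b=(u,v)$; but $\widetilde f_n(p_n,q_n+1)=(\alpha_n(p_n),\beta_n(q_n)+1)\to(u,v+1)$, absurd. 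In the subcase $\alpha_n'(p_n)\to0$ one uses the horizontal geodesic to $(p_n-1,q_n)$ and reaches $(u-1,v)=(u,v)$. Hence $\tilde h_\uparrow$ is finite, and reversing the time orientation gives $\tilde h_\downarrow$ finite. This is precisely the mechanism behind Lemma~\ref{cylinder_proper}.

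The bound $\tilde h_\uparrow(x)-\tilde h_\downarrow(x)\le2$ is where I expect the real difficulty. Suppose some fibre has length $>2$ and choose $(x_*,y_*)\in\widetilde M$ with $(x_*,y_*-1),(x_*,y_*),(x_*,y_*+1)$ all interior; by the equivariance $\alpha_n,\beta_n$ displace points by at most their displacement plus one, so $\widetilde f_n(x_*,y_*)=(\alpha_n(x_*),\beta_n(y_*))$ stays bounded and, after a subsequence, tends to some $(x_1,y_1)$ with $\widetilde f_n(x_*,y_*+k)\to(x_1,y_1+k)$ for $k\in\{-1,0,1\}$. Applying Lemma~\ref{limit_boundary} at each of $(x_*,y_*),(x_*,y_*\pm1)$ (each has an interior vertical neighbour) forces all three limits out of $\widetilde M$. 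On the other hand $\widetilde f_n$ preserves $\widetilde M$ and sends vertical fibres to vertical fibres, so the fibre over $\alpha_n(x_*)$ contains $\beta_n(y_*)-1,\beta_n(y_*),\beta_n(y_*)+1$, i.e. $\tilde h_\downarrow(\alpha_n(x_*))<\beta_n(y_*)-1$ and $\tilde h_\uparrow(\alpha_n(x_*))>\beta_n(y_*)+1$. Letting $n\to\infty$, comparing these with the position of $(x_1,y_1)$ on $\partial\widetilde M$ given by Lemma~\ref{limit_boundary}, and using that $\tilde h_\uparrow$ is left continuous while $\tilde h_\downarrow$ is right continuous (keeping track of the side from which $\alpha_n(x_*)$ approaches $x_1$) is what I would use to reach a contradiction. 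The genuine obstacle is exactly that every limit produced by Lemma~\ref{limit_boundary} lies on the conformal boundary, so one never gets to conclude that a contracted geodesic tends to an interior point, and all information must be read off the one-sided monotone behaviour of $\tilde h_\downarrow,\tilde h_\uparrow$.

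For the last item, acausality makes $\tilde h_\downarrow,\tilde h_\uparrow$ genuine homeomorphisms, which is the setting of Proposition~\ref{acausal_convergence}, and here I would use the propagation of contraction rather than the boundary analysis. At $\widetilde a_n$ the dichotomy gives, say, $\beta_n'(q_n)\to0$ (otherwise replace $\widetilde f_n$ by $\widetilde f_n^{-1}$, which contracts the $y$-direction at the interior point $\widetilde b$); then for every $y$ in the vertical fibre over $p$ the geodesic from $(p_n,q_n)$ to $(p_n,y)$ is contracted, so $\beta_n(y)\to v$. If this fibre had length $>1$ it would contain two points $y_0,y_0+1$, and then $\beta_n(y_0)\to v$ together with $\beta_n(y_0+1)=\beta_n(y_0)+1\to v+1$ would force $v=v+1$. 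Hence the fibre over $p$ (or, in the other case, over $u$) has length at most $1$, providing the required $x_0$.
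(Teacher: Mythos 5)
Your handling of the first bullet is correct and is the same mechanism the paper uses (the paper simply delegates to Lemma \ref{limit_boundary}, whose proof you essentially reproduce inline). Your argument for the last bullet is also correct, and it is genuinely different from the paper's: you contract a vertical fibre at the recurrence point (passing to $\widetilde f_n^{-1}$ in the other derivative regime) and get the contradiction $v=v+1$ from equivariance, whereas the paper observes that if every fibre had length $>1$ then the graph of $\tilde h_{\downarrow}+1$ would be a conformally invariant curve in $\widetilde M$, spacelike by acausality, whose induced length distance forces the action to be proper. Your version is arguably more uniform with the rest of the section, and it does not appear to use acausality at all, while the paper's argument needs it to make the invariant curve spacelike.

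The genuine gap is the middle bullet, $\tilde h_{\uparrow}(x)-\tilde h_{\downarrow}(x)\le 2$, which is the substance of the lemma and which you explicitly leave unfinished (``\dots is what I would use to reach a contradiction''). You have assembled exactly the data the paper uses --- $(x_1,y_1+k)\notin\widetilde M$ for $k=-1,0,1$ via Lemma \ref{limit_boundary}, and $\tilde h_{\downarrow}(\alpha_n(x_*))<\beta_n(y_*)-1<\beta_n(y_*)+1<\tilde h_{\uparrow}(\alpha_n(x_*))$ --- but the concluding comparison is never carried out, and it is not automatic: as you yourself note, no further contraction argument is available because the limit sits on the boundary. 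The paper closes it with a squeeze: writing $y_1=\tilde h_{\uparrow}(x_1)$ (the case $y_1=\tilde h_{\downarrow}(x_1)$ being symmetric with $y_0-1$), one has $\beta_n(y_0)\le\beta_n(y_0+1)\le\tilde h_{\uparrow}(\alpha_n(x_0))$, and since both outer terms tend to $y_1$ this forces $\beta_n(y_0+1)\to y_1$, contradicting $\beta_n(y_0+1)=\beta_n(y_0)+1\to y_1+1$. So what is missing is precisely the monotonicity/one-sided-limit comparison you deferred; until it is written out, the key inequality of the lemma is not proved.
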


\begin{proof} Note that if $\tilde h_{\uparrow}=\infty$ (resp. $\tilde h_{\downarrow}=-\infty$), then $(x,y+1)\in \widetilde M$ (resp. $(x,y-1)\in \widetilde M$) for all $(x,y)\in \widetilde M$. In this case, Lemma \ref{limit_boundary} shows that the action of $\Isom(M,g)$ on $M$ is proper.\\
\indent Let us assume that there is $x_0\in \R$ such that $\tilde h_{\uparrow}(x_0)-\tilde h_{\downarrow}(x_0)>2$ Let $y_0\in \R$ be such that $\tilde h_{\downarrow}(x_0)<y_0-1<y_0+1<\tilde h_{\uparrow}(x_0)$. Let $f_n\to \infty$ in $\Isom(M,g)$ and let $\widetilde f_n$ be a sequence of lifts to $\widetilde M$ such that the sequence $\widetilde f_n(x_0,y_0)$ lies in a compact set of $\R^2$. Up to a subsequence, we consider that $\widetilde f_n(x_0,y_0)\to (x_1,y_1)$. By Lemma \ref{limit_boundary}, we know that $(x_1,y_1)\notin \widetilde M$, hence $y_1=\tilde h_{\uparrow}(x_1)$ or $y_1=\tilde h_{\downarrow}(x_1)$. In the case where $y_1=\tilde h_{\uparrow}(x_1)$, we have  $\beta_n(y_0)\le \beta_n(y_0+1)\le \tilde h_{\uparrow}(\alpha_n(x_0))$, which shows that $\beta_n(y_0+1)\to y_1$. This is impossible because $\beta_n(y_0+1)=\beta_n(y_0)+1\to y_1+1$. Similarly, if $y_1=\tilde h_{\downarrow}(x_1)$, we find $\beta_n(y_1-1)\to y_1$, which is absurd.\\
\indent For the third statement, notice that if $\tilde h_{\uparrow}(x)-\tilde h_{\downarrow}(x)>1$ for all $x\in \R$, then  $L=\{(x,\tilde h_{\downarrow}(x)+1)\vert x\in \R\}\subset \widetilde M$  is homeomorphic to the real line and preserved by the conformal group. Since it is spacelike (when the conformal boundary is acausal), the isometry group preserves a distance on $L$ that is bi-lipschitz to the euclidian distance, and it acts properly on $M$.
\end{proof}

\begin{proof}[Proof of Proposition \ref{open_set_embeds}]  By Lemma \ref{lemma_open_set_embeds}, we know that $\tilde h_{\uparrow}$ and $\tilde h_{\downarrow}$ are not constants. Let $\tilde h(x)$ denote $\tilde h_{\uparrow}(x)$ if $\tilde h_{\uparrow}(x)\le \tilde h_{\downarrow}(x)+1$ and $\tilde h_{\downarrow}(x)+1$ if $\tilde h_{\uparrow}(x)<\tilde h_{\downarrow}(x)+1$. It is a non decreasing map such that $\tilde h(x+1)=\tilde h(x)+1$, and it commutes with lifts of conformal diffeomorphisms of $M$. Consider $V=\{ (x,y)\in \R^2 \vert \tilde h_{\downarrow}(x)<y<\tilde h(x)\}\subset \widetilde M$ and let $U$ be its image in $M$. It is an open set invariant under $\Conf(M)$. Since it contains some Cauchy surfaces of $M$ and it is causally convex (i.e. an inextensible causal curve in $U$ is the  intersection of $U$ and an inextensible causal curve in $M$), it is spatially compact. Since $\tilde h(x)-\tilde h_{\downarrow}(x)\le 1$, we see that lightlike geodesics in $U$ cannot have several intersections, therefore $U$ embeds conformally in $\T^2$. \\ \indent  The lightlike geodesics of $U$ are the lightlike geodesic of $M$, it follows  immediately that $\rho_1^U\circ r = \rho_1^M$ and $\rho_2^U\circ r = \rho_2^M$.  In particular, we find that $\ker(r) \subset \ker(\rho_1^M)=\{Id\}$, so $r$ is injective. \\ \indent If $M$ does not embed in the torus and the conformal boundary is acausal, then there is $x_0\in \R$ such that $\tilde h_{\uparrow}(x_0)-\tilde h_{\downarrow}(x_0)>1$, hence $\tilde h(x_0)-\tilde h_{\downarrow}(x_0)=1$. This shows that the image in the torus has a connected boundary.

\end{proof}

\begin{coro} Let $(M,g)$ be a spatially compact surface with an acausal conformal boundary  that does not embed conformally in $\T^2$, such that the action of $\Isom(M,g)$ on $M$ is non proper. Then $\rho_1^M(\Isom(M,g))$ is a convergence group, hence is topologically conjugate to a subgroup of $\PSL(2,\R)$. \end{coro}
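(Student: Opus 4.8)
The plan is to reduce immediately to the already-solved situation in which the surface embeds in $\T^2$, by invoking the open set produced in Proposition \ref{open_set_embeds}. First I would apply that proposition to the present $(M,g)$ (which is legitimate, since $M$ does not embed in $\T^2$ and $\Isom(M,g)$ acts non properly): this yields a $\Conf(M,g)$-invariant open set $U\subset M$ that is spatially compact, embeds conformally in $\T^2$, has acausal conformal boundary, and whose image in $\T^2$ has \emph{connected} boundary. The decisive structural features are that the restriction map $r:\Isom(M,g)\to\Isom(U,g_{/U})$ is injective and that $\rho_1^U\circ r=\rho_1^M$ (and similarly $\rho_2^U\circ r=\rho_2^M$).

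Next I would exploit the connectedness of the boundary of the image of $U$ in $\T^2$: as in the proof of Corollary \ref{connected}, this forces the map $h_{\to\uparrow}$ attached to $U$ to have a fixed point, so its rotation number is $\frac1k$ with $k=1$. Proposition \ref{acausal_convergence} then applies in the case $k=1$ and shows that $\rho_1^U(\Isom(U,g_{/U}))$ is a $(h_{\to\uparrow},1)$-convergence group, that is, an ordinary convergence group (this is precisely the content of Corollary \ref{connected}).

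The remaining step is to transfer this property back to $M$. Using $\rho_1^U\circ r=\rho_1^M$ together with the inclusion $r(\Isom(M,g))\subset\Isom(U,g_{/U})$, I would write
\[
\rho_1^M(\Isom(M,g))=\rho_1^U\bigl(r(\Isom(M,g))\bigr)\subset \rho_1^U(\Isom(U,g_{/U})).
\]
Since the convergence property is formulated purely in terms of the limiting behaviour of sequences inside the group, it is automatically inherited by every subgroup: any sequence in $\rho_1^M(\Isom(M,g))$ is a sequence in the convergence group $\rho_1^U(\Isom(U,g_{/U}))$, hence either has the convergence property or possesses an equicontinuous subsequence. Therefore $\rho_1^M(\Isom(M,g))$ is itself a convergence group, and the theorem of Gabai and Casson--Jungreis (Theorem \ref{convergence_groups}) immediately gives a topological conjugacy with a subgroup of $\PSL(2,\R)$.

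I do not expect any genuine obstacle at this stage: the corollary is essentially a bookkeeping assembly of Proposition \ref{open_set_embeds}, which performs the geometric work of carving out a well-behaved $\Conf$-invariant open set with connected boundary in $\T^2$, and of Proposition \ref{acausal_convergence}/Corollary \ref{connected}, which supply the convergence property in the connected-boundary case. The only points demanding a moment's attention are the verification that the identities $\rho_i^U\circ r=\rho_i^M$ identify $\rho_1^M(\Isom(M,g))$ with a subgroup of the already-understood circle group on $U$, and the elementary remark that the class of convergence groups is closed under passage to subgroups.
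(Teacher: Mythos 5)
Your proposal is correct and follows essentially the same route as the paper: the paper's proof likewise takes the invariant open set $U$ from Proposition \ref{open_set_embeds}, notes the connected boundary, and applies Corollary \ref{connected} together with $\rho_1^M(\Isom(M,g))=\rho_1^U\circ r(\Isom(M,g))$. Your extra remarks (rotation number $\tfrac11$ from the connected boundary, and stability of the convergence property under passage to subgroups) are just the details the paper leaves implicit.
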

\begin{proof} Let $U\subset M$ be the open set given by Proposition \ref{open_set_embeds}. By Proposition \ref{connected}, we see that $\rho_1^M(\Isom(M,g))=\rho_1^U\circ r(\Isom(M,g))$ is topologically conjugate to a subgroup of $\PSL(2,\R)$.\end{proof}

\section{Proof of Theorem \ref{main_theorem}} \label{sec:proof_main}
\subsection{Elementary groups} We wish to prove Theorem \ref{main_theorem} for elementary groups. Let us start with the stabilizer of a point.

\begin{lemma} \label{stabilizer_semi_conjugacy} Let $(M,g)$ be a spatially compact surface that embeds conformally in $\T^2$. Assume that $G\subset \Isom(M,g)$ is  such that $\rho_1^M(G)$ fixes a point $x_0\in \Ss^1$. There is a faithful representation $\rho : G \to \PSL(2,\R)$ that is semi conjugate to the restriction of $\rho_1^M$ to $G$.  
\end{lemma}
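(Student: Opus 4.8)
The plan is to build the representation $\rho$ geometrically, from the lightlike geodesic fixed by $G$, rather than by hand-constructing a cocycle on $\Ss^1$. Since $\rho_1^M(G)$ fixes $x_0$, the leaf $\ell=p_1^{-1}(x_0)$ of the foliation $\F_1$ is a lightlike geodesic preserved by every element of $G$. Such a geodesic carries an affine parameter, well defined up to $t\mapsto \alpha t+\beta$, and an isometry sends an affinely parametrised geodesic to an affinely parametrised geodesic; hence each $\gamma\in G$ acts on $\ell$ by an affine map of its parameter, and $\gamma\mapsto (\gamma|_\ell)$ is a homomorphism. As $\gamma$ preserves the time orientation it cannot reverse the causal orientation of $\ell$, so the linear part is positive. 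This gives a homomorphism $\rho:G\to \Aff^+(\R)$, and I identify $\Aff^+(\R)$ with the stabiliser of a point in $\PSL(2,\R)$ to obtain $\rho:G\to\PSL(2,\R)$. Faithfulness is immediate from $1$-jet rigidity: if $\rho(\gamma)=\mathrm{Id}$ then $\gamma$ fixes every point of $\ell$ and fixes its null tangent vector, so at such a point the differential lies in $\mathrm{SO}^\circ(1,1)$ and fixes a nonzero null vector, which forces it to be the identity; an isometry with trivial $1$-jet at a point is the identity.

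It remains to produce the semi-conjugacy. Because $M$ embeds in $\T^2$ there are no null conjugate points, so each leaf of $\F_2$ meets the $\F_1$-leaf $\ell$ at most once, and $p_2$ restricts to a homeomorphism from $\ell$ (homeomorphic to $\R$) onto an open arc of $\Ss^1$. Applying the relations $\rho_2^M\circ h_{\downarrow}=h_{\downarrow}\circ\rho_1^M$ and $\rho_2^M\circ h_{\uparrow}=h_{\uparrow}\circ\rho_1^M$ to the fixed point $x_0$ shows that $\rho_2^M(G)$ fixes $a:=h_{\downarrow}(x_0)$ and $b:=h_{\uparrow}(x_0)$, and this arc is precisely one of the two arcs bounded by $a$ and $b$. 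Since $p_2$ is $G$-equivariant, this homeomorphism conjugates the affine action $\rho$ on $\ell$ to the restriction of $\rho_2^M$ to that arc. I then extend it to a map $\phi:\Ss^1\to\Ss^1$ between $\rho_2^M|_G$ and $\rho$ by collapsing the complementary arc $\intoo{b}{a}$ (which $\rho_2^M(G)$ preserves, as $G$ fixes both endpoints) onto the fixed point of $\rho$. The resulting $\phi$ is non-decreasing of degree one and intertwines the two actions on the whole circle. Finally, $h_{\downarrow}$ itself is a semi-conjugacy from $\rho_1^M$ to $\rho_2^M$, so by transitivity of the semi-conjugacy relation, $\rho$ is semi conjugate to $\rho_1^M|_G$.

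The definition of $\rho$ and its faithfulness are routine once the geodesic picture is set up; the step that I expect to require the most care is checking that the collapsing map $\phi$ is genuinely a non-decreasing degree-one map and is equivariant on \emph{all} of $\Ss^1$. In particular one must allow $\phi$ to be constant on the collapsed arc and to skip values (both permitted by the definition of semi-conjugacy, since $h$ need not be continuous or surjective), and one should handle the case where the affine parameter of $\ell$ ranges over a proper sub-interval of $\R$, i.e.\ an incomplete geodesic, by extending the affine maps of that interval to affine maps of $\R$. A minor point to record beforehand is that the arc between $a$ and $b$ is a nondegenerate proper arc, which holds because $\ell$ is nonempty and $p(M)$ is a proper globally hyperbolic open subset of $\T^2$.
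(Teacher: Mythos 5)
Your proof is correct and follows essentially the same route as the paper: both produce $\rho$ from the affine action of $G$ on a $G$-invariant lightlike geodesic, prove faithfulness by $1$-jet rigidity at a fixed point where the differential fixes a null vector, and obtain the semi-conjugacy by collapsing the complementary arc to the fixed point $\infty$ of $\Aff(\R)\subset\PSL(2,\R)$. The only (harmless) difference is that the paper works with the $\F_2$-leaf through $h_\uparrow(x_0)$, whose transversal coordinate carries $\rho_1^M$ directly, whereas you use the $\F_1$-leaf over $x_0$ and therefore need the extra step of composing with $h_\downarrow$ and invoking transitivity of semi-conjugacy.
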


\begin{proof} Since $\rho_1^M(G)$ fixes $x_0$, the representation $\rho_2^M(G)$ fixes $y_0=h_\uparrow(x_0)$. This implies that $G$ fixes the horizontal geodesic $(\Ss^1\times \{y_0\})\cap M$.\\
\indent The parametrisation of this geodesic gives a representation $\rho : G\to \Aff(\R)\subset \PSL(2,\R)$ and a diffeomorphism $\p : \intoo{h_\gets(y_0)}{h_\to(y_0)} \to \R$ such that $\p \circ \rho_1^M = \rho \circ \p$ (we get the real line $\R$ and not just an open interval because $G$ acts non trivially on this geodesic).\\
\indent Let us show that $\rho$ is faithful. If $\p \in \ker(\rho)$, then $\p$ fixes all points on the horizontal geodesic $(\Ss^1\times \{y_0\})\cap M$. If $(x,y_0)\in M$, we then get $\p(x,y_0) =(x,y_0)$ and $\rho_1^M(\p)'(x)=1$, therefore $\rho_2^M(\p)'(y_0)=1$ (because the Jacobian is equal to $1$) and $\p$ is an isometry having a fixed point where its derivative is the identity, therefore $\p=Id$.\\
\indent Let $\psi : \Ss^1\to \Ss^1=\R\cup \{\infty\}$ be defined by $\psi =\p$ on $\intoo{h_\gets(y_0)}{h_\to(y_0)}$ and $\psi\equiv \infty$ on $\intff{h_\to(y_0)}{h_\gets(y_0)}$. It provides a semi conjugacy between $\rho_1^M(G)$ and the action of $\rho$ on the circle.
\end{proof}

\begin{prop} \label{semi_conjugacy_elementary} Let $(M,g)$ be a spatially compact surface that embeds conformally in $\T^2$. Assume that $G\subset  \Isom(M,g)$ is  elementary. There are $k\in \N$ and a faithful representation $\rho : G \to \PSL_k(2,\R)$ that is semi conjugate to the restriction of $\rho_1^M$ to $G$. \end{prop}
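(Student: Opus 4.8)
The plan is to reduce to Lemma \ref{stabilizer_semi_conjugacy} by exploiting the finite orbit, and then to assemble the resulting affine representation into a representation into the cyclic cover $\PSL_k(2,\R)$. First I would dispose of the case where $\rho_1^M(G)$ has a fixed point, where Lemma \ref{stabilizer_semi_conjugacy} applies directly with $k=1$. Otherwise, since $G$ is elementary, $\rho_1^M(G)$ admits a unique minimal closed invariant set, which is a single finite orbit $E=\{x_0,\dots,x_{k-1}\}$ of cardinality $k\ge 2$, cyclically ordered. As $\rho_1^M$ preserves orientation and $G$ acts transitively on $E$, the action on $E$ is by cyclic shifts and yields a surjection $G\to\Z/k\Z$ whose kernel $G_0$ is the pointwise stabilizer of $E$, a normal subgroup with $G/G_0\cong\Z/k\Z$. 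I would fix $\gamma\in G$ mapping to a generator (so $\rho_1^M(\gamma)(x_i)=x_{i+1}$ and $\gamma^k\in G_0$), and let $c\in\mathrm{Aut}(G_0)$ be conjugation by $\gamma$, so that $c^k$ is the inner automorphism by $\gamma^k$.

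Next I would observe that, by the semi-conjugacy criterion for elementary groups (Proposition \ref{finite_orbit_semi_conjugacy}), it suffices to produce a faithful $\rho:G\to\PSL_k(2,\R)$ whose image preserves a finite orbit $F$ of $k$ points on which $G$ acts through the same homomorphism $G\to\Z/k\Z$, by cyclic shifts: then $E$ and $F$ are matched by a cyclic-order preserving, $G$-equivariant bijection, exactly as Proposition \ref{finite_orbit_semi_conjugacy} requires. Only the combinatorics of the finite orbit intervene, so the fine dynamics of $\rho$ are free. I would take $F=\pi_k^{-1}(\infty)$, the fibre over $\infty\in\R\proj^1$ of the degree-$k$ covering $\pi_k:\Ss^1\to\R\proj^1$, whose $k$ points are cyclically permuted by the central rotation $h=R_{1/k}$ generating the kernel of $\PSL_k(2,\R)\to\PSL(2,\R)$.

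To build $\rho$, I would apply Lemma \ref{stabilizer_semi_conjugacy} to $G_0$ to get a faithful $\rho_0:G_0\to\Aff(\R)=\Stab_{\PSL(2,\R)}(\infty)$. Since $\Aff(\R)$ is simply connected it lifts canonically to a copy $\overline{\Aff}\subset\PSL_k(2,\R)$ fixing $F$ pointwise, giving $\tau_0:G_0\to\overline{\Aff}$; because $h$ is central, every element of $\overline{\Aff}$ acts by the same affine map in each of the $k$ charts of $\Ss^1\setminus F$. I would set $\rho|_{G_0}=\tau_0$ and seek $\delta=\rho(\gamma)$ shifting $F$ by one step and satisfying the two defining relations of the cyclic extension, $\delta\,\tau_0(g)\,\delta^{-1}=\tau_0(c(g))$ for $g\in G_0$ and $\delta^k=\tau_0(\gamma^k)$. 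Centrality of $h$ makes the second relation insensitive to the $h$-factor of $\delta$, and since $k$-th roots exist and are unique in $\Aff(\R)$, once the first relation holds the affine part of $\delta$ is pinned down; the two relations are compatible because $c^k=\mathrm{inn}(\gamma^k)$.

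The main obstacle is the conjugation relation $\delta\,\tau_0(g)\,\delta^{-1}=\tau_0(c(g))$: it amounts to showing that $\rho_0\circ c$ is affinely conjugate to $\rho_0$, equivalently that the $k$ germs of the $G_0$-action at $x_0,\dots,x_{k-1}$ are simultaneously conjugate to a single affine representation. I would prove this from the cyclic symmetry — $\rho_1^M(\gamma)$ carries the germ at $x_i$ to the germ at $x_{i+1}$, so the relevant conjugacy invariants (in particular the multipliers) are permuted $k$-cyclically, hence constant — supplemented, for the multipliers, by the Lorentzian rigidity that at any fixed point $(x_i,y_j)\in M$ of an isometry $g\in G_0$ one has the Jacobian relation $\rho_1^M(g)'(x_i)\,\rho_2^M(g)'(y_j)=1$, which propagates across the fixed points lying in $M$. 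Granting this, $\delta$ exists and $\rho$ is a homomorphism; it is faithful because $\tau_0$ is faithful on $G_0$ and $\delta$ shifts $F$ nontrivially, and the matching of $F$ with $E$ then produces the semi-conjugacy via Proposition \ref{finite_orbit_semi_conjugacy}.
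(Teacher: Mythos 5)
Your reduction to producing a faithful $\rho:G\to\PSL_k(2,\R)$ with a matching $k$-point orbit is reasonable, and the decomposition $G_0\subset G$ with $G/G_0\cong\Z/k\Z$ is essentially the paper's. The gap is in the key step: the claim that $\rho_0\circ c$ is affinely conjugate to $\rho_0$, i.e.\ that the multiplier homomorphism $g\mapsto\rho_1^M(g)'(x_i)$ on $G_0$ is independent of $i$. The chain rule only gives $\rho_1^M(\gamma g\gamma^{-1})'(x_{i+1})=\rho_1^M(g)'(x_i)$, which relates the multiplier of a \emph{different} group element at $x_{i+1}$ to that of $g$ at $x_i$; deducing from this that $\rho_1^M(g)'(x_{i+1})=\rho_1^M(g)'(x_i)$ is exactly the assertion that the multiplier homomorphism is $\mathrm{inn}(\gamma)$-invariant, which is what you need to prove, so the "permuted cyclically hence constant" argument is circular. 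And the statement is false: take $M=\dS_2$ and $G=\langle A,B\rangle\subset\PSL(2,\R)$ with $A$ hyperbolic with fixed points $p,q$ and $B$ elliptic of order two exchanging them, so that $BAB^{-1}=A^{-1}$. Here $E=\{p,q\}$, $k=2$, $G_0=\langle A\rangle$, $c(A)=A^{-1}$, and the Jacobian relation at the fixed point $(p,q)\in\dS_2$ gives $A'(p)\,A'(q)=1$: the multipliers at the two points of the orbit are \emph{inverse} to one another, not equal. (The fixed points of $A$ in $\dS_2$ are $(p,q)$ and $(q,p)$, so there is no common second coordinate along which your "propagation" could force equality; the relation it actually yields is the inversion.) Consequently no $\delta$ whose image in $\PSL(2,\R)$ fixes $\infty$ can satisfy $\delta\,\tau_0(A)\,\delta^{-1}=\tau_0(A^{-1})$ with $\tau_0(A)$ a lift of a nontrivial hyperbolic affine map, and no subgroup of $\PSL_2(2,\R)$ preserving the single fibre $F=\pi_2^{-1}(\infty)$ realizes this $G$ with the required combinatorics.

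This is precisely why the paper's proof splits into cases according to the action of $\langle\p_1\rangle$ on $H=G_0$ by conjugation. When that action is trivial, your construction essentially goes through ($G\approx H\times K$, realized with $F$ a single fibre). When it is nontrivial it is forced to be the inversion, the orbit has $n=2k$ points, and the model group sits in $\PSL_{n/2}(2,\R)$ with the invariant $n$-point set being the preimage of the \emph{two} fixed points of a hyperbolic element; the element inducing the shift is elliptic and does not fix $\infty$, so it escapes your framework, which constrains $\delta$ to project to an affine map. You also leave aside the case $G_0\cong\Aff(\R)$ (dense orbits), where $c$ may realize the outer automorphism of $\Aff(\R)$ and the same obstruction appears; the paper disposes of it separately via constant curvature and injectivity of the developing map.
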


\begin{proof} Up to considering the closure $\overline G$, we can assume that $G$ is closed. Let $L_1 \subset \Ss^1$ be a finite orbit of $\rho_1^M(G)$.\\
\indent Lemma \ref{stabilizer_semi_conjugacy} treats the case where $\sharp L_1 =1$, therefore we can assume that $\sharp L_1\ge 2$. Let $n=\sharp L_1$, and consider $L_1=\{x_{\overline 1},x_{\overline 2},\dots,x_{\overline n}\}$ where the indices are taken in $\Z/n\Z$.\\
\indent Since $\rho_1^M$ preserves the cyclic ordering, there is a  morphism $\sigma : G\to \Z/n\Z$ such that $\rho_1^M(\p)(x_i)=x_{i+\sigma(\p)}$ for all $\p\in G$ and $i\in \Z/n\Z$.  Since $G$ acts transitively on $L_1$, we necessarily have $\sigma(G)=\Z/n\Z$.\\
\indent Let $\p_1\in G$ be such that $\sigma(\p_1)=\overline 1$, and let   $H=\ker \p=\Stab(x_1)$.   \\
\indent  If $H=\{Id\}$, then $G=\langle \p_1\rangle$ and it is semi conjugate any element of $\PSL_n(2,\R)$  having the same rotation number as $\p_1$. Such an element can be chosen to be of finite order (if $\p_1^n=Id$) or not, so that the corresponding subgroup of $\PSL_n(2,\R)$ is isomorphic to $G$.\\
\indent We now assume that $H$ is non trivial. The proof of Lemma \ref{stabilizer_semi_conjugacy} shows that the group $H$ is isomorphic to a closed subgroup of $\Aff(\R)$ hence isomorphic to either $\Z$, $\R$ or $\Aff(\R)$.  If it is isomorphic to $\Aff(\R)$, then its orbits are dense in $M$, so $(M,g)$ has constant curvature and $\Isom(M,g)$ is differentially conjugate to a subgroup of $\PSL_k(2,\R)$. Indeed, the developing map $D:\tilde M \to N$ (where $N$ is either $\R^{1,1}$ or $\tilde \dS_2$) is the map $\tilde p$ defined in  \ref{subsec:flat_cylinder}. This implies that $D$ is injective, so $M$ is the quotient of an open set of $\R^{1,1}$ or $\tilde \dS_2$, and the representations of the isometries in $\Diff(\Ss^1)$ are either in some $\PSL_k(2,\R)$ or in $\mathrm{SO}(2,\R)\subset \PSL(2,\R)$. \\
\indent We now assume that $H$ is either isomorphic to $\Z$ or to $\R$.  The group $K$ generated by $\p_1$ acts on $H$ by conjugacy, which shows that $G$ is a semi direct product $H\rtimes K$.\\
\indent If the action of $K$ on $H$ is trivial, i.e. if $G\approx H\times K$, then it is isomorphic and semi conjugate to a subgroup of $\PSL_n(2,\R)$, taking either the group generated by an element of the center of $\PSL(2,\R)$ and the corresponding subgroup of $\Aff(\R)$ (seen as the stabilizer of a point in $\PSL_n(2,\R)$), when $K\approx \Z/n\Z$, either the group generated by a parabolic element of $\PSL_n(2,\R)$ with same rotation number $\frac{1}{n}$ and the corresponding subgroup of $\Aff(\R)$, when $K\approx \Z$.\\
\indent We now assume that $K$ acts non trivially on $H$.  Since $\p_1^n\in H$ and $H$ is abelian, this implies that the action of $K$ on $H$ is done by a finite order automorphism of $K$. There is only one such non trivial element (the map $x\mapsto -x$ in $\Z$ or $\R$), and it is of order two. This implies that there is $k\in \N$ such that $n=2k$. One can realise such a group in $\PSL_k(2,\R)$ by considering the group generated by a hyperbolic element and an elliptic element that exchanges its fixed points.

\end{proof}

\subsection{Non elementary groups}

\subsubsection{The collapsed actions} Recall that if $\rho : \Gamma \to \Homeo(\Ss^1)$ is a non elementary representation with an invariant Cantor set $L_{\rho(\Gamma)}$, then one can construct a minimal representation $\hat \rho: \Gamma \to \Homeo(\Ss^1)$ by considering a continuous non decreasing map of degree one $\pi :\Ss^1\to \Ss^1$ obtained by collapsing the connected components of $\Ss^1\setminus L_{\rho(\Gamma)}$ to points. We then define $\hat \rho$ so that it satisfies $\hat \rho \circ \pi = \pi \circ \rho$.\\
\indent Let $(M,g)$ be a spatially compact surface that embeds conformally in $\T^2$, and assume that $\Isom(M,g)$ is non elementary.  We denote by $\pi_1, \pi_2:\Ss^1\to \Ss^1$ and $\hat \rho_1^M, \hat \rho_2^M$ the maps and representations obtained for the representations $\rho_1^M,\rho_2^M$. Note that $\hat \rho_1^M$ and $\hat \rho_2^M$ are representations of $\Isom(M,g)$, i.e. they do not necessarily extend to $\Conf(M,g)$, since the conformal group does not necessarily preserve the minimal sets $L_{\rho_1^M(\Isom(M,g))}$ and $L_{\rho_2^M(\Isom(M,g))}$.\\
\indent In general, the fact that $\rho$ is faithful does not imply that $\hat \rho$ is. However, it is the case for representations associated to spatially compact surfaces.

\begin{prop} \label{collapsed_faithful} Let $(M,g)$ be a spatially compact surface that embeds conformally in $\T^2$. If  $\Isom(M,g)$ is non elementary, then the collapsed representations $\hat \rho_1^M$ and $\hat \rho_2^M$ are faithful. \end{prop}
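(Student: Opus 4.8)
The plan is to show that any $\varphi \in \Isom(M,g)$ with $\hat\rho_1^M(\varphi) = \mathrm{Id}$ is the identity; faithfulness of $\hat\rho_2^M$ then follows by the symmetric argument exchanging the two coordinates. Write $L_1 = L_{\rho_1^M(\Isom(M,g))}$ and $L_2 = L_{\rho_2^M(\Isom(M,g))}$. If $L_1 = \Ss^1$ the collapsing map $\pi_1$ is the identity and there is nothing to prove, so assume $L_1$ is a Cantor set. By definition of the collapsed representation, $\hat\rho_1^M(\varphi) = \mathrm{Id}$ forces $\rho_1^M(\varphi)$ to fix every point of $L_1$: the points of $L_1$ that are not endpoints of a gap have distinct $\pi_1$-images and are therefore fixed, and since these are dense in $L_1$ the rest follows by continuity. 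Using the relation $\rho_2^M(\varphi)\circ h_\uparrow = h_\uparrow\circ\rho_1^M(\varphi)$ together with $\overline{h_\uparrow(L_1)} \supseteq L_2$, the same holds on the other side, so $\rho_2^M(\varphi)$ fixes $L_2$ pointwise.

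The first key step is that, since $\rho_1^M(\varphi)$ is a \emph{diffeomorphism} of $\Ss^1$ and $L_1$ is perfect, every $x_0 \in L_1$ is accumulated on both sides by fixed points of $\rho_1^M(\varphi)$; an orientation preserving $C^1$ map with derivative $\ne 1$ at $x_0$ is locally a contraction or a dilation there and cannot have other fixed points nearby, so $\rho_1^M(\varphi)'(x_0) = 1$ for all $x_0 \in L_1$. Likewise $\rho_2^M(\varphi)' \equiv 1$ on $L_2$.

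The second key step produces a fixed point of $\varphi$ in $M$. Fix $x_0 \in L_1$ and let $\ell_{x_0}$ be the corresponding leaf of $\F_1$, which $\varphi$ preserves because $\rho_1^M(\varphi)(x_0) = x_0$. An isometry preserving a lightlike geodesic acts on it by an affine map of its affine parameter, and the scaling factor of this affine map is the inverse of the transverse derivative $\rho_1^M(\varphi)'(x_0) = 1$; hence $\varphi|_{\ell_{x_0}}$ is a \emph{translation} $s \mapsto s + \tau(x_0)$. If $\tau(x_0) = 0$ for some $x_0 \in L_1$, then $\varphi$ fixes a point $P \in \ell_{x_0} \subset M$, where $D\varphi_P = \mathrm{diag}(\rho_1^M(\varphi)'(x_0),\rho_2^M(\varphi)'(p_2(P)))$; since the conformal factor is preserved at a fixed point, the Jacobian product equals $1$, so both entries are $1$ and $D\varphi_P = \mathrm{Id}$. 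The rigidity of isometries (an isometry fixing a point with identity $1$-jet is the identity) then gives $\varphi = \mathrm{Id}$.

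The main obstacle is the remaining case, where $\tau(x_0) \ne 0$ for \emph{every} $x_0 \in L_1$, so that $\varphi$ translates nontrivially along each leaf over $L_1$ and has no fixed point there. To exclude it I would use the non-elementary dynamics: since $\rho_1^M(\Isom(M,g))$ is non-elementary with exceptional minimal set $L_1$, a Sacksteder-type argument yields $g \in \Isom(M,g)$ and $x_+ \in L_1$ with $\rho_1^M(g)(x_+) = x_+$ and $\rho_1^M(g)'(x_+) = \mu < 1$ (a contracting direction transverse to $\ell_{x_+}$, hence expanding along it). The conjugates $\varphi_n := g^{-n}\varphi g^{n}$ all lie in the normal subgroup $\ker\hat\rho_1^M$, and on $\ell_{x_+}$, where $g$ acts by the dilation $s \mapsto \mu^{-1}s$ and $\varphi$ by $s \mapsto s + \tau(x_+)$, one computes $\varphi_n|_{\ell_{x_+}} : s \mapsto s + \mu^{n}\tau(x_+)$, so the $1$-jets of $\varphi_n$ at a point of $\ell_{x_+}$ converge to the identity $1$-jet. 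As $\Isom(M,g)$ carries the topology of $1$-jets at a point (into which it embeds properly), this gives $\varphi_n \to \mathrm{Id}$ with $\varphi_n \ne \mathrm{Id}$, so the closed subgroup $\ker\hat\rho_1^M$ is not discrete and contains a nonzero Killing field $X$. The induced field $\bar X$ on $F_1 \approx \Ss^1$ vanishes on $L_1$ but not identically (otherwise $\rho_1^M(\exp tX) = \mathrm{Id}$ would contradict the faithfulness of $\rho_1^M$); pushing $X$ around by the contracting element $g$ and using that $\mathrm{Lie}(\Isom(M,g))$ is finite dimensional while the orbit of $x_+$ is dense in $L_1$ then forces $\bar X \equiv 0$, a contradiction. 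I expect this last case, the exclusion of a fixed-point-free translation along every leaf over $L_1$, to be the genuine difficulty, the earlier steps being comparatively routine.
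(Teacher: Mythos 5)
Your first half --- $\hat\rho_1^M(\varphi)=\mathrm{Id}$ forces $\rho_1^M(\varphi)$ to fix $L_1$ pointwise with derivative $1$ there, and via $h_\uparrow$ the same for $\rho_2^M(\varphi)$ on $L_2$ --- is correct and is exactly the paper's argument. The divergence, and the gap, is in the endgame. You exploit only the fixing of $L_1$, deducing that $\varphi$ preserves each leaf $\ell_{x_0}$ over $x_0\in L_1$ and translates its affine parameter by some $\tau(x_0)$, and you then treat ``$\tau(x_0)\ne 0$ for all $x_0\in L_1$'' as a genuinely possible case requiring Sacksteder's theorem, a limiting Killing field $X$, and a final claim that pushing $X$ around by a contracting element ``forces $\bar X\equiv 0$''. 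That last sentence is not an argument: it is precisely the contradiction you need, and nothing in your sketch produces it (a Killing field vanishing on $L_1$ but supported in the gaps is not obviously incompatible with the finite-dimensionality of the isometry algebra). The appeal to Sacksteder would itself need justification here (regularity of the circle action, and the fact that the group need not be discrete or finitely generated), so the ``main obstacle'' case is left genuinely open.

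The missing idea is that you hold more information than leaf-preservation: since $(M,g)$ embeds conformally in $\T^2$, a point of $M$ is determined by its pair of lightlike coordinates, and you have already shown that $\rho_2^M(\varphi)$ fixes $L_2$ pointwise. Hence any point of $M\cap(L_1\times L_2)$ is outright fixed by $\varphi$, the differential there is $\mathrm{diag}(1,1)$ (derivative $1$ in each lightlike direction, or derivative $1$ in one direction combined with the Jacobian identity at a fixed point), and rigidity of isometries gives $\varphi=\mathrm{Id}$. In your language: $\tau(x_0)$ vanishes as soon as the leaf $\ell_{x_0}$ carries a point whose $\F_2$-coordinate lies in $L_2$, because such a point cannot slide. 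This is how the paper concludes, in one line, and it makes the entire Sacksteder/Killing-field detour unnecessary. (Both routes ultimately rely on $M\cap(L_1\times L_2)\ne\emptyset$, which the paper uses without comment; if you want to be scrupulous, that is the point to examine, not the translation-flow dichotomy.)
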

\begin{proof} Let $\p \in \Isom(M,g)$ be such that $\hat \rho_1^M(\p)=Id$. If $x\in L_{\rho_1^M(\Isom(M,g))}$, then there are two possibilities. Either $x$ bounds an interval $I$ of $\Ss^1\setminus  L_{\rho_1^M(\Isom(M,g))}$, in which case the fact that $\hat \rho_1^M(\p)(\hat x)= \hat x$ implies that $\rho_1^M(\p)$ is equal to $x$ or to the other endpoint of $I$; either $x$ can be approached in both directions by elements of $ L_{\rho_1^M(\Isom(M,g))}$, in which case $\hat y=\hat x$, therefore $\rho_1^M(x)=x$.\\
\indent If $x\in  L_{\rho_1^M(\Isom(M,g))}$ is of the first kind, then it can be approached by a sequence $x_n$ in $ L_{\rho_1^M(\Isom(M,g))}$ such that $\rho_1^M(\p)(x_n)\to x$, therefore $\rho_1^M(\p)(x)=x$ for all $x\in  L_{\rho_1^M(\Isom(M,g))}$. This implies that $\rho_1^M(\p)'(x)=1$ for all $x\in  L_{\rho_1^M(\Isom(M,g))}$.\\
\indent The fixed points of $\rho_2^M(\p)$ contain the closure of  $h_\to(L_{\rho_1^M(\Isom(M,g))})$ which is $\rho_2^M$-invariant, which implies that $\rho_2^M(\p)(y)=y$ and $\rho_2^M(\p)'(y)=1$ for all $y\in L_{\rho_2^M(\Isom(M,g))}$. Taking $(x,y)\in M\cap  (L_{\rho_1^M(\Isom(M,g))}\times  L_{\rho_2^M(\Isom(M,g))})$, we have a fixed point of the isometry $\p$ where the derivative is the identity, hence $\p=Id$. This shows that $\hat \rho_1^M$ is faithful, and so is $\hat \rho_2^M$.
\end{proof}

We will use the fact that the map $h_{\to \uparrow}$ gives a homeomorphism that commutes with the collapsed representation $\hat \rho_1^M$.

\begin{prop} Let $(M,g)$ be a spatially compact surface that embeds conformally in $\T^2$. Assume that $\Isom(M,g)$ is non elementary. There is $\hat h_{\to \uparrow} \in \Homeo(\Ss^1)$ such that $\hat h_{\to \uparrow} \circ \pi_1 = \pi_1 \circ h_{\to \uparrow}$. It commutes with $\hat \rho_1^M$.
\end{prop}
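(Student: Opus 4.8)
The plan is to descend the commuting map $h_{\to\uparrow}$ through the collapsing map $\pi_1$. First I would record that $h_{\to\uparrow}$ commutes with $G:=\rho_1^M(\Isom(M,g))$: combining the relations $\rho_1^M(\p)\circ h_\to = h_\to\circ\rho_2^M(\p)$ and $\rho_2^M(\p)\circ h_\uparrow = h_\uparrow\circ\rho_1^M(\p)$ gives $\rho_1^M(\p)\circ h_{\to\uparrow}= h_\to\circ\rho_2^M(\p)\circ h_\uparrow = h_{\to\uparrow}\circ\rho_1^M(\p)$ for every $\p$. Thus $h_{\to\uparrow}$ is a non-decreasing degree-one map (a composite of two such) that commutes with the whole group $G$.

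The heart of the argument is to show that $h_{\to\uparrow}$ respects the equivalence relation whose quotient is $\pi_1$, i.e. that $\pi_1\circ h_{\to\uparrow}$ is constant on each fibre of $\pi_1$. The fibres are the singletons of $L:=L_{\rho_1^M(\Isom(M,g))}$ together with the closures $\overline I$ of the connected components $I$ of $\Ss^1\setminus L$. Since $h_{\to\uparrow}$ is monotone, $h_{\to\uparrow}(\overline I)$ is an arc, and the point to establish is that this arc lies inside a single fibre of $\pi_1$. This is where commutation with $G$ and the minimality of the $G$-action on $L$ enter: because $h_{\to\uparrow}$ conjugates $G$ to itself, the set $h_{\to\uparrow}(L)$ is $G$-invariant and its closure, being a closed invariant set, contains the unique minimal set $L$; monotonicity then forces the two endpoints of $h_{\to\uparrow}(\overline I)$ to bound one gap (no point of $L$ can lie strictly between consecutive images), so the whole arc collapses. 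Consequently $\hat h_{\to\uparrow}(\pi_1(x)):=\pi_1(h_{\to\uparrow}(x))$ is well defined, and it is continuous because $\pi_1$ is a quotient map and $\pi_1\circ h_{\to\uparrow}$ is continuous (the discontinuities of $h_{\to\uparrow}$ sit inside the collapsed gaps). By construction $\hat h_{\to\uparrow}\circ\pi_1=\pi_1\circ h_{\to\uparrow}$.

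It then remains to see that $\hat h_{\to\uparrow}$ is a homeomorphism and that it commutes with $\hat\rho_1^M$. For the first, $\hat h_{\to\uparrow}$ is a non-decreasing degree-one map of $\Ss^1$ commuting with the collapsed action $\hat\rho_1^M$, which is minimal by construction; a monotone map commuting with a minimal action can neither collapse an arc nor have a jump, since the (open, invariant) set where it did so would produce a proper non-empty closed invariant set, contradicting minimality. Hence $\hat h_{\to\uparrow}$ is injective, and being a continuous degree-one bijection it is a homeomorphism. For the second, projecting $\rho_1^M(\p)\circ h_{\to\uparrow}=h_{\to\uparrow}\circ\rho_1^M(\p)$ through $\pi_1$ and using $\hat\rho_1^M\circ\pi_1=\pi_1\circ\rho_1^M$ together with the surjectivity of $\pi_1$ yields $\hat h_{\to\uparrow}\circ\hat\rho_1^M(\p)=\hat\rho_1^M(\p)\circ\hat h_{\to\uparrow}$.

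The main obstacle is the descent step of the second paragraph: $h_{\to\uparrow}$ need not be injective or even continuous (precisely when the conformal boundary fails to be acausal it can be constant on the gaps of $L$), so one cannot push it forward naively, and all of its defects must be confined to the gaps that $\pi_1$ removes. An equivalent and perhaps cleaner bookkeeping is to factor $h_{\to\uparrow}=h_\to\circ h_\uparrow$ and check separately that the monotone semi-conjugacies $h_\uparrow$ and $h_\to$ descend to conjugacies $\hat h_\uparrow,\hat h_\to$ between the collapsed minimal models of $\rho_1^M$ and $\rho_2^M$ (a standard consequence of the uniqueness of the minimal model within a semi-conjugacy class, cf. \cite{Gh01}), whence $\hat h_{\to\uparrow}=\hat h_\to\circ\hat h_\uparrow$ is automatically a homeomorphism.
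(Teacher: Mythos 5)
Your argument is correct and follows essentially the same route as the paper: the descent step is justified by noting that $\overline{h_{\to \uparrow}(L_{\rho_1^M(\Isom(M,g))})}$ is a closed invariant set containing the minimal set, so monotonicity forbids a point of the limit set from lying strictly inside the image of a gap, and injectivity, surjectivity and continuity of $\hat h_{\to \uparrow}$ are then forced by the minimality of the collapsed action. Your closing suggestion of factoring through $\hat h_\to \circ \hat h_\uparrow$ is an alternative bookkeeping the paper does not use, but the core argument coincides.
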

\begin{proof} Let $\hat x = \pi_1(x)\in \Ss^1$. We wish to show that $\pi_1\circ h_{\to \uparrow}(x)$ only depends on $\hat x$. It is enough to show that if $I=\intoo{a}{b}$ is a connected component of $\Ss^1\setminus L_{\rho_1^M(\Isom(M,g))}$, then $h_{\to \uparrow}(\overline I)$ is included in the closure of a connected component of $\Ss^1\setminus L_{\rho_1^M(\Isom(M,g))}$. If it were not the case, there would be $y\in L_{\rho_1^M(\Isom(M,g))}$ such that $h_{\to \uparrow}(a)<y<h_{\to \uparrow}(b)\le h_{\to \uparrow}(a)$.\\
\indent Since $\overline{h_{\to \uparrow}(L_{\rho_1^M(\Isom(M,g))})}$ is closed an invariant under $\rho_1^M$, it contains $L_{\rho_1^M(\Isom(M,g))}$, so there is $z\in h_{\to \uparrow}(L_{\rho_1^M(\Isom(M,g))})$ such that $h_{\to \uparrow}(a)<z<h_{\to \uparrow}(b)\le h_{\to \uparrow}(a)$. If $z=h_{\to \uparrow}(u)$ with $u\in L_{\rho_1^M(\Isom(M,g))}$, then we find $u\in I$, which is absurd, and shows that $\hat h_{\to \uparrow}$ is well defined.\\
\indent Notice that $\hat \rho_1^M\circ \hat h_{\to \uparrow}\circ \pi_1= \pi_1 \circ \rho_1^M \circ h_{\to \uparrow} = \pi_1  \circ h_{\to \uparrow} \circ \rho_1^M =  \hat h_{\to \uparrow} \circ \hat \rho_1^M\circ \pi_1$. Since $\pi_1$ is onto, this shows that $\hat h_{\to \uparrow}$ commutes with $\hat \rho_1^M$.\\
\indent  Since $\hat h_{\to \uparrow}$  is non decreasing of degree one, the union of the open intervals  where it is constant is invariant under $\hat \rho_1^M$, and therefore is empty, so $\hat h_{\to \uparrow}$ is injective. Similarly, it is onto, and continuous, so $\hat h_{\to \uparrow} \in \Homeo(\Ss^1)$.
\end{proof}

We can also define $\hat h_\to = \pi_1\circ h_\to$ and $\hat h_\gets =\pi_1\circ h_\gets$. We will use the fact that they are linked by $\hat h_{\to \uparrow}$.

\begin{prop} \label{collapsed_commutation} $\hat h_{\to \uparrow} \circ \hat h_\gets = \hat h_\to$ \end{prop}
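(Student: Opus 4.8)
The plan is to unwind both sides into maps built from $h_\to$, $h_\uparrow$, $h_\gets$ and the collapsing $\pi_1$, and then to compare them by a continuity argument. Since $\hat h_\gets = \pi_1\circ h_\gets$ and $\hat h_{\to \uparrow}\circ \pi_1 = \pi_1\circ h_{\to \uparrow}$ with $h_{\to \uparrow} = h_\to\circ h_\uparrow$, one immediately gets $\hat h_{\to \uparrow}\circ\hat h_\gets = \hat h_{\to \uparrow}\circ \pi_1\circ h_\gets = \pi_1\circ h_\to\circ h_\uparrow\circ h_\gets$, whereas $\hat h_\to = \pi_1\circ h_\to$. So the proposition is equivalent to the equality $\pi_1\circ h_\to\circ h_\uparrow\circ h_\gets = \pi_1\circ h_\to$ of two non decreasing degree one maps of $\Ss^1$.

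The step I expect to be the main obstacle, and which I would establish first, is that the collapsed maps $\hat h_\to = \pi_1\circ h_\to$ and $\hat h_\gets = \pi_1\circ h_\gets$ are \emph{continuous}. A non decreasing degree one map has only jump discontinuities, and $\pi_1\circ h_\to$ is continuous at a jump $y_0$ of $h_\to$ exactly when the skipped open interval $J = \intoo{h_\to(y_0^-)}{h_\to(y_0^+)}$ contains no point of the minimal set $L_{\rho_1^M(\Isom(M,g))}$, i.e. $J$ sits in one connected component of its complement. To see this holds, recall that the intertwining relation $\rho_1^M(\p)\circ h_\to = h_\to\circ\rho_2^M(\p)$ makes $\overline{h_\to(L_{\rho_2^M(\Isom(M,g))})}$ a non empty closed $\rho_1^M$-invariant set, so it contains the minimal set: $L_{\rho_1^M(\Isom(M,g))}\subset \overline{h_\to(\Ss^1)}$. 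Since the open jump interval $J$ is disjoint from $\overline{h_\to(\Ss^1)}$, no point of the minimal set lies in $J$, so $\pi_1\circ h_\to$ has no genuine jump and is continuous. The same argument applies verbatim to $h_\gets$ (using $\rho_1^M(\p)\circ h_\gets = h_\gets\circ\rho_2^M(\p)$). As $\hat h_{\to \uparrow}$ is a homeomorphism, it follows that both $\pi_1\circ h_\to$ and $\hat h_{\to \uparrow}\circ\hat h_\gets = \hat h_{\to \uparrow}\circ\pi_1\circ h_\gets$ are continuous. Everything hinges on this point: the minimal set absorbing the jumps of $h_\to$ and $h_\gets$.

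It then remains to check the desired equality on a dense subset, for which I would use that $h_\uparrow\circ h_\gets$ equals the identity off a countable set. Given $y$, set $x = h_\gets(y)$; then $(x,y)$ is the left endpoint of the horizontal slice of $p(M)$ at height $y$, hence lies on the boundary of the vertical slice, so $y = h_\uparrow(x)$ or $y = h_\downarrow(x)$. In the second case, since $(x',y)\notin p(M)$ for $x'<x$ close to $x$ and $h_\uparrow$ is left continuous, one cannot have $h_\uparrow(x')\le y$ along a sequence $x'\to x^-$ (this would force $h_\uparrow(x)\le y<h_\uparrow(x)$); therefore $h_\downarrow\equiv y$ on a left neighbourhood of $x$. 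As a monotone map has at most countably many nondegenerate intervals of constancy, this occurs for at most countably many values of $y$. For every other $y$ one has $h_\uparrow(h_\gets(y)) = y$, whence $\pi_1\circ h_\to\circ h_\uparrow\circ h_\gets(y) = \pi_1\circ h_\to(y)$. Thus the two continuous maps $\pi_1\circ h_\to\circ h_\uparrow\circ h_\gets$ and $\pi_1\circ h_\to$ agree on a cocountable, hence dense, subset of $\Ss^1$, and therefore coincide. This yields $\hat h_{\to \uparrow}\circ\hat h_\gets = \hat h_\to$.
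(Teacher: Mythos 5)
Your reduction of the statement to $\pi_1\circ h_\to\circ h_\uparrow\circ h_\gets=\pi_1\circ h_\to$ and your continuity argument are essentially sound: the minimality argument giving $L_{\rho_1^M(\Isom(M,g))}\subset\overline{h_\to(\Ss^1)}$ is correct, and a jump gap of $h_\to$ can meet the perfect set $L_{\rho_1^M(\Isom(M,g))}$ in at most the single point $h_\to(y_0)$, hence not at all (you should say a word about why $h_\to(y_0)$ itself causes no trouble, but this is easily repaired). The genuine gap is in the density step. When you assert that $(h_\gets(y),y)$ ``lies on the boundary of the vertical slice, so $y=h_\uparrow(x)$ or $y=h_\downarrow(x)$'', you are forgetting that $\partial p(M)$ also contains the vertical segments joining the discontinuities of $h_\uparrow$ and $h_\downarrow$. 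This is precisely the non-acausal phenomenon, and Proposition \ref{collapsed_commutation} is stated and used in the proof of Theorem \ref{main_theorem} with no acausality hypothesis (in the acausal case $h_\gets=h_\uparrow^{-1}$ and the statement is immediate, so the whole content of the proposition lies in the case you are excluding). Concretely, if $h_\uparrow$ jumps at $x_0$, then for every $y\in\intoo{h_\uparrow(x_0)}{h_\uparrow(x_0^+)}$ one checks that $h_\gets(y)=x_0$, hence $h_\uparrow\circ h_\gets(y)=h_\uparrow(x_0)\ne y$: the set where $h_\uparrow\circ h_\gets\ne\mathrm{id}$ contains a whole interval for each jump, not just countably many points. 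In the examples of Section 3 where the boundary homeomorphism is constant on each complementary interval of the limit set (a devil's staircase), the set $\{y:\ h_\uparrow\circ h_\gets(y)=y\}$ is contained in a Cantor set and is nowhere dense, so ``two continuous maps agreeing on a dense set'' is simply not available.

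To close the gap you need the input the paper actually uses: by minimality and compactness, an interval on which $h_\gets$ takes only finitely many values cannot meet the limit set in its interior (otherwise $h_\gets(L_{\rho_2^M(\Isom(M,g))})$ would be a finite invariant set, contradicting non-elementarity). Since $y$ and $h_\uparrow\circ h_\gets(y)$ always lie in the closure of a common interval of constancy of $h_\gets$, this lemma is what forces $\pi_1\circ h_\to$ to take the same value at both points. Your continuity observation is a nice supplement (the paper argues pointwise and never states continuity of $\hat h_\to$), but on its own it cannot replace that minimality lemma: even granting continuity, you would still have to show that your ``good'' set is dense at least in $L_{\rho_2^M(\Isom(M,g))}$ and that both maps are constant on its complementary intervals, which again amounts to the same lemma.
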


\begin{lemma} Let $I\subset \Ss^1$ be an interval such that $h_{\gets}$ only takes a finite number of values on $I$. Then $\mathring I\cap L_{\rho_1^M(\Isom(M,g))} = \emptyset$.
\end{lemma}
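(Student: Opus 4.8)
The plan is to reduce everything to the fact that the jumps of the non-decreasing map $h_\gets$ produce a $\rho_1^M$-invariant family of omitted arcs, which therefore cannot meet the minimal set, and then to clean up a finite exceptional set using that $L_{\rho_1^M(\Isom(M,g))}$ is perfect.

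First I would record the combinatorics. Since $h_\gets$ is non-decreasing of degree one and assumes only the values $c_1<\dots<c_m$ on $I$, its restriction to $\mathring I$ is a step function: $\mathring I$ is a finite union of maximal arcs of constancy separated by finitely many jump points, and between consecutive values $h_\gets$ omits each open arc $(c_j,c_{j+1})$ from its image. The point is then to control the arc $(c_1,c_m)$ spanned by these values on the target circle of $h_\gets$, which is exactly the circle $F_1$ on which $\rho_1^M$ acts; this is what makes the conclusion genuinely about $L_{\rho_1^M(\Isom(M,g))}$ rather than about the source-side limit set.

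The heart is that the omitted arcs avoid $L_{\rho_1^M(\Isom(M,g))}$. Call an open arc $(a,b)$ a \emph{jump-gap} of $h_\gets$ if no point maps into it, i.e. $h_\gets^{-1}((a,b))=\emptyset$. Using $\rho_1^M(\p)^{-1}\circ h_\gets=h_\gets\circ\rho_2^M(\p)^{-1}$ and surjectivity of $\rho_2^M(\p)$, one checks that if some $y$ had $h_\gets(y)\in\rho_1^M(\p)((a,b))$ then $h_\gets(\rho_2^M(\p)^{-1}(y))\in(a,b)$, a contradiction; hence the jump-gaps are permuted by $\rho_1^M$. Their union $G$ is thus an open $\rho_1^M$-invariant set, and $G\neq\Ss^1$ since $h_\gets$ is non-constant. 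By the trichotomy for minimal closed invariant subsets, $\Ss^1\setminus G$ is a non-empty closed $\rho_1^M$-invariant set and so contains the minimal set, giving $G\cap L_{\rho_1^M(\Isom(M,g))}=\emptyset$. In particular each $(c_j,c_{j+1})\subset G$ misses $L_{\rho_1^M(\Isom(M,g))}$, whence $(c_1,c_m)\cap L_{\rho_1^M(\Isom(M,g))}\subseteq\{c_2,\dots,c_{m-1}\}$ is finite. Since $L_{\rho_1^M(\Isom(M,g))}$ is perfect (it is either $\Ss^1$ or a Cantor set, hence has no isolated point), a non-empty relatively open subset of it is infinite, so this finite intersection must be empty.

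The step I expect to be the main obstacle is precisely the passage from the source interval $I$ to the target arc $(c_1,c_m)$: because $h_\gets$ carries the circle of $\rho_2^M$ to the circle of $\rho_1^M$, I must transfer the plateau data across $h_\gets$ rather than conflate the two minimal sets. I would justify this transfer through the boundary semiconjugacy $h_\uparrow$, which satisfies $\rho_2^M(\p)\circ h_\uparrow=h_\uparrow\circ\rho_1^M(\p)$: as a continuous equivariant image of a minimal set is again minimal, uniqueness of the minimal set gives $h_\uparrow(L_{\rho_1^M(\Isom(M,g))})=L_{\rho_2^M(\Isom(M,g))}$, and $h_\gets$, $h_\uparrow$ are mutually inverse monotone maps whose arcs of constancy and jump-gaps interchange. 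This equivariant correspondence lets me match $\mathring I$ with the arc it controls and conclude $\mathring I\cap L_{\rho_1^M(\Isom(M,g))}=\emptyset$, without ever assuming the two limit sets coincide as subsets of a single circle.
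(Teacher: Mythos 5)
Your core mechanism is sound, but it proves a statement on the wrong circle, and the step you yourself flag as ``the main obstacle'' --- transferring back to $\mathring I$ --- is exactly where the argument breaks. What your jump-gap argument establishes is that the arc $(c_1,c_m)$ spanned by the values of $h_\gets$, which lives in the \emph{target} circle of $h_\gets$, misses $L_{\rho_1^M(\Isom(M,g))}$; that part (omitted arcs are permuted by equivariance, their union is a proper open invariant set, its complement contains the unique minimal set, and perfectness kills the finite residue $\{c_2,\dots,c_{m-1}\}$) is correct. But the lemma is about $\mathring I$, an interval in the \emph{domain} of $h_\gets$, i.e.\ the circle on which $\rho_2^M$ acts: the subscript $1$ in the statement is a notational slip (the first line of the paper's own proof reads ``Assume that $\mathring I\cap L_{\rho_2^M(\Isom(M,g))}$ is non empty''), and it is this domain-side fact that the application in Proposition \ref{collapsed_commutation} needs. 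Your bridge back to $\mathring I$ rests on two claims that fail in the setting where the lemma is used. First, $h_\uparrow$ is only left continuous in general, so ``a continuous equivariant image of a minimal set is minimal'' does not apply, and $h_\uparrow(L_{\rho_1^M(\Isom(M,g))})=L_{\rho_2^M(\Isom(M,g))}$ is unjustified (one only gets that the \emph{closure} of the image contains the minimal set). Second, and fatally, $h_\gets$ and $h_\uparrow$ are mutually inverse only when the conformal boundary is acausal, whereas this lemma sits inside the proof of Theorem \ref{main_theorem}, where acausality is not assumed; worse, if $h_\gets$ were invertible it would be injective and could never take finitely many values on a nondegenerate interval, so your transfer is valid only in a regime where the lemma is vacuous.

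The repair is to run your open-invariant-set argument on the domain circle rather than the image circle. Let $P\subset \Ss^1$ be the set of points where $h_\gets$ is locally constant. By the equivariance $\rho_1^M(\p)\circ h_\gets = h_\gets\circ\rho_2^M(\p)$, the set $P$ is open and $\rho_2^M$-invariant, and $P\ne\Ss^1$: otherwise $h_\gets$ would be constant and its value would be a global fixed point of $\rho_1^M(\Isom(M,g))$, contradicting non-elementarity. Hence $\Ss^1\setminus P$ is a nonempty closed invariant set, so it contains the minimal set, i.e.\ $P\cap L_{\rho_2^M(\Isom(M,g))}=\emptyset$. Since $\mathring I\setminus P$ consists of the finitely many jump points of $h_\gets$ inside $\mathring I$, the intersection $\mathring I\cap L_{\rho_2^M(\Isom(M,g))}$ is finite and relatively open in a perfect set, hence empty. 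Note that even after this repair your route differs from the paper's: the paper assumes the intersection is nonempty, uses minimality to give \emph{every} point of the limit set a neighbourhood on which $h_\gets$ takes finitely many values, deduces by compactness that the image of the limit set under $h_\gets$ is finite, and gets a contradiction because this finite set is invariant under the other representation; your version trades that covering argument for perfectness of the minimal set.
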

\begin{proof} Assume that $\mathring I\cap L_{\rho_2^M(\Isom(M,g))}$ is non empty. Let $x\in L_{\rho_1^M(\Isom(M,g))}$. Since the action of $\Isom(M,g)$ on $L_{\rho_1^M(\Isom(M,g))}$ is minimal, the orbit of $x$ meets $\mathring I\cap L_{\rho_1^M(\Isom(M,g))}$, so $x$ has a neighbourhood on which $h_{\gets}$ only takes a finite number of values. Since $L_{\rho_1^M(\Isom(M,g))}$ is compact, this implies that $h_{\gets}(L_{\rho_1^M(\Isom(M,g))})$ is a finite set invariant under $\rho_2^M$, which is absurd.
\end{proof}
 
\begin{proof}[Proof of Proposition \ref{collapsed_commutation}]  First, we see that $\hat h_{\to \uparrow}\circ \hat h_\gets = \pi_1 \circ h_{\to \uparrow} \circ h_\gets = \hat h_\to \circ h_{\uparrow \gets}$. We wish to show that $\hat h_\to \circ h_{\uparrow \gets} = \hat h_\to$.\\
\indent Let $x\in \Ss^1$ be such that $h_{\uparrow \gets}(x)\ne x$. This means that $x$ lies on  an interval where $h_{\gets }$ is constant. If $\hat h_\to \circ h_{\uparrow \gets}(x) \ne \hat h_\to(x)$, then the interval $\intoo{h_\to \circ h_{\uparrow \gets} (x)}{h_\to(x)}$ intersects $L_{\rho_1^M(\Isom(M,g))}$, but $h_\gets$ is constant on this interval. According to the previous lemma, this is a contradiction. Therefore $\hat h_{\to \uparrow}\circ \hat h_\gets = \hat h_\to$.
\end{proof}

\subsubsection{Convergence property for the collapsed actions}
\begin{lemma} \label{collapsed_convergence} Let $(M,g)$ be a spatially compact surface that embeds conformally in $\T^2$. Assume that $\Isom(M,g)$ is non elementary. Then either $\hat \rho_1^M(\Isom(M,g))$ is topologically conjugate to a subgroup of $\mathrm{SO}(2,\R)$, either the rotation number of $\hat h_{\rightarrow \uparrow}$ is equal to $\frac{1}{k}$ for somme $k\in \N$, and $\hat \rho_1^M (\Isom(M,g))$ is a $(\hat h_{\rightarrow \uparrow},k)$-convergence group.
\end{lemma}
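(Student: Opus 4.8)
The plan is to follow the strategy of Proposition \ref{acausal_convergence}, but to carry out the geometric argument upstairs in $p(M)\subset\T^2$ and then transport every convergence statement down to the minimal circle by means of the collapsing map $\pi_1$. First I would note that $\hat\rho_1^M(\Isom(M,g))$ is minimal: collapsing the invariant Cantor set of a non-elementary action yields a minimal action, and if $\rho_1^M$ was already minimal then $\pi_1=Id$ and $\hat\rho_1^M=\rho_1^M$. A minimal group of circle homeomorphisms is either equicontinuous, in which case it preserves a compatible metric and, being minimal, is topologically conjugate to a subgroup of $\mathrm{SO}(2,\R)$ — this is the first alternative — or it is not equicontinuous. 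So it suffices to treat a group that is not equicontinuous and to verify, for every sequence in it, the dichotomy ``$(\hat h_{\to\uparrow},k)$-convergence property or equicontinuous subsequence'', the integer $k$ being fixed once and for all as the denominator of the rotation number of the single homeomorphism $\hat h_{\to\uparrow}$.

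Next I would take a sequence $g_n=\hat\rho_1^M(\p_n)$ with no equicontinuous subsequence. Since $\Isom(M,g)$ is a closed subgroup of a Lie group, if $(\p_n)$ did not leave every compact set then a subsequence would converge to some $\p$, forcing $g_n\to\hat\rho_1^M(\p)$ and hence an equicontinuous subsequence; therefore $\p_n\to\infty$. I would then run the argument of Proposition \ref{acausal_convergence} in $p(M)$: choose $(x_0,y_0)\in M$, extract a subsequence with $\p_n(x_0,y_0)\to(x_1,y_1)\in\overline M$, and split into the cases $(x_1,y_1)\in M$ and $(x_1,y_1)\notin M$. Lemma \ref{hyperbolic_lemma} is available (it only requires the embedding in $\T^2$, not acausality) and produces the hyperbolic dichotomy on the derivatives; the geodesic contraction/dilation argument then yields convergence of $\rho_1^M(\p_n)(x)$ on a horizontal interval of the form $\intoo{h_\gets(y_0)}{h_\to(y_0)}=\intoo{a}{h_{\to\uparrow}(a)}$, with $a=h_\gets(y_0)$ and $h_\to=h_{\to\uparrow}\circ h_\gets$, the limit being a boundary value $h_\gets(y_1)$ or $h_\to(y_1)$.

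I would then push these statements down through $\pi_1$. Using $\hat\rho_1^M\circ\pi_1=\pi_1\circ\rho_1^M$, the defining relation $\hat h_{\to\uparrow}\circ\pi_1=\pi_1\circ h_{\to\uparrow}$, the identities $\hat h_\gets=\pi_1\circ h_\gets$ and $\hat h_\to=\pi_1\circ h_\to$, together with Proposition \ref{collapsed_commutation} giving $\hat h_\to=\hat h_{\to\uparrow}\circ\hat h_\gets$, the convergence of $\rho_1^M(\p_n)$ on the interval above descends, by continuity of $\pi_1$, to $\hat\rho_1^M(\p_n)(\hat x)\to\hat b$ for every $\hat x$ in an interval of the form $\intoo{\hat a}{\hat h_{\to\uparrow}(\hat a)}$. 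Since $\hat h_{\to\uparrow}$ commutes with the whole group $\hat\rho_1^M(\Isom(M,g))$, Lemma \ref{simplify_convergence} applies and yields simultaneously that the rotation number of $\hat h_{\to\uparrow}$ equals $\frac1k$ for some $k\in\N$ and that $(\hat\rho_1^M(\p_n))$ has the $(\hat h_{\to\uparrow},k)$-convergence property. As this holds for every non-equicontinuous sequence, $\hat\rho_1^M(\Isom(M,g))$ is a $(\hat h_{\to\uparrow},k)$-convergence group.

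The main obstacle will be the descent in the case $(x_1,y_1)\notin M$, where the absence of acausality is felt: the boundary maps $h_\gets,h_\to,h_\uparrow,h_\downarrow$ are only non-decreasing degree-one maps, not homeomorphisms, so I must check that collapsing by $\pi_1$ does not merge the two candidate limit points $h_\gets(y_1)$ and $h_\to(y_1)$ (equivalently that $\hat a$ and $\hat h_{\to\uparrow}(\hat a)$ stay distinct in the relevant subcases), and that the image under $\pi_1$ of the horizontal interval is exactly the full open interval $\intoo{\hat a}{\hat h_{\to\uparrow}(\hat a)}$ needed to invoke Lemma \ref{simplify_convergence}. Verifying that the three subcases of the second case of Proposition \ref{acausal_convergence} descend cleanly — in particular that monotonicity forces a single splitting point $z$ whose image $\hat z$ controls the convergence — is where the bulk of the careful work will lie.
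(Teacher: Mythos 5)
Your proposal is correct and follows essentially the same route as the paper: run the two-case analysis of Proposition \ref{acausal_convergence} upstairs in $p(M)$ using Lemma \ref{hyperbolic_lemma}, push the convergence statements down through $\pi_1$ via the intertwining relations and Proposition \ref{collapsed_commutation}, and conclude with Lemma \ref{simplify_convergence} on the collapsed circle (which is also where the rotation number $\frac1k$ is produced, as you note in your third paragraph). The descent issue you flag in the non-interior case is handled in the paper exactly as you anticipate, via the single splitting point $\hat z$ and the relation $\hat h_\to=\hat h_{\to\uparrow}\circ\hat h_\gets$.
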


\begin{proof} If $\hat \rho_1^M(\Isom(M,g))$ is compact, then it is topologically conjugate to a subgroup of $\mathrm{SO}(2,\R)$, in particular it has the convergence property. We can therefore assume that $\hat \rho_1^M(\Isom(M,g))$ is non compact, i.e. that sequences $\p_n$ such that $\hat \rho_1^M(\p_n)\to \infty$ exist.\\
\indent Let  $(x_0,y_0)\in M$. Consider a sequence $(\p_n)\suite \in \Isom(M,g)^\N$ such that $\hat \rho_1^M(\p_n)$ has no equicontinuous subsequence. Since $\hat \rho_1^M$ is continuous, this implies that $\p_n\to \infty$.\\
\indent Up to a subsequence, we can assume that there are $x_1,y_1\in \Ss^1$ such that $\rho_1^M(\p_n)(x_0)\to x_1$ and $\rho_2^M(\p_n)(y_0)\to y_1$.\\


\indent \textbf{First case:} Assume that $(x_1,y_1)\in M$.\\
\indent By Lemma \ref{hyperbolic_lemma}, there are two subcases.\\
\emph{First subcase:} $\rho_1^M(\p_n)'(x_0)\to 0$ and $\rho_2^M(\p_n)'(y_0)\to \infty$\\ \indent Just as in the proof of Proposition \ref{acausal_convergence}, the horizontal geodesic passing through $(x_0,y_0)$ is shrunk to the point $(x_1,y_1)$, i.e. $\rho_1^M(\p_n)(x)\to x_1$ for all $x\in \intoo{h_\gets(y_0)}{h_\to(y_0)}$. Let $z=h_\gets(y_0)$. \\
\indent On the collapsed circle, we see that $\hat \rho_1^M(\p_n)(\hat x)\to \hat x_1$ for all $\hat x\in \intoo{\hat z}{\hat h_{\to \uparrow}(\hat z)}$, and Lemma \ref{simplify_convergence} implies that  the rotation number of $\hat h_{\to \uparrow}$ is some $\frac{1}{k}$ and that $(\hat \rho_1^M(\p_n))\suite$ has the $(\hat h_{\to \uparrow},k)$-convergence property.\\
\emph{Second subcase:} $\rho_1^M(\p_n)'(x_0)\to \infty$ and $\rho_2^M(\p_n)'(y_0)\to 0$\\  \indent The horizontal geodesic passing through $(x_0,y_0)$ is now dilated. If $\hat x\in \intoo{\hat x_0}{\hat h_\to(y_0)}$, then $\hat  \rho_1^M(\p_n)(x)\to \hat h_\to(y_1)$. If $\hat x \in \intoo{\hat h_\to(y_0)}{\hat h_{\to \uparrow}(\hat x_0)}$, then $\hat h_{\to \uparrow}^{-1}(\hat x)\in \intoo{\hat h_\gets(y_0)}{\hat x_0}$, so $\hat \rho_1^M(\p_n)(\hat h_{\to \uparrow}^{-1}(\hat x))\to \hat h_\gets(y_1)$, and $\hat \rho_1^M(\p_n)(\hat x)\to \hat h_{\to \uparrow}(\hat h_\gets(y_1)) = \hat h_\to(y_1)$.\\
\indent We have shown that $\hat \rho_1^M(\p_n)(\hat x)\to \hat h_\to(y_1)$ for all $\hat x\in \intoo{\hat x_0}{\hat h_{\to \uparrow}(\hat x_0)}\setminus \{\hat h_\to(y_0)\}$. By monotonicity, we have convergence on the whole interval, so Lemma \ref{simplify_convergence} can once again be applied.\\

\textbf{Second case:} Assume that $(x_1,y_1)\notin M$.\\
Just as in Proposition \ref{acausal_convergence}, we can assume that there is no $x\in \Ss^1$ such that $(x,y_0)\in M$ and such that the sequence $\rho_1^M(\p_n)(x)$ has a limit point $z\in \Ss^1$ satisfying $(z,y_1)\in M$. This implies that for all $\hat x\in  \intoo{\hat h_\gets(y_0)}{\hat h_\to(y_0)}$, the only limit points of the sequence $\hat \rho_1^M(\p_n)(\hat x)$ are $\hat h_\gets(y_1)$ and $\hat h_\to(y_1)$. Up to a subsequence, we have three possibilities.\\
\emph{First subcase:} $\hat \rho_1^M(\p_n)(\hat x) \to \hat h_\gets(y_1)$ for all $\hat x\in  \intoo{\hat h_\gets(y_0)}{\hat h_\to(y_0)}$.\\ \indent  Since $\hat h_\to(y_0) = \hat h_{\to \uparrow}(\hat h_\gets(y_0))$, Lemma \ref{simplify_convergence} implies that   the rotation number of $\hat h_{\to \uparrow}$ is some $\frac{1}{k}$ and that $(\hat \rho_1^M(\p_n))\suite$ has the $(\hat h_{\to \uparrow},k)$-convergence property..\\
\emph{Second subcase:} $\hat \rho_1^M(\p_n)(\hat x) \to \hat h_\to(y_1)$ for all $\hat x\in  \intoo{\hat h_\gets(y_0)}{\hat h_\to(y_0)}$.\\ \indent The reasoning is exactly the same as in the previous case.\\
\emph{Third subcase:}  The two limits are possible. \\ \indent As in Proposition \ref{acausal_convergence}, there is $z\in \intoo{h_\gets(y_0)}{h_\to(y_0)}$ such that $\hat \rho_1^M(\p_n)(\hat x)\to \hat h_\gets(y_1)$ for all $\hat x\in \intoo{\hat h_\gets(y_0)}{\hat z}$ and $\hat \rho_1^M(\p_n)(\hat x)\to \hat h_\to(y_1)$ for all $\hat x\in \intoo{\hat z}{\hat h_\to(y_0)}$. This implies that $\hat \rho_1^M(\p_n)(\hat x) \to \hat h_\to(y_1)$ for all $\hat x\in \intoo{\hat z}{\hat h_{\to\uparrow}(\hat z)}$, and we once again conclude with Lemma \ref{simplify_convergence}.
\end{proof}

We now have all the ingredients for the proof of Theorem \ref{main_theorem_embeds}, which implies Theorem \ref{main_theorem} because of Proposition \ref{open_set_embeds}.

\begin{proof}[Proof of Theorem \ref{main_theorem_embeds}] Let $(M,g)$ be a spatially compact surface  that embeds conformally in $\T^2$.\\
\indent Proposition \ref{faithful} implies that $\rho_1^M$ and $\rho_2^M$ are semi conjugate, and that their restrictions to $\Isom(M,g)$ are faithful.\\
\indent If $\Isom(M,g)$ is  elementary, then Proposition \ref{semi_conjugacy_elementary} states that there is a faithful representation $\rho:\Isom(M,g)\to \PSL_k(2,\R)$ for some $k\in \N$ that is semi conjugate to $\rho_1^M$.\\ \indent If $\Isom(M,g)$ is non elementary, then  Lemma \ref{collapsed_convergence} assures that either $\hat \rho_1^M$ is topologically conjugate to a representation in $\mathrm{SO}(2,\R)\subset \PSL(2,\R)$, either the rotation number of $\hat h_{\to \uparrow}$ is equal to some $\frac{1}{k}$ and that $\hat \rho_1^M(\Isom(M,g))$ is a $(\hat h_{\to \uparrow},k)$-convergence group. Since the periodic points of $\hat h_{\uparrow}$ form a non empty closed set invariant under $\hat \rho_1^M(\Isom(M,g))$, it is equal to $\Ss^1$ and $\hat h_{\to \uparrow}^k=Id$. Since $\hat h_{\to \uparrow}^k=Id$, we see that $\hat h_{\to \uparrow}$  is topologically conjugate to the rotation of angle $\frac{1}{k}$, and Lemma \ref{convergence_rotation} states that  $\hat \rho_1^M(\Isom(M,g))$ is topologically conjugate to a subgroup of $\PSL_k(2,\R)$. Since the collapsed action $\hat \rho_1^M$ is faithful (Proposition \ref{collapsed_faithful}) and semi conjugate to $\rho_1^M$, we have finished the proof of Theorem \ref{main_theorem}.
\end{proof}

\section{Conjugacy for elementary groups} \label{sec:elementary_acausal}

The goal of this section is to prove  Theorem \ref{main_theorem_acausal} in the case of elementary groups. If $(M,g)$ is a spatially compact surface and $G\subset \Isom(M,g)$ is a subgroup, we will identify $G$ and $\rho_1^M(G)$. 
\subsection{Classification of elements and finite invariant sets} Rather than looking at finite orbits, it will be more practical to consider certain finite invariant sets on which the group may not act transitively.
\begin{lemma} \label{finite_invariant_set} Let $k\in \N$, and let $h\in \Homeo(\Ss^1)$ have rotation number $\frac{1}{k}$. Let $G\subset \Homeo(\Ss^1)$ be an elementary $(h,k)$-convergence group. Then $G$ has a finite invariant set $L_G$ satisfying one of the following properties: \begin{enumerate} \item $L_G$ has more than $2k$ points \item $L_G=\{x_0,y_0,h(x_0),h(y_0),\dots,h^{k-1}(x_0),h^{k-1}(y_0)\}$ where $x_0$ and $y_0$ are periodic points for $h$ such that $x_0<y_0<h(x_0)$
\item $L_G=\{x_0,h(x_0),\dots,h^{k-1}(x_0)\}$ where $h^k(x_0)=x_0$. \end{enumerate} \end{lemma}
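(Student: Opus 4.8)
The plan is to produce a finite $G$-invariant set living inside the periodic points of $h$, and then to saturate it under $h$. Since $h$ has rotation number $\frac1k$, the iterate $h^k$ has rotation number $0$, so $P:=\mathrm{Fix}(h^k)$ is a nonempty closed set; these are exactly the periodic points of $h$, and each $h$-orbit contained in $P$ has exactly $k$ points (permuted by $h$ as a ``shift by one''). Because every element of $G$ commutes with $h$, the set $P$ is $G$-invariant. Moreover, $h^k$ having rotation number $0$ means that on each connected component of $\Ss^1\setminus P$ the map $h^k$ is fixed-point free, hence for every $x\in\Ss^1$ the sequence $h^{nk}(x)$ converges monotonically, as $n\to+\infty$, to an endpoint $x^+\in P$ of that component.

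First I would fix a finite orbit $E$ of $G$ (which exists because $G$ is elementary). Taking forward limits, set $E^+=\{x^+\mid x\in E\}\subset P$, which is nonempty and finite. It is $G$-invariant: for $g\in G$ one has $g(x^+)=g(\lim_n h^{nk}(x))=\lim_n h^{nk}(g(x))=(g(x))^+$ by continuity of $g$ and the relation $h^{nk}g=gh^{nk}$, so $g(E^+)=\{(g(x))^+\mid x\in E\}=E^+$ since $gE=E$. Now let $L_G=\bigcup_{j\in\Z}h^j(E^+)$ be the union of the $h$-orbits of the points of $E^+$. As $E^+\subset P$ is finite and each such orbit has $k$ points, $L_G$ is finite; it is $h$-invariant by construction and $G$-invariant since $g(h^j(x))=h^j(g(x))$. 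Writing $L_G=O_1\sqcup\cdots\sqcup O_m$ as the disjoint union of $m$ distinct $h$-orbits (each contained in $P$), we obtain $|L_G|=mk$.

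The key combinatorial input is that two distinct $h$-orbits of $P$ always interlace. Indeed, their union is an $h$-invariant set of $2k$ points; any orientation-preserving bijection of a cyclically ordered finite set is a rotation, and comparing rotation numbers forces $h$ to act on these $2k$ points as the shift by $2$, so labelling them cyclically $q_0<q_1<\dots<q_{2k-1}$ one orbit consists of the even-indexed points and the other of the odd-indexed ones. With this, the trichotomy on $m$ concludes the proof: if $m\ge 3$ then $|L_G|=mk>2k$, giving case (1); if $m=1$ then $L_G=\{x_0,h(x_0),\dots,h^{k-1}(x_0)\}$ is a single $h$-orbit, giving case (3); and if $m=2$, choosing $x_0\in O_1$ and the unique $y_0\in O_2\cap\intoo{x_0}{h(x_0)}$ yields $x_0<y_0<h(x_0)$ and $L_G=\{x_0,y_0,h(x_0),h(y_0),\dots\}$, where $x_0,y_0\in P$ are $h$-periodic, giving case (2). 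The main obstacle is the interlacing step, i.e. correctly identifying the action of $h$ on an invariant finite set with a rotation of the prescribed rotation number; I note that the $(h,k)$-convergence hypothesis itself is not needed here, only that $G$ is elementary, commutes with $h$, and that $h$ has rotation number $\frac1k$.
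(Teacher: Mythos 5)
Your proof is correct, and at its core it follows the same strategy as the paper: start from a finite orbit of $G$ (which exists since $G$ is elementary), reduce to periodic points of $h$, saturate under $h$ so that the cardinality becomes a multiple of $k$, and conclude by a trichotomy on the number of $h$-orbits. There are two genuine differences worth noting. First, the paper handles points of $E$ that are not $h$-periodic differently: it observes that $E\cup h(E)\cup\cdots\cup h^n(E)$ is $G$-invariant and grows without bound, so for $n$ large it has more than $2k$ elements and one lands directly in case (1); you instead replace each point by its forward limit $x^+=\lim_n h^{nk}(x)\in\mathrm{Fix}(h^k)$, which always produces an invariant set of periodic points before counting. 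Both are valid; yours has the mild advantage of yielding a set on which the case distinction is governed purely by the number of orbits. Second, and more substantively, you supply the interlacing argument (that $h$ acts on the union of two distinct $k$-periodic orbits as the cyclic shift by $2$, so the orbits alternate), which is exactly what is needed to put a $2k$-point set in the form required by case (2) with $x_0<y_0<h(x_0)$; the paper asserts this case without justification, so your proof fills a real gap in the written argument. Your closing remark that the $(h,k)$-convergence property is never used beyond commutation with $h$ is also accurate and matches the paper's proof.
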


\begin{proof} Let $E\subset \Ss^1$ be a finite invariant set. If $\# E >2k$, then we are in the first case. If $h(E)\ne E$, then $G$ also preserves $h(E)$. If $E$ has elements that are not periodic for $h$, then $E$ preserves $E\cup h(E) \cup \cdots \cup h^n(E)$ for all $n$. For $n$ large enough, it has more than $2k$ elements. Therefore we can assume that all elements of $E$ are periodic for $h$, and by adding the iterates under $h$ we can assume that $\# E$ is a multiple of $k$. If it is $3k$ or more, then we are in the first case. If $\# E=2k$, then the second condition is satisfied. Finally, if $\# E=k$, then the third condition is satisfied.
\end{proof}

Applying this to the group generated by one element, we obtain a classification of elements similar to the case of $\PSL(2,\R)$. If $G$ is a $(h,k)$-convergence group, and $f\in G\setminus \{Id\}$, then we say that $f$ is \begin{itemize} \item Hyperbolic if $f$ has exactly $2k$ periodic points. \item Parabolic if $f$ has exactly $k$ periodic points. \item Elliptic if it is not hyperbolic or parabolic. \end{itemize}

Note that if $\gamma$ is elliptic, then the group generated by $\gamma$ is not always elementary (think of irrational rotations).

\subsection{The elliptic case: $\# L_G>2k$} \begin{lemma} \label{elliptic} Let $k\in \N$ let $h\in \Homeo(\Ss^1)$ have rotation number $\frac{1}{k}$. Let $G\subset \Homeo(\Ss^1)$ be a closed elementary $(h,k)$-convergence group. If $G$ has a finite invariant set $L_G\subset \Ss^1$ with more than $2k$ elements, then $G$ is compact. \end{lemma}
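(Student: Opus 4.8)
The plan is to argue by contradiction: I assume $G$ is not compact and use the finite invariant set to violate injectivity of the elements of $G$. Since $G$ is closed in $\Homeo(\Ss^1)$, non-compactness produces a sequence $(f_n)\suite$ in $G$ that leaves every compact subset of $G$, and such a sequence can have no equicontinuous subsequence (equicontinuity together with the closedness of $G$ would force a subsequence to converge inside $G$). By the defining dichotomy of a $(h,k)$-convergence group, after passing to a subsequence $(f_n)\suite$ therefore has the $(h,k)$-convergence property: there are $a,b\in\Ss^1$ with $h^k(a)=a$, $h^k(b)=b$ and, for each $i\in\{0,\dots,k-1\}$, $f_n(x)\to h^i(b)$ for every $x\in\intoo{h^i(a)}{h^{i+1}(a)}$.

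Next I record two consequences of the rotation number of $h$ being $\frac1k$. First, every $h$-periodic point has period exactly $k$, so the orbits $A=\{a,h(a),\dots,h^{k-1}(a)\}$ and $B=\{b,h(b),\dots,h^{k-1}(b)\}$ each have exactly $k$ elements; moreover $h$ permutes each orbit in the cyclic order of the rotation by $\frac1k$, so the arcs $\intoo{h^i(a)}{h^{i+1}(a)}$, $i=0,\dots,k-1$, are precisely the $k$ connected components of $\Ss^1\setminus A$. In particular every point of $\Ss^1\setminus A$ lies in exactly one of these arcs, and is thus sent by $f_n$, in the limit, to a point of $B$.

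Now I bring in the finite invariant set. Since $L_G$ is $G$-invariant, $f_n(L_G)=L_G$ for all $n$, and since $L_G$ is closed the limit points $h^i(b)$ all belong to $L_G$, so $B\subset L_G$. Because $\#L_G>2k$ and $\#A=k$, the set $L_G\setminus A$ has at least $k+1$ elements. For each $x\in L_G\setminus A$ choose $i$ with $x\in\intoo{h^i(a)}{h^{i+1}(a)}$; then $f_n(x)\in L_G$ and $f_n(x)\to h^i(b)$, and since $L_G$ is finite (hence discrete) this forces $f_n(x)=h^i(b)$ for $n$ large. As $L_G\setminus A$ is finite, a single $n$ works simultaneously for all of its points: for that $n$, the homeomorphism $f_n$ maps the $(\ge k+1)$-element set $L_G\setminus A$ into the $(\le k)$-element set $B$. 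By the pigeonhole principle $f_n$ fails to be injective on $L_G\setminus A$, contradicting that $f_n$ is a homeomorphism. Hence $G$ is compact.

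The only genuinely delicate point is the reduction in the first paragraph, namely extracting from non-compactness a sequence with no equicontinuous subsequence so that the $(h,k)$-convergence property becomes available; the combinatorial heart of the argument is then the elementary count $\#(L_G\setminus A)\ge k+1>k\ge\#B$, which is exactly where the hypothesis $\#L_G>2k$ enters.
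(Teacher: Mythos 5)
Your proof is correct and follows essentially the same strategy as the paper: assuming $G$ closed and non-compact, one extracts a sequence $f_n\to\infty$ which must (up to a subsequence) have the $(h,k)$-convergence property, and this collapses too many points of the finite invariant set $L_G$, contradicting the injectivity of the $f_n$. The only difference is the final count — the paper locates three points of $L_G$ in a single arc $\intfo{x_1}{h(x_1)}$ so that two lie in the same component of $\Ss^1\setminus\{a,h(a),\dots,h^{k-1}(a)\}$, whereas you run the global pigeonhole $\#(L_G\setminus A)\ge k+1>k\ge\#B$ — and your version is if anything slightly cleaner, since it avoids the combinatorial claim about the existence of such a triple, which the paper asserts without proof.
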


\begin{proof} If $\# L_G>2k$, then we can find three points $x_1,x_2,x_3\in L_G$ such that $x_1<x_2<x_3<h(x_1)$. Let us assume that there is a sequence $(f_n)$ in $G$ such that $f_n\to \infty$. Since the images of $x_1,x_2,x_3$ under the $f_n$ belong to the finite set $L_G$, up to a subsequence there are $y_1,y_2,y_3\in L_G$ such that $f_n(x_i)=y_i$ for $i=1,2,3$. This shows that the sequence $(f_n)$ does not satisfy the $(h,k)$-convergence property, which is impossible because $G$ is a $(h,k)$-convergence group. Therefore there is no sequence $(f_n)$ in $G$ such that $f_n\to \infty$, i.e. $G$ is relatively compact. Since it is closed, it is compact.
\end{proof}

\subsection{The hyperbolic case: $\# L_G=2k$} \begin{lemma} \label{hyperbolic} Let $(M,g)$ be a spatially compact surface with an acausal conformal boundary that embeds in $\T^2$ and let  $G\subset \Isom(M,g)$ be an elementary closed subgroup. Let $h_{\uparrow},h_{\downarrow}$ be the homeomorphisms that define the boundary in $\T^2$, and assume that the rotation number of $h_{\rightarrow \uparrow}$ is $\frac{1}{k}$. If $G$ has a finite invariant set $L_G\subset \Ss^1$ such that $L_G=\bigcup_{i=0}^{k-1} \{h^i_{\rightarrow \uparrow}(x_0),h^i_{\rightarrow \uparrow}(y_0)\}$ where $h_{\rightarrow \uparrow}^k(x_0)=x_0$ and $h_{\rightarrow \uparrow}^k(y_0)=y_0$, then $G$ is topologically conjugate to a subgroup of $\PSL_k(2,\R)$. \end{lemma}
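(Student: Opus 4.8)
The plan is to exploit Proposition~\ref{acausal_convergence}: the ambient group $\rho_1^M(\Isom(M,g))$ is a $(h_{\to\uparrow},k)$-convergence group, so its closed subgroup $G$ is again a closed elementary $(h_{\to\uparrow},k)$-convergence group. Write $h=h_{\to\uparrow}$, and split $L_G$ into the two $h$-orbits $X=\{h^i(x_0)\}_i$ and $Y=\{h^i(y_0)\}_i$, which interleave on $\Ss^1$ by the hypothesis $x_0<y_0<h(x_0)$. Since $G$ preserves $L_G$ and the cyclic order, there is a homomorphism $s:G\to\Z/2k\Z$ recording the induced shift, and I set $G_0=\ker s$, the pointwise stabilizer of $L_G$, a closed normal subgroup of finite index. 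The strategy is to first straighten $G_0$ onto the hyperbolic one-parameter subgroup of $\PSL_k(2,\R)$ that fixes $2k$ points, and then to absorb the finite cyclic quotient $G/G_0$ by realizing its generator as a deck transformation and\slash or an elliptic involution.

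First I would analyze $G_0$. Any finite-order orientation-preserving circle homeomorphism with a fixed point is the identity, so $G_0$ is torsion free. For a nontrivial $f\in G_0$ the group $\langle f\rangle$ cannot be relatively compact (it would be conjugate to an irrational rotation, contradicting that $f$ has fixed points), so some subsequence $f^{n_j}\to\infty$ and has the $(h,k)$-convergence property. Since every point of $L_G$ is fixed by all $f^n$, the $k$ sources and $k$ sinks of this subsequence must be exactly the $2k$ points of $L_G$; counting then forces the source-orbit and the sink-orbit to be $X$ and $Y$. Consequently $f$ has no interior fixed point in any of the $2k$ arcs cut out by $L_G$, i.e.\ $G_0$ acts freely on each arc, pushing monotonically from the $X$-endpoint toward the $Y$-endpoint. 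By Hölder's theorem a group acting freely on an interval is abelian, while a fixed-point-free homeomorphism (resp.\ flow) of $\R$ is topologically conjugate to an integer translation group (resp.\ the translation flow); using the closedness of $G_0$ to rule out dense translation subgroups, I obtain a homeomorphism $\phi_0$ of $\Ss^1$ fixing $L_G$ and conjugating $G_0$ onto a closed subgroup of the hyperbolic flow $T\cong\R\subset\PSL_k(2,\R)$ whose $2k$ fixed points are the lifts of the two fixed points of a hyperbolic element of $\PSL(2,\R)$, with $X,Y$ sent to the two lifted orbits.

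It then remains to extend $\phi_0$ to all of $G$. The quotient $G/G_0$ is cyclic, generated by some shift of $L_G$; this generator either shifts by an even number of steps, preserving $X$ and $Y$ and hence realizable as a power of the central deck transformation $R_{1/k}$ composed with a flow element, or by an odd number of steps, swapping $X$ and $Y$ and hence realizable as an elliptic element of $\PSL_k(2,\R)$ exchanging the two lifted fixed-point orbits. The subgroup of $\PSL_k(2,\R)$ generated by $T$, by $R_{1/k}$, and by such an elliptic element contains a subgroup with exactly the combinatorics of $G$; matching the north--south dynamics dictated by the convergence property lets me choose $\phi_0$ so that the generator of $G/G_0$ is also conjugated to a genuine such element, whence $\phi_0\,G\,\phi_0^{-1}\subset\PSL_k(2,\R)$.

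The main obstacle is precisely this last assembly: the convergence property only furnishes asymptotic (source\slash sink) information, so the real work is to upgrade it into a single \emph{global} topological conjugacy rather than a semiconjugacy, coherently across all $2k$ arcs, and to straighten the finite-order generator of $G/G_0$ to an honest element of $\PSL_k(2,\R)$ (via linearization of finite-order circle homeomorphisms) without disturbing the flow-straightening of $G_0$. Note that, in contrast to the non-elementary case, one cannot simply invoke Lemma~\ref{convergence_rotation}, since here $h_{\to\uparrow}$ need not be conjugate to the rotation $R_{1/k}$: it may have wandering intervals and a large $\mathrm{Fix}(h_{\to\uparrow}^k)$. These features of $h_{\to\uparrow}$ are invisible to $G$ and must be bypassed by the direct construction above.
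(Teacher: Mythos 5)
Your overall architecture --- first straighten the pointwise stabilizer of $L_G$, then extend to the finite cyclic quotient $G/G_0$ --- is the same as the paper's (the paper's $H=\{f\in G\mid f(x_0)=x_0,\ f(y_0)=y_0\}$ is your $G_0$), but the two places where the difficulty is concentrated are exactly the places you do not carry out, and the first of them contains a genuine gap. To conjugate $G_0$ into the hyperbolic one-parameter subgroup you must know that $G_0$ is topologically isomorphic to $\Z$ or to $\R$ and that its free action on each of the $2k$ arcs is honestly conjugate --- not merely semi-conjugate --- to a translation action. H\"older's theorem only produces an injective translation-number homomorphism $\tau:G_0\to\R$, and ``closedness of $G_0$ rules out dense translation subgroups'' is not a proof: a closed subgroup of $\Homeo(\Ss^1)$ abstractly isomorphic to $\Z^2$ could a priori act freely on an arc with dense, non-closed $\tau$-image, in which case the action is only semi-conjugate to translations and $G_0$ does not embed in the flow. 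The paper closes this off not with circle dynamics but with the Lorentzian structure: every $f\in G_0$ fixes the point $(x_0,h_{\downarrow}(y_0))\in M$, the map $f\mapsto \Log(f'(x_0))$ is an injective homomorphism because an isometry fixing a point with identity differential is the identity, and being (essentially) the isotropy representation at a fixed point it identifies $G_0$ with a closed subgroup of $\R$. Your proposal uses nothing about $G$ beyond the $(h,k)$-convergence property, and the paper explicitly warns that this property alone is not known to force conjugacy into $\PSL_k(2,\R)$; some such Lorentzian input (or a substantially more careful dynamical argument) is indispensable here.

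The ``assembly'' you defer as the main obstacle is where most of the paper's proof actually lives: the conjugacy is built arc by arc from base points $z_i,z'_i$ in the $2k$ complementary arcs matched with base points $\widetilde z_i,\widetilde z'_i$ for the model flow, and these must then be re-chosen (the paper sets $\widetilde z_{i+j}=\gamma_{-t_j}\circ\gamma(\widetilde z_j)$ in its fourth step) so that the same homeomorphism also conjugates one chosen element $f$ generating $G$ modulo $G_0$; the fifth step then propagates the conjugacy to all of $G$ since every element is a product of a power of $f$ with an element of $G_0$. Declaring this to be ``the real work'' and stopping there is deferring the proof rather than giving it. One point in your favour: your bookkeeping through $s:G\to\Z/2k\Z$ cleanly accommodates a generator swapping the two orbits $X$ and $Y$ (to be realized by an elliptic element exchanging the fixed points of the flow), a case that also appears in Proposition \ref{semi_conjugacy_elementary}; but this organizational improvement does not compensate for the two missing steps above.
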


\begin{proof} We note $H=\{f\in G \vert f(x_0)=x_0 \textrm{ and } f(y_0)=y_0\}$. \\

\textbf{First step:} Show that $H\approx \R$ or $H\approx \Z$.\\
Let $\Phi: H \to \R$ be defined by $\Phi(f)=\textrm{Log}(f'(x_0))$. If $\Phi(f)=0$, then $f$ fixes the point $(x_0,h_\downarrow(y_0))\in M$ and its derivative at this point is the identity, hence $f=Id$. We showed that $\Phi$ is an injective homomorphism, and $H$ is isomorphic to a closed subgroup of $\R$.\\

\begin{figure}[h]
\begin{tikzpicture}[line cap=round,line join=round,>=triangle 45,x=1.0cm,y=1.0cm]
\clip(-5.06,-1.67) rectangle (7,4.26);
\draw [shift={(0.46,1.02)}] plot[domain=-0.91:3.14,variable=\t]({1*2.63*cos(\t r)+0*2.63*sin(\t r)},{0*2.63*cos(\t r)+1*2.63*sin(\t r)});
\draw [shift={(0.46,1.02)},dash pattern=on 2pt off 2pt]  plot[domain=3.14:5.37,variable=\t]({1*2.63*cos(\t r)+0*2.63*sin(\t r)},{0*2.63*cos(\t r)+1*2.63*sin(\t r)});
\draw (-0.39,4.1) node[anchor=north west] {$x_0$};
\draw (0.99,4) node[anchor=north west] {$y_0$};
\draw (2.75,2.93) node[anchor=north west] {$h_{\rightarrow \uparrow}(x_0)$};
\draw (3.17,2.2) node[anchor=north west] {$h_{\rightarrow \uparrow}(y_0)$};
\draw (3.13,0.48) node[anchor=north west] {$h_{\rightarrow \uparrow}^2(x_0)$};
\draw (2.75,-0.22) node[anchor=north west] {$h_{\rightarrow \uparrow}^2(y_0)$};
\draw (-3.7,2.26) node[anchor=north west] {$h_{\rightarrow \uparrow}^{k-1}(x_0)$};
\draw (-3.35,3.07) node[anchor=north west] {$h_{\rightarrow \uparrow}^{k-1}(y_0)$};
\draw (2.82,2.32)-- (2.82,2.18);
\draw (2.82,2.18)-- (2.68,2.25);
\draw (3.01,1.04)-- (3.08,1.24);
\draw (3.08,1.24)-- (3.18,1.05);
\draw (2.7,-0.16)-- (2.71,-0.33);
\draw (2.71,-0.33)-- (2.88,-0.21);
\draw (1.91,3.12)-- (1.79,3.29);
\draw (1.79,3.29)-- (1.99,3.27);
\draw (-0.84,3.22)-- (-1.04,3.18);
\draw (-1.04,3.18)-- (-0.93,3.36);
\draw (0.36,3.54)-- (0.5,3.65);
\draw (0.5,3.65)-- (0.32,3.73);
\draw (2.66,2.25)-- (2.82,2.17);
\draw (2.82,2.17)-- (2.81,2.38);
\draw (-1.82,2.12)-- (-1.84,2.29);
\draw (-1.84,2.29)-- (-2,2.17);
\begin{scriptsize}
\fill [color=black] (-0.22,3.56) circle (1.5pt);
\fill [color=black] (1,3.59) circle (1.5pt);
\fill [color=black] (2.59,2.56) circle (1.5pt);
\fill [color=black] (2.93,1.93) circle (1.5pt);
\fill [color=black] (2.9,0.04) circle (1.5pt);
\fill [color=black] (2.59,-0.53) circle (1.5pt);
\fill [color=black] (-1.67,2.56) circle (1.5pt);
\fill [color=black] (-1.99,1.98) circle (1.5pt);
\end{scriptsize}
\end{tikzpicture}
\caption{Dynamics of an element of $H$} \label{fig:dynamics_H}
\end{figure}
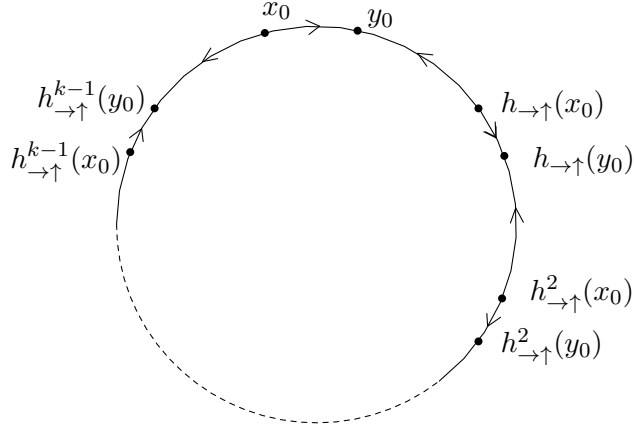

\textbf{Second step:} Find a conjugacy for $H$\\
\emph{First case:} When $H\approx \R$. We set $H= \{ f_t \vert t\in \R\}$. We start by choosing $\widetilde x_0,\widetilde y_0\in \Ss^1$ such that $\widetilde x_0<\widetilde y_0<\widetilde x_0 +\frac{1}{k}$ and $\gamma_t$ the (unique) one parameter subgroup of $\PSL_k(2,\R)$ such that $\gamma_t(\widetilde x_0)=\widetilde x_0$, $\gamma_t(\widetilde y_0)=\widetilde y_0$ and $\gamma'_1(\widetilde x_0)=f'_1(x_0)$ (i.e. $\gamma_t$ has the same dynamics as $f_t$, see Figure \ref{fig:dynamics_H}). We also choose $z_i\in  \intoo{h_{\rightarrow \uparrow}^i(x_0)}{h_{\rightarrow \uparrow}^i(y_0)}$, $ z'_i \in \intoo{h_{\rightarrow \uparrow}^i(y_0)}{h_{\rightarrow \uparrow}^{i+1}(x_0)}$, $\widetilde z_i\in \intoo{\widetilde x_0 + \frac{i}{k}}{\widetilde y_0+\frac{i}{k}}$, and $\widetilde z'_i\in \intoo{ \widetilde y_0+\frac{i}{k}}{ \widetilde x_0 + \frac{i+1}{k}}$. \\
\indent If $x\in \intoo{h_{\rightarrow \uparrow}^i(x_0)}{h_{\rightarrow \uparrow}^i(y_0)}$, then we consider $t_x\in \R$ such that $x=f_{t_x}(z_i)$ and we set $\p(x)=\gamma_{t_x}(\widetilde z_i)$. If $x\in \intoo{h_{\rightarrow \uparrow}^i(y_0)}{ h_{\rightarrow \uparrow}^{i+1}(x_0)}$, then we consider $t_x\in \R$ such that $x=f_{t_x}(z'_i)$ and we set $\p(x)=\gamma_{t_x}(\widetilde z'_i)$. Since $t_{f_s(x)}=t+s$, we see that $\p \circ f_t=\gamma_t \circ \p$, and $\p$ is a homeomorphism.\\
\emph{Second case:} When $H\approx \Z$. We set $H=\{ f_1^n\vert n\in \Z\}$. We start by choosing $\widetilde x_0,\widetilde y_0\in \Ss^1$ such that $\widetilde x_0<\widetilde y_0<\widetilde x_0 +\frac{1}{k}$ and $\gamma_1\in \PSL_k(2,\R)$ a hyperbolic element such that $\gamma_1(\widetilde x_0)=\widetilde x_0$, $\gamma_1(\widetilde y_0)=\widetilde y_0$ and $\gamma'_1(\widetilde x_0)=f'_1(x_0)$. We also choose $z_i\in  \intoo{h_{\rightarrow \uparrow}^i(x_0)}{h_{\rightarrow \uparrow}^i(y_0)}$, $ z'_i \in \intoo{h_{\rightarrow \uparrow}^i(y_0)}{ h_{\rightarrow \uparrow}^{i+1}(x_0)}$, $\widetilde z_i\in \intoo{\widetilde x_0 + \frac{i}{k}}{ \widetilde y_0+\frac{i}{k}}$, and $\widetilde z'_i\in \intoo{ \widetilde y_0+\frac{i}{k}}{ \widetilde x_0 + \frac{i+1}{k}}$. Finally, we can choose arbitrarily the restrictions  $\p: \intff{z_i}{f_1(z_i)}\to \intff{\widetilde z_i}{\gamma_1(\widetilde z_i)}$ and $\p: \intff{z'_i}{f_1(z'_i)}\to \intff{\widetilde z'_i}{\gamma_1(\widetilde z'_i)}$.\\
\indent If $x\in \intoo{h_{\rightarrow \uparrow}^i(x_0)}{h_{\rightarrow \uparrow}^i(y_0)}$, then we consider the unique $n_x\in \Z$ such that $f_1^{n_x}(x)\in \intfo{z_i}{f_1(z_i)}$, and we set $\p(x)=\gamma_1^{-n_x}\circ \p\circ f_1^{n_x}(x)$. Similarly, if $x\in \intoo{h_{\rightarrow \uparrow}^i(y_0)}{ h_{\rightarrow \uparrow}^{i+1}(x_0)}$, then we consider the unique $n_x\in \Z$ such that $f_1^{n_x}(x)\in \intfo{z'_i}{f_1(z'_i)}$, and we set $\p(x)=\gamma_1^{-n_x}\circ \p\circ f_1^{n_x}(x)$. This construction gives a homeomorphism $\p$ such that $\p \circ f_1=\gamma_1\circ \p$.\\

\textbf{Third step:} Show that if $f\in G$,  then  $f(x_0)=h_{\rightarrow \uparrow}^i(x_0)$ for some $i\in \N$.\\
Let $f_1\in H$ have $x_0$ as an attracting fixed point. We first wish to show that $f_1$ and $f$ commute. For this, we see that $f^{-1}\circ f_1\circ f(x_0)=f(x_0)$, i.e. $f^{-1}\circ f_1\circ f\in H$. If $H=<f_1>\approx \Z$, then there is $n\in \N$ such that $f^{-1}\circ f_1\circ f = f_1^n$. But $f^k\in H$, therefore $f^k$ commutes with $f_1$, which shows that $f_1=f_1^n$, hence $n=1$. If $H\approx \R$, then there is $t$ such that $f^{-1}\circ f_1\circ f = f_t$ and $f^k\in H$ shows $f_1=f_t$, hence $t=1$.\\
Now that we know that $f$ and $f_1$ commute, we choose $x\in \Ss^1$ sufficiently close to $x_0$ so that $f_1^n(x)\to x_0$ and $f(x)\ne h_{\rightarrow \uparrow}^j(y_0)$ for all $j$. Then $f_1^n(x)\to h_{\rightarrow \uparrow}^i(x_0)$ for some $i$, and $f(x_0)=\lim f(f_1^n(x)) = \lim f_1^n(f(x)) =h_{\rightarrow \uparrow}^i(x_0)$.\\

\textbf{Fourth step:} Find a conjugacy for $H$ and one element of $G\setminus H$.\\
Note that we showed in the third step that $H$ is contained in the center of $G$. Let $f\in G\setminus H$, and let $\gamma \in \PSL_k(2,\R)$ have the same rotation number as $f$ and have $\widetilde x_0, \widetilde y_0$ as periodic points. If $x_0$ is attracting (resp. repelling) for $f^k$, then we choose $\gamma$ such that $\widetilde x_0$ is attracting (resp. repelling) for $\gamma^k$ (i.e. $f$ and $\gamma$ have the same dynamics). \\
\emph{First case:} $H\approx \R$. We wish to choose  $\p$ such that $\gamma \circ \p(z_j)=\p \circ f(z_j)$ and $\gamma \circ \p(z'_j)=\p \circ f(z'_j)$. The left side of the equation is $\gamma \circ \p(z_j)=\gamma(\widetilde z_j)$. On the right side, we have $\p\circ f(z_j)$. Let $t_j\in \R$ be such that $f(z_j)=f_{t_j}(z_{i+j})$. We see that $\p\circ f(z_j)=\gamma_{t_j}(\widetilde z_{i+j})$. Hence the first equation holds if and only if $\widetilde z_{i+j} = \gamma_{-t_j} \circ \gamma(\widetilde z_j)$. This shows that we can fix the family $(z_j)$ arbitrarily, then choose $\widetilde z_j$ for one $j$ in each class in $\Z/i\Z$, and set $\widetilde z_{i+j}= \gamma_{-t_j} \circ \gamma(\widetilde z_j)$. The same goes for the $z'_j$ and $\widetilde z'_j$.\\
\indent For $x\in \intoo{h_{\rightarrow \uparrow}^j(x_0)}{h_{\rightarrow \uparrow}^j(y_0)}$, let $t\in \R$ be such that $x=f_t(z_j)$.
\begin{eqnarray*} \p\circ f(x) = \p\circ f\circ f_t(z_j) &=& \p \circ f_t \circ f(z_j) ~~~~~~~ f_t\in Z(G)\\ &=& \gamma_t \circ \p \circ f(z_j)   ~~~~~~~\p \textrm{ conjugates } f_t \textrm{ and } \gamma_t \\ &=& \gamma_t \circ \gamma \circ \p(z_j) ~~~~~~~\textrm{Choice of } \widetilde z_{i+j}\\ &=& \gamma\circ \gamma_t \circ \p(z_j) ~~~~~~~ \gamma_t \textrm{ commutes with } \gamma \\&=& \gamma \circ \p(x) ~~~~~~~~~~~~~~ \textrm{Definition of } \p \end{eqnarray*} The same calculations hold for $x\in \intoo{h_{\rightarrow \uparrow}^j(y_0)}{h_{\rightarrow \uparrow}^{j+1}(x_0)}$. This shows that $\p \circ f=\gamma \circ \p$.\\
\emph{Second case:} $H\approx \Z$. We wish to choose $\p$ such that $\gamma\circ \p(u)=\p\circ f(u)$ for $u\in \intfo{z_j}{f_1(z_j)}\cup\intfo{z'_j}{f_1(z'_j)}$. For $u\in \intfo{z_j}{f_1(z_j)}$, we consider $n_u\in \Z$ such that $f(u)=f_1^{n_u}(x_u)$ where $x_u\in \intfo{z_{i+j}}{f_1(z_{i+j})}$. We get $\p\circ f(u)=\gamma \circ \p (u)$ if and only if $\p(x_u)=\gamma_1^{-n_u}\circ \gamma\circ \p(u)$. Hence we only need to choose $\p$ on $\intfo{z_j}{f_1(z_j)}$ for one $j$ in each class modulo $i$, and set $\p$ on $\intfo{z_{i+j}}{f_1(z_{i+j})}$ by the formula $\p(x_u)=\gamma_1^{-n_u}\circ \gamma\circ \p(u)$. We do the same on $\intfo{z'_j}{f_1(z'_j)}$.\\
\indent Finally, for $x\in \intoo{h_{\rightarrow \uparrow}^j(x_0)}{h_{\rightarrow \uparrow}^j(y_0)}$, we consider $n\in \Z$ such that $x=f_1^n(u)$ where $u\in \intfo{z_j}{f_1(z_j)}$. \begin{eqnarray*} \p\circ f(x)=\p\circ f\circ f_1^n(u) &=& \p \circ f_1^n\circ f(u)\\ &=& \gamma_1^n \circ \p \circ f(u)\\ &=& \gamma_1^n \circ \gamma \circ \p(u) \\&=& \gamma \circ \gamma_1^n \circ \p(u)\\&=& \gamma \circ \p(x)\end{eqnarray*} The same calculations holds for $x\in \intoo{h_{\rightarrow \uparrow}^j(y_0)}{h_{\rightarrow \uparrow}^{j+1}(x_0)}$. This shows that $\p \circ f=\gamma \circ \p$.\\

\textbf{Fifth step:} Show that it provides a conjugacy for $G$.\\
Let $F\subset \Z/k\Z$ be the set of classes of numbers $i\in \Z$ such that there is $f\in G$ satisfying $f(x_0)=h_{\rightarrow \uparrow}^i(x_0)$. It is a subgroup of $\Z/k\Z$, hence there is $n\in \{0,\dots,k-1\}$ such that $F=n\Z/k\Z$. Let $f\in G$ be such that $f(x_0)=h_{\rightarrow \uparrow}^n(x_0)$, and let $\p\in \Homeo(\Ss^1)$ be a conjugacy for $H$ and for $f$.\\ 
If $u$ is another element of $G\setminus H$, then there is $i\in \Z$ such that  $u\circ f^{-i} \in H$, hence $\p\circ u\circ \p^{-1} = (\p\circ (u\circ f^{-i}) \circ \p^{-1}) \circ (\p\circ f^i\circ \p^{-1}) \in \PSL_k(2,\R)$.
\end{proof}

\subsection{The parabolic case: $\# L_G=k$} \begin{lemma} \label{parabolic} Let $(M,g)$ be a spatially compact surface with an acausal conformal boundary that embeds in $\T^2$ and let  $G\subset \Isom(M,g)$ be an elementary closed subgroup. Let $h_{\uparrow},h_{\downarrow}$ be the homeomorphisms that define the boundary in $\T^2$, and assume that the rotation number of $h_{\rightarrow \uparrow}=h_{\rightarrow}\circ h_{\uparrow}$ is $\frac{1}{k}$. If $G$ has a finite invariant set  $L_G\subset \Ss^1$ such that $L_G=\{x_0,h_{\rightarrow \uparrow}(x_0),\dots,h_{\rightarrow \uparrow}^{k-1}(x_0)\}$ where $h_{\rightarrow \uparrow}^k(x_0)=x_0$, then $G$ is topologically conjugate to a subgroup of $\PSL_k(2,\R)$. \end{lemma}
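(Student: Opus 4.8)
The plan is to follow the same five-step strategy as in the hyperbolic case (Lemma \ref{hyperbolic}), exploiting the fact that the parabolic case is genuinely simpler because $L_G$ is a single orbit of $h_{\rightarrow \uparrow}$ rather than two. Throughout I identify $G$ with $\rho_1^M(G)$ and write $h=h_{\rightarrow \uparrow}$. The commutation relations of the boundary maps give $\rho_1^M(\p)\circ h = h\circ\rho_1^M(\p)$, so every element of $G$ commutes with $h$ and therefore permutes the $k$ points of $L_G$ cyclically; this yields a homomorphism $\sigma:G\to\Z/k\Z$, $f\mapsto i$ where $f(x_0)=h^i(x_0)$, whose kernel is $H=\Stab_G(x_0)$. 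Since $f$ commutes with $h$, any $f\in H$ fixes all of $L_G$, and $G/H$ is a cyclic subgroup of $\Z/k\Z$. Note that Step 3 of Lemma \ref{hyperbolic} (showing $f(x_0)=h^i(x_0)$) is now automatic, as $f(x_0)\in L_G$.

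\textbf{Structure of $H$.} The point $x_0$ determines a complete lightlike leaf $\ell=p_1^{-1}(x_0)$, which in the torus model is the vertical segment $\{x_0\}\times\intoo{h_\downarrow(x_0)}{h_\uparrow(x_0)}$ parametrized by $\rho_2^M$. As in Lemma \ref{stabilizer_semi_conjugacy}, parametrizing $\ell$ by its affine parameter gives a homomorphism $H\to\Aff(\R)$; it is injective because an isometry acting trivially on $\ell$ fixes a point of $M$ where the Jacobian relation forces the differential to be the identity, hence equals the identity. Because the nontrivial elements of $H$ are parabolic, they have no interior fixed point on $\ell$ and thus act by translations, so the image lies in the translation subgroup; being closed, $H$ is isomorphic to $\{Id\}$, $\Z$, or $\R$. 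If instead the orbits of $H$ were dense (the $\Aff(\R)$ possibility), then $M$ has constant curvature and one concludes by the developing map exactly as in Proposition \ref{semi_conjugacy_elementary}. Finally, conjugation by any $f\in G$ sends $\ell$ to another lightlike leaf while preserving the time orientation, hence preserves the direction of the parabolic flow; combined with the fact that a suitable power of $f$ lies in $H$, this forces the induced automorphism of $H$ to be trivial, so $H$ is central in $G$.

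\textbf{Building the conjugacy.} With $H$ central and $G/H$ cyclic, it suffices (as in Steps 2, 4 and 5 of Lemma \ref{hyperbolic}) to conjugate $H$ together with one representative $f$ of a generator of $G/H$. I choose a single $R_{\frac{1}{k}}$-orbit $\{\widetilde x_0,R_{\frac{1}{k}}(\widetilde x_0),\dots\}$ and a parabolic one-parameter subgroup (when $H\approx\R$) or cyclic group (when $H\approx\Z$) $\langle\gamma\rangle\subset\PSL_k(2,\R)$ fixing exactly this orbit with the same dynamics as $H$. Picking a base point $z_i$ in each of the $k$ gaps $\intoo{h^i(x_0)}{h^{i+1}(x_0)}$ and a matching $\widetilde z_i$ in $\intoo{\widetilde x_0+\frac{i}{k}}{\widetilde x_0+\frac{i+1}{k}}$, the flow (resp. iterate) parametrization produces a homeomorphism $\p$ conjugating $H$ to $\langle\gamma\rangle$, verbatim as in Lemma \ref{hyperbolic} but with one gap per sector instead of two. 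Choosing $\gamma$ to match the rotation number and dynamics of $f$, and adjusting the $\widetilde z_i$ by the cocycle coming from $f$, extends $\p$ so that $\p\circ f=\gamma\circ\p$; since $G=\langle H,f\rangle$, every element writes as $u=(uf^{-j})f^j$ with $uf^{-j}\in H$, whence $\p G\p^{-1}\subset\PSL_k(2,\R)$. In the degenerate case $H=\{Id\}$ the group $G\cong\sigma(G)$ is finite cyclic, hence topologically conjugate to a group of rotations and thus to a subgroup of $\PSL_k(2,\R)$.

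\textbf{Main obstacle.} The transcription of the conjugacy construction from Lemma \ref{hyperbolic} is routine; the delicate points are all concentrated in the analysis of $H$: checking that nontrivial elements really act as translations on $\ell$ (i.e.\ have no interior fixed point, which is exactly where the hypothesis $\#L_G=k$ enters), and verifying that the action of $G/H$ on $H$ is trivial using the preservation of the time orientation of the lightlike flow. Once $H$ is known to be a central $\Z$ or $\R$ acting by parabolics, everything else follows the hyperbolic case with the simplification that there is a single orbit to track.
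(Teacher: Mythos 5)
There is a genuine gap, and it sits exactly where you located the ``delicate point'': the claim that the nontrivial elements of $H=\Stab_G(x_0)$ are parabolic, i.e.\ have no fixed point in the gaps of $L_G$. You justify this by saying that ``this is exactly where the hypothesis $\#L_G=k$ enters'', but that hypothesis only constrains the invariant set of the \emph{whole group}: an individual $f\in H$ may perfectly well be hyperbolic (have $2k$ periodic points), with the extra $k$ fixed points failing to be $G$-invariant. A concrete model is $G\subset\PSL(2,\R)$ generated by a hyperbolic $f$ fixing $x_0,y_0$ and a parabolic $u$ fixing only $x_0$: here $L_G=\{x_0\}$ has $k=1$ element, yet $H=G$ contains a hyperbolic element, and your homomorphism $H\to\Aff(\R)$ does \emph{not} land in the translation subgroup. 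Your fallback dichotomy (``image in translations, hence $\Z$ or $\R$'' versus ``orbits dense, hence constant curvature'') does not exhaust the closed subgroups of $\Aff(\R)$ and, more importantly, is never derived from the hypotheses. The paper's proof devotes its entire first step to this situation: if $H$ contains a hyperbolic $f$ and every element of $H$ fixes its second fixed point $y_0$, one passes to the $2k$-point invariant set and invokes Lemma \ref{hyperbolic}; if some $u\in H$ does not fix $y_0$, one studies the conjugates $u_n=f^{-n}\circ u\circ f^n$, shows they cannot satisfy the $(h_{\rightarrow\uparrow},k)$-convergence property, deduces equicontinuity, hence that $G$ is non-discrete and contains a one-parameter parabolic subgroup, whence constant curvature and the developing-map argument. (The case $u$ hyperbolic with different fixed points is reduced to this via the commutator $[u,f]$.) None of this is present in your proposal, and without it the reduction of $H$ to a central copy of $\Z$ or $\R$ acting freely on each gap --- which your conjugacy construction requires in order to define the parameters $t_x$ and $n_x$ --- is unjustified.

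The rest of your argument is essentially the paper's: once $H$ is known to consist of parabolic elements, the affine parametrisation of the lightlike leaf over $x_0$ sends $H$ into the translations of $\R$, so $H\cong\Z$ or $\R$, and steps two through five of Lemma \ref{hyperbolic} transcribe verbatim with one gap per sector instead of two (the paper itself only says ``by using exactly the same methods as in the hyperbolic case''). Your observation that the analogue of Step 3 is automatic here is correct, and your centrality argument, while sketchier than the paper's $f^k\in H$ computation, can be made to work. But the missing first step is not a routine verification; it is the part of the proof where the conclusion can change nature (topological conjugacy obtained through Lemma \ref{hyperbolic}, or differential conjugacy through constant curvature), so the proposal as written is incomplete.
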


\begin{proof} Let $H=\{ f\in G \vert f(x_0)=x_0\}$. \\

\textbf{First step:} Assume that $H$ contains a hyperbolic element $f$.\\
Up to replacing $f$ by $f^{-1}$, we can assume that $x_0$ is an attracting point for $f$. If all elements of $H$ fix the other fixed point $y_0$ of $f$, then we have an invariant set with $2k$ elements and Lemma \ref{hyperbolic} shows that $G$ is topologically conjugate to a subgroup of $\PSL_k(2,\R)$. Hence we can assume that there is $u \in H$ that does not fix $y_0$.\\
\emph{First case:} Assume that $u$ is parabolic. Up to replacing $u$ by $u^{-1}$, assume that $u(y_0)\in \intoo{y_0}{x_0}$. We consider the sequence $u_n=f^{-n}\circ u \circ f^n$. We have $u_n(x_0)=x_0$ and $u_n(y_0)=f^{-n}(u(y_0)) \to y_0$. If $x\in \intoo{x_0}{y_0}$, then $x_0<f^n(x)<u(f^n(x))<y_0<x_0$ gives $x_0<f^{-n}(f^n(x))=x<u_n(x)<y_0$ (see Figure \ref{fig:parabolic}), hence the sequence $u_n(x)$ does not have $x_0$ as a limit point. If $y\in \intoo{y_0}{h_{\rightarrow \uparrow}(x_0)}$, then we find $y_0<y<u_n(y)<h_{\rightarrow \uparrow}(x_0)<y_0$ and the sequence $u_n(y)$ does not have $y_0$ as a limit point. This shows that the sequence $u_n$ does not have the $(h_{\rightarrow \uparrow},k)$-convergence property, hence it is equicontinuous. This implies that $G$ is not discrete. Since it is a Lie group, it has dimension at least one, and there is a one parameter subgroup of parabolic elements. The orbit of any point of $M$ under this one parameter subgroup intersects the axes of $f$, hence the curvature of $M$ is constant.\\
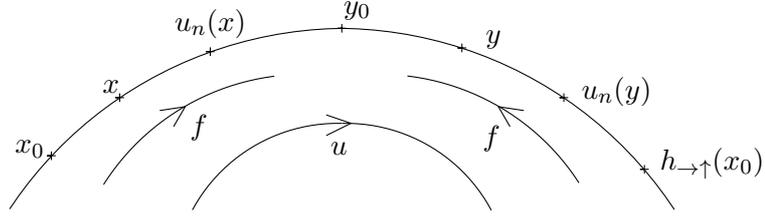
\begin{figure}[h] 
\begin{tikzpicture}[line cap=round,line join=round,>=triangle 45,x=1.0cm,y=1.0cm]
\clip(-4.8,1) rectangle (6.5,4.85);
\draw [shift={(0.95,-1.45)}] plot[domain=1.58:2.56,variable=\t]({1*5.31*cos(\t r)+0*5.31*sin(\t r)},{0*5.31*cos(\t r)+1*5.31*sin(\t r)});
\draw [shift={(0.39,0.13)}] plot[domain=1.7:2.58,variable=\t]({1*3.12*cos(\t r)+0*3.12*sin(\t r)},{0*3.12*cos(\t r)+1*3.12*sin(\t r)});
\draw [shift={(0.83,0.43)}] plot[domain=1.55:2.65,variable=\t]({1*2.17*cos(\t r)+0*2.17*sin(\t r)},{0*2.17*cos(\t r)+1*2.17*sin(\t r)});
\draw [shift={(0.93,0.43)}] plot[domain=0.49:1.59,variable=\t]({1*2.17*cos(\t r)+0*2.17*sin(\t r)},{0*2.17*cos(\t r)+1*2.17*sin(\t r)});
\draw [shift={(1.37,0.13)}] plot[domain=0.57:1.45,variable=\t]({1*3.12*cos(\t r)+0*3.12*sin(\t r)},{0*3.12*cos(\t r)+1*3.12*sin(\t r)});
\draw [shift={(0.81,-1.45)}] plot[domain=0.58:1.56,variable=\t]({1*5.31*cos(\t r)+0*5.31*sin(\t r)},{0*5.31*cos(\t r)+1*5.31*sin(\t r)});
\draw (-1.51,2.77)-- (-1.18,2.82);
\draw (-1.33,2.56)-- (-1.18,2.82);
\draw (3.27,2.77)-- (2.94,2.82);
\draw (3.09,2.56)-- (2.94,2.82);
\draw (0.68,2.71)-- (1,2.6);
\draw (1,2.6)-- (0.68,2.44);
\draw (-3.55,2.51) node[anchor=north west] {$x_0$};
\draw (-2.4,3.3) node[anchor=north west] {$x$};
\draw (-1.45,4.25) node[anchor=north west] {$u_n(x)$};
\draw (0.77,4.35) node[anchor=north west] {$y_0$};
\draw (2.64,3.97) node[anchor=north west] {$y$};
\draw (3.9,3.35) node[anchor=north west] {$u_n(y)$};
\draw (4.93,2.41) node[anchor=north west] {$h_{\rightarrow \uparrow}(x_0)$};
\draw (-1.25,2.85) node[anchor=north west] {$f$};
\draw (2.6,2.7) node[anchor=north west] {$f$};
\draw (0.6,2.49) node[anchor=north west] {$u$};
\begin{scriptsize}
\draw [color=black] (0.88,3.86)-- ++(-1.5pt,0 pt) -- ++(3.0pt,0 pt) ++(-1.5pt,-1.5pt) -- ++(0 pt,3.0pt);
\draw [color=black] (-2.04,2.94)-- ++(-1.5pt,0 pt) -- ++(3.0pt,0 pt) ++(-1.5pt,-1.5pt) -- ++(0 pt,3.0pt);
\draw [color=black] (3.8,2.94)-- ++(-1.5pt,0 pt) -- ++(3.0pt,0 pt) ++(-1.5pt,-1.5pt) -- ++(0 pt,3.0pt);
\draw [color=black] (-2.94,2.17)-- ++(-1.5pt,0 pt) -- ++(3.0pt,0 pt) ++(-1.5pt,-1.5pt) -- ++(0 pt,3.0pt);
\draw [color=black] (-0.85,3.55)-- ++(-1.5pt,0 pt) -- ++(3.0pt,0 pt) ++(-1.5pt,-1.5pt) -- ++(0 pt,3.0pt);
\draw [color=black] (2.46,3.6)-- ++(-1.5pt,0 pt) -- ++(3.0pt,0 pt) ++(-1.5pt,-1.5pt) -- ++(0 pt,3.0pt);
\draw [color=black] (4.86,1.99)-- ++(-1.5pt,0 pt) -- ++(3.0pt,0 pt) ++(-1.5pt,-1.5pt) -- ++(0 pt,3.0pt);
\end{scriptsize}
\end{tikzpicture}
\caption{Dynamics in the parabolic case} \label{fig:parabolic}
\end{figure}
\indent Consider the developing map $D:\tilde M\to N$ where $N$ is either $\R^{1,1}$ or $\tilde{dS}_2$. Notice that $D$ is the conformal embedding $\tilde p$ defined in \ref{subsec:flat_cylinder}, hence it is injective and $M$ is a quotient of an open set of $\R^{1,1}$ or $\tilde{dS}_2$. Since it has an isometry group acting non properly, it is an open set of $\tilde{dS}_2$ and $\rho_1^M(\Isom(M,g))$ is differentially conjugate to a quotient of $\tilde{\PSL}(2,R)$. The fact that it has isometries with $k$ fixed points on the circle implies that $\rho_1^M(\Isom(M,g))$ is differentially conjugate to a subgroup of $\PSL_k(2,\R)$.\\
\emph{Second case:} Assume that $u$ is hyperbolic. The commutator $[u,f]\in H$ satisfies $[u,f](x_0)=x_0$ and $[u,f]'(x_0)=1$, therefore it is either parabolic, either the identity. Since $u$ and $f$ have different fixed points, we have $[u,f](y_0)\ne y_0$, and $H$ possesses a parabolic element. \\ 

\textbf{Second step:} $H$ has only parabolic elements.\\
Since $H$ preserves the affine structure on $\{x_0\} \times \intoo{h_{\downarrow}(x_0)}{h_{\uparrow}(x_0)}$ defined by the parametrisation of geodesics, we see that $H$ is isomorphic to a subgroup of the affine group $\textrm{Aff}(\R)$. Parabolic elements are sent to translations in $\textrm{Aff}(\R)$, therefore $H$ is isomorphic to a subgroup of $\R$, and it is either $\Z$ either $\R$. By using exactly the same methods as in the hyperbolic case (second to fifth steps), we see first that $H$, then $G$ are topologically conjugate to subgroups of $\PSL_k(2,\R)$.
\end{proof}

As a byproduct of the proof, we obtain the following result that will simplify our search for counter examples when we study the differential conjugacy problem.

\begin{coro} Let $(M,g)$ be a spatially compact surface with an acausal conformal boundary that embeds in $\T^2$. Let $G\subset \Isom(M,g)$ be an elementary subgroup (i.e. $\rho_1^M(G)$ is elementary) that acts non properly on $M$ . If $\rho_1^M(G)$ is not differentially conjugate to a subgroup of some $\PSL_k(2,\R)$, then there is a finite index subgroup of $G$ that is isomorphic to $\Z$ or $\R$. \end{coro}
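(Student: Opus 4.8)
The plan is to run the trichotomy of Lemma~\ref{finite_invariant_set} for $\rho_1^M(G)$ and to read off, from the proofs of Lemmas~\ref{elliptic}, \ref{hyperbolic} and~\ref{parabolic}, that a \emph{differential} conjugacy into some $\PSL_k(2,\R)$ is produced exactly in the branches we are excluding, whereas every other branch exhibits a finite index subgroup isomorphic to $\Z$ or $\R$. First I would record the setup. Because $G$ acts non properly, it is not relatively compact, hence neither is $\Isom(M,g)\supseteq G$, so $\Isom(M,g)$ acts non properly and Proposition~\ref{rotation_number} gives $k\in\N$ with $h_{\to\uparrow}$ of rotation number $\frac{1}{k}$; restricting Proposition~\ref{acausal_convergence} to $G$ then makes $\rho_1^M(G)$ an elementary $(h_{\to\uparrow},k)$-convergence group. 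Replacing $G$ by its closure in $\Isom(M,g)$ (still non proper, and still not differentially conjugate into any $\PSL_k(2,\R)$, since a conjugacy of the closure would restrict to one of $G$), I may assume $G$ closed, so that the point stabilizers below are closed. Lemma~\ref{finite_invariant_set} furnishes a finite invariant set $L_G$, and the elliptic configuration $\#L_G>2k$ is impossible: by Lemma~\ref{elliptic} it would make $G$ compact, hence proper by Proposition~\ref{proper_compact_group}.

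So $\#L_G\in\{k,2k\}$, and I am in the situation of Lemma~\ref{hyperbolic} or Lemma~\ref{parabolic}. In each, every $f\in G$ sends $x_0$ to some $h_{\to\uparrow}^i(x_0)$ (the third step of Lemma~\ref{hyperbolic}, resp. the cyclic order preserving action on $L_G$), which defines a homomorphism $G\to\Z/k\Z$ whose kernel is precisely the point stabilizer $H$ used there; thus $H$ has finite index in $G$. The map $f\mapsto\log f'(x_0)$ embeds $H$ into $\R$, injectivity being the rigidity of isometries (a fixed point with trivial $1$-jet forces $f=\mathrm{Id}$); as $G$ is non compact, $H\neq\{\mathrm{Id}\}$, and since $G$ is closed, $H$ is isomorphic to a nontrivial closed subgroup of $\R$, i.e. to $\Z$ or $\R$. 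It then remains to locate where differential conjugacy is gained. Inspecting Lemma~\ref{parabolic}, the only differential conjugacy arises when $H$ contains a hyperbolic element together with a parabolic element not fixing its second fixed point: this forces a one-parameter parabolic subgroup, constant curvature, and an injective developing map $\tilde p$, hence a differential conjugacy of $\rho_1^M(G)$ into $\PSL_k(2,\R)$ (the compact/elliptic case being analogous). In all the other branches --- the hyperbolic case, the purely parabolic case, and the parabolic sub-case that reduces to Lemma~\ref{hyperbolic} --- the conjugacy obtained is only topological and $H$ is the finite index subgroup found above. Consequently, if $\rho_1^M(G)$ is not differentially conjugate to any subgroup of $\PSL_k(2,\R)$, we are in one of these branches and $H\approx\Z$ or $\R$.

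The main obstacle is the bookkeeping inside Lemmas~\ref{hyperbolic} and~\ref{parabolic}: one must verify that the constant-curvature branch is the unique source of differential (as opposed to merely topological) conjugacy, and that in every other branch the finite index stabilizer is genuinely $\Z$ or $\R$. The delicate point underneath this is closedness of $G$: a non-closed stabilizer would embed as a dense subgroup of $\R$, which is neither cyclic nor full. The reduction to closed $G$ is immediate in the $\Z$ case (a closed virtually cyclic group is discrete, so $G=\bar G$ already), but in the $\R$ case one must ensure that $G$ itself, and not merely its closure, carries a finite index subgroup isomorphic to $\Z$ or $\R$; this is the genuinely subtle verification, which I would carry out by analysing the one-parameter isometry subgroup (equivalently, the Killing field) produced by the closure and using precisely the dichotomy between differential and topological conjugacy to control it.
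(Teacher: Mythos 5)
Your proposal follows the paper's own route exactly: the paper offers no separate argument for this corollary, presenting it as a byproduct of the case analysis in Lemmas \ref{finite_invariant_set}, \ref{elliptic}, \ref{hyperbolic} and \ref{parabolic}, which is precisely the trichotomy you run (elliptic branch excluded by non-properness via Proposition \ref{proper_compact_group}, the constant-curvature branch as the sole source of \emph{differential} conjugacy, and otherwise a finite index stabilizer $H$ isomorphic to a closed subgroup of $\R$). The one point you flag but do not finish --- descending from $\overline G$ to $G$ when $H\approx\R$, since $G\cap H$ could a priori be a dense proper subgroup of $\R$ --- is equally unaddressed in the paper, whose Lemmas \ref{hyperbolic} and \ref{parabolic} are stated only for closed subgroups; your treatment is, if anything, more scrupulous than the source on this point.
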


\section{Conjugacy for non elementary groups} \label{sec:non_elementary_acausal}

We now wish to show Theorem  \ref{main_theorem_acausal} in the non elementary case. We will deal only with  surfaces that embed conformally in $\T^2$, and conclude with Proposition \ref{open_set_embeds}. The main tool will be the $(h,k)$-convergence property, but we face the problem  that $h$ is not necessarily a rotation. If it were the case, then the result would be immediate by Lemma \ref{convergence_rotation}.\\
\indent Our goal is to see that there is a homeomorphism $\widetilde h$ that is topologically conjugate to a rotation, such that $\rho_1^M(\Isom(M,g))$ is a $(\widetilde h,k)$-convergence group. Our strategy is to keep $h_{\rightarrow \uparrow}$ on the limit set, and to change it outside the limit set in order to make all points periodic. First, we will see that $h_{\rightarrow \uparrow}$ is of order $k$ on the limit set.

\begin{lemme} Let $k\in \N^*$ and let $h\in \Homeo(\Ss^1)$ have rotation number $ \frac{1}{k}$. Let $G\subset \Homeo(\Ss^1)$ be a non elementary $(h,k)$-convergence group. Let $F$ be the set of points $b\in \Ss^1$ such that there is a sequence $f_n\to \infty$ in $G$ and $a\in \Ss^1$ satisfying: \begin{itemize} \item $h^k(a)=a$ and $h^k(b)=b$ \item $\forall i\in\{0,\dots,k-1\}~ \forall x\in \intoo{h^i(a)}{h^{i+1}(a)} ~ f_n(x)\to h^i(b)$ \end{itemize} Then  $L_G\subset \overline F$. In particular, all points of $L_G$ are  periodic for $h$. \end{lemme}

\begin{proof} Since the minimal invariant closed set is unique in the non elementary case, we only need to show that $F$ is $G$-invariant. If $f_n$ is a sequence in $G$ defining $b\in F$, then the sequence $g\circ f_n$ shows that $g(b)\in F$. \end{proof}



\begin{lemme} \label{changing_homeomorphism} Let $(M,g)$ be a spatially compact surface with an acausal conformal boundary that embeds in $\T^2$ and assume that  $G=\rho_1^M(\Isom(M,g))$ is a non elementary subgroup. Let $h_{\uparrow},h_{\downarrow}$ be the homeomorphisms that define the boundary in $\T^2$, and assume that the rotation number of $h_{\rightarrow \uparrow}$ is $\frac{1}{k}$. There is $h\in \Homeo(\Ss^1)$ such that: \begin{itemize} \item $ h^k=Id$ \item $ h(x)=h_{\rightarrow \uparrow}(x)$ for all $x\in L_G$ \item $ h \circ f = f \circ  h$ for all $f\in G$ \end{itemize} Consequently, $G$ is a $( h,k)$-convergence group. \end{lemme}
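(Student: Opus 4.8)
Write $c=h_{\rightarrow\uparrow}$. The plan is to leave $c$ untouched on $L_G$ and to correct it only on the complementary intervals, using that $c$ already commutes with $G$: the relations $\rho_2^M(\p)\circ h_\uparrow=h_\uparrow\circ\rho_1^M(\p)$ and $\rho_1^M(\p)\circ h_\to=h_\to\circ\rho_2^M(\p)$ give $c\circ\rho_1^M(\p)=\rho_1^M(\p)\circ c$, so $c$ commutes with every element of $G$. First I would record the structure of $c$. By the previous lemma every point of $L_G$ is periodic for $c$, and since $c$ has rotation number $\frac1k$ (which is in lowest terms) all these periods equal $k$; thus $c^k$ fixes $L_G$ pointwise. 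As $c$ commutes with $G$, the set $c(L_G)$ is closed and $G$-invariant, hence equals $L_G$ by uniqueness of the minimal set; so $c$ preserves $L_G$ and permutes the connected components of $\Ss^1\setminus L_G$, each $c$-orbit of components having exactly $k$ elements (a shorter orbit would produce a periodic point of period $<k$).

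If $L_G=\Ss^1$ then $c^k=\mathrm{Id}$ already and $h=c$ works, so I may assume $L_G$ is a Cantor set. I set $h=c$ on $L_G$; on a component $I_i$ the map $h$ is then forced to be a homeomorphism $\overline{I_i}\to\overline{c(I_i)}$ agreeing with $c$ at the endpoints. The requirement $h^k=\mathrm{Id}$ is a condition around each $c$-orbit of components: the model case shows that putting $h=c$ on $\overline J,\dots,\overline{c^{k-2}J}$ and $h=(c^{k-1}|_{\overline J})^{-1}$ on $\overline{c^{k-1}J}$ yields $h^k=\mathrm{Id}$ while matching $c$ at every endpoint (because $c^k$ fixes them). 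To package this together with $G$-equivariance I would instead look for a homeomorphism $\rho$ that fixes $L_G$ pointwise, preserves each component, commutes with $c$ and with $G$, and whose restriction $r_i:=\rho|_{\overline{I_i}}$ is a $k$-th root of $(c^k|_{\overline{I_i}})^{-1}$; then $h:=c\circ\rho$ satisfies $h^k=c^k\rho^k=\mathrm{Id}$, equals $c$ on $L_G$, and commutes with $G$.

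The \textbf{main obstacle} is choosing the roots $r_i$ equivariantly, i.e. so that $r_{g(i)}=g\,r_i\,g^{-1}$ for $g\in G$ and $r_{c(i)}=c\,r_i\,c^{-1}$. I would fix a fundamental domain for the action of $\Gamma=\langle G,c\rangle$ on the set of components, choose a root freely on each representative, and propagate it by these two relations; consistency is automatic off the stabilizers, and the sole constraint is that at a component $J$ with nontrivial stabilizer $\Gamma_J$ the root must commute with $\Gamma_J|_{\overline J}$. Here $c^k|_{\overline J}$ is central in $\Gamma_J$ (since $c^k$ commutes with all of $\Gamma$), so $\Gamma_J$ preserves its fixed-point set and permutes the maximal subintervals on which $c^k|_{\overline J}$ has no interior fixed point; on each such subinterval $c^k|_{\overline J}$ is conjugate to a translation and admits $k$-th roots commuting with it, and one checks that these can be chosen compatibly along the $\Gamma_J$-action. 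Producing such a $\Gamma_J$-invariant root is the delicate bookkeeping step; everything else is routine. Finally $\rho$, and hence $h$, is a genuine homeomorphism: continuity along $L_G$ follows because the components have lengths tending to $0$.

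The concluding statement is short. By Proposition \ref{acausal_convergence}, $G$ is a $(c,k)$-convergence group. Given $f_n\to\infty$ in $G$, after passing to a subsequence it has the $(c,k)$-convergence property for some $a,b$, and both may be taken in $L_G$: the attracting point $b$ lies in $L_G$ by the previous lemma, and applying the property to the sequence $f_n^{-1}\in G$ identifies the repelling point $a$ with an attracting point of that sequence, hence $a\in L_G$ as well. Since $h=c$ on the invariant set $L_G$ we have $h^i(a)=c^i(a)$ and $h^i(b)=c^i(b)$ for all $i$, so $h^k(a)=a$, $h^k(b)=b$, the arcs $\intoo{h^i(a)}{h^{i+1}(a)}$ coincide with $\intoo{c^i(a)}{c^{i+1}(a)}$, and the convergence $f_n(x)\to c^i(b)=h^i(b)$ is exactly the $(h,k)$-convergence property; equicontinuous subsequences are unchanged. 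As all elements of $G$ commute with $h$ by construction, $G$ is a $(h,k)$-convergence group.
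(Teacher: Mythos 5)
Your overall strategy matches the paper's: keep $h_{\rightarrow \uparrow}$ on $L_G$, modify it only on the gaps of the Cantor set, and propagate the modification equivariantly from a fundamental domain for the action on the set of gaps. Your closing argument for the ``consequently'' clause is essentially correct (though $b\in L_G$ follows from closedness and invariance of $L_G$ rather than literally from the preceding lemma: pick $x\in L_G$ in one of the intervals, note $f_n(x)\in L_G$, and use that $L_G$ is closed and $h_{\rightarrow\uparrow}$-invariant).

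However, there is a genuine gap exactly where you write that ``one checks that these can be chosen compatibly along the $\Gamma_J$-action'' and call it ``delicate bookkeeping.'' What you need at a gap $J$ with nontrivial stabilizer is a $k$-th root of $(h_{\rightarrow\uparrow}^k|_{\overline J})^{-1}$ commuting with the \emph{entire} restricted stabilizer, not merely with $h_{\rightarrow\uparrow}^k$ on each of its translation-like blocks; for an abstract group of homeomorphisms of an interval such an equivariant root need not exist, and nothing in your argument uses more than abstract circle dynamics. This is precisely the point where the paper brings in the Lorentzian structure: the subgroup $G_n\subset G$ preserving a $h_{\rightarrow\uparrow}$-orbit of gaps $I_n\sqcup h_{\rightarrow\uparrow}(I_n)\sqcup\cdots\sqcup h_{\rightarrow\uparrow}^{k-1}(I_n)$ is a closed elementary subgroup of the isometry group with a $2k$-point invariant set (the endpoints), so Lemma \ref{hyperbolic} --- whose proof relies on the rigidity of isometries (an isometry fixing a point with identity derivative is the identity) --- conjugates $G_n$ into $\PSL_k(2,\R)$, and the desired order-$k$ homeomorphism commuting with $G_n$ on that orbit of gaps is simply the pullback of the central rotation of $\PSL_k(2,\R)$. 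Without Lemma \ref{hyperbolic} or a substitute, the equivariant root extraction you postpone is unproved, and it is the whole content of the lemma.
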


\begin{proof} If $L_G=\Ss^1$, then $h_{\rightarrow \uparrow}^k=Id$ and we can take $ h=h_{\rightarrow \uparrow}$. We now assume that $L_G$ is a Cantor set. \\ \indent If $I\subset \Ss^1$ is a connected component of $\Ss^1\setminus L_G$, then so is $h_{\rightarrow \uparrow}(I)$, and $h_{\rightarrow \uparrow}^k(I)=I$ (because $h_{\rightarrow \uparrow}^k$ is the identity on $L_G$). We use the decomposition into connected components $\Ss^1\setminus L_G = \bigcup_{n\in \N} (I_n \sqcup h_{\rightarrow \uparrow}(I_n) \sqcup\cdots \sqcup h_{\rightarrow \uparrow}^{k-1}(I_n))$. Let $E\subset \N$ be a set of representers for the action of $G$.\\
\indent Let $n\in E$. Let $G_n$ be the set of elements of $G$ that preserve the union  $I_n\sqcup \cdots \sqcup h_{\rightarrow \uparrow}^{k-1}(I_n)$. It is a closed elementary group with an invariant set containing $2k$ elements.  Lemma \ref{hyperbolic} implies that it is topologically conjugate to subgroup $\Gamma_n$ of $\PSL_k(2,\R)$. We write $G_n=\p^{-1}\Gamma_n\p$. Let $R$ be the element of the center of $\PSL_k(2,\R)$ of rotation number $\frac{1}{k}$. Set $ h$ on $h_{\rightarrow \uparrow}^i(I_n)$ to be $\p^{-1} R \p$. Since $R^k=Id$, we find $ h^k=Id$. Since $R$ commutes with $\Gamma_n$, we see that $ h$ commutes with $G_n$.\\
\indent On $f(h_{\rightarrow \uparrow}^i(I_n))$ for $f\in G$, we set $ h_{/f(h_{\rightarrow \uparrow}^i(I_n))} = f \circ  h_{/h_{\rightarrow \uparrow}^i(I_n)} \circ f^{-1}$. Since $ h$ commutes with the elements that preserve $I_n\sqcup \cdots \sqcup h_{\rightarrow \uparrow}^{k-1}(I_n)$, we see that $ h$ commutes with $G$. By construction, it is equal to $h_{\rightarrow \uparrow}$ on $L_G$, and it is a homeomorphism.
\end{proof}

We can now achieve the proof Theorem \ref{main_theorem_acausal_embeds}, which implies Theorem \ref{main_theorem_acausal} because of Proposition \ref{open_set_embeds}.

\begin{proof}[Proof of Theorem \ref{main_theorem_acausal_embeds}] Let $(M,g)$ be  a spatially compact surface with an acausal conformal boundary that embeds conformally in the flat torus.\\
\indent Proposition \ref{faithful} implies that $\rho_1^M$ and $\rho_2^M$ are faithful and topologically conjugate to each other.\\ \indent If $\Isom(M,g)$ acts properly on $M$, then Proposition \ref{main_acausal_embeds_proper} concludes. If $\Isom(M,g)$ acts non properly on $M$, then according to Proposition \ref{rotation_number}, the rotation number of $h_{\rightarrow \uparrow}$ is equal to $\frac{1}{k}$ for some $k\in \N$.\\ \indent  If $\rho_1^M(\Isom(M,g))$ is elementary, then Lemma \ref{finite_invariant_set} assures that we can apply Lemma \ref{elliptic}, \ref{hyperbolic} or \ref{parabolic} to show that $\rho_1^M(\Isom(M,g))$ is topologically conjugate to a subgroup of a finite cover of $\PSL(2,\R)$.\\ \indent If $\rho_1^M(\Isom(M,g))$ is non elementary, then Lemma \ref{changing_homeomorphism} shows that there is $ h\in \Homeo(\Ss^1)$ such that $ h^k=Id$ and $\rho_1^M(\Isom(M,g))$ is a $( h,k)$-convergence group. Since $ h^k=Id$, it is topologically conjugate to a rotation, and Lemma \ref{convergence_rotation} shows that $\rho_1^M(\Isom(M,g))$ is topologically conjugate to a subgroup of $\PSL_k(2,\R)$.
\end{proof}

\subsection*{Acknowledgments} This work corresponds to chapters 1 and 3 of my PhD thesis \cite{these}. I would like to thank my  advisor Abdelghani Zeghib for his help throughout this work, as well as Thierry Barbot for some important remarks on a first version of this paper.

~\\
\footnotesize \textsc{UMPA, \'Ecole Normale Supérieure de Lyon, 46 allée d'Italie, 69364 Lyon Cedex 07}\\
 \emph{E-mail address:}  \verb|daniel.monclair@ens-lyon.fr|

\end{document}